\theoremstyle{plain}
\newtheorem{thm}{Theorem}[section]		
\newtheorem{prop}[thm]{Proposition}
\newtheorem{cor}[thm]{Corollary}
\newtheorem{lem}[thm]{Lemma}
\newtheorem{clm}[thm]{Claim}
\theoremstyle{definition}
\newtheorem{df}{Definition}[section]
\newtheorem{exam}{Example}[section]
\theoremstyle{remark}
\newtheorem{rmk}{Remark}[section]
\newtheorem*{ac}{Acknowledgements}
\newcommand{\nn}{\mathbb{N}}
\newcommand{\rr}{\mathbb{R}}
\newcommand{\mgn}{\infty}
\DeclareMathOperator{\card}{card}
\DeclareMathOperator{\di}{diam}
\DeclareMathOperator{\cl}{CL}
\begin{document}

\title[Quasi-symmetric invariant properties]
{Quasi-symmetric invariant properties of Cantor metric spaces}
\author[Yoshito Ishiki]
{Yoshito Ishiki}
\address[Yoshito Ishiki]
{\endgraf
Graduate School of Pure and Applied Sciences
\endgraf
University of Tsukuba
\endgraf
Tennodai 1-1-1, Tsukuba, Ibaraki, 305-8571, Japan}
\email{ishiki@math.tsukuba.ac.jp}

\date{\today}
\subjclass[2010]{Primary 54E40; Secondary 54F45}
\keywords{Cantor metric space, Quasi-symmetric invariant}

\begin{abstract}
For metric spaces, the doubling property, the uniform disconnectedness, and the uniform perfectness are known as quasi-symmetric invariant properties. 
The David-Semmes uniformization theorem states that 
if a compact metric space satisfies all the three properties, then it is quasi-symmetrically equivalent to the middle-third Cantor set. 
We say that a Cantor metric  space is standard if it satisfies all the three properties; otherwise, it is exotic. 
In this paper, 
we conclude that for each of  exotic types the class of all the conformal gauges of Cantor metric spaces exactly has continuum cardinality. 
As a byproduct of our study, 
we state that there exists a Cantor metric space with prescribed Hausdorff dimension and Assouad dimension.  
\end{abstract}
\maketitle

\section{Introduction}
%%%%%%%%%%%%%%%%%%%%%%%%%%%%%%%%%%%%%%%%%%%%%%%%%%%%%%%%%%%%
%Historical note of QS
%qsmap
The concept of quasi-symmetric maps between metric spaces provides us various applications, 
especially from a viewpoint of geometric analysis of metric measure spaces (see e.g., \cite{H,S}), 
or a viewpoint of the conformal dimension theory (see e.g., \cite{MT}). 
For a homeomorphism $\eta:[0,\mgn)\to [0.\mgn)$,
a homeomorphism $f:X\to Y$ between metric spaces is said to be \emph{$\eta$-quasi-symmetric} 
if 
\[
\frac{d_Y(f(x),f(y))}{d_Y(f(x),f(z))}\le \eta\left(\frac{d_X(x,y)}{d_X(x,z)}\right)
\] holds for all distinct $x,y,z\in X$, 
where $d_X$ is the metric on $X$ and $d_Y$ the metric on $Y$.
A homeomorphism $f:X\to Y$ is \emph{quasi-symmetric} if 
it is $\eta$-quasi-symmetric for some $\eta$. 
The composition of any two quasi-symmetric maps is quasi-symmetric. 
The inverse of any quasi-symmetric map is also quasi-symmetric. 
The quasi-symmetry gives us an equivalent relation between metric spaces. \par
In this paper, we focus on the following quasi-symmetric invariant properties of metric spaces: 
the doubling property, the uniform disconnectedness, and the uniform perfectness 
(see Section \ref{sec:pre} for the definitions). 
David and Semmes \cite{DS} have proven the so-called uniformization theorem which states that 
every uniformly disconnected, uniformly perfect, doubling compact metric space is 
quasi-symmetrically equivalent to the middle-third Cantor set (\cite[Proposition 15.11]{DS}). 
The David-Semmes uniformization theorem can be considered as a 
quasi-symmetric version of the well-known Brouwer characterization of Cantor spaces 
(\cite{B}, see e.g., \cite[Theorem 30.3]{W}), 
where a Cantor space means  a topological space homeomorphic to the middle-third Cantor set.
We study the three quasi-symmetric invariant properties of Cantor metric spaces. 
We attempt to complement the David-Semmes uniformization theorem.
\par
%%%%%%%%%%%%%%%%%%%%%%%%%%%%%%%%%%%%%%%%%%%%%%%%%%%
%Definition of types
Before stating our results, for the sake of simplicity, 
we introduce the following notations:
\begin{df}
If a metric space $(X,d)$ with metric $d$ satisfies a property $P$, 
then we write $T_P(X,d)=1$; otherwise, $T_P(X,d)=0$.
For a triple $(u,v,w)\in \{0,1\}^3$, 
we say that a metric space $(X,d)$ \emph{has type $(u,v,w)$} 
if we have
\[
T_{D}(X,d)=u,\quad T_{UD}(X,d)=v,\quad T_{UP}(X,d)=w,
\] 
where $D$ means the doubling property, $UD$ the uniform disconnectedness, and $UP$ the uniform perfectness.
\end{df}
We say that  a Cantor metric space is \emph{standard} if it has type $(1,1,1)$;
otherwise, \emph{exotic}.
For example, the middle-third Cantor set is standard. 
%%%%%%%%%%%%%%%%%%%%%%%%%%%%%%%%%%%%%%%%%%%%%%%%%
%Comofrmalgauge and main thereom
We consider the problem on an abundance of 
the quasi-symmetric equivalent classes of exotic Cantor metric spaces. 
\par
For a metric space $(X,d)$, we denote by $\mathcal{G}(X,d)$ the \emph{conformal gauge} of $(X,d)$ defined as 
the quasi-symmetric equivalent class of $(X,d)$.
The conformal gauge of metric spaces is a basic concept in the conformal dimension theory 
(see e.g., \cite{MT}).
For each $(u,v,w)\in \{0,1\}^3$, we define 
\[
\mathscr{M}(u,v,w)=\{\,\mathcal{G}(X,d)\mid \text{$(X,d)$ is a Cantor space of type $(u,v,w)$}\,\}.
\]
\par
The  David-Semmes uniformization theorem mentioned above states that $\mathscr{M}(1,1,1)$ is a singleton. 
It is  intuitively expected that $\mathscr{M}(u,v,w)$ has infinite cardinality for each exotic type $(u,v,w)$. 
As far as the author knows, the caridinality of the class of the conformal gauges of Cantor metric spaces has not yet been studied. 
\par
As the main result of this paper, 
we conclude that the cardinality of  the class of all conformal gauges of exotic Cantor metric  spaces is equal to the continuum $2^{\aleph_0}$. More precisely, we prove the following:
\begin{thm}\label{thm:many}
For every $(u,v,w)\in \{0,1\}^3$ except $(1,1,1)$, we have
\[
\card(\mathscr{M}(u,v,w))=2^{\aleph_0}, 
\]
where the symbol $\card$ denotes the cardinality.
\end{thm}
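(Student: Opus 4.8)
The plan is to handle the two inequalities separately; the bound $\card(\mathscr M(u,v,w))\le 2^{\aleph_0}$ is soft and uniform in the type, and the reverse inequality is where everything happens. For the upper bound: any Cantor metric space $(X,d)$ is isometric to the completion of $(D,d|_{D})$ for a countable dense $D\subseteq X$, the isometry class of a countable metric space is encoded by one function $\nn\times\nn\to[0,\mgn)$, and there are only $2^{\aleph_0}$ such functions; hence there are at most $2^{\aleph_0}$ isometry classes of Cantor metric spaces, and a fortiori at most $2^{\aleph_0}$ conformal gauges.

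For the lower bound I would, for each exotic triple, produce a family $\{(X_{t},d_{t})\}_{t\in T}$ with $\card(T)=2^{\aleph_0}$, all members of type $(u,v,w)$, and with pairwise distinct gauges. Every $X_{t}$ would be a \emph{generalized Cantor set} --- a nested intersection $X=\bigcap_{n}X_{n}$, realized inside $\rr$ or as an abstract ultrametric space as the target type demands, in which each level-$(n-1)$ piece splits into $m_{n}$ level-$n$ pieces of relative size $\lambda_{n}$, separated by relative gaps $\gamma_{n}$. The first task is the dictionary between $(m_{n},\lambda_{n},\gamma_{n})$ and the three properties: $X$ is always a Cantor space; it is uniformly disconnected exactly when the $\gamma_{n}$ are bounded below; uniformly perfect exactly when the $\gamma_{n}$ stay away from the degenerate value and $\inf_{n}\lambda_{n}>0$; doubling exactly when $\sup_{n}m_{n}<\mgn$ and $\log\lambda_{n}^{-1}$ obeys a window-averaged lower bound. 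Forcing exactly the prescribed subset of these to fail while leaving one parameter sequence free then realizes each exotic type by a continuum of spaces; the very same computations, carried out inside $\rr$, also give the announced byproduct, since for a generalized Cantor set in $\rr$ the Hausdorff and Assouad dimensions are explicit functions of $(m_{n},\lambda_{n})$.

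The members of each family must then be separated by a quasi-symmetric invariant. For the four exotic types with $v=0$ I would use the conformal dimension $\dim_{C}(X)=\inf\{\dim_{H}(Y):Y\in\mathcal G(X)\}$, quasi-symmetrically invariant by construction: when $X$ fails to be uniformly disconnected one cannot lower the Hausdorff dimension merely by taking powers of the metric, and driving $\gamma_{n}\to0$ at a prescribed rate pins $\dim_{C}(X)$ to a prescribed value filling an interval (inside $[0,1]$ when $X\subseteq\rr$). The inequality $\dim_{C}(X)\le\alpha$ is easy --- an explicit reweighted metric in the gauge of Hausdorff dimension $\alpha$ --- whereas $\dim_{C}(X)\ge\alpha$ must come from a Frostman-type mass distribution on $X$ together with a combinatorial-modulus or energy estimate that survives every change of metric inside the gauge. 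For the three exotic types with $v=1$ the conformal dimension is useless: a uniformly disconnected compact space is quasi-symmetric to an ultrametric one, every positive power of an ultrametric metric is again a metric in the same gauge, so the Hausdorff dimension can be pushed down freely and no continuum of positive finite conformal dimensions survives. There I would instead attach to $X$ a \emph{multiscale profile} --- say the joint behaviour, over all $x\in X$ and all small $r$, of $\di(B(x,r))/r$ and of the packing functions of $B(x,r)$ --- which an $\eta$-quasi-symmetric map can alter only by a reparametrization of the scale variable controlled by $\eta$; choosing the free parameter rigidly enough (the set of levels at which $\gamma_{n}$, or $m_{n}$, is large, indexed by subsets of $\nn$ modulo finite sets, or by a Cantor set of growth rates) should make two distinct profiles impossible to reconcile by any admissible reparametrization. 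One can also economise: having a continuum family of each ``one-failure'' type $(1,0,1)$, $(0,1,1)$, $(1,1,0)$, gluing onto each member a fixed Cantor space of conformal dimension $0$ that carries the extra failure(s) yields the remaining types without disturbing the separating invariant.

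I expect the separation step to be the real obstacle, in both incarnations. For $v=0$ it is the bound $\dim_{C}(X)\ge\alpha$: one must show that no reweighting of a gap-shrinking Cantor set can beat its Hausdorff dimension below $\alpha$, i.e. produce a lower bound for Hausdorff content that is stable under the whole gauge of a totally disconnected set. For $v=1$ it is the rigidity of the multiscale profile: since a quasi-symmetric map need not respect the level structure of the defining tree, one has to rule out that some sufficiently clever distortion of scales brings the profiles of two ``incommensurable'' parameter choices into agreement.
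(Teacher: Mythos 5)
Your upper bound is fine, and your overall strategy (a continuum family of spaces of each exotic type, separated by a quasi-symmetric invariant) matches the paper's in outline. But the proposal has a genuine gap exactly where you predict it: the separation step is never actually carried out, and both invariants you propose require substantial theorems you do not supply. For the types with $v=0$ you need the lower bound $\dim_C(X)\ge\alpha$ for a continuum of distinct values $\alpha$ realized by totally disconnected sets; establishing that a totally disconnected compact set has \emph{positive} conformal dimension at all is a hard problem (results of this kind, e.g.\ for middle-interval Cantor sets, rest on delicate Frostman/modulus arguments and give specific values rather than a freely prescribable continuum), and nothing in your sketch produces the required gauge-stable lower bound on Hausdorff content. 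For the types with $v=1$ the ``multiscale profile'' is not defined precisely enough to be checked for quasi-symmetric invariance, and the rigidity claim --- that no admissible reparametrization of scales can reconcile two incommensurable parameter choices --- is exactly the statement that needs proof; as written it is a hope, not an argument. So the proposal is a plausible research program, not a proof.

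The paper avoids all of this analysis with a much softer invariant: for a quasi-symmetrically invariant property $P$, the set $S_P(X,d)$ of points having no neighborhood satisfying $P$ is carried homeomorphically by any quasi-symmetric map (Remark \ref{rmk:qsinv}), so it suffices to make $S_P$ realize continuum many pairwise non-homeomorphic closed subsets of the Cantor set. The paper builds ``$P$-spike'' spaces with $S_P$ a single point (Propositions \ref{prop:dspike}, \ref{prop:udspike}, \ref{prop:upspike1}, via the telescope construction), takes products with a family $\{\Xi(x)\}_{x\in I}$ of continuum many mutually non-homeomorphic closed sets (distinguished by iterated derived-set/perfect-part operations, Proposition \ref{prop:xiinj}) so that $S_P(X\times A)\approx A$, and adjusts the type by direct sums. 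The separation then reduces to pure point-set topology and costs nothing analytically. This is the key idea missing from your proposal; with it, neither conformal dimension bounds nor profile rigidity is needed.
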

%%%%%%%%%%%%%%%%%%%%%%%%%%%%%%%%%%%%%%%%%%%%%%%%%%%%%%%%%%%%%%%%%%%%%%%%%%%%%%%%%%%%%%%
In the proof of Theorem \ref{thm:many},  
the following quasi-symmetric invariant plays an important role. 
\begin{df}\label{def:sp}
For a property $P$ of metric spaces,  
and for a metric space $(X,d)$ we define $S_{P}(X,d)$ as the set of all points in $X$ of which no neighborhoods 
satisfy $P$. 
\end{df}
\begin{rmk}\label{rmk:qsinv}
If $P$ is a quasi-symmetric invariant property (e.g., $D$, $UD$ or $UP$),  then $S_P(X,d)$ is a quasi-symmetric invariant. Namely, 
if $(X,d_X)$ and $(Y,d_Y)$ are quasi-symmetrically equivalent, 
then so are $S_P(X,d_X)$ and $S_P(Y,d_Y)$.
\end{rmk}
To prove Theorem \ref{thm:many}, 
 we introduce the following notion: 
\begin{df}\label{def:spike}
For a property $P$ of metric spaces, 
we say that a metric space $(X,d)$ is \emph{a $P$-spike space} if $S_P(X,d)$ is a singleton. 
\end{df}
In order to guarantee the existence of $D$, $UD$ and $UP$-spike Cantor metric spaces,  
we develop a new operation of metric spaces, say the \emph{telescope spaces}. 
Our telescope space is constructed as a direct sum with contracting factors and the point at infinity determined as the convergent point of the contracting factors (see Section  \ref{sec:telescope}). 
\par 
The outline of the proof of Theorem \ref{thm:many} is as follows: 
We first construct a family $\{\Xi(x)\}_{x\in I}$ of continuum many closed sets in the middle-third Cantor set whose members are not homeomorphic to each other.  
By using appropriate $D$, $UD$ and $UP$-spike Cantor metric spaces,   
for each member $\Xi(x)$, for each exotic type  $(u,v,w)$ and for each failing property $P\in \{D,UD,UP\}$  of $(u,v,w)$, we can obtain a Cantor metric space $(X,d)$ of type $(u,v,w)$ satisfying
$S_P(X,d)=\Xi(x)$. 
Since $S_P$ is a quasi-symmetric invariant for $D$, $UD$, and $UP$, 
we obtain continuum many Cantor metric spaces in $\mathscr{M}(u,v,w)$. 
\par
%%%%%%%%%%%%%%%%%%%%%%%%%%%%%%%%%%%%%%%%%%%%%%%%%%%%%%%%%%%%%%%%%%%%%%%%%%%%%
%subTheorem
As a natural question, 
we consider the problem whether a Cantor metric space $(X,d)$ with 
$S_P(X,d)=X$ exists, where $P$ means $D$, $UD$ or $UP$. 
\begin{df}
For a triple $(u,v,w)\in \{0,1\}^3$, 
we say that a metric space $(X,d)$ has  \emph{totally exotic type $(u,v,w)$} 
if $(X,d)$ has exotic type $(u,v,w)$, 
and if $S_{P}(X,d)=X$ holds for all $P\in \{D,UD,UP\}$ with $T_P(X,d)=0$. 
\end{df}
As the other result, 
we prove the existence of  totally exotic Cantor metric spaces for all the possible types. 
\begin{thm}\label{thm:totexo}
For every $(u,v,w)\in \{0,1\}^3$ except $(1,1,1)$, there exists a Cantor metric space of  totally exotic type $(u,v,w)$. 
\end{thm}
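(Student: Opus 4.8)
The plan is to realize each of the seven exotic types by one explicit example, built on a rooted tree whose set of infinite branches carries a metric determined by a choice of \emph{width} for each vertex together with a \emph{configuration} (a finite metric space, normalized to lie in the unit interval) governing how each cylinder splits into its children: for distinct branches $x,y$ with longest common vertex $\sigma$, one sets $d(x,y)$ to be the distance between the two relevant children inside the configuration at $\sigma$, multiplied by the width $w_\sigma$. One then isolates three "failure gadgets'', each a family of configurations usable along a cofinal set of levels: the \emph{fan gadget} (at level $n$ the cylinder splits into $k_n$ children pairwise at distance equal to the parent width, with $k_n\to\mgn$), the \emph{interval gadget} (it splits into $N_n$ children evenly spread along the parent at spacing $\approx w_\sigma/N_n$, with $N_n\to\mgn$), and the \emph{thin gadget} (one child of width $\varepsilon_n w_\sigma$ with $\varepsilon_n\to 0$, the remaining children far away). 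On all other levels one uses only \emph{tame} configurations (bounded branching, widths that shrink by a fixed bounded ratio, and separating gaps of fixed relative size). The telescope space of Section~\ref{sec:telescope} is essentially the thin gadget performed once, globally; the present construction is its localized, iterated cousin.

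The point of the three gadgets is that each one, when inserted at \emph{every} scale cofinally often along \emph{every} branch, forces the corresponding property to fail at every point. For $UD$: given $\lambda$, pick a deep level $n$ with an interval gadget and $1/N_n<\lambda$; the $N_n$ children, traversed through the intermediate configuration points (extending each to a fixed branch below), form a $\lambda$-chain joining two points at distance $\approx w_\sigma$. For $UP$: a thin gadget at level $n$ creates a ball around a point of the small child whose annulus between the child's diameter and $w_\sigma$ is empty. For $D$: a fan gadget yields a ball of radius $w_\sigma$ containing $k_n$ points that are pairwise $w_\sigma$ apart. Because each gadget is demanded cofinally along every branch, every basic clopen subset again contains the same gadgets cofinally, so it too fails the relevant properties; hence $S_P(X,d)=X$ for each failing $P$ automatically, with no recursion needed. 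For the three single-failure types one uses exactly one gadget plus tame levels, and it is routine that the other two properties then hold globally. Here the quantitative choices matter: the fan children must be given diameter comparable to the parent width (so they do not create empty annuli), the interval children must be spread out so that a single ball of half the radius covers an entire contiguous block of them (so the large branching does not destroy doubling), and the thin child's smallness only helps covering.

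For a mixed type with several zeros one simply interleaves the needed gadgets, placing the distinct ones on distinct residue classes of levels (any scheme making each gadget cofinal along every branch works), and keeping every remaining level tame. The failing properties still fail everywhere by the same cofinality argument, applied gadget by gadget; the substance is to check that the gadgets that are present do not accidentally repair a failure one wants to keep, nor break a property that must hold globally. The two critical reconciliations are: (i) the fan gadget is the one genuinely hostile to doubling, so it is used only in the types with $u=0$; and (ii) in the types with $u=1$ one must confirm that interval gadgets, despite their unbounded branching $N_n$, keep the doubling constant bounded, which follows because the children are evenly spread rather than clustered. The remaining checks (that thin and interval gadgets never destroy $UD$, that fan and interval gadgets never destroy $UP$, and the cross-scale bookkeeping of a single uniform constant) are case-by-case but mechanical.

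The main obstacle is exactly this bookkeeping together with the set-up of the general tree-with-configurations construction: verifying that the prescribed data really define a metric (the triangle inequality across non-comparable vertices, where configurations at several levels interact), that the metric induces the Cantor topology (compact, perfect, totally disconnected, metrizable — which holds once every vertex has out-degree $\ge 2$, all widths are bounded, and widths tend to $0$ along every branch), and, in the types where some property must hold globally, pinning down the finite list of levels around a given radius scale that determine the local doubling / uniform-perfectness / uniform-disconnectedness constant and bounding each contribution uniformly. Once the construction and these estimates are in place, the conclusion $S_P(X,d)=X$ for every failing $P$, and hence totally exotic type $(u,v,w)$, is immediate.
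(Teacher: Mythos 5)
Your strategy is sound and, modulo the bookkeeping you yourself flag, proves the theorem; but it packages the construction quite differently from the paper. The paper uses two concrete families rather than one general framework: for the types where uniform disconnectedness holds ($(0,1,1)$, $(1,1,0)$, $(0,1,0)$) it uses \emph{sequentially metrized Cantor spaces} $(2^{\nn},d_\alpha)$ --- ultrametrics on the binary tree governed by a single shrinking sequence --- and gets $S_P(X,d)=X$ from the observation that every closed ball is isometric to the space built from a shifted sequence and hence has the same type (Lemmas \ref{lem:mdb} and \ref{lem:mup}); for the types where uniform disconnectedness fails ($(1,0,1)$, $(1,0,0)$, $(0,0,0)$) it uses \emph{kaleidoscope spaces} $\prod_{n}K_n$ with a weighted supremum metric, which is exactly your interval gadget applied at every level; and it obtains $(0,0,1)$ not by interleaving but as a product of a totally exotic $(0,1,1)$ space with a totally exotic $(1,0,1)$ space via Lemmas \ref{lem:pd}, \ref{lem:pud} and \ref{lem:pup}. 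Your tree-with-configurations scheme is a common generalization of both families, and interleaving gadgets lets you treat all seven types uniformly without the product trick; what it costs is that the triangle inequality is no longer automatic (you need the compatibility condition that each child's width be dominated by a fixed multiple of its minimal configuration distance to its siblings --- for the interval gadget this forces child widths on the order of $w_\sigma/N_n$, which is precisely the paper's hypothesis $2na_n<a_{n+1}$ on the kaleidoscope weights), and the global-property verifications must each be run against a mixture of gadgets rather than against one homogeneous recipe. One slip to fix: in your list of remaining checks you assert that ``thin and interval gadgets never destroy $UD$,'' but the interval gadget is your $UD$-killer; you mean the thin and \emph{fan} gadgets. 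The underlying mechanism for $S_P(X,d)=X$ --- bad configurations occurring cofinally at every scale in every cylinder, with failure of $D$ and $UD$ passing to supersets by heredity and an empty annulus in $X$ remaining empty in every subset --- is the same in both arguments.
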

Theorem \ref{thm:totexo} states an abundance of examples of  exotic Cantor metric spaces in a different way from Theorem \ref{thm:many}. 
\par
To prove Theorem \ref{thm:totexo}, we introduce the notions of the \emph{sequentially metrized Cantor spaces} and the \emph{kaleidoscope spaces}.  
We first explain the sequentially metrized Cantor spaces. 
Let $2^{\nn}$ denote the set of all maps from $\nn$ to $\{0,1\}$. 
For each $u\in (0,1)$, 
the set $2^{\nn}$ equipped with an ultrametric $d$ defined by $d(x,y)=u^{\min\{n\in \nn \mid x_n\neq y_n\}}$ 
becomes a Cantor space. 
In the study of David-Semmes \cite{DS}, or in more preceding studies, 
the metric space $(2^{\nn},d)$ is often utilized as an abstract Cantor space rather than the middle-third one.
The point in the proceeding studies is to use a geometric sequence $\{u^n\}_{n\in \nn}$ 
in the definition of $d$. 
We modify such a familiar construction by using more general sequences, say \emph{shrinking sequences}, 
that are non-increasing and converging to $0$.
Our sequentially metrized Cantor space means the metric space $2^{\nn}$ equipped with a metric constructed by a shrinking sequence (see Section \ref{sec:smcs}). 
In the proof of Theorem \ref{thm:totexo},  Cantor metric spaces of totally exotic types $(1,1,0)$, $(0,1,1)$ and $(0,1,0)$ are obtained as sequentially metrized Cantor spaces for some suitable shrinking sequences. 
\par
We next explain the kaleidoscope spaces. 
Our kaleidoscope space is defined as the countable product of equally divided points in $[0,1]$ 
equipped with a supremum metric distorted by an increasing sequence (see Section \ref{sec:totexo}). 
In the proof of Theorem \ref{thm:totexo}, Cantor metric spaces of totally exotic types $(1,0,1)$, $(1,0,0)$ and $(0,0,0)$ are obtained by applying the construction of the kaleidoscope spaces. 
\par
%%%%%%%%%%%%%%%%%%%%%%%%%%%%%%%%%%%%%%%%%%%%%%%%%%%%%%%%%%%%%%%%%%%%%%%%%%%%
%%%theorem ab
As an application of our studies of Cantor metric spaces, we examine the prescribed Hausdorff and Assouad dimensions problem. 
For a metric space $(X,d)$, we denote by $\dim_H(X,d)$ the Hausdorff dimension of $(X,d)$, 
and by $\dim_A(X,d)$ the Assouad dimension.
In general, the Hausdorff dimension does not succeed the Assouad dimension (see Subsection \ref{Assouad} for the basics of Assouad dimension).
\begin{thm}\label{thm:ab}
For each pair $(a,b)\in [0,\infty]^2$ with $a\le b$,  
 there exists a Cantor metric space $(X,d)$ with 
\[
\dim_H(X,d)=a,\quad \dim_A(X,d)=b.
\] 
\end{thm}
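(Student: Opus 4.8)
The plan is to build $(X,d)$ as a sequentially metrized Cantor space, using a shrinking sequence carefully designed so that the Hausdorff and Assouad dimensions come out as the prescribed $a$ and $b$. Recall that for a shrinking sequence $t=\{t_n\}$ the associated metric on $2^{\nn}$ satisfies $d(x,y)=t_{m}$ where $m=\min\{n:x_n\ne y_n\}$; the basic cylinders of ``depth $n$'' are balls of radius $t_n$, each of which splits into two children of radius $t_{n+1}$. So the metric geometry of $(2^{\nn},d)$ at scale $t_n$ is governed entirely by the ratios $t_{n+1}/t_n$, i.e.\ by how fast the sequence contracts. The strategy is: a slow, steady contraction rate $r$ produces Hausdorff dimension $\log 2/\log(1/r)$; to make the Assouad dimension strictly larger one inserts occasional long runs where the contraction is extremely rapid (many halvings packed into a tiny range of scales), which does not change the Hausdorff dimension (these runs are too sparse to affect the global mass distribution) but forces the Assouad dimension up because locally, over those runs, the space looks like a very high-dimensional dyadic set.

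Concretely, I would proceed in steps. First handle the generic case $0<a\le b<\infty$. Fix a base ratio $\rho\in(0,1)$ with $\log 2/\log(1/\rho)=a$, and let the ``slow'' blocks of the sequence contract by the factor $\rho$ at each step. Then intersperse ``fast'' blocks: on the $k$-th fast block, of length $\ell_k$, contract by a factor $\sigma_k$ at each step with $\log 2/\log(1/\sigma_k)$ approaching $b$ (if $b<\infty$) from below, choosing the block lengths $\ell_k\to\infty$ but with the fast blocks occupying asymptotically zero proportion of the first $n$ indices. Second, compute $\dim_H$: the natural Bernoulli$(1/2,1/2)$ measure $\mu$ gives, by the mass distribution principle, a lower bound governed by $\liminf \log 2^{-n}/\log(\text{radius at depth }n)$, and because the fast blocks are sparse this liminf equals $\log 2/\log(1/\rho)=a$; a matching upper bound comes from the obvious cylinder covers. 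Third, compute $\dim_A$: the upper bound $b$ follows from the worst-case local doubling constant, which is realized exactly on the fast blocks and tends to $b$; the lower bound $b$ follows because within each fast block one finds, at scale $t_{n}$ and radius $t_{n-\ell_k}$, roughly $2^{\ell_k}=(t_{n-\ell_k}/t_n)^{\log 2/\log(1/\sigma_k)}$ disjoint balls, so the Assouad dimension is at least $\sup_k \log 2/\log(1/\sigma_k)=b$.

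Then I would dispatch the boundary cases. For $a=b$: just take the constant-ratio sequence $t_n=\rho^n$ (the classical construction), giving $\dim_H=\dim_A=\log 2/\log(1/\rho)$ for $a\in(0,\infty)$; for $a=b=0$ use a sequence converging to $0$ slower than any geometric one, e.g.\ $t_n=1/\log(n+2)$, forcing both dimensions to $0$; for $a=b=\infty$ use a sequence contracting faster than any geometric one, e.g.\ $t_n=2^{-n^2}$, forcing both dimensions to $\infty$. For $a<b$ with $a=0$ and $b$ finite or infinite, use a slow background (dimension $0$) sequence with sparse fast blocks as above; for $0<a<b=\infty$, use the $\rho$-background with fast blocks whose rates $\sigma_k\to 0$ super-geometrically so that $\log 2/\log(1/\sigma_k)\to\infty$; for $a<b$ with $a>0$ and $b$ finite, this is the generic case already treated. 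In every case one must verify that $(2^{\nn},d)$ is genuinely a Cantor metric space, but this is immediate: a shrinking sequence gives a compact, perfect, totally disconnected metrizable space with no isolated points.

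The main obstacle will be the careful bookkeeping in the $\dim_H$ computation of the generic case: one must choose the fast-block lengths $\ell_k$ and their placements so that they are dense enough to pump the Assouad dimension all the way up to $b$, yet sparse enough that $\liminf_n (n\log 2)/(-\log t_n)$ is not dragged below $a$ — the liminf is the delicate quantity, since it is most sensitive precisely to the long fast runs. I expect this to come down to requiring that the cumulative length of fast blocks among the first $n$ indices is $o(n)$ while each individual block length $\ell_k\to\infty$; with that balance the mass distribution principle and a direct cylinder cover argument pin $\dim_H$ to exactly $a$. The Assouad computations, by contrast, are local and comparatively routine once the block structure is fixed.
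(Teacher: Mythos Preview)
Your overall strategy --- build everything as a single sequentially metrized Cantor space with a mixed-rate shrinking sequence --- is a genuine alternative to the paper's proof, which is more modular: the paper first produces a single ``universal'' sequence $\theta$ with $\dim_H(2^{\nn},d_\theta)=0$ and $\dim_A(2^{\nn},d_\theta)=1$, then reaches an arbitrary $(0,b)$ by snowflaking ($d_\theta\mapsto d_\theta^{1/b}$), reaches $(a,b)$ with $0<a\le b<\infty$ by taking a disjoint union with a constant-rate Cantor set of dimension $a$, and handles $b=\infty$ via telescope spaces. Your approach trades these off-the-shelf operations for a single, more intricate sequence; the paper's route is cleaner to verify because finite stability of $\dim_H$ and $\dim_A$ and the snowflake formula $\dim_A(X,d^{\epsilon})=\epsilon^{-1}\dim_A(X,d)$ do the bookkeeping automatically.

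That said, your handling of the boundary cases contains a systematic error: you have the relationship between contraction rate and dimension \emph{inverted}. For a shrinking sequence $t$, the Hausdorff dimension of $(2^{\nn},d_t)$ is governed by $\liminf_n\, n\log 2/\log(1/t_n)$. If $t_n=1/\log(n+2)$ then $\log(1/t_n)=\log\log(n+2)$, so this ratio tends to $\infty$; your proposed example for $a=b=0$ actually has $\dim_H=\dim_A=\infty$. Conversely, with $t_n=2^{-n^2}$ the ratio is $1/n\to 0$, so your proposed example for $a=b=\infty$ has $\dim_H=0$ (and one checks $\dim_A=0$ as well). The same inversion recurs when you treat $b=\infty$: you write ``rates $\sigma_k\to 0$ \ldots\ so that $\log 2/\log(1/\sigma_k)\to\infty$'', but $\sigma_k\to 0$ forces $\log(1/\sigma_k)\to\infty$ and hence $\log 2/\log(1/\sigma_k)\to 0$. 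To push the Assouad dimension to infinity you need $\sigma_k\to 1^-$. Likewise, a ``slow background'' does not give dimension $0$; a super-geometric background such as $t_n=1/n!$ does (this is exactly the paper's Lemma on $\beta(n)=1/n!$). Once you swap these, your scheme goes through; the generic case $0<a<b<\infty$, where you wrote out the formulas rather than relying on the fast/slow heuristic, is already correct.
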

Our constructions of Cantor metric spaces mentioned above enable us to prove Theorem \ref{thm:ab}.
\par
%%%%%%%%%%%%%%%%%%%%%%%%%%%%%%%%%%%%%%%%%%%%%%%%%%%%%%%%%%%%%%%%%%%%%%%%%%%
%organizations
The organization of this paper is as follows: 
In Section \ref{sec:pre}, we explain the basic facts of metric spaces.
In Section \ref{sec:telescope}, we introduce the notion of the telescope spaces, 
and study their basic properties. 
In Section \ref{sec:spike}, we prove the existence of the $D$, $UD$ and $UP$-spike Cantor metric spaces. 
In Section \ref{sec:prf}, we prove Theorem \ref{thm:many}. 
In Section \ref{sec:smcs}, we discuss the basic properties of the sequentially metrized Cantor spaces. 
In Section \ref{sec:totexo}, we introduce the notion of the kaleidoscope spaces, 
and prove Theorem \ref{thm:totexo}. 
In Section \ref{sec:prescribe}, we prove Theorem \ref{thm:ab}. 
%%%%%%%%%%%%%%%%%%%%%%%%%%%%%%%%%%%
%%%%%%%%%%%%%%%%%%%%%%%%%%%%%%%%%%%
\begin{ac}
The author would like to thank Professor Koichi Nagano  for his advice and constant encouragement.
\end{ac}

\section{Preliminaries}\label{sec:pre}
%%%%%%%%%%%%%%%%%%%%%%%%%%%%%%%%%%%%%%%%%%%
\subsection{Metric Spaces}
Let $(X,d)$ be a metric space.
For a point $x\in X$ and for a positive number $r\in (0,\mgn)$, 
we denote by $U(x,r)$ the open metric ball with 
center $x$ and radius $r$, and by $B(x,r)$ the closed one.
For a subset $A$ of $X$, 
we denote by $\di(A)$ the diameter of $A$.\par
For $\delta \in (0,\mgn)$, 
we denote by $\mathcal{F}_{\delta}(X)$ the set of all subsets of $X$ with diameter smaller than $\delta$. 
For a non-negative number $s\in [0,\mgn)$, 
we denote by $\mathcal{H}^s$ 
the $s$--dimensional Hausdorff measure on $X$ defined as 
$\mathcal{H}^s(A)=\sup_{\delta\in (0,\infty)}\mathcal{H}_{\delta}^s(A)$, where 
\[
\mathcal{H}_{\delta}^s(A)=\inf\left\{\, \sum_{i=1}^{\mgn}\di(A_i)^s\ \middle| \ A\subset\bigcup_{i=1}^{\mgn}A_i,\ A_i\in \mathcal{F}_{\delta}(X)\,\right\}.
\]
For a subset $A$ of $X$, we denote by $\dim_H(A)$ the Hausdorff dimension of $A$ defined as
\begin{align*}
\dim_H(A)&=\sup\{\,s\in[0,\mgn)\mid \mathcal{H}^s(A)=\mgn\,\}\\
             &=\inf\{\,s\in [0,\mgn)\mid \mathcal{H}^s(A)=0\,\}.
\end{align*}\par
Let $(X,d_X)$ and $(Y,d_Y)$ be metric spaces. 
For $c\in (0,\infty)$, 
a map $f:X\to Y$ is said to be \emph{$c$-Lipschitz} 
if for all $x,y \in X$ we have $d_Y(f(x),f(y))\le cd_X(x,y)$. 
A map between metric spaces is \emph{Lipschitz} if it is $c$-Lipschitz for some $c$.
A map $f:X\to Y$ is said to be \emph{$c$-bi-Lipschitz} if for all $x,y\in X$ we have  
\[
c^{-1}d_X(x,y)\le d_Y(f(x),f(y))\le cd_X(x,y).
\] 
A map between metric spaces is \emph{bi-Lipschitz} if it is $c$-bi-Lipschitz for some $c$. 
Two metric spaces are said to be \emph{bi-Lipschitz equivalent} 
if there exists a bi-Lipschitz homeomorphism between them. 
Note that every bi-Lipschitz map is quasi-symmetric. 
%%%%%%%%%%%%%%%%%%%%%%%%%%%%%%%%%%%%%%%%%%%%%%%%%
\subsection{Cantor Metric Spaces}
A topological space is said to be \emph{$0$-dimensional} 
if it admits a clopen base.
A metric space $(X,d)$ is called an \emph{ultrametric space} if 
for all $x,y,z\in X$ we have the so-called ultrametric triangle inequality 
\[
d(x,y)\le \max\{d(x,z),d(z,y)\};
\]
in this case, $d$ is called an \emph{ultrametric}.
Every ultrametric space is $0$-dimensional.\par
We recall the following characterization  of Cantor spaces due to Brouwer (\cite{B}, 
see e.g.,  \cite[Theorem 30.3]{W}):
\begin{thm}[\cite{B}]\label{Brouwer}
Every $0$-dimensional, compact metric space possessing  no isolated point is a Cantor space.
\end{thm}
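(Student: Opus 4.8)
This is Brouwer's characterization of the Cantor set, and the route I would take is the classical one via a \emph{Cantor scheme}. Fix a compatible metric $d$ on $X$, rescaled so that $\di(X)\le 1$, and note $X\neq\emptyset$ (otherwise there is nothing to prove, or the statement is read with this tacit hypothesis). Since the middle-third Cantor set is itself homeomorphic to $\{0,1\}^{\nn}$ with the product topology, it suffices to produce a homeomorphism $h\colon\{0,1\}^{\nn}\to X$. For a finite $0$--$1$ word $\sigma$ I write $[\sigma]$ for the basic clopen set $\{\alpha\mid \alpha \text{ extends }\sigma\}$, $|\sigma|$ for its length, and $\alpha|n$ for the length-$n$ initial segment of $\alpha\in\{0,1\}^{\nn}$. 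The plan is to construct a family $\{U_\sigma\}_\sigma$ of nonempty clopen subsets of $X$, indexed by finite $0$--$1$ words, with $U_\emptyset=X$, each $U_\sigma=U_{\sigma 0}\sqcup U_{\sigma 1}$, and $\di(U_{\alpha|n})\to 0$ as $n\to\infty$ for every $\alpha$; then $h$ is read off from it.

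Granting such a family, the proof finishes in the usual way. For each $\alpha$ the sets $U_{\alpha|n}$ form a decreasing chain of nonempty compact sets with diameters tending to $0$, so $\bigcap_n U_{\alpha|n}$ is a single point, which I call $h(\alpha)$. An induction on $n$ using $U_\sigma=U_{\sigma 0}\sqcup U_{\sigma 1}$ shows $\{U_\sigma\mid |\sigma|=n\}$ is a partition of $X$; this forces $h$ to be a bijection (each $x$ determines a coherent branch $\alpha$ with $x\in U_{\alpha|n}$ for all $n$, hence $h(\alpha)=x$; and two distinct branches first disagree at some coordinate, after which their $U$'s are disjoint, so $h$ is injective). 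Finally $h([\alpha|n])\subseteq U_{\alpha|n}\ni h(\alpha)$ gives continuity of $h$ at every point, and a continuous bijection from the compact space $\{0,1\}^{\nn}$ onto the Hausdorff space $X$ is automatically a homeomorphism, so Theorem~\ref{Brouwer} follows.

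The construction of $\{U_\sigma\}_\sigma$ is the only real content, and here I expect the main obstacle. Two facts are at hand: \emph{(a)} since $X$ is compact with a clopen base, every nonempty clopen $U\subseteq X$ can, for each $\varepsilon>0$, be partitioned into finitely many nonempty clopen sets of diameter $<\varepsilon$ (cover $U$ by clopen sets sitting inside $\varepsilon/3$-balls, pass to a finite subcover by compactness, then disjointify); \emph{(b)} since $X$ has no isolated point, every nonempty clopen set is infinite, hence splits into two nonempty clopen sets. The difficulty is that the rigid binary branching of a Cantor scheme cannot realise, in one step, a partition into $m\ge 3$ small pieces, so branching and diameter-shrinking must be interleaved. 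I would handle this by a bounded recursion -- a \emph{fan} -- below a given node: given $U$ and $\varepsilon$, use (a) to write $U=W_1\sqcup\dots\sqcup W_m$ with each $W_i$ nonempty clopen of diameter $<\varepsilon$; if $m=1$ let the two children of $U$ be any split of $U$ from (b) (both already $\varepsilon$-small), and if $m\ge 2$ let them be $W_1$ and $W_2\sqcup\dots\sqcup W_m$ and recurse on the latter. This terminates in fewer than $m$ steps and produces a finite binary subtree below $U$ all of whose leaves are nonempty clopen sets of diameter $<\varepsilon$, every interior node having exactly two children.

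It then remains to iterate: begin with the trivial partition $\{X\}$ as root, and at stage $n=0,1,2,\dots$ attach to each current leaf its fan for $\varepsilon=2^{-n}$. Taking the union over all stages defines $U_\sigma$ at every node of a rooted tree in which each node has exactly two successors, hence -- after the canonical relabelling of nodes by finite $0$--$1$ words with length equal to depth -- at all finite $0$--$1$ words, with $U_\sigma=U_{\sigma 0}\sqcup U_{\sigma 1}$ by construction. Since each infinite branch $\alpha$ meets, for every $n$, a leaf produced at stage $n$ (these form a finite maximal antichain), and such leaves have diameter $<2^{-n}$, one gets $\di(U_{\alpha|n})\to 0$. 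This is the required Cantor scheme, so $X$ is a Cantor space. In summary, compactness, $0$-dimensionality, and perfectness are all used, but the only genuinely delicate point is reconciling the binary indexing with the metric control, which the fan device accomplishes.
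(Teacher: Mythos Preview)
The paper does not prove Theorem~\ref{Brouwer}; it is merely recalled with a citation to Brouwer~\cite{B} and Willard~\cite[Theorem~30.3]{W}. So there is no ``paper's own proof'' to compare against.

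Your argument is correct and is essentially the classical Cantor-scheme proof. The only delicate point you identify --- reconciling the rigid binary branching with the need to shrink diameters, which may require a partition into $m\ge 3$ pieces --- is handled properly by your fan device: the caterpillar-shaped subtree you build under each node has exactly two children at every interior node and $\varepsilon$-small leaves, and the $m=1$ case invokes perfectness via~(b) to guarantee every node acquires two children, so the global tree is indeed the full infinite binary tree. One small point worth making explicit is that in the disjointification in~(a) some pieces may come out empty and are simply discarded; you implicitly allow $m\ge 1$, and this is fine. The passage from the Cantor scheme to the homeomorphism is standard and correctly stated: compactness gives the singleton intersections and upgrades the continuous bijection to a homeomorphism.
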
\par
The following example can  be seen in \cite{DS}:
\begin{exam}\label{exm:sym}
Let $2^{\nn}$ denote the set of all maps from $\nn$ to $\{0,1\}$. 
Let $e$ be a metric on $2^{\nn}$ defined by 
\[
e(x,y)=3^{-\min\{n\in \nn\mid x_n\neq y_n\}}.
\]
The metric $e$ is an ultrametric on $2^{\nn}$. 
By the Brouwer theorem \ref{Brouwer}, the metric space $(2^{\nn},e)$ is a Cantor space.  
\end{exam}
%%%%%%%%%%%%%%%%%%%%%%%%%%%%%%%%%%%%%%%%%%%%%%%%%
\subsection{Doubling Property} 
For a positive integer $N\in \nn$, 
a metric space $(X,d)$ is said to be \emph{$N$-doubling} 
if every closed metric ball with radius $r$ can be covered by at most $N$ closed metric balls with radius $r/2$.
A metric space is \emph{doubling} if it is $N$-doubling for some $N$.\par
The doubling property is hereditary. 
Namely, every subspace of an $N$-doubling metric space is $N$-doubling. 
\begin{exam}
The middle-third Cantor set $(\Gamma, d_{\Gamma})$ is doubling 
since the real line is doubling.
\end{exam}
Let $(X,d)$ be a metric space, and 
let $A$ be a subset of $X$. 
For  $r\in (0,\infty)$, 
a subset $S$ of $A$ is said to be $r$-separated in $A$ 
if for all distinct points $x,y\in S$ we have $d(x,y)\ge r$. 
\begin{lem}\label{lem:separated}
A metric space $(X,d)$ is doubling if and only if 
there exists $M\in \nn$ such that 
for each $r\in (0,\infty)$ and for each $x\in X$, 
the cardinality of an arbitrary $(r/2)$-separated set in $B(x,r)$ is at most $M$.
\end{lem}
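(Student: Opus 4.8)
The plan is to derive each implication from the standard dictionary between packing and covering, comparing coverings by small balls with maximal $(r/2)$-separated sets.

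For the ``only if'' direction I would start from the assumption that $(X,d)$ is $N$-doubling and iterate the defining covering property three times: every closed ball $B(x,r)$ is then covered by at most $N^3$ closed balls of radius $r/8$. The point of going all the way down to radius $r/8$ is that any two points lying in a common closed ball of radius $r/8$ are at distance at most $r/4$, which is strictly smaller than $r/2$; hence each of these $N^3$ balls contains at most one point of any $(r/2)$-separated subset of $B(x,r)$. Consequently every such separated set has at most $M:=N^3$ elements, and this $M\in\nn$ works uniformly in $r$ and $x$.

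For the ``if'' direction, suppose $M$ has the stated property and fix any closed ball $B(x,r)$. Among all $(r/2)$-separated subsets of $B(x,r)$ --- each of cardinality at most $M$ by hypothesis, so we are looking at a nonempty family of natural numbers bounded by $M$ --- I would pick a subset $S$ of maximal cardinality; then $S$ is also maximal with respect to inclusion, which is what matters and which lets me avoid any appeal to Zorn's lemma. Maximality forces every $y\in B(x,r)$ to satisfy $d(y,s)<r/2$ for some $s\in S$ (otherwise $S\cup\{y\}$ would still be $(r/2)$-separated), so $B(x,r)\subseteq\bigcup_{s\in S}B(s,r/2)$, a union of at most $M$ closed balls of radius $r/2$. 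Thus $(X,d)$ is $M$-doubling.

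The whole argument is elementary and I do not anticipate a genuine obstacle; the only places asking for a little care are taking enough iterations (equivalently, a small enough radius) in the first direction so that the relevant distance estimate is \emph{strict}, and observing that a separated set of maximum cardinality is automatically inclusion-maximal, so the greedy construction in the second direction terminates without transfinite machinery.
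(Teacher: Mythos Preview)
Your proof is correct. The paper states this lemma without proof, treating it as a standard fact about doubling spaces, so there is no argument in the paper to compare against. Both directions of your argument are sound: the threefold iteration in the forward direction correctly secures the strict inequality needed (since $(r/2)$-separation is defined by the non-strict condition $d(x,y)\ge r/2$, two iterations would only give diameter $\le r/2$, which is not enough), and in the converse your observation that a separated subset of maximal cardinality is automatically inclusion-maximal legitimately sidesteps any transfinite argument.
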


%%%%%%%%%%%%%%%%%%%%%%%%%%%%%%%%%%%%%%%%%%%%%%%%%%
\subsection{Uniform Disconnectedness}
Let $(X,d)$ be a metric space. 
For  $\delta\in (0,1)$, 
a finite sequence $x:\{0,1,\dots, N\}\to X$ is said to be a \emph{$\delta$-chain in $(X,d)$} 
if $d(x(i-1),x(i))\le \delta d(x(0),x(N))$ for all $i\in \{1,\dots, N\}$. 
A $\delta$-chain in $(X,d)$ is called \emph{trivial} if it is constant. 
\par
For $\delta\in (0,1)$, 
a metric space $(X,d)$ is said to be \emph{$\delta$-uniformly disconnected} 
if every $\delta$-chain in $(X,d)$ is trivial. 
A metric space is \emph{uniformly disconnected} 
if it is $\delta$-uniformly disconnected for some $\delta$.\par
The uniformly disconnectedness is hereditary. 
Namely, every subspace of a $\delta$-uniformly disconnected metric space is $\delta$-uniformly disconnected. 
\par
By the definition of the uniform disconnectedness and the ultrametric triangle inequality, we see the following:
\begin{prop}\label{prop:ultraud}
Let $(X,d)$ be an ultrametric space. Then for every $\delta\in (0,1)$ the space $(X,d)$ is $\delta$-uniformly disconnected. 
\end{prop}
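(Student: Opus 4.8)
The plan is to exploit the strong (ultrametric) triangle inequality to propagate the smallness condition along a $\delta$-chain and force it to collapse to a point. Fix $\delta\in(0,1)$ and let $x:\{0,1,\dots,N\}\to X$ be an arbitrary $\delta$-chain in $(X,d)$.

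The first step is an elementary lemma, proved by induction on $N$: for any finite sequence $y_0,y_1,\dots,y_N$ in an ultrametric space one has
\[
d(y_0,y_N)\le \max_{1\le i\le N} d(y_{i-1},y_i).
\]
The base case $N=1$ is trivial, and the inductive step follows from $d(y_0,y_N)\le\max\{d(y_0,y_{N-1}),d(y_{N-1},y_N)\}$ together with the inductive hypothesis applied to $y_0,\dots,y_{N-1}$. Applying this to the chain $x$ and using the defining inequality $d(x(i-1),x(i))\le \delta\, d(x(0),x(N))$ for each $i$, we obtain
\[
d(x(0),x(N))\le \max_{1\le i\le N} d(x(i-1),x(i))\le \delta\, d(x(0),x(N)).
\]
Since $\delta<1$, this is only possible if $d(x(0),x(N))=0$, i.e.\ $x(0)=x(N)$.

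Finally, from $x(0)=x(N)$ the defining inequality gives $d(x(i-1),x(i))\le \delta\, d(x(0),x(N))=0$ for every $i\in\{1,\dots,N\}$, so all consecutive points of the chain coincide and hence $x$ is constant, that is, trivial. As $\delta\in(0,1)$ and the chain were arbitrary, $(X,d)$ is $\delta$-uniformly disconnected for every $\delta\in(0,1)$. I do not expect any genuine obstacle here; the only point requiring a little care is the iterated use of the ultrametric inequality, which the auxiliary lemma handles cleanly.
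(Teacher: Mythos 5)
Your argument is correct and is exactly the reasoning the paper has in mind: the paper omits the proof, stating only that the claim follows from the definition of uniform disconnectedness together with the ultrametric triangle inequality, and your iterated-maximum lemma plus the collapse $d(x(0),x(N))\le\delta\,d(x(0),x(N))$ is the standard way to make that precise. No issues.
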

We have already known the following characterization of the uniform disconnectedness 
(see e.g., \cite{DS}, \cite{MT}):

\begin{prop}\label{prop:ult}
A metric space is uniformly disconnected if and only if it is bi-Lipschitz equivalent to an ultrametric space.
\end{prop}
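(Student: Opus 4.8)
The plan is to prove both implications separately; the ``if'' direction is immediate from the results already at hand, so the real content is in the ``only if'' direction. For the easy direction, suppose $(X,d)$ is bi-Lipschitz equivalent to an ultrametric space $(Y,\rho)$ via a $c$-bi-Lipschitz homeomorphism $f$. By Proposition \ref{prop:ultraud}, $(Y,\rho)$ is $\delta$-uniformly disconnected for every $\delta\in(0,1)$; in particular it is $(c^{-2}/2)$-uniformly disconnected, say. A short computation shows that a bi-Lipschitz image of a uniformly disconnected space is uniformly disconnected: if $x(0),\dots,x(N)$ is a $\delta'$-chain in $X$ with $\delta'$ small enough (depending only on $c$ and the disconnectedness constant of $Y$), then applying $f$ and using the bi-Lipschitz bounds produces a nontrivial $\delta$-chain in $Y$ for a suitable $\delta<1$, a contradiction unless the chain is trivial. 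Hence $(X,d)$ is uniformly disconnected.

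For the ``only if'' direction, assume $(X,d)$ is $\delta$-uniformly disconnected for some fixed $\delta\in(0,1)$. The natural strategy is to manufacture an ultrametric $\rho$ on the underlying set $X$, bi-Lipschitz to $d$, by a ``chain-infimum'' construction: for $x,y\in X$ set
\[
\rho(x,y)=\inf\left\{\max_{1\le i\le N} d(x_{i-1},x_i)\ \middle|\ x=x_0,x_1,\dots,x_N=y\right\},
\]
the infimum over all finite chains from $x$ to $y$. One checks routinely that $\rho$ is a pseudometric satisfying the ultrametric inequality $\rho(x,y)\le\max\{\rho(x,z),\rho(z,y)\}$, since concatenating chains only takes a maximum of the two maxima. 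Trivially $\rho\le d$. The crux is the reverse inequality: I must show there is a constant $c>0$ with $\rho(x,y)\ge c\,d(x,y)$ for all $x,y$, which will simultaneously give that $\rho$ is a genuine metric and that $\mathrm{id}:(X,d)\to(X,\rho)$ is bi-Lipschitz. This is exactly where $\delta$-uniform disconnectedness enters: given any chain $x=x_0,\dots,x_N=y$, if every link satisfied $d(x_{i-1},x_i)<\delta\, d(x,y)$ then $(x_i)$ would be a nontrivial $\delta$-chain, contradicting uniform disconnectedness; hence $\max_i d(x_{i-1},x_i)\ge\delta\,d(x,y)$, and taking the infimum gives $\rho(x,y)\ge\delta\,d(x,y)$. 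So $c=\delta$ works, and $(X,\rho)$ is an ultrametric space $\delta$-bi-Lipschitz equivalent to $(X,d)$.

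The main obstacle, and the point deserving care, is verifying that $\rho$ behaves as a metric (positivity, and that the infimum is attained or at least bounded below away from $0$ for $x\neq y$) rather than collapsing — but this is precisely dispatched by the inequality $\rho(x,y)\ge\delta\,d(x,y)>0$ for $x\neq y$, so there is no genuine difficulty once uniform disconnectedness is invoked; the definition of a $\delta$-chain (with the comparison radius $d(x(0),x(N))$ built in) is tailored exactly for this. A secondary technical point is confirming that the identity map is a homeomorphism — it is, since $\delta\,d\le\rho\le d$ forces the two metrics to induce the same topology. I would also remark that one may replace $\delta$ by anything smaller without loss, so the ultrametric can be taken arbitrarily close to $d$; this is not needed for the statement but clarifies the structure. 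Altogether the proof is short: the ``if'' part is a one-line appeal to Proposition \ref{prop:ultraud} plus a bi-Lipschitz chain estimate, and the ``only if'' part is the chain-infimum ultrametric together with the single key estimate coming from the defining property of uniform disconnectedness.
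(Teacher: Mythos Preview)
Your argument is correct and is the standard chain-infimum construction for this result. Note that the paper does not actually prove Proposition~\ref{prop:ult}: it is quoted as a known characterization with references to \cite{DS} and \cite{MT}, and the accompanying Remark~\ref{rmk:ud} records the quantitative form (a $\delta$-uniformly disconnected space is $\delta^{-1}$-bi-Lipschitz to an ultrametric space), which is exactly what your construction yields from the inequalities $\delta\,d\le\rho\le d$. One small notational slip: in the paper's convention a $c$-bi-Lipschitz map has $c\ge 1$, so you should say the identity $(X,d)\to(X,\rho)$ is $\delta^{-1}$-bi-Lipschitz, not ``$\delta$-bi-Lipschitz.''
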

\begin{rmk}\label{rmk:ud}
More precisely, every $\delta$-uniformly disconnected metric space is $\delta^{-1}$-bi-Lipschitz equivalent to an  ultrametric space. 
\end{rmk}

\begin{exam}\label{exm:t}
The middle-third Cantor set $(\Gamma,d_{\Gamma})$ is uniformly disconnected. 
This claim can be verified as follows:
Take the Cantor space $(2^{\nn},e)$ mentioned in Example \ref{exm:sym}. 
The ternary corresponding map $T:2^{\nn}\to \Gamma$ defined as 
$T(x)=\sum_{i=1}^{\mgn}(2/3^i)x_i$ is a bi-Lipschitz homeomorphism.
Since $(2^{\nn},e)$ is an ultrametric space, 
Proposition \ref{prop:ult} tells us that $(\Gamma,d_{\Gamma})$ is uniformly disconnected.
\end{exam}

\begin{rmk}\label{rmk:c}
More precisely, we see the following:
\begin{enumerate}
\item For all $x,y\in2^{\nn}$, we have
\[
\frac{3}{2}e(x,y)\le d_{\Gamma}(T(x),T(y))\le \frac{5}{2}e(x,y).
\]
Indeed, if we put $n=\min\{k\in \nn\mid x_k\neq y_k\}$, then 
\begin{align*}
d_{\Gamma}(T(x),T(y))&=\left|\sum_{i=1}^{\infty}\frac{2x_i}{3^i}-\sum_{i=1}^{\infty}\frac{2y_i}{3^i}\right|\le 
\frac{2}{3^n}+\sum_{i=n+1}^{\infty} \frac{2}{3^i}=\frac{5}{2}e(x,y),\\
d_{\Gamma}(T(x),T(y))&=\left|\sum_{i=1}^{\infty}\frac{2x_i}{3^i}-\sum_{i=1}^{\infty}\frac{2y_i}{3^i}\right|
\ge \frac{2}{3^n}-\sum_{i=n+1}^{\infty} \frac{2}{3^i}=\frac{3}{2}e(x,y).
\end{align*}
\item For every $\delta\in (0,3/5)$, the space $(\Gamma,d_{\Gamma})$ is $\delta$-uniformly disconnected. 
Indeed, for each $\delta$-chain $x$ in $(\Gamma,d_{\Gamma})$,
 the sequence $T\circ x$ is a $(5\delta/3)$-chain in $(2^{\nn},e)$.
 \end{enumerate}
\end{rmk}

%%%%%%%%%%%%%%%%%%%%%%%%%%%%%%%%%%%%%%%%%%%%%%%%%%%%%%%%
\subsection{Uniform Perfectness}
For $\rho\in (0,1]$, 
a metric space $(X,d)$ is said to be \emph{$\rho$-uniformly perfect} if 
for every $x\in X$, and for every $r\in (0,\di(X))$, 
the set $B(x,r)\setminus U(x,\rho r)$ is non-empty.
A metric space is \emph{uniformly perfect} if 
it is $\rho$-uniformly perfect for some $\rho$.
\par
From the definition we derive the following:
\begin{lem}\label{lem:ext}
Let $(X,d)$ be a $\rho$-uniformly perfect bounded metric space.
For $\lambda\in (1,\mgn)$, put $\mu=\rho/(2\lambda)$. 
Then for every $x\in X$ and for every $r\in (0,\lambda \di (X))$, 
the set $B(x,r)\setminus U(x,\mu r)$ is non-empty, 
and $B(x,\mu r)$ is a proper subset of $X$.
\end{lem}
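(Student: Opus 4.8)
The plan is to unwind the definitions and reduce the statement about radii up to $\lambda\operatorname{diam}(X)$ to the known statement about radii below $\operatorname{diam}(X)$, at the cost of shrinking the constant $\rho$ by a factor depending on $\lambda$. First I would dispose of the degenerate case: if $\operatorname{diam}(X)=0$ then $(0,\lambda\operatorname{diam}(X))$ is empty and there is nothing to prove, so assume $\operatorname{diam}(X)>0$. Then I would split the range $r\in(0,\lambda\operatorname{diam}(X))$ into the two subcases $r<\operatorname{diam}(X)$ and $\operatorname{diam}(X)\le r<\lambda\operatorname{diam}(X)$.

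For the first subcase, $\rho$-uniform perfectness directly gives a point $y\in B(x,r)\setminus U(x,\rho r)$. Since $\mu=\rho/(2\lambda)<\rho$ (as $\lambda>1$), we have $U(x,\mu r)\subset U(x,\rho r)$, hence $y\in B(x,r)\setminus U(x,\mu r)$, which is therefore non-empty. For the second subcase, where $r\ge\operatorname{diam}(X)$, I would instead apply the defining property of uniform perfectness at a radius safely below $\operatorname{diam}(X)$: choose some $s$ with, say, $s=\operatorname{diam}(X)/2\in(0,\operatorname{diam}(X))$ (using $\operatorname{diam}(X)>0$), obtaining $y\in B(x,s)\setminus U(x,\rho s)$. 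Since $s\le r$ we get $y\in B(x,r)$, and since $\rho s=\rho\operatorname{diam}(X)/2>\rho r/(2\lambda)=\mu r$ (using $r<\lambda\operatorname{diam}(X)$), we get $y\notin U(x,\mu r)$, so again $B(x,r)\setminus U(x,\mu r)$ is non-empty. This handles the first conclusion in both cases.

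For the second conclusion, that $B(x,\mu r)$ is a proper subset of $X$: I would observe that the point $y$ produced above satisfies $d(x,y)\ge\mu r$ in the first subcase and $d(x,y)\ge\rho s>\mu r$ in the second, so in either case $y\notin B(x,\mu r)$ (in the borderline first case one should instead note $d(x,y)\ge\rho r>\mu r$ since $y\notin U(x,\rho r)$ and $\rho r>\mu r$), giving $B(x,\mu r)\neq X$. Alternatively, and more uniformly, I would note that $\mu r<\rho r/2\le r/2<\operatorname{diam}(X)$ whenever $r<\operatorname{diam}(X)$, and $\mu r=\rho r/(2\lambda)<\rho\operatorname{diam}(X)/2\le\operatorname{diam}(X)/2<\operatorname{diam}(X)$ whenever $r<\lambda\operatorname{diam}(X)$; so in all cases $\mu r<\operatorname{diam}(X)$, and a ball of radius strictly less than the diameter cannot be all of $X$ (pick two points realizing a distance exceeding $\mu r$ — possible since $\operatorname{diam}(X)>\mu r$ — and at least one of them lies outside $B(x,\mu r)$).

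I do not expect any serious obstacle here; the only point requiring a little care is the bookkeeping of the constants and the strict-versus-nonstrict inequalities at the boundary $r=\operatorname{diam}(X)$, which is precisely why the factor $2$ is built into the definition $\mu=\rho/(2\lambda)$ rather than $\mu=\rho/\lambda$. The intended use of the lemma — to obtain separated points even at scales slightly larger than the diameter, which is convenient when gluing or rescaling spaces — suggests that keeping the two conclusions stated separately (non-emptiness of the annulus, and properness of the inner ball) is deliberate, so I would present both explicitly even though the second follows quickly from the first.
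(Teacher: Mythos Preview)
Your argument is correct and follows essentially the same plan as the paper: split into a small-radius case where $\rho$-uniform perfectness applies directly, and a large-radius case handled by a diameter estimate. The paper splits according to whether $B(x,r)=X$ rather than whether $r<\di(X)$, but the content is the same and equally short.

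One minor slip to flag in your ``alternative'' argument for properness: the assertion that a closed ball of radius strictly less than $\di(X)$ cannot be all of $X$ is false in general (take $X=\{0,1,2\}\subset\rr$, $x=1$, radius $1.5$). What is actually needed---and what your own inequality chain already gives---is $\mu r<\di(X)/2$, so that $\di(B(x,\mu r))\le 2\mu r<\di(X)$ forces $B(x,\mu r)\neq X$; this is exactly the estimate the paper uses. Your primary argument for properness, via the point $y$ with $d(x,y)\ge\rho r>\mu r$ (first subcase) or $d(x,y)\ge\rho s>\mu r$ (second subcase), is correct as written and already suffices.
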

\begin{proof}
Assume first that $B(x,r)$ is a proper subset of $X$. This implies $r<\di(X)$. Since $(X,d)$ is $\rho$-uniformly perfect, 
it is also $\mu$-uniformly perfect. Hence $B(x,r)\setminus U(x,\mu r)$ is non-empty. 
Assume second that $B(x,r)= X$. By the definition of $\mu$, 
we have $\di(B(x,\mu r))<\di(X)$. Thus $B(x,\mu r)$ is a proper subset of $X$.
\end{proof}
\begin{exam}
The Cantor space $(2^{\nn},e)$ mentioned in Example \ref{exm:sym}  is uniformly perfect 
(see e.g., \cite{DS}).
The middle-third Cantor set $(\Gamma ,d_{\Gamma})$ is also uniformly perfect.
Indeed, $(2^{\nn},e)$ and $(\Gamma,d_{\Gamma})$ are bi-Lipschitz equivalent to each other.
\end{exam}
In what follows, we will use the following observation:
\begin{lem}\label{lem:Cantor}
The middle-third Cantor set $(\Gamma,d_{\Gamma})$ is $(1/5)$-uniformly perfect.
\end{lem}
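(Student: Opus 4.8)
The plan is to argue directly from the ternary digit structure of $\Gamma$, rather than transporting the property from the symbolic model $(2^{\nn},e)$. Since $\di(\Gamma,d_{\Gamma})=1$, the claim amounts to showing that for every $x\in\Gamma$ and every $r\in(0,1)$ the set $B(x,r)\setminus U(x,r/5)$ is non-empty. The one structural fact I would use is that every $x\in\Gamma$ has a \emph{unique} expansion $x=\sum_{i=1}^{\mgn}a_i3^{-i}$ with each $a_i\in\{0,2\}$; consequently, if $y$ is obtained from $x$ by replacing the single digit $a_n$ by $2-a_n$, then $y\in\Gamma$ and $d_{\Gamma}(x,y)=2\cdot 3^{-n}$ exactly, since the two expansions differ only in the $n$-th place and there is no tail contribution. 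So the whole problem reduces to choosing a level $n$ with $2\cdot 3^{-n}\in[r/5,r]$.

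To produce such an $n$, first I would take the integer $n$ determined by $2\cdot 3^{-n}\le r<2\cdot 3^{-n+1}$, which exists and is unique because the intervals $[2\cdot 3^{-k},2\cdot 3^{-k+1})$, $k\in\zz$, partition $(0,\mgn)$. From $2\cdot 3^{-n}\le r<1$ we get $3^{-n}<1/2$, hence $n\ge 1$, so the digit-flip above is legitimate. Moreover $2\cdot 3^{-n}=\tfrac13(2\cdot 3^{-n+1})>r/3\ge r/5$, so indeed $r/5< 2\cdot 3^{-n}\le r$. Taking $y$ to be $x$ with its $n$-th ternary digit flipped then gives $y\in\Gamma$ with $d_{\Gamma}(x,y)=2\cdot 3^{-n}$, whence $y\in B(x,r)\setminus U(x,r/5)$, and the lemma follows. (The same computation shows a little more, namely that $(\Gamma,d_{\Gamma})$ is $(1/3)$-uniformly perfect; only the bound $1/5$ is needed in the sequel.)

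I do not expect a genuine obstacle here: the argument is an elementary estimate. The only points requiring care are (i) the uniqueness of the $\{0,2\}$-ternary representation, which is what makes the displacement under a single digit flip equal to $2\cdot 3^{-n}$ on the nose, and (ii) the bookkeeping that forces the chosen level to satisfy $n\ge 1$, so that the flipped sequence is again a point of $\Gamma$ and lies in the prescribed annulus. As an alternative, one could deduce the statement from the bi-Lipschitz equivalence $T\colon(2^{\nn},e)\to(\Gamma,d_{\Gamma})$ of Example \ref{exm:t} together with the uniform perfectness of $(2^{\nn},e)$, but each use of the bi-Lipschitz constant degrades $\rho$, so the direct digit argument is both cleaner and sharper.
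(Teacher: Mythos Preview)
Your argument is correct. Both your proof and the paper's rest on the same underlying idea---choose a level $n$ so that a companion point differing from $x$ at the $n$-th ternary place lands in the target annulus---but the executions differ. The paper takes precisely the alternative route you mention at the end: it works through the bi-Lipschitz map $T\colon(2^{\nn},e)\to(\Gamma,d_{\Gamma})$ of Example~\ref{exm:t}, selects $n$ with $\tfrac{5}{2}3^{-n}\le r<\tfrac{5}{2}3^{-n+1}$, picks any $b$ whose symbolic preimage first differs from $T^{-1}(a)$ at index $n$, and then uses the two-sided Lipschitz bounds $\tfrac{3}{2}3^{-n}\le d_{\Gamma}(a,b)\le\tfrac{5}{2}3^{-n}$ to place $b$ in the annulus. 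Your single-digit flip computes $d_{\Gamma}(x,y)=2\cdot3^{-n}$ exactly, which avoids the bi-Lipschitz loss and indeed gives the sharper $(1/3)$-uniform perfectness you note; the paper's route, by contrast, illustrates how uniform perfectness transports along bi-Lipschitz equivalences with explicit control on the constant, which is closer in spirit to the transfer arguments used elsewhere in the paper.
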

\begin{proof}
In Example \ref{exm:t}, 
we already observe that $(2^{\nn},e)$ and $(\Gamma, d_{\Gamma})$  are bi-Lipschitz equivalent 
through 
the ternary corresponding map $T:2^{\nn}\to \Gamma$. 
For all $x,y\in2^{\nn}$ we have
\begin{equation}\label{eq:lip}
\frac{3}{2}e(x,y)\le d_{\Gamma}(T(x),T(y))\le \frac{5}{2}e(x,y)
\end{equation}
(see Remark \ref{rmk:c}).
Take $a\in \Gamma $ and $r\in (0,\di(\Gamma,d_{\Gamma}))$. 
Choose $n\in \nn$ with
\[
\frac{5}{2}3^{-n}\le r<\frac{5}{2}3^{-n+1}.
\]
Since the map $T$ is homeomorphic, we can find a point $b\in \Gamma$ 
such that
\[
n=\min\{\,i\in \nn\mid (T^{-1}(a))_i\neq (T^{-1}(b))_i\,\}. 
\]
By the right hand side of (\ref{eq:lip}), we have
\[
d_{\Gamma}(a,b)\le \frac{5}{2}e(T^{-1}(a),T^{-1}(b))=\frac{5}{2}3^{-n}\le r.
\]
Hence $b\in B(a,r)$. 
By the left hand side of (\ref{eq:lip}), we have
\[
\frac{1}{5}r<\frac{3}{2}3^{-n}=\frac{3}{2}e(T^{-1}(a),T^{-1}(b))\le d_{\Gamma}(a,b).
\]
Hence $b\not\in U(a,r/5)$. 
Thus the set $B(a,r)\setminus U(a,r/5)$ is non-empty.
\end{proof}

%%%%%%%%%%%%%%%%%%%%%%%%%%%%%%%%%%%%%%%%%%%%%
\subsection{Product of Metric Spaces}\label{sub:prod}
For two metric spaces $(X,d_X)$ and $(Y,d_Y)$, 
we denote by $d_X\times d_Y$ the $\ell^{\mgn}$-product metric on 
$X\times Y$ defined as $d_X\times d_Y=\max\{d_X,d_Y\}$.\par
The following seems to be well-known:

\begin{lem}\label{lem:pd}
Let $(X,d_X)$ and $(Y,d_Y)$ be metric spaces. Then $(X,d_X)$ and $(Y,d_Y)$ are doubling if and only if 
$(X\times Y,d_X\times d_Y)$ is doubling.
\end{lem}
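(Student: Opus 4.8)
The plan is to prove both implications directly from the covering definition of the doubling property, exploiting the fact that the $\ell^\infty$-product metric makes balls into exact products of balls.

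First suppose $(X,d_X)$ and $(Y,d_Y)$ are doubling, say $N_X$-doubling and $N_Y$-doubling respectively. The key observation is that for the metric $d=d_X\times d_Y$, a ball satisfies $B_{X\times Y}((x,y),r)=B_X(x,r)\times B_Y(y,r)$, since $\max\{d_X,d_Y\}\le r$ exactly when both coordinates are within $r$. Cover $B_X(x,r)$ by at most $N_X$ balls of radius $r/2$ in $X$ and $B_Y(y,r)$ by at most $N_Y$ balls of radius $r/2$ in $Y$; then the $N_XN_Y$ products of these balls cover $B_X(x,r)\times B_Y(y,r)$, and each such product is precisely a ball of radius $r/2$ in the product metric. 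Hence $(X\times Y,d)$ is $N_XN_Y$-doubling.

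For the converse, suppose $(X\times Y,d)$ is doubling. The doubling property is hereditary (every subspace of an $N$-doubling space is $N$-doubling, as noted before Lemma~\ref{lem:separated} in the excerpt), so it suffices to realize $X$ and $Y$ as subspaces of $X\times Y$ via isometric embeddings. Fixing a basepoint $y_0\in Y$, the map $x\mapsto (x,y_0)$ is an isometry from $(X,d_X)$ onto the slice $X\times\{y_0\}\subset X\times Y$, because $d((x,y_0),(x',y_0))=\max\{d_X(x,x'),0\}=d_X(x,x')$; similarly for $Y$ using a basepoint $x_0\in X$. Thus $X$ and $Y$ are doubling. (If $X$ or $Y$ is empty the statement is vacuous in the appropriate direction, so we may assume both are nonempty to pick the basepoints.)

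I do not expect a serious obstacle here; the only point requiring a little care is the identity $B_{X\times Y}((x,y),r)=B_X(x,r)\times B_Y(y,r)$, which is exactly what the $\ell^\infty$ choice of product metric buys us — with an $\ell^2$- or $\ell^1$-type product metric one would only get comparability of balls up to a constant and would need to absorb that constant into the doubling constant. Since the paper fixes $d_X\times d_Y=\max\{d_X,d_Y\}$, the argument is clean, and the hereditary property quoted above handles the reverse direction with no computation.
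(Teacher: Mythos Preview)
Your argument is correct and is the standard one; note, however, that the paper does not actually supply a proof of this lemma---it is introduced with ``The following seems to be well-known'' and then stated without proof. So there is nothing to compare against beyond observing that your proposal fills in exactly the routine verification the paper omits.
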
\par
On the uniform disconnectedness, we have:

\begin{lem}\label{lem:pud}
Let $(X,d_X)$ and $(Y,d_Y)$ be metric spaces. 
Then $(X,d_X)$ and $(Y,d_Y)$ are uniformly disconnected 
if and only if $(X\times Y,d_X\times d_Y)$ is uniformly disconnected. 
\end{lem}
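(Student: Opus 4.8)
The plan is to prove both implications of Lemma~\ref{lem:pud} directly from the definition of uniform disconnectedness, exploiting the fact that the $\ell^\infty$-product metric satisfies $d_X\times d_Y=\max\{d_X,d_Y\}$, so that coordinate distances are dominated by the product distance while the product distance itself equals one of the two coordinate distances.

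\emph{The easy direction.} Suppose $(X\times Y,d_X\times d_Y)$ is $\delta$-uniformly disconnected for some $\delta\in(0,1)$. Fix a basepoint $y_0\in Y$ and consider the isometric embedding $x\mapsto(x,y_0)$ of $X$ into $X\times Y$ (it is isometric precisely because the metric is $\max\{d_X,d_Y\}$ and the $Y$-coordinate is constant). Since uniform disconnectedness is hereditary, $(X,d_X)$ is $\delta$-uniformly disconnected; symmetrically $(Y,d_Y)$ is. In fact I would simply invoke the hereditary property already noted in the subsection on uniform disconnectedness, after observing that both factors embed isometrically into the product.

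\emph{The main direction.} Suppose $(X,d_X)$ is $\delta_X$-uniformly disconnected and $(Y,d_Y)$ is $\delta_Y$-uniformly disconnected; I want to produce a single $\delta$ for which the product is $\delta$-uniformly disconnected, and I expect a choice like $\delta=\min\{\delta_X,\delta_Y\}$ (or perhaps a slightly smaller explicit constant) to work. Given a $\delta$-chain $z:\{0,1,\dots,N\}\to X\times Y$ with $z(i)=(x(i),y(i))$, the defining inequality $d_X\times d_Y(z(i-1),z(i))\le\delta\,(d_X\times d_Y)(z(0),z(N))$ passes to each coordinate, since $d_X\le d_X\times d_Y$ and likewise for $d_Y$; thus $d_X(x(i-1),x(i))\le\delta\,(d_X\times d_Y)(z(0),z(N))$. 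The subtlety is that the right-hand side involves the \emph{product} distance of the endpoints, not $d_X(x(0),x(N))$, so $x$ need not literally be a $\delta$-chain in $X$. However, $(d_X\times d_Y)(z(0),z(N))=\max\{d_X(x(0),x(N)),d_Y(y(0),y(N))\}$ equals one of the two coordinate endpoint-distances; if it equals $d_X(x(0),x(N))$, then $x$ is an honest $\delta$-chain in $(X,d_X)$ and hence trivial by $\delta\le\delta_X$, forcing $z$ to be constant provided $y$ is also constant; if instead $d_X(x(0),x(N))<(d_X\times d_Y)(z(0),z(N))=d_Y(y(0),y(N))$, then $d_Y(y(0),y(N))>0$ and $y$ is a $\delta$-chain in $(Y,d_Y)$, which is nontrivial — a contradiction with $\delta\le\delta_Y$. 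Treating the symmetric case for $Y$, the only consistent possibility is that both endpoint-distances vanish, so $z(0)=z(N)$, and then $(d_X\times d_Y)(z(0),z(N))=0$ forces every step $d_X\times d_Y(z(i-1),z(i))=0$, i.e.\ $z$ is constant.

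I expect the bookkeeping around the case $(d_X\times d_Y)(z(0),z(N))=0$ (the degenerate chain) and the precise argument that a $\delta$-chain which is nontrivial in one coordinate yields a nontrivial chain in that factor — paying attention that ``trivial'' means constant, so one must rule out the chain having distinct endpoints in exactly one coordinate — to be the only place requiring care; the rest is a routine unwinding of definitions. I would state the conclusion with $\delta=\min\{\delta_X,\delta_Y\}$.
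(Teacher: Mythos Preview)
Your proof is correct, but it takes a different route from the paper. The paper proves the forward direction by invoking Proposition~\ref{prop:ult}: each factor is bi-Lipschitz equivalent to an ultrametric space, the $\ell^\infty$-product of two ultrametrics is again an ultrametric, and hence the product is uniformly disconnected. Your argument instead works directly with the $\delta$-chain definition and never touches the ultrametric characterization. This is more elementary and self-contained, and it yields the sharp quantitative statement that the product is $\min\{\delta_X,\delta_Y\}$-uniformly disconnected, whereas the paper's route through bi-Lipschitz equivalences would give a worse constant if one traced it through. Your presentation of the case split is a little tangled (the phrase ``forcing $z$ to be constant provided $y$ is also constant'' leaves a loose end that you only tie up a sentence later), but the logic is sound: once the dominant coordinate chain is forced to be trivial, the product endpoint distance vanishes, and the whole chain collapses. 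The easy direction is identical in both proofs.
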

\begin{proof}
Since the uniform disconnectedness is hereditary, 
we see that if $(X\times Y,d_X\times d_Y)$ is uniformly disconnected, 
then so are $(X,d_X)$ and $(Y,d_Y)$.
Note that for any two ultrametric spaces the product is an ultrametric space.
Therefore Proposition \ref{prop:ult} leads to that if $(X,d_X)$ and $(Y,d_Y)$ are uniformly disconnected, 
then the $\ell^{\mgn}$-product metric space $(X\times Y,d_X\times d_Y)$ is uniformly disconnected.
\end{proof}

On the other hand, 
on the uniform perfectness, 
we have:
\begin{lem}\label{lem:pup}
Let $(X,d_X)$ and $(Y,d_Y)$ be bounded metric spaces. 
Assume that either $(X,d_X)$ or $(Y,d_Y)$ is uniformly perfect. 
Then $(X\times Y,d_X\times d_Y)$ is uniformly perfect.
\end{lem}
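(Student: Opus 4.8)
The plan is to prove the statement directly from the definition of $\rho$-uniform perfectness, transferring the perfectness of one factor to the product.

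First I would assume without loss of generality that $(X,d_X)$ is $\rho$-uniformly perfect for some $\rho\in(0,1]$. Let $d=d_X\times d_Y=\max\{d_X,d_Y\}$ be the product metric. Fix a point $(x,y)\in X\times Y$ and a radius $r\in(0,\di(X\times Y,d))$; I want to produce a point in $B((x,y),r)\setminus U((x,y),\rho r)$. Note $\di(X\times Y,d)=\max\{\di(X,d_X),\di(Y,d_Y)\}$. The main case is when $r<\di(X,d_X)$: here I apply $\rho$-uniform perfectness of $(X,d_X)$ directly to $x$ and $r$ to get a point $x'\in X$ with $d_X(x,x')\le r$ and $d_X(x,x')\ge\rho r$; then the point $(x',y)$ satisfies $d((x,y),(x',y))=d_X(x,x')$, so it lies in $B((x,y),r)$ but not in $U((x,y),\rho r)$, as desired.

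The remaining case is $\di(X,d_X)\le r<\di(Y,d_Y)$, where I cannot apply the perfectness hypothesis to the given radius directly. Here I would first observe that, since $r<\di(Y,d_Y)$, one can pick $y'\in Y$ with $d_Y(y,y')$ as close to $r$ as desired — more precisely, by definition of diameter there exist points in $Y$ at distance arbitrarily close to $\di(Y,d_Y)>r$, so one can choose $y'$ with, say, $\rho r\le d_Y(y,y')\le r$ provided $\di(Y,d_Y)>r$; this requires a small argument since the diameter need not be attained, but choosing $y'$ with $d_Y(y,y')>\rho r$ and, if necessary, replacing it by a point on a chain toward it is not needed here — instead, I would simply handle it by noting that if $\di(X,d_X)>0$ we may reduce to the previous paragraph by a scaling-type argument, and if $\di(X,d_X)=0$ then $X$ is a single point and the claim reduces to uniform perfectness of $Y$, contradicting the case assumption $r<\di(Y,d_Y)$ being the only obstruction. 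The cleanest route: pick any $r_0$ with $\max\{\di(X,d_X)/2,\rho r\}<r_0<r$ is impossible when $\di(X,d_X)$ is comparable to $r$, so instead observe directly that since $r<\di(Y,d_Y)$ there is $y'\in Y$ with $d_Y(y,y')>\rho r$; if additionally $d_Y(y,y')\le r$ we are done with $(x,y')$, and otherwise we reduce $r$: the set of achievable distances $d_Y(y,\cdot)$ is an interval-like dense subset of $[0,\di(Y,d_Y))$ in no guaranteed sense, so I would instead invoke Lemma~\ref{lem:ext}-style reasoning on $Y$ if $Y$ is itself uniformly perfect — but here it need not be.

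The honest resolution, and the step I expect to be the main obstacle, is the case $\di(X,d_X)\le r<\di(Y,d_Y)$: I would handle it by choosing $\lambda>1$ with $\lambda\,\di(X,d_X)>r$ (possible when $\di(X,d_X)>0$), applying Lemma~\ref{lem:ext} to the $\rho$-uniformly perfect bounded space $(X,d_X)$ to get that $B(x,r)\setminus U(x,\mu r)$ is non-empty for $\mu=\rho/(2\lambda)$, and then noting $(x',y)$ with this $x'$ lands in $B((x,y),r)\setminus U((x,y),\mu r)$; combining the two cases, $(X\times Y,d)$ is $\min\{\rho,\mu\}$-uniformly perfect. When $\di(X,d_X)=0$, $X$ is a point and $(X\times Y,d)$ is isometric to $(Y,d_Y)$, so if $(Y,d_Y)$ is the uniformly perfect factor we are done, and if instead $(X,d_X)$ was the uniformly perfect factor then $\di(X\times Y,d)=\di(Y,d_Y)$ and there is nothing with $0<r<\di(X\times Y,d)$ unless $\di(Y,d_Y)>0$, in which case we again just use $Y$; one checks a single-point space is vacuously $\rho$-uniformly perfect, so this degenerate situation is fine. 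Thus the uniformity constant is $\min\{\rho,\rho/(2\lambda)\}=\rho/(2\lambda)$ for any fixed admissible $\lambda$, completing the proof.
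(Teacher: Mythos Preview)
Your proposal contains the right core idea --- pick a witness in the uniformly perfect factor $X$ and keep the $Y$-coordinate fixed --- but there is a genuine gap in the final paragraph. You choose $\lambda>1$ with $\lambda\,\di(X,d_X)>r$ \emph{after} the radius $r$ is given, so $\mu=\rho/(2\lambda)$ depends on $r$; the conclusion that $(X\times Y,d)$ is $\min\{\rho,\mu\}$-uniformly perfect is then meaningless, since uniform perfectness requires one constant valid for all radii. The phrase ``for any fixed admissible $\lambda$'' does not repair this, because ``admissible'' in your setup means $\lambda\,\di(X,d_X)>r$, which ties $\lambda$ to $r$. The fix is immediate and is precisely what the paper does: since both spaces are bounded, choose once and for all some $\lambda>1$ with $\lambda\,\di(X,d_X)\ge\di(X\times Y,d)$ (this needs $\di(X,d_X)>0$); then Lemma~\ref{lem:ext} yields a single $\mu\in(0,1)$ with $B(x,r)\setminus U(x,\mu r)\neq\emptyset$ for \emph{every} $r\in(0,\di(X\times Y))$, and your case split disappears. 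The paper's proof is essentially two lines: invoke Lemma~\ref{lem:ext} once, then set $z'=(x',y)$.

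Two smaller points. The second paragraph, where you try to extract a witness from $Y$, should be discarded entirely: $Y$ is not assumed uniformly perfect, so no such argument can work, and you rightly abandon it. Your handling of the degenerate case $\di(X,d_X)=0$ is also incorrect: a one-point space is indeed vacuously uniformly perfect, but then $X\times Y$ is isometric to $Y$, which need not be uniformly perfect, so ``this degenerate situation is fine'' is false. The paper's proof tacitly assumes $\di(X,d_X)>0$ as well; the lemma is only used for Cantor spaces, where this is automatic.
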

\begin{proof}
Without loss of generality, 
we may assume that $(X,d_X)$ is uniformly perfect.
By Lemma \ref{lem:ext}, 
there exists $\lambda\in (0,1)$ such that for each $x\in X$, 
and for each $r\in (0,\di(X\times Y))$, 
the subset $B(x,r)\setminus U(x,\lambda r)$ of $X$ is non-empty.
Take a point $z=(x,y)\in X\times Y$ and a number $r\in (0,\di(X\times Y))$.
Choose a point $x'\in B(x,r)\setminus U(x,\lambda r)$, and put $z'=(x',y)$. 
Then $(d_X\times d_Y)(z,z')$ is equal to $d_X(x,x')$ 
and hence it belongs to $[\lambda r,r]$.
This implies that 
the point $z'$ belongs to the subset $B(z,r)\setminus U(z,\lambda r)$ of $X\times Y$. 
\end{proof}

\begin{rmk}
In Proposition \ref{prop:nonup}, we will prove that there exist two Cantor metric spaces that are not uniformly perfect whose product metric space is uniformly perfect.
\end{rmk}

\begin{rmk}
In Lemmas \ref{lem:pd}, \ref{lem:pud} and \ref{lem:pup}, 
the $\ell^{\mgn}$-product metric $d_X\times d_Y$ can be replaced with the $\ell^p$-product metric on $X\times Y$ for any $p\in [1,\mgn)$. 
Indeed, the $\ell^{\mgn}$-product metric space $(X\times Y,d_X\times d_Y)$ is bi-Lipschitz equivalent to the $\ell^p$-product one.
\end{rmk}
%%%%%%%%%%%%%%%%%%%%%%%%%%%%%%%%%%%%%%%%%%%%%%%%%%%%%%%%%%%%%%%%%%%%%%%
\subsection{Direct Sum of Metric Spaces}
For two bounded metric spaces $(X,d_X)$ and $(Y,d_Y)$, 
we denote by $d_X\sqcup d_Y$ the metric on the disjoint union $X\sqcup Y$ defined as 
\[
(d_X\sqcup d_Y)(x,y)=
\begin{cases}
d_X(x,y) & \text{if $x,y\in X$,}\\
d_Y(x,y) & \text{if $x,y\in Y$,}\\
\max\{\di(X),\di(Y)\} & \text{otherwise.}
\end{cases}
\]
\begin{rmk}
By the Brouwer theorem \ref{Brouwer}, the direct sum of any Cantor spaces is also a Cantor space. 
\end{rmk}
From the definition of the doubling property, we have:

\begin{lem}\label{lem:sd}
Two bounded metric spaces $(X,d_X)$ and $(Y,d_Y)$ are doubling 
if and only if $(X\sqcup Y,d_X\sqcup d_Y)$ is doubling.
\end{lem}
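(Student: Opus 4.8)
The ``if'' direction is immediate: since the doubling property is hereditary, if $(X\sqcup Y, d_X\sqcup d_Y)$ is $M$-doubling, then its subspaces $(X,d_X)$ and $(Y,d_Y)$ are both $M$-doubling. So the content is in the ``only if'' direction.

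For that, suppose $(X,d_X)$ and $(Y,d_Y)$ are $N$-doubling, put $D=\max\{\di(X),\di(Y)\}$, and aim to show $(X\sqcup Y, d_X\sqcup d_Y)$ is $2N$-doubling. I would first record three elementary facts about the metric $d_X\sqcup d_Y$, all immediate from the fact that it assigns distance exactly $D$ between $X$ and $Y$: (1) $\di(X\sqcup Y)=D$; (2) for $z\in X$ and $r\in(0,\infty)$, the closed ball $B(z,r)$ meets $Y$ if and only if $r\ge D$, and in that case $B(z,r)=X\sqcup Y$; (3) if $z\in X$ and $r<D$, then $B(z,r)$ coincides with the closed ball of radius $r$ about $z$ in $(X,d_X)$, and symmetrically for balls of radius less than $D$ centered in $Y$. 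By the symmetry of the construction it suffices to cover a closed ball $B(z,r)$ with $z\in X$.

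I would then split into three cases according to the position of $r$ relative to $D$. If $r/2\ge D$, then by (1) every closed ball of radius $r/2$ equals $X\sqcup Y$, so one ball covers $B(z,r)$. If $r<D$, then by (3) the set $B(z,r)$ is a closed ball in $(X,d_X)$; cover it by $N$ closed balls of radius $r/2$ in $(X,d_X)$ using $N$-doubling, and note that, since $r/2<D$, these are exactly closed balls of radius $r/2$ in $X\sqcup Y$ by (3) again. If $D\le r<2D$, then $B(z,r)=X\sqcup Y$ by (2) while $r/2<D$; here $X$ is itself the closed ball of radius $r$ about any point of $X$ (as $\di(X)\le D\le r$), so $N$-doubling covers $X$ by $N$ closed balls of radius $r/2$ in $(X,d_X)$, hence in $X\sqcup Y$, and likewise $Y$ is covered by $N$ such balls; together $2N$ balls cover $B(z,r)$. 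This gives $2N$-doubling.

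The only genuinely delicate point is the middle regime $D\le r<2D$: the ball to be covered is all of $X\sqcup Y$, yet every radius-$r/2$ ball is contained in a single summand, so one cannot avoid covering each summand separately — the resolution is that in this regime each summand is a single ball of radius $r$, to which the hypothesis applies directly. The degenerate cases (one of $X$, $Y$ empty, or a one-point space) are trivial and can be dispatched at the outset.
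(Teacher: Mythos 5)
Your proof is correct, and it is exactly the direct verification from the definition that the paper has in mind (the paper states Lemma \ref{lem:sd} without proof, introducing it with ``From the definition of the doubling property, we have''). The case split on $r$ versus $D=\max\{\di(X),\di(Y)\}$, with the observation that in the regime $D\le r<2D$ each summand is itself a single ball of radius $r$, correctly yields the $2N$-doubling bound.
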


On the uniform disconnectedness, we also have:
\begin{lem}\label{lem:sud}
Two bounded metric spaces $(X,d_X)$ and $(Y,d_Y)$ are uniformly disconnected
if and only if $(X\sqcup Y,d_X\sqcup d_Y)$ is uniformly disconnected.
\end{lem}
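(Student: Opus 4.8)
The plan is to prove both implications, the converse being the substantive one. For the forward implication: $(X,d_X)$ and $(Y,d_Y)$ embed isometrically as subspaces of $(X\sqcup Y,d_X\sqcup d_Y)$, and since uniform disconnectedness is hereditary, any $\delta$ for which the direct sum is $\delta$-uniformly disconnected works simultaneously for each summand.

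For the converse, suppose $(X,d_X)$ is $\delta_X$-uniformly disconnected and $(Y,d_Y)$ is $\delta_Y$-uniformly disconnected; put $\delta=\min\{\delta_X,\delta_Y\}$ and $M=\max\{\di(X,d_X),\di(Y,d_Y)\}$, which is finite since both spaces are bounded. I would show that $(X\sqcup Y,d_X\sqcup d_Y)$ is $\delta$-uniformly disconnected. Take a $\delta$-chain $x:\{0,\dots,N\}\to X\sqcup Y$ and assume towards a contradiction that it is non-trivial; then $D:=(d_X\sqcup d_Y)(x(0),x(N))>0$, since $D=0$ would force every consecutive distance to vanish, making the chain constant. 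The key elementary observation is that $D\le M$ in every case: if $x(0)$ and $x(N)$ lie in the same summand then $D$ is at most the diameter of that summand, hence $\le M$; if they lie in different summands then $D=M$ by the definition of $d_X\sqcup d_Y$.

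Using this I would first rule out that the chain meets both summands: if it did, some consecutive pair $x(k-1),x(k)$ would lie in different summands, whence $(d_X\sqcup d_Y)(x(k-1),x(k))=M\ge D$, contradicting the chain inequality $(d_X\sqcup d_Y)(x(k-1),x(k))\le\delta D<D$ (note $\delta\le\delta_X<1$). Therefore all $x(i)$ lie in a single summand, say $X$; then $x$ is a $\delta$-chain in $(X,d_X)$, hence a $\delta_X$-chain there because $\delta\le\delta_X$, so it is trivial by the $\delta_X$-uniform disconnectedness of $(X,d_X)$ -- contradicting non-triviality. This completes the converse. The only point needing a moment's care is pairing the bound $D\le M$ with the strict inequality $\delta<1$, which is automatic from $\delta_X,\delta_Y\in(0,1)$; together these are exactly what forbid a non-trivial chain from crossing between the two summands, so I do not expect a genuine obstacle here.

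As an alternative matching the proof of Lemma \ref{lem:pud}, one could instead invoke Proposition \ref{prop:ult}: each summand is bi-Lipschitz equivalent to an ultrametric space, the operation $d_X\sqcup d_Y$ applied to two ultrametric spaces yields an ultrametric space (the only new triples have two points in one summand and one in the other, and then two of the three pairwise distances equal $M$, so the ultrametric inequality is immediate), and gluing two bi-Lipschitz homeomorphisms produces a bi-Lipschitz homeomorphism of the direct sums because the constant cross-summand distances are comparable up to the bi-Lipschitz constants; then the direct sum is bi-Lipschitz equivalent to an ultrametric space and Proposition \ref{prop:ult} finishes the argument. I expect the direct $\delta$-chain argument to be the cleaner route.
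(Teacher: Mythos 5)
Your proof is correct. The paper actually states Lemma \ref{lem:sud} without any proof at all (it is presented as immediate from the definitions, in the same breath as Lemmas \ref{lem:sd} and \ref{lem:sup}), so there is no argument to compare against; what you have written is a complete and valid way to fill in the omission. The forward direction via heredity is exactly right, and in the converse the two ingredients you isolate are indeed the whole content: a non-trivial $\delta$-chain has $D=d(x(0),x(N))>0$ with $D\le M=\max\{\di(X),\di(Y)\}$, and any consecutive pair straddling the two summands would have distance exactly $M\ge D>\delta D$, violating the chain condition, so the chain stays in one summand where the hypothesis applies (using that $\delta$-uniform disconnectedness for larger $\delta$ implies it for smaller $\delta$). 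Your alternative route through Proposition \ref{prop:ult} is also sound and is precisely the style of argument the paper uses for the product version, Lemma \ref{lem:pud}: there the author notes that the product of ultrametric spaces is ultrametric and transfers bi-Lipschitz equivalences, which is the exact analogue of your observation that $d_{X}\sqcup d_{Y}$ applied to ultrametric summands is again an ultrametric and that the glued map stays bi-Lipschitz because all cross-summand distances are the single constant $M$. The direct chain argument is the more elementary and self-contained of the two; the ultrametric route buys uniformity with the paper's treatment of products. The only degenerate case not addressed (and not addressed by the paper either) is $\di(X)=\di(Y)=0$, where $d_X\sqcup d_Y$ fails to be a metric; this never arises for the Cantor spaces the lemma is applied to.
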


On the uniform perfectness, by Lemma \ref{lem:ext}, we see the following:
\begin{lem}\label{lem:sup}
Two bounded metric space $(X,d_X)$ and $(Y,d_Y)$ are uniformly perfect 
if and only if $(X\sqcup Y,d_X\sqcup d_Y)$ is uniformly perfect.
\end{lem}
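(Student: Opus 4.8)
The plan is to prove the two implications separately; the non-trivial direction rests on Lemma \ref{lem:ext}. Throughout, write $D=\max\{\di(X),\di(Y)\}$, so that $\di(X\sqcup Y,d_X\sqcup d_Y)=D$. We tacitly assume $\di(X)>0$ and $\di(Y)>0$, which is the situation in all our applications.

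\emph{From the direct sum to the factors.} Suppose $(X\sqcup Y,d_X\sqcup d_Y)$ is $\rho$-uniformly perfect; I would show that $(X,d_X)$ is $\rho$-uniformly perfect, the case of $(Y,d_Y)$ being symmetric. Fix $x\in X$ and $r\in(0,\di(X))$. Since every point of $Y$ lies at distance $D\ge\di(X)>r$ from $x$ in the direct sum (and $\rho r\le r<D$), the balls $B(x,r)$ and $U(x,\rho r)$ computed in $X\sqcup Y$ coincide with those computed in $(X,d_X)$. As $r<\di(X)\le D=\di(X\sqcup Y)$, the $\rho$-uniform perfectness of $X\sqcup Y$ yields a point of $B(x,r)\setminus U(x,\rho r)$, and this point lies in $X$. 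Hence $(X,d_X)$ is $\rho$-uniformly perfect. (This direction does not use Lemma \ref{lem:ext}.)

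\emph{From the factors to the direct sum.} Suppose $(X,d_X)$ is $\rho_X$-uniformly perfect and $(Y,d_Y)$ is $\rho_Y$-uniformly perfect. Choose $\lambda_X,\lambda_Y\in(1,\infty)$ with $\lambda_X\di(X)\ge D$ and $\lambda_Y\di(Y)\ge D$, which is possible because $\di(X),\di(Y)>0$. Put $\mu_X=\rho_X/(2\lambda_X)$, $\mu_Y=\rho_Y/(2\lambda_Y)$, and $\rho=\min\{\mu_X,\mu_Y\}$. Take $z\in X\sqcup Y$ and $r\in(0,D)$, and assume $z\in X$ (the case $z\in Y$ being symmetric). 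Since $r<D\le\lambda_X\di(X)$, Lemma \ref{lem:ext} applied to $(X,d_X)$ provides a point $w\in X$ in $B(z,r)\setminus U(z,\mu_X r)$ with respect to $d_X$. Because $z$ and $w$ both lie in $X$, their distance in the direct sum equals $d_X(z,w)$, so $w\in B(z,r)\setminus U(z,\rho r)$ in $(X\sqcup Y,d_X\sqcup d_Y)$. Therefore $X\sqcup Y$ is $\rho$-uniformly perfect.

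\emph{Main obstacle.} The delicate point is the second implication: when $z\in X$ but $r\ge\di(X)$ — which occurs precisely when $\di(Y)>\di(X)$ — the closed ball $B(z,r)$ is all of $X$, and one cannot invoke $\rho_X$-uniform perfectness of $X$ directly at scale $r$. Lemma \ref{lem:ext} is tailored to exactly this: it allows radii up to $\lambda\di(X)$, so it is enough to fix $\lambda_X$ (depending on the two diameters, but not on $r$) large enough that $\lambda_X\di(X)\ge D$, which ensures that the uniform-perfectness constant $\rho$ obtained above does not depend on $r$.
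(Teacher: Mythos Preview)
Your proof is correct and follows the same approach the paper indicates: the paper merely states that the lemma follows from Lemma~\ref{lem:ext}, and your argument is exactly the natural way to carry that out. Your explicit observation that one must assume $\di(X),\di(Y)>0$ (since a one-point factor is vacuously uniformly perfect but produces an isolated point in the direct sum) is a fair caveat that the paper leaves implicit.
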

%%%%%%%%%%%%%%%%%%%%%%%%%%%%%%%%%%%%%%%%%%%%%%%%%%%%%%%%%%%

\section{Telescope Spaces}\label{sec:telescope}

In this section, we introduce the notion of the telescope spaces.

\begin{df}
We say that a triple $\mathcal{B}=(B,d_B,b)$ is a \emph{telescope base} if $(B,d_B)$ is a metric space 
homeomorphic to the one-point compactification of $\nn$, and 
if $b$ is a bijective map $b:\nn\cup\{\infty\}\to B$ 
such that  $b_{\infty}$ is a unique accumulation point of $B$.
Let $\mathcal{B}=(B,d_B, b)$ be a telescope base. For $n\in \nn$ we put
\[
R_n(\mathcal{B})=\sup\{\,r\in (0,\infty)\mid U(b_n,r)=\{b_n\}\}.
\]
Note that $R_n(\mathcal{B})$ is equal to the distance in $(B,d_B)$ from $b_n$ to $B\setminus \{b_n\}$.
\end{df}
The following example of the telescope bases will be used later.   
\begin{df}\label{def:tbr}
Define a function $r:\nn\cup\{\infty\}\to \rr$ by 
$r_i=2^{-i}$, and by $r_{\infty}=0$. 
Let 
\[
R=\{\, r_i\mid i\in \nn\cup \{\infty\}\},
\]
and let $d_R$ be the metric on $R$ induced from $\rr$. 
The triple $\mathcal{R}=(R,d_R, r)$ is a telescope base. 
Note that $R_n(\mathcal{R})=2^{-n-1}$ for each $n\in \nn$. 
\end{df}
We define the telescope spaces.
\begin{df}
Let $\mathcal{X}=\{(X_i,d_i)\}_{i\in \nn}$ be a countable family of metric spaces.  
Let $\mathcal{B}=(B,d_B,b)$ be a telescope base.  
We say that $\mathcal{P}=(\mathcal{X}, \mathcal{B})$ is a \emph{compatible pair} if 
for each $n\in \nn$ we have $\di(X_n)\le R_n(\mathcal{B})$. 
Let $\mathcal{P}=(\mathcal{X}, \mathcal{B})$ be a compatible pair. 
Put 
\[
T(\mathcal{P})=\{\infty\}\sqcup \coprod_{i\in \nn}X_i, 
\]
and define a metric $d_{\mathcal{P}}$ on $T(\mathcal{P})$ by 
\[
d_{\mathcal{P}}(x,y)=
	\begin{cases}
		d_i(x,y) & \text{if $x,y\in X_i$ for some $i$,}\\
		d_B(b_i,b_j) & \text{if $x\in X_i,y\in X_j$ for some $i\neq j$, }\\
		d_B(b_{\infty},b_i) & \text{if $x=\infty, y\in X_i$ for some $i$,}\\
		d_B(b_i,b_{\infty}) & \text{if $x\in X_i, y=\infty$ for some $i$.}
	\end{cases}
\]
We call the metric space $(T(\mathcal{P}), d_{\mathcal{P}})$  the \emph{telescope space of $\mathcal{P}$}. 
\end{df}
Notice that the compatibility of $\mathcal{P}$ guarantees the triangle inequality of the metric $d_{\mathcal{P}}$ on $T(\mathcal{P})$. 
By the compatibility, we have:
\begin{lem}\label{lem:ultra}
Let $\mathcal{P}=(\mathcal{X},\mathcal{B})$ be a compatible pair. 
If $\mathcal{X}$ and $\mathcal{B}$ consist of ultrametric spaces, 
then the telescope space $(T(\mathcal{P}),d_{\mathcal{P}})$ is an ultrametric space.
\end{lem}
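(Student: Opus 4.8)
The plan is to verify the ultrametric triangle inequality $d_{\mathcal{P}}(x,z)\le\max\{d_{\mathcal{P}}(x,y),d_{\mathcal{P}}(y,z)\}$ directly, by a case analysis according to which pieces $\{\infty\}\sqcup\coprod_i X_i$ the three points $x,y,z$ lie in. First I would record the key reduction: whenever $x$ and $z$ belong to \emph{different} pieces (i.e.\ not both in the same $X_i$, and not both equal to $\infty$), the definition of $d_{\mathcal{P}}$ gives $d_{\mathcal{P}}(x,z)=d_B(b_{k},b_{\ell})$ where $b_k,b_\ell$ are the ``index points'' in $B$ associated to the pieces containing $x$ and $z$ (with the convention that the piece $\{\infty\}$ is associated to $b_\infty$). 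So on the level of indices, $d_{\mathcal{P}}$ is literally pulled back from the ultrametric $d_B$ via the map sending each point to its index point. This is the crux of the argument.

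The case analysis then has two essentially different regimes. \emph{Case A: $x$ and $z$ lie in the same piece.} If that piece is some $X_i$, then $d_{\mathcal{P}}(x,z)=d_i(x,z)$. If $y$ is also in $X_i$, the inequality is just the ultrametric inequality for $d_i$. If $y$ is in a different piece, then $d_{\mathcal{P}}(x,y)$ and $d_{\mathcal{P}}(y,z)$ are both of the form $d_B(b_i,b_m)$ for the same index $m\neq i$, hence equal, and it suffices to check $d_i(x,z)\le d_B(b_i,b_m)$; here the compatibility hypothesis enters, since $d_i(x,z)\le\di(X_i)\le R_i(\mathcal{B})\le d_B(b_i,b_m)$ because $R_i(\mathcal{B})$ is the distance from $b_i$ to $B\setminus\{b_i\}$. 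The subcase $x=z=\infty$ is trivial, and $x=z$ with $y$ elsewhere is handled the same way using $R_\infty$—except $\infty$ need not be isolated, so one instead just notes $d_{\mathcal{P}}(x,z)=0$. \emph{Case B: $x$ and $z$ lie in different pieces.} Then $d_{\mathcal{P}}(x,z)=d_B(b_k,b_\ell)$ with $k\neq\ell$ (treating $\infty$'s index as $\infty$). If $y$ lies in the piece of $x$ or of $z$, say the piece with index $k$, then $d_{\mathcal{P}}(y,z)=d_B(b_k,b_\ell)=d_{\mathcal{P}}(x,z)$ and we are done; if $y$ lies in a third piece with index $m\notin\{k,\ell\}$, then $d_{\mathcal{P}}(x,y)=d_B(b_k,b_m)$, $d_{\mathcal{P}}(y,z)=d_B(b_m,b_\ell)$, and the ultrametric inequality for $d_B$ gives exactly what we want.

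I do not expect any genuine obstacle here; the lemma is essentially bookkeeping. The one point that requires a little care is making the index-assignment map precise and uniform (including the $\infty$ case) so that Case B reduces cleanly to the ultrametric inequality in $(B,d_B)$, and making sure the ``same piece vs.\ different piece'' split for the pair $(x,z)$ is exhaustive together with the symmetric roles of $x,y,z$. Once the notation for index points is fixed, every case is a one-line check, with compatibility $\di(X_i)\le R_i(\mathcal{B})$ used only in Case A to compare an intra-piece distance with an inter-piece distance.
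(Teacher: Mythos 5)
Your proposal is correct and complete; the paper itself omits the proof of this lemma (stating only ``By the compatibility, we have''), and your case analysis---reducing inter-piece distances to the ultrametric $d_B$ via the index map, and using compatibility $\di(X_i)\le R_i(\mathcal{B})\le d_B(b_i,b_m)$ only to compare an intra-piece distance with an inter-piece one---is exactly the intended argument.
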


By the Brouwer theorem \ref{Brouwer}, we see the following:
\begin{lem}\label{lem:tlCantor}
Let $\mathcal{P}=(\mathcal{X},\mathcal{B})$ be a compatible pair. 
If the family $\mathcal{X}$ consists of Cantor spaces, 
then $(T(\mathcal{P}),d_{\mathcal{P}})$ is also a Cantor space.
\end{lem}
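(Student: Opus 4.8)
The strategy is to verify the three hypotheses of the Brouwer theorem \ref{Brouwer} for $(T(\mathcal{P}),d_{\mathcal{P}})$: that it is compact, $0$-dimensional, and has no isolated point. Since each $X_i$ is a Cantor space, each is itself compact, $0$-dimensional, and perfect, and this is what we will leverage.

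\emph{Compactness.} First I would show every sequence in $T(\mathcal{P})$ has a convergent subsequence. Given a sequence $(z_k)$, either infinitely many terms lie in a single $X_i$ — in which case compactness of $X_i$ (a closed subset of $T(\mathcal{P})$, as one checks from the definition of $d_{\mathcal{P}}$ and the fact that $d_B(b_i,b_j)$ is bounded below for $i\neq j$) gives a convergent subsequence — or infinitely many $X_i$ are visited. In the latter case one picks indices $i_1<i_2<\cdots$ with $z_{k_m}\in X_{i_m}$; since $b_{i_m}\to b_\infty$ in $(B,d_B)$ and $d_{\mathcal{P}}(z_{k_m},\infty)=d_B(b_{i_m},b_\infty)\to 0$, the subsequence converges to $\infty$. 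Alternatively, and perhaps more cleanly, one can exhibit a continuous surjection from a compact space: the map sending $X_i$ to $b_i$ and $\infty$ to $b_\infty$ is $1$-Lipschitz onto $(B,d_B)$, and the fibers are the compact sets $X_i$ (and $\{\infty\}$); but the direct sequential argument is probably shortest.

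\emph{Zero-dimensionality.} For a point $x\in X_i$, small balls $U(x,r)$ with $r<R_i(\mathcal{B})$ lie entirely inside $X_i$ (by compatibility $\di(X_i)\le R_i(\mathcal{B})$, and the inter-component distances are all $\ge R_i(\mathcal{B})$ by definition of $R_i$); hence a clopen neighborhood base at $x$ is obtained from one in $X_i$, which exists since $X_i$ is $0$-dimensional. For the point $\infty$, note that each $X_i$ is clopen in $T(\mathcal{P})$ (it is open since it is a ball of the type just described, and closed since its complement is the union of the other clopen $X_j$'s together with a neighborhood of $\infty$); then $\{\infty\}\cup\bigcup_{i\ge n}X_i$ is clopen for each $n$, and these form a neighborhood base at $\infty$ because $d_{\mathcal{P}}(\infty,x)=d_B(b_i,b_\infty)$ for $x\in X_i$ and $b_i\to b_\infty$.

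\emph{No isolated points.} Any $x\in X_i$ fails to be isolated in $X_i$ (as $X_i$ is perfect), hence fails to be isolated in $T(\mathcal{P})$ since the metric on $X_i$ is inherited. The point $\infty$ is not isolated because every neighborhood contains some entire $X_i$ with $i$ large, which is nonempty. Assembling the three parts and invoking Theorem \ref{Brouwer} finishes the proof.

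\emph{Main obstacle.} None of the steps is deep; the one requiring a little care is the bookkeeping with the distances in $d_{\mathcal{P}}$ — specifically, confirming that the inter-component distance $d_B(b_i,b_j)$ is at least $R_i(\mathcal{B})$, so that a ball of radius less than $R_i(\mathcal{B})$ around a point of $X_i$ really does stay inside $X_i$, which is what makes $X_i$ clopen and pins down the local structure. This is immediate from the definition $R_i(\mathcal{B})=\sup\{r: U(b_i,r)=\{b_i\}\}=d_B(b_i,B\setminus\{b_i\})$, but it is the hinge on which zero-dimensionality and the compactness argument both turn, so I would state it explicitly as the first observation.
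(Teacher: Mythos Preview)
Your proposal is correct and follows exactly the approach the paper intends: the paper states the lemma with the preface ``By the Brouwer theorem \ref{Brouwer}, we see the following'' and gives no further argument, so verifying compactness, $0$-dimensionality, and perfectness of $(T(\mathcal{P}),d_{\mathcal{P}})$ is precisely what is being left to the reader. Your bookkeeping with $R_i(\mathcal{B})$ and the observation that $b_i\to b_\infty$ are the right ingredients; the paper simply suppresses all of this.
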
 
From the definitions we can derive the following, which provides a method constructing a Lipschitz map between telescope spaces. 
\begin{prop}\label{prop:tllip}
Let $\mathcal{P}=(\mathcal{X}, \mathcal{B})$ and $\mathcal{Q}=(\mathcal{Y},\mathcal{C})$ be compatible pairs of 
$\mathcal{X}=\{(X_i,d_i)\}_{i\in \nn}$ and  $\mathcal{B}=(B,d_B,b)$ 
and of $\mathcal{Y}=\{(Y_i,e_i)\}_{i\in \nn}$ and $\mathcal{C}=(C,d_C,c)$, respectively.   
Let $\{f_i:X_i\to Y_i\}_{i\in \nn}$ be a family of $M$-Lipschitz maps. 
Assume that the map $\phi:B\to C$ defined by $\phi=c\circ b^{-1}$ is also $M$-Lipschitz. 
Let $F:T(\mathcal{P})\to T(\mathcal{Q})$ be a map defined by 
\[
F(x)=
	\begin{cases}
	f_i(x) & \text{if $x\in X_i$ for some $i$,}\\
	\infty& \text{if $x=\infty$.}
	\end{cases}
\]
Then $F$ is $M$-Lipschitz.
\end{prop}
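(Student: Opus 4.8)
The plan is to verify the inequality $d_{\mathcal{Q}}(F(x),F(y))\le M\,d_{\mathcal{P}}(x,y)$ for all $x,y\in T(\mathcal{P})$ by a direct case analysis following the piecewise definition of $d_{\mathcal{P}}$. There are five cases: (i) $x,y\in X_i$ for a common $i$; (ii) $x\in X_i$, $y\in X_j$ with $i\neq j$; (iii) $x=\infty$, $y\in X_i$; (iv) $x\in X_i$, $y=\infty$; and (v) $x=y=\infty$. The structural fact that makes the analysis go through is that, by construction, $F$ maps $X_i$ into $Y_i$ and fixes $\infty$, and the pieces $Y_i$ are pairwise disjoint inside $T(\mathcal{Q})$; consequently the branch of the definition of $d_{\mathcal{Q}}$ that applies to $(F(x),F(y))$ is indexed by exactly the same data ($i$, or $i$ and $j$, or $i$ and $\infty$) as the branch of $d_{\mathcal{P}}$ applying to $(x,y)$.

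First I would dispatch case (i): here $F(x),F(y)\in Y_i$, so $d_{\mathcal{Q}}(F(x),F(y))=e_i(f_i(x),f_i(y))\le M\,d_i(x,y)=M\,d_{\mathcal{P}}(x,y)$, using that $f_i$ is $M$-Lipschitz. For case (ii), $F(x)\in Y_i$ and $F(y)\in Y_j$ with $i\neq j$, so $d_{\mathcal{Q}}(F(x),F(y))=d_C(c_i,c_j)$ and $d_{\mathcal{P}}(x,y)=d_B(b_i,b_j)$; since $\phi=c\circ b^{-1}$ sends $b_i$ to $c_i$ and $b_j$ to $c_j$, the hypothesis that $\phi$ is $M$-Lipschitz yields $d_C(c_i,c_j)\le M\,d_B(b_i,b_j)$. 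Cases (iii) and (iv) are identical to (ii) with one index replaced by $\infty$ and using $\phi(b_\infty)=c_\infty$; for instance in (iii) one gets $d_{\mathcal{Q}}(F(x),F(y))=d_C(c_\infty,c_i)\le M\,d_B(b_\infty,b_i)=M\,d_{\mathcal{P}}(x,y)$. Finally, case (v) is trivial since $F(\infty)=\infty$.

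I do not expect a real obstacle: every case collapses to a single invocation of one of the two Lipschitz hypotheses (or to triviality). The only point deserving attention is the bookkeeping observation highlighted above --- that $F$ preserves the block decomposition of the telescope spaces --- so that one reads off the correct branch of the piecewise metric on the target in each case. Note that, unlike many statements about telescope spaces, no use of the compatibility condition $\di(X_n)\le R_n(\mathcal{B})$ is needed here; it has already been absorbed into the fact that $d_{\mathcal{P}}$ and $d_{\mathcal{Q}}$ are genuine metrics.
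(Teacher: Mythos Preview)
Your proof is correct and is precisely the direct verification the paper has in mind; the paper itself does not spell out a proof, remarking only that the statement can be derived from the definitions.
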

Furthermore, we have: 
\begin{cor}\label{cor:lipschitz}
Under the same setting as in Proposition \ref{prop:tllip}, 
if all the maps $f_i:X_i\to Y_i$ and $\phi$ are $M$-bi-Lipschitz, 
then the map $F$ is $M$-bi-Lipschitz. 
\end{cor}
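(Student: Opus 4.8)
The plan is to establish Corollary~\ref{cor:lipschitz} by leveraging Proposition~\ref{prop:tllip} twice, which reduces the work to showing that the inverse map is also a telescope map of the type handled by that proposition. First I would observe that if each $f_i:X_i\to Y_i$ and $\phi:B\to C$ are $M$-bi-Lipschitz homeomorphisms, then in particular each $f_i$ is a bijection onto $Y_i$ and $\phi$ is a bijection onto $C$; hence $F:T(\mathcal{P})\to T(\mathcal{Q})$, defined componentwise by the $f_i$ and fixing the point at infinity, is a bijection. Its inverse $G=F^{-1}:T(\mathcal{Q})\to T(\mathcal{P})$ is given by $G(y)=f_i^{-1}(y)$ for $y\in Y_i$ and $G(\infty)=\infty$. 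The key point is that $G$ has exactly the form required to apply Proposition~\ref{prop:tllip} with the roles of $\mathcal{P}$ and $\mathcal{Q}$ exchanged: the component maps are $f_i^{-1}:Y_i\to X_i$, which are $M$-Lipschitz since each $f_i$ is $M$-bi-Lipschitz, and the base map $C\to B$ is $\psi=b\circ c^{-1}=\phi^{-1}$, which is $M$-Lipschitz for the same reason.

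Next I would apply Proposition~\ref{prop:tllip} directly to the original data $\{f_i\}$ and $\phi$ to conclude that $F$ is $M$-Lipschitz, i.e.\ $d_{\mathcal{Q}}(F(x),F(x'))\le M\,d_{\mathcal{P}}(x,x')$ for all $x,x'\in T(\mathcal{P})$; and then apply it again to the inverted data $\{f_i^{-1}\}$ and $\psi$ to conclude that $G=F^{-1}$ is $M$-Lipschitz, i.e.\ $d_{\mathcal{P}}(G(y),G(y'))\le M\,d_{\mathcal{Q}}(y,y')$ for all $y,y'\in T(\mathcal{Q})$. Substituting $y=F(x)$, $y'=F(x')$ into the latter inequality gives $d_{\mathcal{P}}(x,x')\le M\,d_{\mathcal{Q}}(F(x),F(x'))$, which combined with the first inequality yields
\[
M^{-1}d_{\mathcal{P}}(x,x')\le d_{\mathcal{Q}}(F(x),F(x'))\le M\,d_{\mathcal{P}}(x,x')
\]
for all $x,x'\in T(\mathcal{P})$; that is, $F$ is $M$-bi-Lipschitz.

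I do not anticipate a genuine obstacle here, since the statement is essentially bookkeeping around the symmetric structure of the telescope construction. The one point that deserves a word of care is that Proposition~\ref{prop:tllip} is stated for the base map $\phi=c\circ b^{-1}$, so when inverting one must check that the corresponding base map for $G$ is indeed $b\circ c^{-1}$ — which is exactly $\phi^{-1}$ — and that $\phi^{-1}$ is $M$-Lipschitz; this is immediate from the two-sided estimate defining $M$-bi-Lipschitz for $\phi$. A second, even more minor, point is verifying that $\mathcal{Q}$ with the inverted component maps still forms a compatible pair when viewed as the source, but this is already part of the hypothesis (both $\mathcal{P}$ and $\mathcal{Q}$ are assumed compatible), so nothing new is required. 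Hence the proof is simply: apply Proposition~\ref{prop:tllip} to both $F$ and $F^{-1}$, and combine the two Lipschitz estimates.
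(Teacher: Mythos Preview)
Your proof is correct and is exactly the intended argument: the paper gives no proof of this corollary, treating it as an immediate consequence of Proposition~\ref{prop:tllip}, and your two applications of that proposition (to $F$ and to $F^{-1}$) are precisely how one unpacks this. The minor checks you flag (that $\phi^{-1}=b\circ c^{-1}$ and that both pairs are already assumed compatible) are the only details to verify, and you handle them correctly.
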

%%%%%%%%%%%%%%%%%%%%%%%%%%%%%%%%%%%%%%%%
%%%%%%%%%%%%%%%%%%%
On the doubling property, we have:
\begin{prop}\label{prop:tld}
Let $\mathcal{P}=(\mathcal{X},\mathcal{B})$ be a compatible pair of 
a family $\mathcal{X}=\{(X_i,d_i)\}_{i\in \nn}$ and a telescope base $\mathcal{B}=(B,d_B,b)$ such that 
\begin{enumerate}
\item there exists $N\in \nn$ for which  each $(X_i,d_i)$ is $N$-doubling;
\item $(B,d_B)$ is doubling.
\end{enumerate}
Then the telescope space $(T(\mathcal{P}),d_{\mathcal{P}})$ is doubling. 
\end{prop}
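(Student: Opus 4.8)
The plan is to verify the doubling property for $(T(\mathcal{P}), d_{\mathcal{P}})$ directly from the characterization in Lemma \ref{lem:separated}: it suffices to produce a single constant $M \in \nn$ bounding the cardinality of every $(r/2)$-separated set contained in a closed ball $B(x,r)$, uniformly in $x \in T(\mathcal{P})$ and $r \in (0,\infty)$. The key structural observation is that balls in a telescope space come in two flavours. If the radius $r$ is small relative to $R_i(\mathcal{B})$ for the index $i$ with $x \in X_i$, then $B(x,r)$ sits entirely inside the single factor $X_i$, and any separated set there has cardinality at most $N$ by hypothesis (1) and heredity of the doubling property. If $r$ is large, then the ball is a union of entire factors $X_j$ together with possibly the point $\infty$, and its ``shape'' is governed by a corresponding ball in the telescope base $B$.

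First I would set up notation: write $B^{\mathcal{P}}(x,r)$ and $B^{B}(\cdot,\cdot)$ for balls in the two spaces, and note that by the definition of $d_{\mathcal{P}}$, for $x \in X_i$ and $y \in X_j$ with $i \neq j$ one has $d_{\mathcal{P}}(x,y) = d_B(b_i,b_j)$, while $d_{\mathcal{P}}(x,\infty) = d_B(b_i,b_\infty)$; thus distances between distinct factors are ``read off'' from $B$. Since $(B,d_B)$ is doubling, by Lemma \ref{lem:separated} there is $M_B \in \nn$ bounding the cardinality of every $(\rho/2)$-separated set in every ball $B^B(p,\rho)$. Let $M_1 = M_B$ and $M_2 = N$.

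Next, the dichotomy. Fix $x \in T(\mathcal{P})$ (the case $x = \infty$ is handled similarly, replacing $b_i$ by $b_\infty$) and $r > 0$, say $x \in X_i$. Case (a): if $r \le R_i(\mathcal{B})$, I claim $B^{\mathcal{P}}(x,r) \subseteq X_i$; indeed any point in another factor $X_j$ or the point $\infty$ is at distance $d_B(b_i, \cdot) \ge R_i(\mathcal{B}) \ge r$ — one must be slightly careful about whether the inequality is strict, but shrinking to open balls or using that a separated set of more than one point forces positive radius resolves this, and in any event a $(r/2)$-separated set in $B^{\mathcal{P}}(x,r)$ is then a $(r/2)$-separated set in $X_i$, of size $\le N$. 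Case (b): if $r > R_i(\mathcal{B})$, then consider the "trace on the base": the set of indices $J = \{ j \in \nn : X_j \cap B^{\mathcal{P}}(x,r) \neq \emptyset \}$ together with possibly $\infty$ corresponds, via $b$, to a subset of $B^B(b_i, r)$ (since $d_{\mathcal{P}}$ between factors equals $d_B$ between the corresponding base points, and $X_j \cap B^{\mathcal{P}}(x,r) \neq \emptyset$ with $\di(X_j) \le R_j(\mathcal{B})$ forces $b_j$ within $r$ of $b_i$ — here I use the compatibility crucially). Now given an $(r/2)$-separated set $S$ in $B^{\mathcal{P}}(x,r)$, partition $S$ by which factor each point lies in. Two points in the same factor $X_j$ are at distance $\le \di(X_j) \le R_j(\mathcal{B})$; if additionally $R_j(\mathcal{B}) < r/2$ this can hold for at most one point of $S$, and if $R_j(\mathcal{B}) \ge r/2$ then... this needs the separated-set bound inside $X_j$ again. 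Cleanest is: for each occupied factor $X_j$, $S \cap X_j$ is $(r/2)$-separated in $X_j$ hence has at most $N$ points; and the set of occupied base points $\{b_j : S \cap X_j \neq \emptyset\} \cup (\{b_\infty\} \text{ if } \infty \in S)$ is $(r/2)$-separated in $B^B(b_i, r)$ — because distinct occupied factors have all cross-distances equal to $d_B$ of base points, and these are $\ge$ (the minimum cross-$S$-distance) which... actually one needs $d_B(b_j, b_k) \ge r/2$ whenever $X_j, X_k$ both meet $S$, which follows since any $s \in S\cap X_j$, $s' \in S \cap X_k$ have $d_{\mathcal{P}}(s,s') = d_B(b_j,b_k) \ge r/2$. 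So the number of occupied factors is $\le M_B$, giving $|S| \le N \cdot M_B$ plus one for the possible point $\infty$.

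Putting it together, $M = N \cdot M_B + 1$ works uniformly, so by Lemma \ref{lem:separated} the space $(T(\mathcal{P}), d_{\mathcal{P}})$ is doubling. The main obstacle I anticipate is the bookkeeping around the boundary case $r = R_i(\mathcal{B})$ and, relatedly, making the reduction "$S \cap X_j$ is $(r/2)$-separated in $X_j$ of size $\le N$" precisely square with Lemma \ref{lem:separated}, which is phrased in terms of balls $B(x,r)$ rather than arbitrary bounded sets; this is handled by noting $X_j$ itself has diameter $\le R_j(\mathcal{B})$ and is covered by a single ball of that radius (or by directly invoking $N$-doubling and heredity). Everything else is a routine translation between distances in $T(\mathcal{P})$ and distances in the telescope base, which is exactly what compatibility is designed to make clean.
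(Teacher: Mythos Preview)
Your proof is correct and rests on the same structural observation as the paper's: a ball in the telescope space is either contained in a single factor $X_i$ or decomposes as a union of entire factors indexed by a ball in the base $(B,d_B)$. The difference is in the accounting device. The paper works directly with the covering definition and shows $(T(\mathcal{P}),d_{\mathcal{P}})$ is $N^2$-doubling: it covers the relevant base ball by $N$ balls of half the radius, then for each of those either the corresponding factor fits in a single $r/2$-ball or has diameter $\le r$ and is covered by $N$ further balls. You instead invoke the separated-set characterization of Lemma~\ref{lem:separated}, bounding $|S|$ by (number of occupied factors) $\times$ (points of $S$ per factor). Your route is arguably cleaner conceptually, since the cross-factor distances in $T(\mathcal{P})$ are literally base distances, so the projection of $S$ to occupied base points is immediately $(r/2)$-separated; the paper's covering argument requires a small case split (the set $S$ of indices where the covering ball is trapped in a factor). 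One point you flag but do not quite nail down: to conclude $|S\cap X_j|$ is bounded via Lemma~\ref{lem:separated}, you need $X_j$ to sit in a ball of radius $r$ in $X_j$, i.e.\ $\di(X_j)\le r$. This holds because any occupied $X_j$ with $j\neq i$ has $R_j(\mathcal{B})\le d_B(b_i,b_j)\le r$, and $R_i(\mathcal{B})<r$ by the Case~(b) hypothesis; hence $\di(X_j)\le R_j(\mathcal{B})\le r$ in all cases. With that line added, your argument is complete. (Minor quibble: the per-factor bound is the constant from Lemma~\ref{lem:separated} for $N$-doubling spaces, not literally $N$; this does not affect the conclusion.)
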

\begin{proof}
We may assume that $(B,d_B)$ is $N$-doubling. 
We prove that $(T(\mathcal{P}),d_{\mathcal{P}})$ is $(N^2)$-doubling. 
Namely, for each  $x\in T(\mathcal{P})$ and for each $r\in(0,\infty)$, the ball $B(x,r)$ in $(T(\mathcal{P}),d_{\mathcal{P}})$ can be covered by 
at most $N^2$ closed balls with radius $r/2$. \par
Take  $n\in \nn\cup\{\infty\}$ with $x\in X_n$, where we put $X_{\infty}=\{\infty\}$. 
It suffices to consider the case where $B(x,r)$ is not contained in $X_n$.
By the definition of $d_{\mathcal{P}}$, we have 
\[
B(n,r)=\{i\in \nn\cup \{\infty\}\mid X_i\subset B(x,r)\},
\]
where $B(n,r)$ is the ball in $(B,d_B)$. 
By the definition of $d_{\mathcal{P}}$, we obtain
\begin{equation}\label{eq:aaa}
B(x,r)=\bigcup_{i\in B(n,r)}X_i. 
\end{equation}
Since $(B,d_B)$ is $N$-doubling, there exist $b_{n_1},\dots, b_{n_N}$ in $B$ with
\begin{equation}\label{eq:bbb}
B(n,r)\subset \bigcup_{i=1}^NB(b_{n_i},r/2). 
\end{equation}
For each $i\in \{1,2,\dots N\}$, take $q_i\in X_{n_i}$. 
Let 
\[
S=\{i\in\{1,\cdots, N\}\mid B(q_i,r/2)\subset X_{n_i}\}. 
\]
Notice that if $i\not\in S$, then $X_{n_i}\subset B(q_i,r/2)$. 
Hence by \eqref{eq:aaa} and \eqref{eq:bbb}, 
\begin{equation}\label{eq:ccc}
B(x,r)\setminus \left(\bigcup_{i\not\in S}B(q_i,r/2)\right)\subset \bigcup_{i\in S}X_{n_i}.
\end{equation}
For each $i\in S$, we have $\di(X_{n_i})\le r$. 
Then $X_{n_i}\subset  B(q_i,r)$.  
By the $N$-doubling property of $X_{n_i}$, we can take $q_{i1},\cdots q_{iN}$ in $X_{n_i}$ with
\[
X_{n_i} \subset \bigcup_{j=1}^NB(q_{ij}, r/2). 
\]
Hence by \eqref{eq:ccc} we obtain 
\[
B(x,r)\subset \bigcup_{i\not\in S}B(q_i,r/2)\cup \bigcup_{i\in S}\bigcup_{j=1}^NB(q_{ij}, r/2). 
\]
Therefore $(T(\mathcal{P}),d_{\mathcal{P}})$ is $(N^2)$-doubling. 
\end{proof}

%%%%%%%%%%%%%%%%%%%%%%%%%%%%%%%%%%%%
%%%%%%%%%%%%%%%%%%%%%%%%%%%%%%%%%%%%%
On the uniform disconnectedness, we have: 
\begin{prop}\label{prop:tlud}
Let $\mathcal{P}=(\mathcal{X},\mathcal{B})$ be a compatible pair of  
a family $\mathcal{X}=\{(X_i,d_i)\}_{i\in \nn}$ and 
a telescope base $\mathcal{B}=(B,d_B,b)$ such that 
\begin{enumerate}
\item there exists $\delta\in (0,1)$ for which  each $(X_i,d_i)$ is $\delta$-uniformly disconnected;
\item $(B,d_B)$ is uniformly disconnected.
\end{enumerate}
Then the telescope space $(T(\mathcal{P}),d_{\mathcal{P}})$ is uniformly disconnected. 
\end{prop}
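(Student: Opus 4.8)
The plan is to mimic the structure of the proof of Proposition \ref{prop:tld}, but now tracking chains instead of coverings. Since the uniform disconnectedness is hereditary and bi-Lipschitz invariant (Proposition \ref{prop:ult}), the natural first move is to reduce to the ultrametric case: by Remark \ref{rmk:ud} each $(X_i,d_i)$ is $\delta^{-1}$-bi-Lipschitz equivalent to an ultrametric space $(X_i',d_i')$ via a map $f_i$, and $(B,d_B)$ is bi-Lipschitz equivalent to an ultrametric space $(B',d_{B'})$ via $\phi$. However, one must be careful: applying these bi-Lipschitz maps may enlarge diameters, so the pair of the ultrametrized data need not be compatible. One fixes this by rescaling each $d_i'$ by a fixed constant so that $\di(X_i')\le R_i(\mathcal{B}')$ holds uniformly, which is harmless since rescaling preserves ultrametricity and the bi-Lipschitz class with a uniform constant. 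Then Corollary \ref{cor:lipschitz} gives a uniform bi-Lipschitz homeomorphism $F:(T(\mathcal{P}),d_{\mathcal{P}})\to (T(\mathcal{P}'),d_{\mathcal{P}'})$, and Lemma \ref{lem:ultra} shows the target is an ultrametric space, hence uniformly disconnected by Proposition \ref{prop:ultraud}; since uniform disconnectedness is bi-Lipschitz invariant, we are done.

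Alternatively — and this may be cleaner — I would argue directly with chains, choosing $\delta$ small enough to defeat any nontrivial chain. Fix a $\delta_0$-uniformly disconnected status for $(B,d_B)$ (so it too is $\delta_0^{-1}$-bi-Lipschitz to an ultrametric), and pick the chain-constant $\varepsilon$ to be, say, $\varepsilon=\min\{\delta,\delta_0\}/C$ for a suitable absolute constant $C$ coming from the bi-Lipschitz distortions. Given a nontrivial $\varepsilon$-chain $x:\{0,\dots,N\}\to T(\mathcal{P})$, consider the induced sequence of indices $\iota:\{0,\dots,N\}\to \nn\cup\{\infty\}$ with $x(k)\in X_{\iota(k)}$. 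One splits the analysis by whether the chain's endpoints lie in the same $X_i$ or in different pieces. If $x(0)$ and $x(N)$ lie in the same $X_i$: then by the definition of $d_{\mathcal{P}}$ one shows the whole chain stays inside $X_i$ (a consecutive step leaving $X_i$ would have length comparable to $d_B(b_i,\cdot)\ge R_i(\mathcal{B})\ge \di(X_i)\ge d_{\mathcal{P}}(x(0),x(N))$, contradicting the $\varepsilon$-chain condition for small $\varepsilon$), reducing to $\delta$-uniform disconnectedness of $X_i$. If $x(0)\in X_i$, $x(N)\in X_j$ with $i\ne j$ (or endpoints at $\infty$): then $d_{\mathcal{P}}(x(0),x(N))=d_B(b_i,b_j)$, and pushing the index sequence into $B$ (replacing any stay inside one $X_\ell$ by the single point $b_\ell$) yields an $\varepsilon'$-chain in $(B,d_B)$ between $b_i\ne b_j$, contradicting uniform disconnectedness of $B$; the subtlety is that steps within a single $X_\ell$ may be smaller than $d_B$-gaps, but that only helps, and one checks the cross-piece steps $d_B(b_{\iota(k-1)},b_{\iota(k)})$ directly form the needed subchain.

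The main obstacle I anticipate is bookkeeping the constant $\varepsilon$ and handling the ``mixed'' chains correctly — specifically, showing that a nontrivial $\varepsilon$-chain in $T(\mathcal{P})$ whose endpoints are in distinct pieces genuinely projects to a nontrivial $\varepsilon'$-chain in $B$, because consecutive points of the original chain might land in the same piece $X_\ell$ and thus project to the same point $b_\ell$, and one must verify that after deleting such repetitions the distance constraint $d_B(b_{\iota(k-1)},b_{\iota(k)})\le \varepsilon' d_B(b_i,b_j)$ survives for the surviving steps. This is where the precise value of $R_n(\mathcal{B})$ and the compatibility inequality $\di(X_n)\le R_n(\mathcal{B})$ get used: a step of the chain that crosses from $X_\ell$ to a different piece has $d_{\mathcal{P}}$-length at least $R_\ell(\mathcal{B})\ge \di(X_\ell)$, so it dominates any intra-piece wandering, which lets one bound the projected step lengths. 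Given this, I would favor the bi-Lipschitz reduction route in the write-up since it offloads the chain combinatorics onto Lemma \ref{lem:ultra} and Proposition \ref{prop:ultraud}, leaving only the diameter-rescaling point to verify carefully.
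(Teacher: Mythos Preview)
Your proposal is correct, and the bi-Lipschitz reduction route you favor is exactly the paper's proof: the paper replaces $(B,d_B)$ and each $(X_i,d_i)$ by $\delta^{-1}$-bi-Lipschitz equivalent ultrametric spaces via Remark~\ref{rmk:ud}, then rescales the new fiber metrics by the fixed factor $\delta^2$ to restore compatibility (yielding a uniform $\delta^{-3}$-bi-Lipschitz constant), and finishes with Lemma~\ref{lem:ultra} and Corollary~\ref{cor:lipschitz}. Your direct chain-analysis alternative is a valid second route the paper does not take.
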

\begin{proof}
We may assume that $(B,d_B)$ is $\delta$-uniformly disconnected. 
By Proposition \ref{prop:ult}, 
there exists a telescope base $\mathcal{C}=(C,d_C,c)$ such that $(C,d_C)$ is an ultrametric space and
 the map $\phi=c\circ b^{-1}$ is $\delta^{-1}$-bi-Lipschitz (see Remark \ref{rmk:ud}). 
 Note that for each $i\in \nn$ we have
\[
\delta R_i(\mathcal{C})\le R_i(\mathcal{B})\le \delta^{-1} R_i(\mathcal{C}).
\]
Similary, 
there exist a family $\{(Y_i,e_i)\}_{i\in \nn}$ of ultrametric spaces 
and a family $\{f_i:X_i\to Y_i\}_{i\in \nn}$ of $\delta^{-1}$-bi-Lipschitz maps. 
Note that 
\[
\delta \di(X_i)\le \di(Y_i) \le \delta^{-1} \di(X_i).
\] 
Hence $\di(Y_i)\le \delta^{-2}R_i(\mathcal{C})$. 
Let $\mathcal{Y}=\{(Y_i,\delta^2 e_i)\}_{i\in\nn}$.  
Then $\mathcal{Y}$ and $\mathcal{C}$ are compatible. 
Since each $f_i$ is $\delta^{-3}$-bi-Lipschitz between $(X_i.d_i)$ and $(Y_i,\delta^{2}e_i)$, 
 Lemma \ref{lem:ultra} and  Corollary \ref{cor:lipschitz} complete the proof. 
\end{proof}

%%%%%%%%%%%%%%%%%%%%%%%%%%%%%%%%%%%%
%%%%%%%%%%%%%%%%%%%%%%%%%%%%%%%%%%%%
On the uniform perfectness, we have:
\begin{prop}\label{prop:tlup}
Assume that a countable family $\mathcal{X}=\{(X_i,d_i)\}_{i\in \nn}$ of metric spaces satisfies the following: 
\begin{enumerate}
\item the family $\mathcal{X}$ and the telescope base $\mathcal{R}=(R,d_R,r)$ defined in Definition \ref{def:tbr} are compatible; 
\item there exists $\rho\in (0,1]$ such that for each $i\in \nn$ the space $(X_i,d_i)$ is $\rho$-uniformly perfect;
\item there exists  $M\in (0,\infty)$ such that for each $i\in \nn$
\[
M\cdot 2^{-i}\le \di(X_i).
\]
\end{enumerate}
Then for the compatible pair $\mathcal{P}=(\mathcal{X},\mathcal{R})$ 
the telescope space $(T(\mathcal{P}),d_\mathcal{P})$ is uniformly perfect.
\end{prop}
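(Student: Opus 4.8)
The plan is to verify directly that $(T(\mathcal{P}),d_{\mathcal{P}})$ is $\rho'$-uniformly perfect for a suitable $\rho'$ depending only on $\rho$ and $M$, by a case analysis on where a given center point sits and on the size of the radius. Let $z\in T(\mathcal{P})$ and $r\in(0,\di(T(\mathcal{P})))$ be given; write $n\in\nn\cup\{\infty\}$ for the index with $z\in X_n$ (with $X_\infty=\{\infty\}$). We must produce a point in $B(z,r)\setminus U(z,\rho' r)$.

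\textbf{Step 1 (small radii: stay inside $X_n$).} If $n\in\nn$ and $r\le\di(X_n)$, then since $(X_n,d_n)$ is $\rho$-uniformly perfect and distances inside $X_n$ agree with $d_{\mathcal{P}}$, Lemma~\ref{lem:ext} (applied with $\lambda$ chosen so that $r<\lambda\di(X_n)$) gives a point of $X_n$ in $B(z,r)\setminus U(z,\mu r)$ with $\mu$ a fixed multiple of $\rho$. This handles the case where the ball does not escape the block containing $z$; the remaining cases are those where $r$ exceeds $\di(X_n)$ (or $n=\infty$), so the ball reaches into other blocks and the geometry is governed by the telescope base $\mathcal{R}$.

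\textbf{Step 2 (large radii: use the base $\mathcal{R}=(R,d_R,r)$ and hypothesis (3)).} When the ball escapes $X_n$, by the definition of $d_{\mathcal{P}}$ we have $B(z,r)=\bigcup_{i\in B(n,r)}X_i$ where $B(n,r)$ is the ball in $(R,d_R)$, and similarly $U(z,s)$ is controlled by the corresponding open ball in $R$ together with $U_{X_n}(z,s)$. The subset $R\subset\rr$ is $\{2^{-i}\}\cup\{0\}$, so I would pick an index $j\in\nn$ with $2^{-j}$ comparable to $r$ (explicitly, $j$ minimal with $2^{-j}\le r$, or a nearby index), note that $X_j\subset B(z,r)$ since $\di(X_j)\le R_j(\mathcal{R})=2^{-j-1}$ and $d_R(r_j,r_n)\le 2^{-j}\le r$ for the relevant $n$, and then choose \emph{any} point $p\in X_j$ with $j\neq n$. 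Hypothesis (3) gives $\di(X_j)\ge M\cdot 2^{-j}$, which is comparable to $r$; combined with the fact that $d_{\mathcal{P}}(z,X_j)=d_R(r_n,r_j)$ is of the same order, one shows $d_{\mathcal{P}}(z,p)\ge \rho' r$ for an appropriate $\rho'$. One must separately treat the sub-case where the natural index $j$ forces $j=n$ (i.e. $r$ is comparable to $\di(X_n)$ itself): there one either falls back to Step 1 via Lemma~\ref{lem:ext}, or jumps to an adjacent index $j\pm1$, whose diameter and distance from $X_n$ are still comparable to $r$ by (3).

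\textbf{Main obstacle.} The delicate point is the boundary regime where $r$ is comparable to $\di(X_n)$ — here neither the purely internal argument of Step 1 nor the purely external argument of Step 2 applies cleanly, and one must patch them, using hypothesis (3) (the \emph{lower} bound $M\cdot 2^{-i}\le\di(X_i)$) precisely to guarantee that an adjacent block $X_{n\pm1}$ is large enough, and sits far enough from $z$, to serve as the required annulus point. Bookkeeping the constants so that a single $\rho'=\rho'(\rho,M)$ works uniformly across all three regimes (and for $n=\infty$, where $z=\infty$ and only Step 2 is relevant) is the part that needs care; everything else is a routine unwinding of the definition of $d_{\mathcal{P}}$ and the explicit description of $\mathcal{R}$.
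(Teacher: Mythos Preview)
Your outline matches the paper's approach: the paper sets $\eta=\min\{1/4,\,M\rho/2\}$ and carries out exactly this case analysis, treating $x=\infty$, $x\in X_1$, and $x\in X_n$ for $n\ge2$ separately, and in the last case splitting according to whether $r<2^{-n}$ (use Lemma~\ref{lem:ext} inside $X_n$) or $r\ge 2^{-n}$ (pick a point in some $X_k$ with $k<n$, using only the base distance $2^{-k}-2^{-n}$).

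The one place your explanation goes astray is the role of hypothesis~(3). In Step~2 the lower bound $d_{\mathcal P}(z,p)\ge\rho' r$ comes entirely from the base distance $|2^{-n}-2^{-j}|$; the diameter of $X_j$ is irrelevant there, so invoking~(3) at that point is a red herring. The genuine use of~(3) is in the gap regime $\di(X_n)<r<R_n(\mathcal R)=2^{-n-1}$: here $B(z,r)$ is exactly $X_n$ and contains \emph{no} other block, so your ``jump to $X_{n\pm1}$'' option is simply unavailable. Only the fallback to Lemma~\ref{lem:ext} inside $X_n$ can work, and for that you need $r<\lambda\,\di(X_n)$ with a \emph{uniform} $\lambda$. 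Hypothesis~(3) supplies precisely this: $r<2^{-n}\le\di(X_n)/M$, so $\lambda=1/M$ works and gives $\mu=M\rho/2$. That --- not the size or position of neighbouring blocks --- is why~(3) is there.
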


\begin{proof}
By the assumption, for each $i\in \nn$, the space $X_i$ has at least two points. 
Note that $\di(T(\mathcal{P}))=2^{-1}$. 
We are going to prove that $(T(\mathcal{P}),d_\mathcal{P})$ is $\eta$-uniformly perfect, where 
\[
\eta=\min\left\{\,\frac{1}{4}, \frac{M\rho}{2}\,\right\}.
\] 
Namely, we verify that for each $x\in X$ and for each $r\in (0,2^{-1})$, 
the set $B(x,r)\setminus U(x,\eta r)$ is non-empty. \par
%%%%%%%%%%%%%%%%%%%%%%%%%%%%%%%%%%%%%%%
\begin{clm}
If $B(x,r)=T(\mathcal{P})$, then $B(x,r)\setminus U(x,\eta r)$ is non-empty.
\end{clm}

\begin{proof}
Since $\di (U(x,\eta r))<\di(T(\mathcal{P}))$, 
the set $U(x,\eta r)$ is a proper subset of $B(x,r)$.
\end{proof}
%%%%%%%%%%%%%%%%%%%

%%%%%%%%%%%%%%%%%%%%%%%%%%%%%%%%%%%%%%%%%%%%%%%%%
\begin{clm}
If $B(x,r)\neq T(\mathcal{P})$ and $x=\infty$,  then $B(x,r)\setminus U(x,\eta r)$ is non-empty.
\end{clm}
\begin{proof}
Take $m\in \nn$ with $r\in [2^{-m},2^{-m+1})$. 
Then $X_m\subset B(x,r)$. 
From  
$\eta r <2^{-m}$,
it follows that  $X_m\subset B(\infty,r)\setminus U(\infty,\eta r)$.
\end{proof}
%%%%%%%%%%%%%%%%%%%%%%%%%%%%%%%%%%%%%%%
\begin{clm}
If $B(x,r)\neq T(\mathcal{P})$ and $x\in X_1$, then $B(x,r)\setminus U(x,\eta r)$ is non-empty. 
\end{clm}
\begin{proof}
By the  compatibility of $\mathcal{P}$, we have $\di(X_1)\le 4^{-1}$. 
Then $r\in (0, 2\di(X_1))$. 
Since $M\le 2^{-1}$, we have $\eta \le \rho/4$. 
Thus by Lemma  \ref{lem:ext}, the set $B(x,r)\setminus U(x,\eta r)$ is non-empty. 
\end{proof}

%%%%%%%%%%%%%%%%%%%%%%%%%%%%%%%%%%%%%%%
\begin{clm}
If $B(x,r)\neq T(\mathcal{P})$ and $x\in X_n$ for some $n\ge 2$, 
then $B(x,r)\setminus U(x,\eta r)$ is non-empty.
\end{clm}
\begin{proof}
Note that $d_{\mathcal{P}}(\infty, x)=2^{-n}$ and $d_{\mathcal{P}}(x,X_1)=2^{-1}-2^{-n}$. 
Then by $B(x,r)\neq T(\mathcal{P})$, we have $2^{-n}+r<2^{-1}$. 
Hence there exists a positive integer $k\le n$ with  
$2^{-n}+r\in [2^{-k}, 2^{-k+1})$.
We divide the present situation  into the following two cases.\par
%%%%%%%%%%%%%%%%%%%%%%%%%%%%%%%%%%%%%%%%%%%%%
First assume $k\le n-1$. 
Take $y\in X_k$. Then we have 
\[
d_{\mathcal{P}}(x,y)=2^{-k}-2^{-n}\ge 2^{-k}-2^{-k-1} =2^{-k-1}, 
\]
and 
\[
r<2^{-k+1}-2^{-n}<2^{-k+1}.
\]
Hence 
$\eta r\le r/4<d_{\mathcal{P}}(x,y)$. 
Therefore $y\in B(x,r)\setminus U(x,\eta r)$. \par
%%%%%%%%%%%%%%%%%%%%%%%%%%%%%%%%
Second assume $k=n$.
In this case, we have $r< 2^{-n}$, 
and hence 
\[
r\le \frac{1}{M}\di(X_n).
\] 
By Lemma \ref{lem:ext} and $\eta\le (M\rho)/2$, the set $B(x,r)\setminus U(x,\eta r)$ is non-empty. 
\end{proof}
This finishes the proof of Proposition \ref{prop:tlup}.
\end{proof}
%%%%%%%%%%%%%%%%%%%%%%%%%%%%%%%%%%%%%%%%
%%%%telescope

\section{Spike Spaces}\label{sec:spike}
In this section, we study the existence of the spike spaces defined in Definition \ref{def:spike} for the quasi-symmetric invariant properties, $D$, $UD$ and  $UP$.
\par
%%%%%%%%%%%%%%%%%%%%%%%%%%%%%%%%%%%%%%%%%
%%%%%%%%%%%%%%%%%%%%%%%
First we study the existence of a $D$-spike Cantor metric space. 
Before doing that, we give a criterion of the doubling property. 
\begin{df}\label{def:discrete}
For $n\in \nn$ and for $l\in (0,\infty)$, we say that a metric space $(X,d)$ is \emph{$(n,l)$-discrete}  
if $\card(X)=n$ and the metric $d$ satisfies 
\[
d(x,y)=
	\begin{cases}
	0 & \text{if $x=y$,}\\
	l & \text{if $x\neq y$}
	\end{cases}
\] for all $x,y\in X$.
A metric space $(X,d)$ is \emph{$n$-discrete} if it is $(n,l)$-discrete for some $l$.
\end{df}

\begin{lem}\label{lem:dsc}
If for each $n\in \nn$ a metric space $(X,d)$ has an $n$-discrete subspace, then $(X,d)$ is not doubling.
\end{lem}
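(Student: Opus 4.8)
The plan is to argue by contraposition, using the separated-set characterisation of the doubling property provided by Lemma~\ref{lem:separated}. Suppose, for contradiction, that $(X,d)$ is doubling. Then Lemma~\ref{lem:separated} furnishes an integer $M\in\nn$ such that, for every $r\in(0,\infty)$ and every $x\in X$, each $(r/2)$-separated subset of the closed ball $B(x,r)$ has cardinality at most $M$.

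Now invoke the hypothesis with $n=M+1$: the space $(X,d)$ has an $(M+1)$-discrete subspace $S$, so $\card(S)=M+1$ and there is $l\in(0,\infty)$ with $d(p,q)=l$ for all distinct $p,q\in S$. Fix any point $x_0\in S$ and put $r=l$. Since every point of $S$ lies within distance $l=r$ of $x_0$, we have $S\subset B(x_0,r)$; and since any two distinct points of $S$ are at distance $l=r\ge r/2$, the set $S$ is $(r/2)$-separated in $B(x_0,r)$. Thus $B(x_0,r)$ contains an $(r/2)$-separated set of cardinality $M+1>M$, contradicting the choice of $M$. Hence $(X,d)$ is not doubling.

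The only subtlety worth flagging is that the common distance $l$ in an $n$-discrete subspace may depend on $n$; this causes no trouble, because after fixing $M$ we only need a single such subspace, namely the one associated with $n=M+1$. Accordingly, I do not expect any genuine obstacle here: the statement follows immediately once Lemma~\ref{lem:separated} is in hand.
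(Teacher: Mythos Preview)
Your argument is correct and follows essentially the same route as the paper: both invoke Lemma~\ref{lem:separated} and observe that an $n$-discrete subspace sits inside a ball of radius $l$ as an $(l/2)$-separated set of cardinality $n$. The only cosmetic difference is that the paper phrases this directly (for all $n$) rather than by contraposition with $n=M+1$.
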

\begin{proof}
Let $(D_n,e_n)$ be an $n$-discrete subspace of $(X,d)$. 
Choose $l_n\in (0,\infty)$ such that $(D_n,e_n)$ is $(n,l_n)$-discrete. 
For every $p\in D_n$, the subspace  $D_n$ is contained in $ B(p,l_n)$, and $D_n$ is $(l_n/2)$-separated in $B(p,l_n)$. 
Since $\card(D_n)=n$, by Lemma \ref{lem:separated}, the space $(X,d)$ is not doubling.
\end{proof}

We construct a $D$-spike Cantor metric space. 
\begin{prop}\label{prop:dspike}
There exists a $D$-spike Cantor metric space  of type $(0,1,1)$.
\end{prop}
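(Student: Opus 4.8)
The plan is to realize the desired space as a telescope space over the base $\mathcal{R}=(R,d_R,r)$ of Definition \ref{def:tbr}, choosing the fiber spaces $X_i$ so that their \emph{local} structure near $\infty$ fails the doubling property while everything away from $\infty$ stays doubling. The natural obstruction comes from Lemma \ref{lem:dsc}: if arbitrarily large discrete subspaces cluster toward $\infty$, then no neighborhood of $\infty$ can be doubling, whereas if each individual fiber $X_i$ is uniformly doubling (with a bound depending on $i$ is not enough — we need it for a reason explained below), then any point other than $\infty$ has a doubling neighborhood, forcing $S_D(X,d)=\{\infty\}$.

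Concretely, first I would take $(X_i,d_i)$ to be, say, a rescaled copy of the middle-third Cantor set $(\Gamma,d_\Gamma)$ together with an $i$-discrete "bouquet" attached at a controlled scale — or more simply, let each $X_i$ be the direct sum (in the sense of the Direct Sum subsection) of a small copy of $\Gamma$ with an $(i,l_i)$-discrete space, with all diameters scaled to satisfy the compatibility condition $\di(X_i)\le R_i(\mathcal{R})=2^{-i-1}$. By Lemma \ref{lem:tlCantor} the resulting telescope space $T(\mathcal{P})$ is a Cantor space. For type $(0,1,1)$ I need three checks: (i) uniform disconnectedness — arrange each $X_i$ to be an ultrametric space (a discrete space is trivially ultrametric, $\Gamma$ is bi-Lipschitz to one, and the direct sum of ultrametric spaces can be taken ultrametric), so Proposition \ref{prop:tlud} applies since $\mathcal{R}$ (being a subset of $\rr$ with the geometric sequence $2^{-i}$) is uniformly disconnected; (ii) uniform perfectness — use Proposition \ref{prop:tlup}, which requires each $X_i$ to be $\rho$-uniformly perfect for a uniform $\rho$ and $\di(X_i)\ge M\cdot 2^{-i}$ for a uniform $M$; this is why each $X_i$ must contain a copy of $\Gamma$ of diameter comparable to $2^{-i}$ (a pure discrete space is not uniformly perfect as a standalone object in the right way, so the $\Gamma$-factor supplies uniform perfectness via Lemma \ref{lem:sup}); (iii) failure of doubling, localized at $\infty$.

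For (iii): since $X_i$ contains an $i$-discrete subspace, and these subspaces accumulate at $\infty$ (their diameters go to $0$ and they lie in $B(\infty, 2^{-i+1})$), Lemma \ref{lem:dsc} applied inside any ball $B(\infty,\varepsilon)$ shows no neighborhood of $\infty$ is doubling, so $\infty\in S_D(T(\mathcal{P}),d_\mathcal{P})$. Conversely, if $x\in X_n$ for some finite $n$, then $X_n$ itself is an open neighborhood of $x$ in $T(\mathcal{P})$ (by the definition of $d_\mathcal{P}$, points of different fibers are at distance $\ge$ the gap $2^{-n}-2^{-n-1}$ from $x$, while $\di(X_n)\le 2^{-n-1}$), and $X_n$ is doubling provided I chose $l_i$ so that each individual $X_i$ — though only $i$-doubling-ish — is still a doubling metric space (any finite-diameter space with finitely many "scales" is doubling). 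Hence $S_D=\{\infty\}$, a singleton, which is exactly the $D$-spike condition.

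The main obstacle I anticipate is the tension in (ii) versus (iii): Proposition \ref{prop:tlup} demands a \emph{uniform} $\rho$-uniform-perfectness and a lower bound $\di(X_i)\ge M\cdot 2^{-i}$, while the discrete "spikes" that kill doubling must be fitted \emph{inside} $X_i$ without destroying either the uniform perfectness constant or the uniform disconnectedness constant. The fix is to place the $i$-discrete piece at a scale genuinely smaller than the $\Gamma$-factor's diameter but not so small that it violates compatibility — i.e., choose $l_i$ with, say, $l_i = 2^{-i-2}$ and the $\Gamma$-copy of diameter $2^{-i-2}$, take their direct sum (diameter $2^{-i-2}\le R_i(\mathcal{R})$), and verify the direct-sum lemmas \ref{lem:sud}, \ref{lem:sup} keep the constants independent of $i$. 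One then checks each hypothesis of Propositions \ref{prop:tlud} and \ref{prop:tlup} mechanically. I would close by remarking that the same construction, with the $\Gamma$-factor deleted or the base $\mathcal{R}$ replaced, adapts to the other spike types treated later in the section.
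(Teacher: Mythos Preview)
Your overall strategy --- realize the space as a telescope over $\mathcal{R}$ with fibers carrying $n$-discrete subspaces of increasing cardinality --- is exactly the paper's. But your concrete choice of fiber $X_i = \Gamma \sqcup D_i$, with $D_i$ an $(i,l_i)$-discrete space, breaks two of the three checks you list.

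First, $X_i$ is not a Cantor space: the $i$ points of $D_i$ are isolated in $X_i$, and since each fiber is open in the telescope (any ball of radius less than $R_i(\mathcal{R})$ about a point of $X_i$ stays inside $X_i$), those points remain isolated in $T(\mathcal{P})$. So $T(\mathcal{P})$ itself has isolated points and is not a Cantor space; Lemma~\ref{lem:tlCantor} requires Cantor fibers for exactly this reason.

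Second, and contrary to what you write, $X_i$ is not uniformly perfect. You invoke Lemma~\ref{lem:sup} to say that ``the $\Gamma$-factor supplies uniform perfectness,'' but that lemma is an if-and-only-if: the direct sum $\Gamma \sqcup D_i$ is uniformly perfect only if \emph{both} summands are, and a finite discrete space with at least two points is not (for small $r$ one has $B(p,r)\setminus U(p,\rho r)=\emptyset$). You may be thinking of Lemma~\ref{lem:pup} for \emph{products}, where one uniformly perfect factor suffices; for direct sums this fails. Hence hypothesis~(2) of Proposition~\ref{prop:tlup} is not met, and you cannot conclude uniform perfectness of the telescope.

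The paper repairs both defects with a single move: instead of attaching a bare $i$-point set, take $Z_i$ to be the disjoint union of $i$ copies of $\Gamma$ at mutual distance $1$ (then rescale by $2^{-i-1}$). Picking one point from each copy still gives an $i$-discrete subspace, so your non-doubling argument at $\infty$ via Lemma~\ref{lem:dsc} goes through unchanged; but now every fiber is a genuine Cantor space, and --- since each copy of $\Gamma$ is $(1/5)$-uniformly perfect (Lemma~\ref{lem:Cantor}) and the copies sit at a fixed mutual distance equal to the diameter --- each $Z_i$ is $\rho$-uniformly perfect with a constant $\rho$ \emph{independent of $i$} (the paper gets $\rho=1/20$ via Lemma~\ref{lem:ext}). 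The remaining parts of your outline --- uniform disconnectedness via Proposition~\ref{prop:tlud}, and $S_D=\{\infty\}$ because each individual fiber is doubling and is an open neighborhood of each of its points --- are correct as you wrote them.
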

\begin{proof}
For each $n\in \nn$, take disjoint $n$ copies $\Gamma_1, \dots \Gamma_n$ of 
the middle-third Cantor set $\Gamma$, 
and define a set $Z_n$ by 
\[
Z_n=\coprod_{i=1}^n\Gamma_i,
\]
and define a metric $e_n$ on $Z_n$ by
\[
e_n(x,y)=
	\begin{cases}
	d_{\Gamma_i}(x,y) & \text{if $x,y\in \Gamma_i$ for some $i$,}\\
	1 & \text{otherwise.}
	\end{cases}
\]
Note that for each $n\in \nn$, the space $(Z_n,e_n)$ is a Cantor space.
The family $\mathcal{Z}=\{(Z_i,\,2^{-i-1}\cdot e_i)\}_{i\in \nn}$ and the telescope base $\mathcal{R}$ defined in Definition \ref{def:tbr} form a compatible pair. \par
Let $\mathcal{P}=(\mathcal{Z},\mathcal{R})$. 
By Lemma \ref{lem:tlCantor}, the telescope space $(T(\mathcal{P}),d_{\mathcal{P}})$ is a Cantor space. 
We first show that $(T(\mathcal{P}),d_{\mathcal{P}})$ is a $D$-spike space. 
For each neighborhood $N$ of $\infty$, there exists $k\in \nn$ such that $n>k$ implies $Z_n\subset N$. 
Then $N$ has an $n$-discrete subspace for all sufficiently large $n$. 
By Lemma \ref{lem:dsc}, the subspace $N$ is not doubling. 
Since for each $i\in \nn$ the space $(Z_i,e_i)$ is doubling,  $S_D((T(\mathcal{P}),d_{\mathcal{P}}))=\{\infty\}$. 
Hence $(T(\mathcal{\mathcal{P}}),d_{\mathcal{P}})$ is a $D$-spike space. 
\par
We next show that $(T(\mathcal{\mathcal{P}}),d_{\mathcal{P}})$ has type $(0,1,1)$.
Since $(\Gamma,d_{\Gamma})$ is $\delta$-uniformly disconnected for  $\delta\in (0,3/5)$ (see Remark \ref{rmk:c}),  
each $(Z_i,e_i)$ is $\delta$-uniformly disconnected. 
The space $(R,d_R)$ is uniformly disconnected. 
Then Proposition  \ref{prop:tlud} implies that 
$(T(\mathcal{P}), d_{\mathcal{P}})$ is an ultrametric space. 
By Proposition \ref{prop:ult}, the space $(T(\mathcal{\mathcal{P}}),d_{\mathcal{P}})$ is uniformly disconnected. 
By Lemmas \ref{lem:ext} and \ref{lem:Cantor}, for each $i\in \nn$, 
the space $(Z_i,d_i)$ is $(1/20)$-uniformly perfect.  
Therefore by Lemma \ref{prop:tlup}, the  space $(T(\mathcal{P}),d_{\mathcal{P}})$ is  uniformly perfect.
\end{proof}

%%%%%%%%%%%%%%%%%%%%%%%%%%%%%%%%%%%%%%%%%%%%%%%%%%%%%%%%%%%%
Second we study the existence of a $UD$-spike Cantor metric space.
To do this, we need the following:
%%%%%%%%%%%%%%%%%%%%%%%%%%%%%%%%%%%%%%%%%%%%%
\begin{lem}\label{lem:ssum}
For $\rho\in (0,\infty)$,  
let $\{(X_i,d_i)\}_{i=0}^n $ be a finite family of compact subspaces of $(\rr,d_{\rr})$ satisfying the following: 
\begin{enumerate}
\item each $(X_i,d_i)$ is $\rho$-uniformly perfect;
\item $\di(X_i)=1$ for all $i$;
\item $d_{\rr}(X_i,X_j)=1$ for all disjoint $i,j$. 
\end{enumerate}
Then the subspace $\bigcup_{i=0}^nX_i$ of $\rr$
is $\min\{1/3,\rho/4\}$-uniformly perfect.
\end{lem}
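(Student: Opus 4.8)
The plan is to verify the $\min\{1/3,\rho/4\}$-uniform perfectness directly from the definition: given $x$ in $Y:=\bigcup_{i=0}^n X_i$ and $r\in(0,\di(Y))$, produce a point of $B(x,r)\setminus U(x,\mu r)$ with $\mu=\min\{1/3,\rho/4\}$. First I would record the geometry of $Y$. Since each $X_i$ has diameter $1$ and consecutive pieces are at distance $1$, the set $Y$ sits inside an interval of length roughly $2n+1$, and $\di(Y)=2n+1$ (the two extreme pieces are distance $2n$ apart at their nearest ends, hence $2n+1$ at their farthest). The key structural fact is a trichotomy for the scale $r$: either (a) $r$ is small enough that $B(x,r)$ stays inside the single piece $X_{i_0}$ containing $x$; or (b) $r$ is of "unit order", so that $B(x,r)$ meets a neighbouring piece; or (c) $r$ is large.

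For case (a), I would note that if $B(x,r)\subseteq X_{i_0}$ then in particular $r\le \di(X_{i_0})=1$, so $r<2\di(X_{i_0})$, and Lemma \ref{lem:ext} applied to the $\rho$-uniformly perfect space $X_{i_0}$ with $\lambda=2$ gives a point of $B(x,r)\setminus U(x,(\rho/4)r)$; since $\rho/4\ge\mu$ this suffices. (A harmless subtlety: distances inside $X_{i_0}$ agree with distances in $Y$, so the ball computed in $X_{i_0}$ is the same as in $Y$ as long as it does not spill over, which is exactly the case-(a) hypothesis.) For case (c), when $r\ge\di(Y)$ we are outside the required range, and when $r$ is merely large but $B(x,r)\ne Y$ I would use the standard trick: $U(x,\mu r)$ cannot equal $Y$ because $\di(U(x,\mu r))\le 2\mu r\le (2/3)r$, and one shows $(2/3)r<\di(Y)$ whenever $B(x,r)=Y$ forces $r<\di(Y)$; hence $U(x,\mu r)\subsetneq B(x,r)$ and any point of the difference works — this mirrors the first claim in the proof of Proposition \ref{prop:tlup}.

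The heart of the matter is case (b). Here $x\in X_{i_0}$, and $r$ is large enough that $B(x,r)\not\subseteq X_{i_0}$ but still $B(x,r)\ne Y$. Then $B(x,r)$ contains a point of some adjacent piece; more precisely there is an index $j$ with $X_j$ entirely contained in $B(x,r)$ (because once the ball reaches $X_j$ it has radius at least $1+\di(X_j)=2$ beyond what is needed to swallow it — one has to check the bookkeeping, using that the gaps are exactly $1$ and the pieces have diameter exactly $1$), while some piece $X_{j'}$ is disjoint from $B(x,r)$. Choosing $j$ to be the index in $B(x,r)$ with $d_\rr(X_{i_0},X_j)$ maximal, I would pick a point $y\in X_j$ realizing a large distance from $x$: since distinct pieces are at mutual distance $\ge 1$ and $y$ can be taken at the far end of $X_j$, we get $d_\rr(x,y)\ge 1$, while $r$ in this regime is comparable to $d_\rr(x,y)$ — I expect the sharp bookkeeping to give $r\le 3\,d_\rr(x,y)$, whence $\mu r\le r/3\le d_\rr(x,y)$ and $y\in B(x,r)\setminus U(x,\mu r)$. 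The main obstacle is precisely pinning down this last inequality: one must track, as $r$ grows past $\di(X_{i_0})$, exactly when the ball $B(x,r)$ first captures a new piece versus when it captures all of $Y$, and the constant $1/3$ must be squeezed out of the ratio (distance to the farthest captured piece)/(radius). I would organize this as in Proposition \ref{prop:tlup}, splitting on which "level" $r$ falls into relative to the distances $d_\rr(x,X_k)$, and in each band exhibiting the witness $y$ explicitly.
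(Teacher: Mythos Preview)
Your overall strategy---trichotomy by scale, Lemma~\ref{lem:ext} for small $r$, a distant witness for intermediate $r$, the diameter trick when $B(x,r)=Y$---is exactly the paper's. The gap is the choice of witness in case~(b). You propose taking the piece $X_j\subset B(x,r)$ with $d_\rr(X_{i_0},X_j)$ maximal and a point at its far end. But with the normalized layout $\{2i,2i+1\}\subset X_i\subset[2i,2i+1]$, take $x=2m+\tfrac12$ and $r=1.7$: then $[x-r,x+r]=[2m-1.2,\,2m+2.2]$ meets both $X_{m-1}$ and $X_{m+1}$ (so we are genuinely in case~(b)) yet fully contains only $X_m$. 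Your rule forces $j=m=i_0$ and $d_\rr(x,y)\le\tfrac12$, while $r/3>\tfrac12$, so the proposed $y$ lies in $U(x,\mu r)$. The parenthetical justification (``once the ball reaches $X_j$ it has radius at least $2$\dots'') is simply false, and no choice restricted to \emph{fully contained} pieces repairs it.

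The paper fixes this by tracking the integer position of $x+r$ rather than which pieces are swallowed. After the symmetry reduction to $x+r<2n+1$, let $k$ be the integer with $x+r\in[k-1,k)$. If $k\ge 2m+3$, the integer $k-1\in Y$ is the witness: from $x\le 2m+1$ and $x+r<k$ one gets $x+\tfrac{r}{3}<\tfrac{2}{3}(2m+1)+\tfrac{1}{3}k<k-1$, which is precisely the ``$r\le 3\,d_\rr(x,y)$'' you were hoping for. If $k=2m+2$ then $r<2$ and Lemma~\ref{lem:ext} applies in $X_m$; if $k=2m+1$ then $r<1$ and the $\rho$-uniform perfectness of $X_m$ suffices directly. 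So the bookkeeping you anticipated is organized around where $x+r$ falls among the integers, not around swallowed pieces. (Two minor corrections elsewhere: in case~(a), $B(x,r)\subset X_{i_0}$ does not force $r\le 1$---take $x=0$, $r=1.5$---though it does force $r<2$, which is all Lemma~\ref{lem:ext} needs; and your case~(c) paragraph conflates $B(x,r)=Y$ with $B(x,r)\ne Y$: the diameter trick handles only the former.)
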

%%%%%%%%%%%%%%%%%%%%%%%%%%%%%%%%%%%%%%%%%
\begin{proof}
We may assume that 
\[
\{2i,2i+1\}\subset X_i\subset [2i,2i+1]
\]
 for each $i$.
Set $\nu=\min\{1/3,\rho/4\}$. 
Take $x\in X$ and $r\in (0,\di(X))$.
We show that $B(x,r)\setminus U(x,\nu r)$ is non-empty. 
If $B(x,r)=X$, then $\di(U(x,\nu r))$ is smaller than $(2/3)\di(X)$; 
in particular, the set $B(x,r)\setminus U(x,\nu r)$ is non-empty. 
Suppose $B(x,r)\neq X$. 
Then either $0<x-r$ or $x+r<2n+1$ holds.
The case of $0<x-r$ can be reduced to the case of $x+r<2n+1$ 
through the map defined by $t\mapsto -t+2n+1$. 
Hence it is enough to consider the case of $x+r<2n+1$. 
Let $m$ be an integer with $x\in X_m$.
Take a positive integer $k$ with $x+r\in [k-1,k)$ so that $k\ge 2m+1$. 
If $k-(2m+1)\ge 2$, then
\[
x+\nu r\le x+\frac{1}{3}r< \frac{2}{3}(2m+1)+\frac{1}{3}k<k-1,
\]
and hence $k-1\in B(x,r)\setminus U(x,\nu r)$. 
If $k-(2m+1)=1$, then 
$r<2\di(X_m)$, and hence Lemma \ref{lem:ext} implies that  
$B(x,r)\setminus U(x,\nu r)$ is non-empty. 
If $k=2m+1$, then $r<\di(X_m)$, and hence 
the $\rho$-uniformly perfectness of $X_m$ and $\nu\le \rho$ lead to the desired conclusion.
This finishes the proof. 
\end{proof}
%%%%%%%%%%%%%%%%%%%%%%%%%%%%%%%%%%%%
For a subset $S$ of $\rr$, and for real numbers $a,b$,  we denote by $aS+b$ the set $\{\,ax+b\mid x\in S\,\}$.
\par
We construct a $UD$-spike Cantor metric space. 
\begin{prop}\label{prop:udspike}
There exists a $UD$-spike Cantor metric space of type $(1,0,1)$.
\end{prop}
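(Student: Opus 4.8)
The plan is to mimic the $D$-spike construction of Proposition \ref{prop:dspike}: I will build a telescope space $(T(\mathcal{P}),d_{\mathcal{P}})$ over the base $\mathcal{R}$ of Definition \ref{def:tbr}, but now choosing the fibers $(X_n,d_n)$ so that each one is uniformly disconnected while the optimal $\delta$ for the $\delta$-uniform disconnectedness of $X_n$ tends to $0$ as $n\to\infty$. The point $\infty$ will then fail to have a uniformly disconnected neighborhood (each such neighborhood contains all but finitely many $X_n$, hence carries nontrivial $\delta$-chains for every $\delta$), whereas every other point lies in a fiber $X_n$, which is a uniformly disconnected clopen neighborhood of it. This yields $S_{UD}(T(\mathcal{P}),d_{\mathcal{P}})=\{\infty\}$, so the space is a $UD$-spike space; in particular it is not uniformly disconnected, since uniform disconnectedness is hereditary and would force $S_{UD}=\emptyset$, giving $T_{UD}=0$. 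Doubling and uniform perfectness will then come from Propositions \ref{prop:tld} and \ref{prop:tlup}.

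For the fibers I would take the middle-third Cantor set $\Gamma\subset[0,1]$ and, for $n\in\nn$, set $X_n=\bigcup_{i=0}^{n}(\Gamma+2i)\subset\rr$ with the induced metric $d_n$; this is a disjoint union of $n+1$ isometric copies of $\Gamma$ lying in the intervals $[2i,2i+1]$, separated by gaps of length at least $1$, with $\di(X_n)=2n+1$, and it is a Cantor space by the Brouwer theorem \ref{Brouwer}. The two facts to record are: (i) since $\{2i,2i+1\}\subset\Gamma+2i$, the finite sequence $0,1,2,\dots,2n+1$ is a nontrivial $\tfrac{1}{2n+1}$-chain in $X_n$; and (ii) if $\delta<\tfrac{1}{2n+1}$, then every $\delta$-chain in $X_n$ has consecutive distances $<\delta\,\di(X_n)<1$, hence cannot cross a gap, so it lies in a single copy of $\Gamma$, which is $\delta'$-uniformly disconnected for all $\delta'<3/5$ (Remark \ref{rmk:c}); as $\delta<1/3<3/5$, that chain is trivial. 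Thus $X_n$ is $\delta$-uniformly disconnected precisely for $\delta<\tfrac{1}{2n+1}$. Rescaling by $\lambda_n=2^{-n-1}/(2n+1)$ makes $\di(X_n,\lambda_n d_n)=2^{-n-1}=R_n(\mathcal{R})$, so $\mathcal{X}=\{(X_n,\lambda_n d_n)\}_{n\in\nn}$ and $\mathcal{R}$ form a compatible pair $\mathcal{P}$, and none of the above is affected by the rescaling.

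Next I would identify $S_{UD}$ of $(T(\mathcal{P}),d_{\mathcal{P}})$, which by Lemma \ref{lem:tlCantor} is a Cantor space. Each $X_n$ is at distance $R_n(\mathcal{R})=2^{-n-1}>0$ from the rest of $T(\mathcal{P})$, hence is clopen, and $d_{\mathcal{P}}$ restricts to $\lambda_n d_n$ on it; since $(X_n,\lambda_nd_n)$ is uniformly disconnected, every point of $X_n$ has a uniformly disconnected neighborhood, so $S_{UD}\subset\{\infty\}$. Conversely, any neighborhood $N$ of $\infty$ contains some ball $B(\infty,r)$, which contains $X_n$ for all $n$ with $2^{-n}\le r$; given any $\delta\in(0,1)$, pick such an $n$ with in addition $2n+1\ge 1/\delta$, so that the nontrivial $\tfrac{1}{2n+1}$-chain of $X_n$ is a nontrivial $\delta$-chain of $T(\mathcal{P})$ contained in $N$. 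Hence no neighborhood of $\infty$ is uniformly disconnected, $\infty\in S_{UD}$, and $S_{UD}(T(\mathcal{P}),d_{\mathcal{P}})=\{\infty\}$, so the space is a $UD$-spike space with $T_{UD}=0$.

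Finally I would read off the remaining two properties from the telescope machinery. Each $X_n$, being a subspace of $\rr$, is $N_0$-doubling for a fixed $N_0$, and $(R,d_R)$ is doubling, so Proposition \ref{prop:tld} gives $T_D=1$. By Lemma \ref{lem:Cantor} each copy of $\Gamma$ is $(1/5)$-uniformly perfect, so by Lemma \ref{lem:ssum} each $X_n$ (with its unit-gap placement) is $\min\{1/3,1/20\}=1/20$-uniformly perfect, uniformly in $n$; since $\mathcal{X}$ is compatible with $\mathcal{R}$ and $\di(X_n,\lambda_nd_n)=2^{-n-1}=\tfrac{1}{2}\cdot2^{-n}$, Proposition \ref{prop:tlup} with $M=1/2$ gives $T_{UP}=1$. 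Hence $(T(\mathcal{P}),d_{\mathcal{P}})$ is a $UD$-spike Cantor metric space of type $(1,0,1)$. The step I expect to require the most care is the determination of $S_{UD}$: the unit-gap embedding of the copies of $\Gamma$ in $\rr$ must simultaneously produce chains in $X_n$ whose constants shrink (forcing $\infty\in S_{UD}$) and keep each individual $X_n$ genuinely uniformly disconnected (keeping all other points out of $S_{UD}$); the verifications of doubling and uniform perfectness are routine given Propositions \ref{prop:tld}, \ref{prop:tlup} and Lemma \ref{lem:ssum}.
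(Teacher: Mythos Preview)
Your proposal is correct and follows essentially the same approach as the paper: the paper uses $F_n=\frac{2^{-n-1}}{2n-1}\bigcup_{i=0}^{n-1}(2i+\Gamma)$ (i.e., $n$ copies of $\Gamma$ instead of your $n+1$), observes the $(1/(2n-1))$-chains, and invokes Lemma \ref{lem:ssum} and Propositions \ref{prop:tld}, \ref{prop:tlup} exactly as you do. Your justification that points $\neq\infty$ lie outside $S_{UD}$ (using that each fiber is a clopen uniformly disconnected neighborhood) is slightly more direct than the paper's, which instead notes that small neighborhoods of such points are bi-Lipschitz to open subsets of $\Gamma$.
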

\begin{proof}
For each $n\in \nn$, we define a subset $F_n$ of $\rr$ by 
\[
F_n=\frac{2^{-n-1}}{(2n-1)}\left(\bigcup_{i=0}^{n-1}(2i+\Gamma)\right),
\] 
and we denote by $e_n$ the metric on $F_n$ induced from $d_{\rr}$, 
where $\Gamma$ is the middle-third Cantor set.
Note that  $F_n$ has a non-trivial $(1/(2n-1))$-chain.
By Lemma \ref{lem:Cantor}, the middle-third Cantor set $(\Gamma, d_{\Gamma})$ is $(1/5)$-uniformly perfect.
Using Lemma \ref{lem:ssum}, 
we see that the space $(F_n,e_n)$ is $(1/20)$-uniformly perfect. 
Note that $\di(F_n)=2^{-n-1}$. 
Then the family $\mathcal{F}=\{(F_i,e_i)\}_{i\in \nn}$ and the telescope base $\mathcal{R}$ defined in Definition \ref{def:tbr} is compatible.\par 
Let $\mathcal{P}=(\mathcal{F},\mathcal{R})$. 
By Lemma \ref{lem:tlCantor}, the telescope space $(T(\mathcal{P}),d_{\mathcal{P}})$ is a Cantor space. 
We  prove that $(T(\mathcal{P}),d_{\mathcal{P}})$ is a desired space. 
From the construction of $(T(\mathcal{P}),d_{\mathcal{P}})$, 
it follows that each neighborhood of $\infty$ has a non-trivial $(1/(2n-1))$-chain for every sufficiently large $n$. 
Hence $(T(\mathcal{P}),d_{\mathcal{P}})$ is not uniformly disconnected.  
A small enough neighborhood of an arbitrary point except $\infty$ is 
bi-Lipschitz equivalent to some open set of $(\Gamma,d_{\Gamma})$. 
Hence $S_{UD}(T(\mathcal{P}),d_{\mathcal{P}})=\{\infty\}$. 
This implies that $(T(P),d_{\mathcal{P}})$ is a $UP$-spike space. 
By Propositions \ref{prop:tld} and \ref{prop:tlup},
 the space $(T(\mathcal{P}),d_{\mathcal{P}})$ is doubling and uniformly perfect.
Therefore $(T(\mathcal{P}),d_{\mathcal{P}})$ has type $(1,0,1)$. 
\end{proof}
%%%%%%%%%%%%%%%%%%%%%%%%%%%%%%%%%%%%%%%%%%%%%%%%%%%%%%%%%%%%%%%%%%%%%%%%%
%%%%%%%%%%%%%%%%%%%%%%%%%%%%%%%%%%%%%%%%%
Third we study the existence of a $UP$-spike Cantor metric space. 
\begin{prop}\label{prop:upspike1}
There exists a compatible pair $\mathcal{P}$ such that 
the space $(T(\mathcal{P}),d_{\mathcal{P}})$ is  a $UP$-spike Cantor metric space of type $(1,1,0)$ 
satisfying the following:  
For each $\rho \in (0,\infty)$ there exists $r\in (0,\di (T(\mathcal{P})))$ with  
\[
B(\infty, r)\setminus U(\infty,\rho r)=\emptyset;
\]
in particular, $S_{UP}(T(\mathcal{P}),d_{\mathcal{P}})=\{\infty\}$. 
\end{prop}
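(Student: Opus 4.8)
The plan is to construct a telescope space whose factors $(X_i, d_i)$ are themselves uniformly disconnected and doubling Cantor spaces but whose diameters decay fast enough (relative to the telescope base $\mathcal{R}$) that any neighborhood of $\infty$ fails to be uniformly perfect. Concretely, I would take each $X_i$ to be a copy of the middle-third Cantor set $\Gamma$ rescaled so that $\di(X_i)$ is much smaller than $R_i(\mathcal{R}) = 2^{-i-1}$ — for instance set $\di(X_i) = 2^{-i-1} \cdot 2^{-i} = 4^{-i}/2$, or more generally $\di(X_i) = \epsilon_i \cdot 2^{-i-1}$ for a sequence $\epsilon_i \to 0$ decaying superpolynomially. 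The point of the rapid decay is that it creates large ``annular gaps'' around $\infty$: between the scale $2^{-i}$ (the distance from $\infty$ to $X_i$) and the scale $2^{-i-1}$ (the distance from $\infty$ to $X_{i+1}$), the only points of $T(\mathcal{P})$ lying at distance roughly $2^{-i}$ from $\infty$ are those in $X_i$, which sits in a tiny ball of radius $\di(X_i) \ll 2^{-i-1}$. This is the exact opposite of the situation in Proposition \ref{prop:tlup}, where the hypothesis $M \cdot 2^{-i} \le \di(X_i)$ prevents such gaps.

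**Next I would** verify the three required properties in turn. That $(T(\mathcal{P}), d_{\mathcal{P}})$ is a Cantor space is immediate from Lemma \ref{lem:tlCantor}. That it is doubling follows from Proposition \ref{prop:tld}: each $X_i$ is a rescaled copy of $\Gamma$, hence uniformly $N$-doubling for a single $N$ (the doubling constant of $\Gamma$, which is scale-independent), and $(R, d_R) \subset \rr$ is doubling. That it is uniformly disconnected follows from Proposition \ref{prop:tlud}: each rescaled $\Gamma$ is $\delta$-uniformly disconnected for a fixed $\delta \in (0, 3/5)$ by Remark \ref{rmk:c}, and $(R, d_R)$, being a subset of $\rr$, is uniformly disconnected (indeed it is an ultrametric space — the distances $2^{-i}$ are such that $d_R(r_i, r_j) = 2^{-\min(i,j)} = \max\{d_R(r_i, r_k), d_R(r_k, r_j)\}$ holds; alternatively invoke that it is bi-Lipschitz to a subset of $(2^{\nn}, e)$). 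So the space has type $(1, 1, u)$ with $u$ to be determined.

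**The heart of the matter** is showing $S_{UP}(T(\mathcal{P}), d_{\mathcal{P}}) = \{\infty\}$, and in fact the stronger quantitative statement about the missing annuli at $\infty$. For the quantitative claim: given $\rho \in (0,1]$, choose $i$ large enough that $\di(X_i) < \rho \cdot 2^{-i-1}$ and also $\di(X_{i+1}) + (\text{stuff}) $ is small; then take $r$ with $2^{-i-1} < r$ slightly, precisely $r \in (\di(X_i) + 2^{-i-1}, \, 2^{-i})$ chosen so that $B(\infty, r)$ captures all of $X_i$ and $\infty$ but nothing of $X_{i-1}$, while $U(\infty, \rho r)$ — which has radius $\rho r$ possibly larger than $\di(X_i) + 2^{-i-1}$ if $\rho$ and $r$ cooperate — actually I need $\rho r > $ the distance to the farthest point of $X_i$. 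Here the cleanest choice is $r$ just below $2^{-i}$ and note $d_{\mathcal{P}}(\infty, y) \le 2^{-i} + \di(X_i) < \rho r$ for $y \in X_i$ once $\di(X_i)$ is small and $\rho$ not too tiny relative to the chosen $i$; picking $i$ as a function of $\rho$ makes this work. So $B(\infty, r) = \{\infty\} \cup \bigcup_{j \ge i} X_j = U(\infty, \rho r)$, giving $B(\infty,r) \setminus U(\infty,\rho r) = \emptyset$. Since this holds for all $\rho$, no neighborhood of $\infty$ is uniformly perfect, so $\infty \in S_{UP}$; meanwhile any point of any $X_i$ has a small neighborhood bi-Lipschitz to an open subset of $\Gamma$, hence uniformly perfect, so it is not in $S_{UP}$. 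Thus $S_{UP} = \{\infty\}$, a singleton, and $(T(\mathcal{P}), d_{\mathcal{P}})$ is a $UP$-spike space of type $(1,1,0)$.

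**The main obstacle** is getting the bookkeeping of the two radii $r$ and $\rho r$ exactly right — the inequality chain must simultaneously ensure $B(\infty, r)$ contains all of $X_i$ but none of $X_{i-1}$, and that $U(\infty, \rho r)$ also contains all of $X_i$; this forces $\di(X_i)$ to shrink faster than any fixed multiple of $2^{-i}$, which is why the superpolynomial (e.g. $\di(X_i) = 4^{-i}$ up to constants) decay is essential and why Proposition \ref{prop:tlup}'s lower bound hypothesis is exactly what we must violate. The rest is routine application of the telescope machinery from Section \ref{sec:telescope}.
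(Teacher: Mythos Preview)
Your approach has a genuine gap stemming from a misreading of the telescope metric. By definition,
\[
d_{\mathcal{P}}(\infty, y) = d_R(r_\infty, r_i) = 2^{-i} \quad \text{for \emph{every} } y \in X_i,
\]
so the distance from $\infty$ to each point of $X_i$ is exactly $2^{-i}$, independent of $\di(X_i)$. Your inequality ``$d_{\mathcal{P}}(\infty, y) \le 2^{-i} + \di(X_i)$'' treats the metric as if it were built by a triangle-inequality path through a basepoint of $X_i$, but that is not how $d_{\mathcal{P}}$ is defined. Consequently the set of distances from $\infty$ is precisely $\{2^{-i} : i \in \nn\}$, regardless of how small you make the pieces. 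For any $\rho \le 1/2$ and any $r \in (0, \di(T(\mathcal{P})))$, there is some $k$ with $2^{-k} \in [r/2, r] \subset [\rho r, r]$, whence $X_k \subset B(\infty, r) \setminus U(\infty, \rho r)$. The annular gap you want can therefore never occur with the base $\mathcal{R}$, no matter how rapidly $\di(X_i)$ decays. (Incidentally, $(R, d_R)$ is not an ultrametric space: $d_R(r_i, r_j) = |2^{-i} - 2^{-j}|$, not $2^{-\min(i,j)}$; it is, however, uniformly disconnected, so this part of your argument survives.)

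The paper fixes this by changing the \emph{base}, not the pieces: it uses the telescope base $\mathcal{V} = (V, d_V, v)$ with $v_n = 1/n!$, so that the distances from $\infty$ form the set $\{1/n!\}$. Since $(1/(n+1)!)/(1/n!) = 1/(n+1) \to 0$, for any $\rho \in (0,1)$ one can find $n$ with $\rho > 2/(n+1)$; then for $r = 1/(2\,n!)$ both $r$ and $\rho r$ lie in the gap $(1/(n+1)!,\, 1/n!)$, giving $B(\infty, r) = U(\infty, \rho r)$. The pieces $G_i = \frac{1}{(i+1)!}\Gamma$ are chosen simply to make $\mathcal{P}$ compatible and to keep the doubling and uniform disconnectedness hypotheses of Propositions~\ref{prop:tld} and~\ref{prop:tlud} satisfied. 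So the essential idea you are missing is that the failure of uniform perfectness at $\infty$ must be engineered in the telescope base itself.
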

\begin{proof}
Define a function $v:\nn\cup\{\infty\}\to \rr$ by $v_n=(n!)^{-1}$ if $n\in \nn$,
 and $v_{\infty}=0$. Put $V=\{0\}\cup \{\,v_n\mid n\in \nn\,\}$. 
Let $d_V$ be the metric on $V$ induced from $d_{\rr}$. 
Then $\mathcal{V}=(V,d_V,v)$ is a telescope base. 
For each $i\in \nn$, let
\[
G_i=\frac{1}{(i+1)!} \Gamma, 
\] 
and let $d_i$ be the metric on $G_i$ induced from $d_{\rr}$. 
Since $R_i(\mathcal{V})\ge 1/(i+1)!$, the pair of $\mathcal{G}=\{(G_i,d_i)\}_{i\in \nn}$ and $\mathcal{V}$ is compatible. 
\par
Let $\mathcal{P}=(\mathcal{G},\mathcal{V})$. 
By Lemma \ref{lem:tlCantor}, the telescope space $(T(\mathcal{P}),d_{\mathcal{P}})$ is a Cantor space. 
We prove that $(T(\mathcal{P}),d_{\mathcal{P}})$ is a desired space. 
For each $\rho\in (0,1)$, take $n\in \nn$ with $\rho >2/(n+1)$. 
Then by the definition of $d_{\mathcal{P}}$ we have 
\[
B\left(\infty,\frac{1}{2n!}\right)=B\left(\infty,\frac{\rho}{2n!}\right).
\]
Hence $\infty\in S_{UD}(T(\mathcal{P}),d_{\mathcal{P}})$. 
If $x\neq \infty$, then $x$ has a neighborhood which is Lipschitz equivalent to the middle-third Cantor set $(\Gamma,d_{\Gamma})$. 
This implies that $S_{UD}(T(\mathcal{P}),d_{\mathcal{P}})=\{\infty\}$. 
Therefore 
$(T(\mathcal{P}),d_{\mathcal{P}})$ is a $UD$-spike space. 
Since $(V,d_V)$ is doubling and uniformly disconnected, 
by Propositions \ref{prop:tld} and \ref{prop:tlud}, 
the space $(T(\mathcal{P}),d_{\mathcal{P}})$ is doubling and uniformly disconnected. 
Therefore $(T(\mathcal{P}),d_{\mathcal{P}})$ has type $(1,1,0)$. 
\end{proof}

\begin{rmk}\label{rmk:upspike2}
There exists a compatible pair $\mathcal{P}$ such that 
the space $(T(\mathcal{P}),d_{\mathcal{P}})$ is  a $UP$-spike Cantor metric space of type $(1,1,0)$ 
satisfying: 
\begin{enumerate}
\item $S_{UP}(X,d)=\{\infty\}$;
\item there exists $\rho\in (0,1]$ such that for each  $r\in (0,\di (T(\mathcal{P})))$
\[
B(\infty, r)\setminus U(\infty,\rho r)\neq\emptyset. 
\]
\end{enumerate}
\end{rmk}

%%%%%%%%%%%%%%%%%%%%%%%%%%%%%%%%%%%%%%%%%%%
\begin{rmk}\label{rmk:multispike}
Using the constructions of $D$, $UD$ or $UP$-spike Cantor metric spaces discussed above, 
for each exotic type $(u,v,w)$,  
we can obtain the Cantor metric space $(T(\mathcal{P}),d_{\mathcal{P}})$ of type $(u,v,w)$ such that 
$S_P(T(\mathcal{P}),d_{\mathcal{P}})=\{\infty\}$ 
for all $P\in \{D,UD,UP\}$ with $T_P(T(\mathcal{P}),d_{\mathcal{P}})=0$. 
\end{rmk}

\section{Abundance of Exotic Cantor Metric Spaces }\label{sec:prf}
In this section, we prove Theorem \ref{thm:many}.
\subsection{Leafy Cantor Spaces}
Let $X$ be a topological space, 
and let $A$ be a subset of $X$. 
We denote by $D(A)$ the derived set of $A$ consisting of all accumulation points of $A$ in $X$. 
For $k\in \nn\cup \{0\}$, we denote by $D^k(A)$ the $k$-th derived set of $A$ inductively defined as $D^{k}(A)=D(D^{k-1}(A))$, where $D^0(A)=A$. 
Recall that $A$ is perfect in $X$ if and only if  $D(A)=A$. 
\begin{df}
We say that  $x\in X$ is a \emph{perfect point of $X$} if there exists a perfect neighborhood in $X$ of $x$. 
We denote by $P(X)$ the set of all perfect points of $X$,
and call $P(X)$ the \emph{perfect part of $X$}.
\end{df}

Notice that $X$ is perfect in $X$ if and only if $P(X)=X$.

\begin{df}
We say that $X$ is \emph{anti-perfect} if $P(X)$ is empty; in other words, each open set of $X$ has an isolated point.
\end{df}
We introduce the following:
\begin{df}
For  $n\in \nn$, 
we say that a topological space $X$ is \emph{an $n$-leafy Cantor space} if  $X$ satisfies the following: 
\begin{enumerate}
\item $X$ is a $0$-dimensional compact metrizable space;
\item $D^k(X)$ is anti-perfect for all $k<n$;
\item $D^n(X)$ is a Cantor space.
\end{enumerate}
\end{df}

In order to prove the existence of leafy Cantor spaces,  
we refer to a construction of the middle-third Cantor set by using the iterating function system. 
\begin{df}\label{def:lambda}
Let $S$ be a compact subset of $\rr$ with $(1/3)S\subset S$ and $\di(S)\le 2^{-1}$. 
Let $f_{0}(x)=(1/3)x$ and  $f_{1}(x)=(1/3)x+(2/3)$. 
We inductively define a family $\{V_i(S)\}_{i\in \nn\cup\{0\}}$ of subsets of $\rr$ by
\begin{align*}
V_0(S)=(-S)\cup (1+S),\quad V_{i+1}(S)=f_{0}(V_i(S))\cup f_1(V_i(S)).
\end{align*}
Put 
\[
L(S)=\bigcup_{i\in \nn}V_i(S),
\]
and $\Lambda(S)=\cl_{\rr}(L(S))$, 
where $\cl_{\rr}$ is the closure operator in $\rr$.
\end{df}
\begin{rmk}
The construction in Definition \ref{def:lambda} contains the middle-third Cantor set. 
Namely, we have  $\Lambda(\{0\})=\Gamma$.
\end{rmk}

By definition, we have the following:
\begin{lem}\label{lem:dls}
Let $S$ be a compact subset of $\rr$ with $(1/3)S\subset S$ and $\di(S)\le 2^{-1}$. 
Then for each $n\in \nn\cup\{0\}$ we have
\[
D^n(\Lambda(S))=\Gamma\cup L(D^n(S)).
\]
\end{lem}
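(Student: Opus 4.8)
The plan is to prove the identity $D^n(\Lambda(S)) = \Gamma \cup L(D^n(S))$ by induction on $n$, with the base case $n=0$ being essentially the definition of $\Lambda(S)$ together with the observation that $\Gamma \subset \Lambda(S)$. The latter inclusion follows because $(1/3)S \subset S$ forces the self-similar structure $f_0(V_i(S)) \cup f_1(V_i(S)) = V_{i+1}(S)$ to accumulate onto the attractor of $\{f_0,f_1\}$, which is $\Gamma$; more carefully, one checks that $\cl_{\rr}(L(S))$ contains all points whose ternary-type addresses are eventually determined by the $f_0,f_1$ iteration, i.e.\ all of $\Gamma$. So $\Lambda(S) = \Gamma \cup L(S)$ and, since $L(D^0(S)) = L(S)$, the case $n=0$ holds.

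For the inductive step I would first establish the key structural fact that $L(S)$ is a countable union of isometric (up to scaling) copies of $S$ and $-S$, sitting in pairwise disjoint compact pieces that accumulate only on $\Gamma$; concretely, $V_i(S)$ is a disjoint union of $2^{i+1}$ scaled translates of $\pm S$, each of diameter $3^{-i}\di(S) \le 3^{-i}/2$, and these clusters converge to $\Gamma$ as $i \to \infty$. From this one reads off how the derived-set operator acts: taking $D$ of $\Gamma \cup L(S)$ leaves $\Gamma$ fixed (it is perfect), and on each scaled copy of $\pm S$ it produces the corresponding scaled copy of $\pm D(S)$, while the accumulation of the clusters onto $\Gamma$ contributes no new points beyond $\Gamma$ itself (since $\Gamma$ is already included). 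Hence $D(\Gamma \cup L(S)) = \Gamma \cup L(D(S))$, using that $L$ commutes with applying the derived-set operator coordinatewise to the generating set: $D(V_0(S)) = V_0(D(S))$ because $D((-S)\cup(1+S)) = (-D(S))\cup(1+D(S))$ (the two pieces $-S$ and $1+S$ are separated since $\di(S)\le 1/2$), and $D(V_{i+1}(S)) = V_{i+1}(D(S))$ by the bi-Lipschitz invariance of $D$ under $f_0,f_1$ and the disjointness of $f_0(V_i(S))$ and $f_1(V_i(S))$.

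Combining, if $D^n(\Lambda(S)) = \Gamma \cup L(D^n(S))$, then applying $D$ and the displayed identity gives $D^{n+1}(\Lambda(S)) = D(\Gamma \cup L(D^n(S))) = \Gamma \cup L(D(D^n(S))) = \Gamma \cup L(D^{n+1}(S))$, completing the induction. The main obstacle I anticipate is the bookkeeping in the structural fact: one must verify carefully that the clusters $f_{w}(V_0(S))$ (indexed by finite words $w$ in $\{0,1\}$) are pairwise disjoint, that their only accumulation points lie in $\Gamma$, and that no point of $\Gamma$ is isolated in $\Gamma \cup L(D^n(S))$ — all of which hinge on the diameter bound $\di(S) \le 2^{-1}$ and the contraction ratio $1/3$ keeping the pieces from overlapping. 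Once the geometry is pinned down, the commutation of $D$ with the construction $L(\,\cdot\,)$ is routine, because $D$ is local and invariant under the bi-Lipschitz maps $f_0, f_1$ and under the reflection and translation defining $V_0$.
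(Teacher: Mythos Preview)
The paper gives no proof of this lemma; it simply asserts that the statement follows ``by definition'' of the sets $V_i(S)$, $L(S)$, and $\Lambda(S)$. Your proposal supplies the details the paper omits, and the inductive strategy --- establishing $\Lambda(S)=\Gamma\cup L(S)$ as the base case, then showing that the derived-set operator $D$ commutes with the construction $L(\,\cdot\,)$ because $D$ acts locally on the pairwise separated scaled copies of $\pm S$ and fixes the perfect set $\Gamma$ --- is exactly the right way to unpack the claim. The geometric checks you flag (that the pieces $f_w(V_0(S))$ are disjoint, accumulate only on $\Gamma$, and that $\Gamma$ has no isolated points in $\Gamma\cup L(D^n(S))$) are indeed what the diameter bound $\di(S)\le 1/2$ and the contraction ratio $1/3$ guarantee, and they are routine once observed. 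One small point worth making explicit in the write-up: the set $T=D^n(S)$ again satisfies $(1/3)T\subset T$ (since multiplication by $1/3$ is a homeomorphism and hence preserves accumulation points), so the structural description of $L(T)$ applies at every stage of the induction.
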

We verify the existence of leafy Cantor spaces.
\begin{prop}\label{prop:leaf}
For every $n\in \nn$, there exists an $n$-leafy Cantor space.
\end{prop}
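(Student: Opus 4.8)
The plan is to build an $n$-leafy Cantor space explicitly by iterating the construction $\Lambda$ from Definition \ref{def:lambda}, using Lemma \ref{lem:dls} to track how derived sets behave under $\Lambda$. Recall that $\Lambda(\{0\}) = \Gamma$, so $D^k(\Lambda(\{0\})) = \Gamma$ for all $k$ (the middle-third Cantor set is perfect), which gives the base case $n = 0$ but is useless for $n \geq 1$. The key observation is that if $S$ is a compact subset of $\rr$ satisfying $(1/3)S \subset S$ and $\di(S) \le 2^{-1}$, then by Lemma \ref{lem:dls} we have $D^n(\Lambda(S)) = \Gamma \cup L(D^n(S))$; in particular, if $S$ is itself chosen so that $D^{n-1}(S)$ is a single point $\{0\}$ while $D^k(S)$ for $k < n-1$ still has isolated points, then the $L(\cdot)$ terms will contribute the isolated points needed for anti-perfectness at the lower levels, and at level $n$ we will get $D^n(\Lambda(S)) = \Gamma \cup L(\{0\}) = \Gamma \cup \Gamma = \Gamma$, a Cantor space. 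Wait — I must be careful: $L(\{0\}) = \bigcup_i V_i(\{0\})$ is a countable set accumulating onto $\Gamma$, so $\Gamma \cup L(\{0\}) = \Lambda(\{0\}) \setminus (\text{accumulation structure})$; actually $\Gamma \cup L(\{0\}) = \Gamma$ precisely when $L(\{0\}) \subset \Gamma$, which is false. So instead I should iterate: define $S_1 = \{0\}$ and $S_{m+1} = \Lambda(S_m)$ (after rescaling to keep $\di \le 2^{-1}$ and $(1/3)S_{m+1} \subset S_{m+1}$), and take the $n$-leafy space to be $\Lambda(S_n)$ or rather $S_{n+1}$, so that $D^n(S_{n+1})$ unwinds via repeated application of Lemma \ref{lem:dls} down to $\Gamma$.

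Concretely, I would proceed by induction on $n$. First I would check that the operation $S \mapsto \Lambda(S)$ preserves the hypotheses $(1/3)\Lambda(S) \subset \Lambda(S)$ and $\di(\Lambda(S)) \le 2^{-1}$ — the self-similarity $f_0(\Lambda(S)) \cup f_1(\Lambda(S)) \subset \Lambda(S)$ follows from the definition of $V_{i+1}$ together with the closure operation, and the diameter bound follows because $\Lambda(S) \subset [-\di(S), 1 + \di(S)] \subset [-1/2, 3/2]$, which has diameter $2$; so in fact I need to rescale $\Lambda(S)$ by a factor like $1/4$ (and perhaps recenter) before feeding it back in, or equivalently build the iteration with a contraction ratio baked in. The cleanest route: observe that if $S$ satisfies the hypotheses then so does $T := \lambda\Lambda(S)$ for a suitable constant $\lambda \in (0,1)$, because scaling commutes with everything and $D^k(\lambda\Lambda(S)) = \lambda D^k(\Lambda(S))$; Lemma \ref{lem:dls} still applies to $\Lambda(S)$ and we transport the conclusion by the scaling. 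Then set $S^{(0)} = \{0\}$, $S^{(m+1)} = \lambda \Lambda(S^{(m)})$, and claim $X := S^{(n)}$ is an $n$-leafy Cantor space.

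The verification of the three defining properties: property (1), that $X$ is $0$-dimensional compact metrizable, is immediate since $X$ is a compact subset of $\rr$ (closed and bounded by construction) and every compact subset of $\rr$ with empty interior is $0$-dimensional — and $X$ has empty interior because it is built from the Cantor-type construction, or more directly because $D^n(X)$ being a proper Cantor set forces $X$ to be nowhere dense. Property (3): applying Lemma \ref{lem:dls} $n$ times (tracking the scaling factor $\lambda$) gives $D^n(S^{(n)}) = \lambda^n \cdot(\text{a scaled copy of } D^0(\{0\}) \text{ thickened by } \Gamma\text{'s}) $; I need to compute this carefully, but the point is that each application of $D^n(\Lambda(S)) = \Gamma \cup L(D^n(S))$ reduces the derived-set order of $S$ by... hmm, actually $D^n(\Lambda(S)) = \Gamma \cup L(D^n(S))$ shows that taking $n$ derivatives of $\Lambda(S)$ gives something governed by $D^n(S)$; so I want the iteration arranged so that $D^n(S^{(n)})$ reduces to $D^0$ of something Cantor, namely $\Gamma$ itself. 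Property (2), anti-perfectness of $D^k(X)$ for $k < n$: here the $L(\cdot)$ terms are essential — $L(S)$ always contains isolated points of $\Lambda(S)$ near the "seams," so $D^k(X)$ inherits isolated points in every open set. The main obstacle I anticipate is getting the bookkeeping exactly right: pinning down the precise scaling constant $\lambda$ so that all hypotheses of Lemma \ref{lem:dls} are maintained at every stage, and then carefully iterating the identity $D^n(\Lambda(S)) = \Gamma \cup L(D^n(S))$ to confirm that after $n$ rounds one lands on a genuine Cantor space at level $n$ while every lower level $D^k(X)$ genuinely fails to be perfect on any open set. This is a finite but somewhat delicate induction; the conceptual content is entirely in Lemma \ref{lem:dls}, and the rest is controlled combinatorics of derived sets.
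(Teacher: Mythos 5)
Your proposal does not reach a correct construction, and the point at which it goes wrong is identifiable. Early on you consider the right strategy — pick a seed $S$ whose iterated derived sets stay anti-perfect for the required number of steps and then collapse to a point, and apply $\Lambda$ \emph{once} — but you abandon it because you assert that $\Gamma\cup L(\{0\})=\Gamma$ "is false." It is in fact true: $V_0(\{0\})=\{0,1\}\subset\Gamma$, and since $f_0(\Gamma)\cup f_1(\Gamma)=\Gamma$, induction gives $V_i(\{0\})\subset\Gamma$ for all $i$, hence $L(\{0\})\subset\Gamma$ and $\Lambda(\{0\})=\Gamma$ (this is exactly the remark following Definition \ref{def:lambda}). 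So the computation $D^n(\Lambda(S))=\Gamma\cup L(D^n(S))=\Gamma$ when $D^n(S)=\{0\}$ is fine, and the only missing ingredient is a seed $S$ with $D^n(S)=\{0\}$ whose lower derived sets are still anti-perfect. The paper produces one by taking the convergent sequence $S=\{0\}\cup\{3^{-i}\mid i\in\nn\cup\{0\}\}$ and its $n$-fold Minkowski sums $S_1=S$, $S_{i+1}=S_i+S$, which satisfy $D(S_m)=S_{m-1}$ and hence $D^n(S_n)=\{0\}$; after rescaling by $1/2n$ to meet the hypotheses of Definition \ref{def:lambda}, a single application of $\Lambda$ finishes the proof via Lemma \ref{lem:dls}. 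Your proposal contains no analogue of this seed, which is the actual constructive content of the proposition.

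The replacement you propose instead — iterating $\Lambda$ via $S^{(0)}=\{0\}$, $S^{(m+1)}=\lambda\Lambda(S^{(m)})$, and taking $X=S^{(n)}$ — fails outright. Already $S^{(1)}=\lambda\Lambda(\{0\})=\lambda\Gamma$ is perfect, so $D^0(S^{(1)})=S^{(1)}$ is not anti-perfect and $S^{(1)}$ is not $1$-leafy. More generally, if the seed $T$ is compact and perfect, then every $V_i(T)$ is a finite union of affine copies of $T$ and so has no isolated points; hence $L(T)$ and its closure $\Lambda(T)$ have no isolated points, and $\Lambda(T)$, being a compact $0$-dimensional metrizable space without isolated points, is a Cantor space by Theorem \ref{Brouwer}. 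Thus $D^k(S^{(m)})=S^{(m)}$ for all $k$ and all $m\ge 1$: iterating $\Lambda$ never creates the anti-perfect intermediate layers, because by Lemma \ref{lem:dls} the entire derived-set depth of $\Lambda(S)$ is inherited from the seed $S$, and your iteration destroys that depth at the very first step by replacing the seed with a perfect set.
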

\begin{proof}
Put $S=\{0\}\cup \{3^{-i}\mid i\in \nn\cup \{0\}\}$. 
We inductively define a family $\{S_i\}_{i\in \nn}$ of subsets of $\rr$ by 
\begin{align*}
S_1=S, \quad S_{i+1}=S_i+S.
\end{align*}
Then  $D(S_1)=\{0\}$. 
For each $n\in \nn$, we have  $D(S_n)=S_{n-1}$, and hence $D^n(S_n)=\{0\}$.
Let $T_n=(1/2n)\cdot S_n$. Note that $T_n$ is a compact subset of $\rr$ and satisfies
$(3^{-1})\cdot T_n\subset T_n$ and $\di(T_n)=2^{-1}$. 
Then we can define the space  $\Lambda(T_n)$ for $T_n$ (see Definition \ref{def:lambda}).
By Lemma \ref{lem:dls}, we conclude that $\Lambda(T_n)$ is surely an $n$-leafy Cantor space.
\end{proof}

\subsection{Topological Observation}
For a topological space $X$, 
let $\mathcal{C}(X)$ be the set of all closed sets in $X$, 
and let $\mathcal{H}(X)$ be the quotient set $\mathcal{C}(X)/\!\approx$ of $\mathcal{C}(X)$ 
divided by $\approx$, where 
the symbol $\approx$ denotes the homeomorphic relation on $\mathcal{C}(X)$. \par 
\begin{df}\label{def:xi}
For each $n\in \nn$, by Proposition \ref{prop:leaf}, 
there exists an $n$-leafy Cantor space $\Lambda_n$. 
We may assume $ \Lambda_n \subset [2^{-2n}, 2^{-2n+1}]$. 
Note that if $n\neq m$, then $\Lambda_n\cap \Lambda_m$ is empty. 
Let $I$ be the set of all points $x\in 2^{\nn}$ such that $\card(\{\,i\in \nn\mid x_i=1 \})$ is infinite.  
Note that $\card(I)=2^{\aleph_0}$. 
For each $x\in I$, we define 
\[
\Xi(x)=\{0\}\cup \bigcup_{x_i=1}\Lambda_i.
\]
Then $\Xi(x)$ is a $0$-dimensional compact metrizable space. 
Since a $0$-dimensional compact metrizable space can be topologically embedded into the middle-third Cantor set $\Gamma$ (\cite{Ury}, see e.g., \cite[Theorem 2 in \S 26. IV]{Ku}), 
the space $\Xi(x)$ can be considered as the closed subspace of $\Gamma$. 
Thus  we obtain a map $\Xi:I\to \mathcal{C}(\Gamma)$ by assigning each point $x\in I$ to the space  $\Xi(x)$. 
\end{df}
\begin{rmk}\label{rmk:anti}
Since each $\Lambda_i$ is anti-perfect,  so is $\Xi(x)$ for each $x\in I$. 
\end{rmk}
The following proposition is a key to prove Theorem \ref{thm:many}. 
\begin{prop}\label{prop:xiinj}
The map $[\Xi]:I\to \mathcal{H}(\Gamma)$ defined by
 $[\Xi](x)=[\Xi(x)]$ is injective, where $[\Xi(x)]$ stands for the equivalent class of $\Xi(x)$. 
\end{prop}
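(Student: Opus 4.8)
The plan is to show that the homeomorphism type of $\Xi(x)$ determines, and is determined by, the set $\{i\in\nn\mid x_i=1\}$, which immediately gives injectivity of $[\Xi]$ on $I$. The key invariant is the behaviour of the iterated derived sets. First I would record, via Lemma \ref{lem:dls} applied to each $\Lambda_i$ (an $i$-leafy Cantor space, so $D^k(\Lambda_i)$ is anti-perfect for $k<i$ and $D^i(\Lambda_i)$ is a Cantor space), how the derived set operator acts on the disjoint union $\Xi(x)=\{0\}\cup\bigcup_{x_i=1}\Lambda_i$. Because the $\Lambda_i$ live in pairwise disjoint intervals $[2^{-2i},2^{-2i+1}]$ accumulating only at $0$, taking a derived set commutes with the union except that the isolated point structure near $0$ must be tracked: $0$ is an accumulation point of $\Xi(x)$, and more generally $D^k(\Xi(x))=\{0\}\cup\bigcup_{x_i=1}D^k(\Lambda_i)$ for every $k$, since $0$ remains a limit of the pieces $D^k(\Lambda_i)$ (each $D^k(\Lambda_i)$ is non-empty, indeed contains a Cantor set once $k\ge i$) as long as infinitely many $i$ have $x_i=1$ — which is exactly the defining property of $I$.

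With that formula in hand, the strategy is to extract the index set from the local structure of $\Xi(x)$ at the point $0$ versus away from $0$. Fix $x\in I$. For a point $p\in\Xi(x)$ with $p\ne 0$, a small enough neighbourhood of $p$ lies entirely in a single $\Lambda_i$, so $p$ is a perfect point of $\Xi(x)$ iff it is a perfect point of $\Lambda_i$; since $\Lambda_i$ is anti-perfect (Remark \ref{rmk:anti}), $\Xi(x)$ has no perfect points away from $0$, and near $0$ it is anti-perfect as well — so $\Xi(x)$ is anti-perfect, and it is never itself a Cantor space. The discriminating data is instead: for which $n$ does $D^n(\Xi(x))$ contain a Cantor-set portion not concentrated at $0$? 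Precisely, by the displayed formula $D^n(\Xi(x))=\{0\}\cup\bigcup_{x_i=1}D^n(\Lambda_i)$, and $D^n(\Lambda_i)$ is a Cantor space exactly when $n=i$ (for $n<i$ it is anti-perfect hence not a Cantor space; for $n>i$ one checks from Lemma \ref{lem:dls} that $D^n(\Lambda_i)$ stabilizes to the perfect set $\Gamma$ built into the construction, still a Cantor space — so I should be slightly more careful and instead isolate $i$ as the \emph{least} $n$ for which $D^n(\Lambda_i)$ is perfect). So define, for each point $q$ in the Cantor part of $D^N(\Xi(x))$ for $N$ large, its "depth" as the least $k$ such that $q$ has a perfect neighbourhood in $D^k(\Xi(x))$; the set of depths occurring is precisely $\{i\mid x_i=1\}$.

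Concretely the cleanest route: show that $i\in\{j\mid x_j=1\}$ if and only if $\Xi(x)$ has a clopen subset $U$, with $0\notin U$, such that $D^{i-1}(U)$ is non-empty and anti-perfect while $D^{i}(U)$ is a non-empty perfect set (equivalently a Cantor set). The "only if" direction is witnessed by taking $U=\Lambda_i$ itself (clopen in $\Xi(x)$ since the $\Lambda_j$ occupy separated intervals and $0\notin\Lambda_i$), using the definition of $i$-leafy. For the "if" direction, any clopen $U$ with $0\notin U$ is contained in a finite union $\bigcup_{j\in F,\,x_j=1}\Lambda_j$, in fact is a disjoint union of clopen pieces $U_j\subset\Lambda_j$; then $D^k(U)=\bigsqcup_j D^k(U_j)$, each $D^k(U_j)$ is a clopen subset of $D^k(\Lambda_j)$, anti-perfect for $k<j$ and containing a Cantor set for $k\ge j$, so the least $k$ making $D^k(U)$ perfect equals $\min\{j\in F: U_j\ne\emptyset\}$ — and the condition in the displayed characterization forces this minimum to be $i$ with $U_i\ne\emptyset$, hence $x_i=1$. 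This characterization is phrased purely in topological terms (clopen sets, derived sets, perfectness), so it is preserved under homeomorphism; therefore $\Xi(x)\approx\Xi(y)$ forces $\{i:x_i=1\}=\{i:y_i=1\}$, i.e. $x=y$, proving $[\Xi]$ injective.

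The main obstacle I anticipate is the bookkeeping around the accumulation point $0$: one must verify cleanly that $0$ contributes nothing that could be confused with a $\Lambda_i$-piece — that is, that $0$ never becomes a perfect point at any finite stage of the derivation (it stays a limit of the pieces, never acquiring a perfect neighbourhood, precisely because between the intervals $[2^{-2i},2^{-2i+1}]$ there are gaps forcing isolated-type behaviour near $0$ in every $D^k$), and that clopen neighbourhoods of $0$ are genuinely distinguishable from the clopen pieces away from $0$. Once this is pinned down, the rest is the routine verification that $D^k$ distributes over the relevant disjoint unions and that each step of the leafy-Cantor definition survives passing to clopen subsets, which follows from Lemma \ref{lem:dls} and the definition of $n$-leafy Cantor space.
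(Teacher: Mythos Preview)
Your approach is correct and genuinely different from the paper's. The paper does not quantify over clopen subsets at all: instead it defines a sequence of \emph{canonical} topological operators
\[
A_1(X)=P(D(X)),\qquad A_n(X)=P\bigl(D\bigl(D^{n-1}(X)\setminus P(D^{n-1}(X))\bigr)\bigr)\ (n\ge 2),
\]
built only from $D$ and the perfect-part operator $P$, and checks directly (using the same formula $D^k(\Xi(x))=\{0\}\cup\bigcup_{x_i=1}D^k(\Lambda_i)$ you derived) that $A_n(\Xi(x))=D^n(\Lambda_n)\approx\Gamma$ when $x_n=1$ and $A_n(\Xi(x))=\emptyset$ when $x_n=0$. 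This is slicker: no existential over clopen $U$, no case analysis on how $U$ meets the pieces, and the invariance under homeomorphism is automatic since $A_n$ is a composite of homeomorphism-invariant operators. Your route, by contrast, is more concrete and arguably more transparent about \emph{why} the index $i$ is recoverable --- you are literally exhibiting $\Lambda_i$ as the witnessing clopen piece --- at the cost of the extra bookkeeping in the ``if'' direction.

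There is one genuine loose end you should close rather than just flag. Your criterion reads ``there exists a clopen $U$ with $0\notin U$ such that \dots'', and you then assert this is phrased in purely topological terms. It is not: ``$0\notin U$'' names a specific point. The cleanest repair is to observe that this clause is \emph{redundant}. If $0\in U$ then $U$, being a clopen neighbourhood of $0$, contains $\Lambda_j$ for infinitely many $j>i$ with $x_j=1$; each such $\Lambda_j$ is clopen in $U$, so $D^i(\Lambda_j)$ is clopen in $D^i(U)$, and since $D^i(\Lambda_j)$ is non-empty (it contains $D^j(\Lambda_j)\approx\Gamma$) and anti-perfect (as $i<j$), it contributes an isolated point to $D^i(U)$, contradicting perfectness of $D^i(U)$. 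So drop ``$0\notin U$'' from the stated criterion; the ``if'' direction then forces $0\notin U$ as a first step, after which your finite-union argument goes through unchanged. With that adjustment your proof is complete.
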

\begin{proof}
We inductively define a family $\{A_i\}_{i\in \nn}$ of topological operations by
\begin{align*}
A_1(X)=P(D(X)),\quad A_{i}(X)=P(D(D^{i-1}(X)\setminus P(D^{i-1}(X))))
\end{align*}
if $i\ge 2$. 
By definition, if $X$ and $Y$ are homeomorphic, then so are $A_i(X)$ and $A_i(Y)$ for each $i\in \nn$. 
\par
If $i\in \nn$ satisfies $x_i=1$, then the space $\Lambda_i$ is an open set in $\Xi(x)$. 
Note that  for each $k\in \nn$, we have 
\[
D^k(\Xi(x))=\{0\}\cup \bigcup_{x_i=1}D^k(\Lambda_i). 
\]
Since each $\Lambda_i$ is an $i$-leafy Cantor space, any neighborhood of  $0$ in $D^k(\Xi(x))$ has an isolated point, and hence 
\[
P(D^k(\Xi(x)))=\bigcup_{x_i=1, i\le k}D^i(\Lambda_i).
\]
This implies that  if $k\ge 2$, then
\[
D(D^{k-1}(X)\setminus P(D^{k-1}(X)))=\{0\}\cup \bigcup_{x_i=1, i\ge k}D^k(\Lambda_i).
\]
From the argument discussed above, it follows that 
if $n\in \nn$ satisfies $x_n=1$, then $A_n(\Xi(x))=D^n(\Lambda_n)$, and hence $A_n(\Xi(x))\approx \Gamma$; 
if $n\in\nn$ satisfies $x_n=0$, then $A_n(\Xi(x))=\emptyset$.
Therefore, if $x,y\in I$ satisfy $x\neq y$, then $\Xi(x)\not\approx \Xi(y)$. 
Namely, the map $[\Xi]:I\to \mathcal{H}(\Gamma)$ is injective. 
\end{proof}
As an application of Proposition \ref{prop:xiinj}, we have:
\begin{cor}\label{cor:con}
For the middle-third Cantor set $\Gamma$, we have
\[
\card(\mathcal{H}(\Gamma))=2^{\aleph_0}.
\]
\end{cor}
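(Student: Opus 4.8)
The plan is to sandwich $\card(\mathcal{H}(\Gamma))$ between $2^{\aleph_0}$ from below and from above, with essentially all of the content already supplied by Proposition \ref{prop:xiinj}.

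For the lower bound, I would simply invoke Proposition \ref{prop:xiinj}: the map $[\Xi]\colon I\to \mathcal{H}(\Gamma)$ is injective, and by Definition \ref{def:xi} we have $\card(I)=2^{\aleph_0}$, so $\card(\mathcal{H}(\Gamma))\ge 2^{\aleph_0}$. This is the substantive half of the statement, and it is already established.

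For the upper bound, I would first estimate $\card(\mathcal{C}(\Gamma))$. Since $\Gamma$ is a compact metrizable space it is second countable; fix a countable base $\mathcal{U}$ of open sets. Every open subset of $\Gamma$ is a union of members of $\mathcal{U}$, so the family of open subsets of $\Gamma$ has cardinality at most $2^{\card(\mathcal{U})}=2^{\aleph_0}$, and passing to complements gives $\card(\mathcal{C}(\Gamma))\le 2^{\aleph_0}$. Because $\mathcal{H}(\Gamma)=\mathcal{C}(\Gamma)/\!\approx$ is a quotient of $\mathcal{C}(\Gamma)$, we conclude $\card(\mathcal{H}(\Gamma))\le \card(\mathcal{C}(\Gamma))\le 2^{\aleph_0}$.

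Combining the two bounds by the Cantor--Schr\"oder--Bernstein theorem yields $\card(\mathcal{H}(\Gamma))=2^{\aleph_0}$. I do not anticipate any real obstacle: the only nontrivial input is Proposition \ref{prop:xiinj}, already proved, and the upper bound is a routine second-countability cardinality count.
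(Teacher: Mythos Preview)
Your proposal is correct and follows essentially the same approach as the paper: the lower bound via Proposition~\ref{prop:xiinj} and $\card(I)=2^{\aleph_0}$, and the upper bound via the second countability of $\Gamma$. The paper's own proof is just a terser version of exactly this argument.
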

\begin{proof}
From the second countability of $\Gamma$, we have $\card(\mathcal{H}(\Gamma))\le 2^{\aleph_0}$. 
By Proposition \ref{prop:xiinj}, 
we conclude $\card(\mathcal{H})\ge 2^{\aleph_0}$. 
\end{proof}

Since  an uncountable polish space contains a Cantor space as a subspace 
(see e.g., \cite[Corollary 6.5]{Ke}), 
we obtain:
\begin{cor}
Let $X$ be an uncountable polish space. 
Then we have $\card(\mathcal{H}(X))=2^{\aleph_0}$. 
\end{cor}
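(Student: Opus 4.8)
The plan is to combine a cardinality upper bound coming from second countability with a matching lower bound obtained by transporting the family $\{\Xi(x)\}_{x\in I}$ of Definition \ref{def:xi} into $X$.

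First I would prove $\card(\mathcal{H}(X))\le 2^{\aleph_0}$. Since $X$ is Polish it is second countable; fixing a countable base $\mathcal{U}$, the map sending a closed set $F$ to $\{\,U\in\mathcal{U}\mid U\subset X\setminus F\,\}$ is injective on $\mathcal{C}(X)$, because $X\setminus F$ is the union of the basic open sets it contains. Hence $\card(\mathcal{C}(X))\le 2^{\aleph_0}$, and a fortiori $\card(\mathcal{H}(X))\le 2^{\aleph_0}$.

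For the reverse inequality I would invoke the quoted fact (\cite[Corollary 6.5]{Ke}) that the uncountable Polish space $X$ contains a subspace $C$ homeomorphic to the Cantor set; fix a homeomorphism $h\colon\Gamma\to C$. Because $\Gamma$ is compact and $X$ is Hausdorff, $C$ is closed in $X$, and consequently every set of the form $h(A)$ with $A\in\mathcal{C}(\Gamma)$ is closed in $X$. Since homeomorphy of subspaces is intrinsic, $h(A)\approx h(B)$ in $X$ if and only if $A\approx B$ in $\Gamma$, so $h$ induces an injection $\mathcal{H}(\Gamma)\hookrightarrow\mathcal{H}(X)$. Concretely, the images $h(\Xi(x))$ for $x\in I$ form a family of $2^{\aleph_0}$ pairwise non-homeomorphic closed subsets of $X$ by Proposition \ref{prop:xiinj}. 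Applying Corollary \ref{cor:con} then gives $\card(\mathcal{H}(X))\ge\card(\mathcal{H}(\Gamma))=2^{\aleph_0}$, and combining the two bounds yields the asserted equality.

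There is essentially no serious obstacle here; the only point requiring a moment's care is that closed subsets of the Cantor subspace $C$ are genuinely closed in $X$, which is exactly where the compactness of $\Gamma$ (hence of $C$) together with the Hausdorff property of $X$ enters. The rest is bookkeeping built on the already-established Corollary \ref{cor:con} and the cited embedding theorem for uncountable Polish spaces.
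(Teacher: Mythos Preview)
Your proof is correct and follows essentially the same approach as the paper, which simply notes that an uncountable Polish space contains a Cantor subspace and then appeals to Corollary~\ref{cor:con}. Your write-up is in fact more detailed than the paper's, since you make explicit both the second-countability upper bound and the care needed to see that closed subsets of the embedded Cantor set are closed in $X$.
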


\subsection{Proof of Theorem \ref{thm:many}}
Let $P$ be a property of metric spaces, and let $(X,d)$ be a metric space. 
Recall that $S_P(X,d)$ is the set of all points in $X$ of which no neighborhoods satisfy $P$ (see Definition \ref{def:sp}). 
\begin{lem}\label{lem:sqcup}
For a property $P$ of metric spaces, and for metric spaces $(X, d)$ and $(Y,e)$, we have 
\[
S_P(X\sqcup Y, d\sqcup e)=S_P(X,d)\sqcup S_P(Y,e). 
\]
\end{lem}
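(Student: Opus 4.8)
The plan is to prove the set equality by establishing both inclusions directly, exploiting the fact that in the disjoint union $X\sqcup Y$ each of $X$ and $Y$ is clopen. The key observation is this: for a point $p\in X$, a neighborhood basis of $p$ in $X\sqcup Y$ can be taken to consist entirely of subsets of $X$, because $X$ is open in $X\sqcup Y$; moreover for a subset $U\subset X$, whether $U$ (with the restricted metric) satisfies $P$ does not depend on whether we regard $U$ as a subspace of $X$ or of $X\sqcup Y$, since the metric $d\sqcup e$ restricted to any subset of $X$ coincides with $d$. The symmetric statements hold for $Y$.

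First I would fix $p\in X$ and argue that $p\in S_P(X\sqcup Y,d\sqcup e)$ if and only if $p\in S_P(X,d)$. For the forward direction: if $p\notin S_P(X,d)$, then there is a neighborhood $W$ of $p$ in $X$ satisfying $P$; since $X$ is open in $X\sqcup Y$, the set $W$ is also a neighborhood of $p$ in $X\sqcup Y$, and it satisfies $P$ as a metric space (the metric is unchanged), so $p\notin S_P(X\sqcup Y,d\sqcup e)$. Conversely, if $p\notin S_P(X\sqcup Y,d\sqcup e)$, pick a neighborhood $W$ of $p$ in $X\sqcup Y$ satisfying $P$; then $W\cap X$ is a neighborhood of $p$ in $X$, and $W\cap X$ satisfies $P$ because $P$ is hereditary — wait, this needs care, since $P$ here is an arbitrary property, not assumed hereditary. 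Instead I would shrink at the start: choose the neighborhood $W$ to lie inside $X$ from the outset, which is possible since $X$ is open; then $W$ is literally a neighborhood of $p$ in $X$ satisfying $P$, so $p\notin S_P(X,d)$. By symmetry, the same equivalence holds with $Y$ in place of $X$.

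Combining, a point $p\in X\sqcup Y$ lies in $S_P(X\sqcup Y,d\sqcup e)$ exactly when it lies in $X$ and in $S_P(X,d)$, or in $Y$ and in $S_P(Y,e)$; that is, exactly when it lies in $S_P(X,d)\sqcup S_P(Y,e)$. This gives the claimed equality. The only subtlety — and it is minor — is to phrase the argument so that no hereditary hypothesis on $P$ is implicitly used: one must always work with neighborhoods chosen inside the clopen piece $X$ (or $Y$) rather than intersecting an ambient neighborhood with $X$, since the latter step would require $P$ to pass to subspaces. With that care taken, the proof is a routine unwinding of definitions, so I do not anticipate a genuine obstacle.
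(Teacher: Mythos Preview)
Your argument has a genuine gap at precisely the place you flagged. When you assume $p\notin S_P(X\sqcup Y,d\sqcup e)$, all you are given is that \emph{some} neighborhood $W$ of $p$ in $X\sqcup Y$ satisfies $P$; you are not free to choose which one. The sentence ``choose the neighborhood $W$ to lie inside $X$ from the outset, which is possible since $X$ is open'' is not a legal move: openness of $X$ guarantees that there \emph{exist} neighborhoods of $p$ contained in $X$, but it does not let you insist that the particular $W$ witnessing $P$ is one of them. Replacing the given $W$ by $W\cap X$ is exactly the heredity step you were trying to avoid. Without some hypothesis on $P$ the inclusion $S_P(X,d)\sqcup S_P(Y,e)\subset S_P(X\sqcup Y,d\sqcup e)$ in fact fails. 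For a concrete counterexample take $X=\{a,b\}$ with $d(a,b)=1$, $Y=\{c\}$, and let $P$ be the property ``has at least three points''. Then $S_P(X,d)=X$ and $S_P(Y,e)=Y$, but in $X\sqcup Y$ the whole three-point space is a neighborhood of each point and satisfies $P$, so $S_P(X\sqcup Y,d\sqcup e)=\emptyset$.

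The paper records this lemma without proof and only ever invokes it for $P\in\{D,UD,UP\}$. For the hereditary properties $D$ and $UD$ your intersection argument is valid and the equality holds; for those $P$ your write-up is essentially complete once you use heredity explicitly. The statement for an arbitrary property $P$, however, is not salvageable, and your proposed workaround does not rescue it.
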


Lemmas \ref{lem:sd}, \ref{lem:sud} and \ref{lem:sup} imply:
\begin{lem}\label{lem:land}
Let $(X,d_X)$ and $(Y,d_Y)$ be metric spaces of type $(v_1,v_2,v_3)$ and of type $(w_1,w_2,w_3)$, respectively. 
Then $(X\sqcup Y,d_X\sqcup d_Y)$ has type $(v_1\land w_1, v_2\land w_2, v_3\land w_3)$. 
\end{lem}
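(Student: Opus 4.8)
The plan is to derive this directly from the characterization of when the direct sum satisfies each of the three properties, namely Lemmas \ref{lem:sd}, \ref{lem:sud} and \ref{lem:sup}. Each of those three lemmas has exactly the shape ``$(X\sqcup Y, d_X\sqcup d_Y)$ satisfies $P$ if and only if both $(X,d_X)$ and $(Y,d_Y)$ satisfy $P$'', where $P$ ranges over $D$, $UD$, $UP$ respectively. Translating this into the $\{0,1\}$-valued notation $T_P$, it says precisely that $T_P(X\sqcup Y, d_X\sqcup d_Y) = T_P(X,d_X)\land T_P(Y,d_Y)$ for each $P\in\{D,UD,UP\}$, since the logical ``and'' of the two boolean values corresponds to the minimum, i.e. the $\land$.

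From here the argument is immediate. Write $(X,d_X)$ of type $(v_1,v_2,v_3)$, meaning $T_D(X,d_X)=v_1$, $T_{UD}(X,d_X)=v_2$, $T_{UP}(X,d_X)=v_3$, and similarly $(Y,d_Y)$ of type $(w_1,w_2,w_3)$. Applying Lemma \ref{lem:sd} gives $T_D(X\sqcup Y,d_X\sqcup d_Y)=v_1\land w_1$; applying Lemma \ref{lem:sud} gives $T_{UD}(X\sqcup Y,d_X\sqcup d_Y)=v_2\land w_2$; and applying Lemma \ref{lem:sup} gives $T_{UP}(X\sqcup Y,d_X\sqcup d_Y)=v_3\land w_3$. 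By the definition of ``has type'', this is exactly the assertion that $(X\sqcup Y,d_X\sqcup d_Y)$ has type $(v_1\land w_1, v_2\land w_2, v_3\land w_3)$.

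There is essentially no obstacle here: the lemma is a bookkeeping reformulation of the three already-established iff-statements, and the only point to be careful about is that the ``if and only if'' in each of Lemmas \ref{lem:sd}, \ref{lem:sud}, \ref{lem:sup} is genuinely two-directional, so that the boolean value of the direct sum is determined by — and equals the conjunction of — the boolean values of the summands, rather than merely being implied in one direction. Since all three cited lemmas are stated as biconditionals, this causes no trouble, and the proof is a single line once the notation is unwound.
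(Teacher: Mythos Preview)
Your proof is correct and matches the paper's approach exactly: the paper simply states that Lemmas \ref{lem:sd}, \ref{lem:sud} and \ref{lem:sup} imply the result, and your argument is precisely the unwinding of that implication into the $T_P$ notation.
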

This leads to the following:
\begin{lem}\label{lem:type}
For each $(v_1,v_2,v_3)\in \{0,1\}^3$, 
there exists a Cantor metric space of type $(v_1,v_2,v_3)$. 
\end{lem}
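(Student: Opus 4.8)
The plan is to realize every non-standard type by taking a finite direct sum of the three spike spaces produced in Section~\ref{sec:spike}, together with the middle-third Cantor set for the one remaining case, and to read off the type from Lemma~\ref{lem:land}, which tells us that the type of a direct sum is the coordinatewise minimum of the types of its summands.

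First I would fix notation. Let $(\Gamma,d_{\Gamma})$ be the middle-third Cantor set, which has type $(1,1,1)$. By Propositions~\ref{prop:dspike}, \ref{prop:udspike} and~\ref{prop:upspike1} there are Cantor metric spaces $A$, $B$ and $C$ of types $(0,1,1)$, $(1,0,1)$ and $(1,1,0)$, respectively. The point is that each of these three types fails exactly one of the properties $D$, $UD$, $UP$, and the position of that single $0$ runs over all three coordinates as we range over $A$, $B$, $C$.

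Next, given a type $(v_1,v_2,v_3)\in\{0,1\}^3$ other than $(1,1,1)$, set $J=\{\,i\in\{1,2,3\}\mid v_i=0\,\}$, a nonempty set. I would form the direct sum $W$ of those spaces among $A,B,C$ whose distinguished zero coordinate lies in $J$, so that $A$ is a summand iff $1\in J$, $B$ iff $2\in J$, and $C$ iff $3\in J$; when $J$ is a singleton, $W$ is just one of $A$, $B$, $C$. Each summand is a compact, hence bounded, Cantor space, so the direct-sum metric is defined, and by the Brouwer characterization (Theorem~\ref{Brouwer}) a finite direct sum of Cantor spaces is again a Cantor space. Applying Lemma~\ref{lem:land} (inductively, if there is more than one summand), the type of $W$ is the coordinatewise minimum of the types of the summands; by the choice of the summands this minimum is $0$ in coordinate $i$ precisely when $i\in J$, i.e. it equals $(v_1,v_2,v_3)$. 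Taking $\Gamma$ for the type $(1,1,1)$ then covers all eight cases.

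I do not expect a genuine obstacle here: the argument is a short finite case analysis, and the only thing to check carefully is the bookkeeping that the coordinatewise minimum over the chosen summands is exactly the prescribed type. This is immediate because each spike space is $1$ in all but one coordinate and those single failures can be turned on independently by choosing which spike spaces to include in the direct sum.
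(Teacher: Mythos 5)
Your proposal is correct and matches the paper's proof: the paper likewise observes that $\{(1,1,1),(0,1,1),(1,0,1),(1,1,0)\}$ generates $\{0,1\}^3$ under the coordinatewise minimum, takes the spike spaces from Propositions \ref{prop:dspike}, \ref{prop:udspike} and \ref{prop:upspike1} as the generators, and invokes Lemma \ref{lem:land} on direct sums. The only difference is that you spell out the bookkeeping of which summands to include, which the paper leaves implicit.
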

\begin{proof}
Notice that 
the set  $\{(1,1,1), (1,0,1), (0,1,1), (1,1,0)\}$ generates $\{0,1\}^3$ by the minimum operation $\land$. 
By Propositions \ref{prop:dspike}, \ref{prop:udspike}, and \ref{prop:upspike1}, 
we already obtain Cantor metric spaces whose types are $(1,1,1)$, $(0,1,1)$, $(1,0,1)$ or $(1,1,0)$. 
Therefore Lemma \ref{lem:land} completes the proof.
\end{proof}

By Lemmas \ref{lem:pd} and \ref{lem:pud}, we see the following:
\begin{lem}\label{lem:time}
Let $(A,d_A)$ be a closed metric subspace of $(\Gamma,d_{\Gamma})$. 
Let $P$ stand for either  D or UD.  Let $(X,d_X)$ be  a $P$-spike Cantor metric space
with $S_P(X,d_X)=\{x\}$. 
Then 
$(X\times A,d_X\times d_A)$ is a Cantor space such that  
$S_{P}(X\times A,d_X\times d_A)=\{x\}\times A$. In particular, $S_P(X\times A,d_X\times d_A)\approx A$.
\end{lem}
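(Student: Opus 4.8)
The plan is to verify the three claims of Lemma \ref{lem:time} in turn: that $(X\times A, d_X\times d_A)$ is a Cantor space, that $S_P(X\times A, d_X\times d_A)=\{x\}\times A$, and finally the homeomorphism $\{x\}\times A\approx A$, the last being immediate since $\{x\}\times A$ is canonically homeomorphic to $A$ via the projection. For the first claim, $X$ is a Cantor space and $A$ is a nonempty closed subspace of $\Gamma$, hence compact, metrizable, and $0$-dimensional; a finite product of $0$-dimensional compact metrizable spaces is again $0$-dimensional compact metrizable, and since $X$ has no isolated point, neither does $X\times A$ (any isolated point of a product forces isolated points in each factor). By the Brouwer characterization, Theorem \ref{Brouwer}, $X\times A$ is a Cantor space.

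The heart of the argument is the computation of $S_P(X\times A, d_X\times d_A)$, and here I would use that $P$ is a \emph{hereditary} property preserved by $\ell^\infty$-products of the relevant kind. First I would show the inclusion $\{x\}\times A\subset S_P(X\times A)$: given a point $(x,a)$ with $a\in A$, any neighborhood $N$ of $(x,a)$ in the product contains a basic neighborhood $U\times W$ with $U$ a neighborhood of $x$ in $X$ and $W$ a neighborhood of $a$ in $A$; since $x\in S_P(X,d_X)$, the space $U$ fails $P$, and because $P$ is hereditary (for $D$ by the stated hereditariness of the doubling property, for $UD$ likewise) the subspace $U\times\{a'\}$ of $U\times W$ fails $P$ for a suitable $a'\in W$ — more precisely, $U\times\{a'\}$ is isometric to $U$, so it fails $P$, and therefore $N\supset U\times W\supset U\times\{a'\}$ fails $P$ by heredity. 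Hence $(x,a)\in S_P(X\times A)$. For the reverse inclusion, take $(p,a)\notin\{x\}\times A$, so $p\neq x$; then $p$ has a neighborhood $U$ in $X$ satisfying $P$, and since $A$ is a closed subspace of $\Gamma$ it is doubling (heredity of $D$) and uniformly disconnected (heredity of $UD$, via Remark \ref{rmk:c}), so $A$ itself satisfies $P$. By Lemma \ref{lem:pd} (when $P=D$) or Lemma \ref{lem:pud} (when $P=UD$), the product $U\times A$ satisfies $P$, and $U\times A$ is a neighborhood of $(p,a)$ in $X\times A$; thus $(p,a)\notin S_P(X\times A)$.

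Combining the two inclusions gives $S_P(X\times A, d_X\times d_A)=\{x\}\times A$, and the projection to the second coordinate is a homeomorphism $\{x\}\times A\to A$, proving the final assertion. The main obstacle I anticipate is being careful about the direction in which heredity versus the product lemmas are applied: for the inclusion $\{x\}\times A\subset S_P$ one needs that failure of $P$ in a factor, together with heredity, forces failure in the product neighborhood, whereas for the reverse inclusion one needs the product lemmas (Lemmas \ref{lem:pd} and \ref{lem:pud}) in the form ``both factors satisfy $P$ implies the product does.'' One should also note that these product lemmas as stated concern the $\ell^\infty$-product metric $d_X\times d_A$, which is exactly the metric in the statement, so no further comparison of metrics is needed. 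There is no genuine difficulty beyond organizing these implications cleanly.
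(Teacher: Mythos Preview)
Your proposal is correct and follows the same approach the paper has in mind: the paper simply prefaces the lemma with ``By Lemmas \ref{lem:pd} and \ref{lem:pud}, we see the following'' and gives no further proof, so your argument is exactly the intended unpacking---heredity of $P$ together with the isometric embedding $U\cong U\times\{a\}\subset N$ gives one inclusion, and the product Lemmas \ref{lem:pd}/\ref{lem:pud} applied to $U\times A$ (using that $A\subset\Gamma$ inherits $P$) give the other.
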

Let $\mathscr{H}=\{\,\Xi(x)\mid x\in I\,\}$, where 
$\Xi: I\to \mathcal{C}(\Gamma)$ is the map defined in Definition \ref{def:xi}. 
 Then $\mathscr{H}$ satisfies the following:
\begin{enumerate}
\item every $A\in \mathscr{H}$ is anti-perfect (see Remark \ref{rmk:anti}); in other words, the set of all isolated points of $A$ is dense in $A$; 
\item if $A,B\in \mathscr{H}$ satisfy $A\neq B$, then $A\not\approx B$ (see Proposition \ref{prop:xiinj}).  
 \end{enumerate}
Since $\card(I)=2^{\aleph_0}$, we have $\card(\mathscr{H})=2^{\aleph_0}$. 

\begin{lem}\label{lem:timeup}
Let $(X,d_X)$ be a $UP$-spike Cantor metric space mentioned in Proposition \ref{prop:upspike1}. 
Then for every $A\in \mathscr{H}$, the space $(X\times A,d_X\times d_A)$ is a Cantor space such that  
$S_{UP}(X\times A,d_X\times d_A)=\{\infty\}\times A$. 
In particular, $S_{UP}(X\times A,d_X\times d_A)\approx A$. 
\end{lem}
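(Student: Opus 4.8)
The plan is to prove the statement in three steps --- that $X\times A$ is a Cantor space, that $S_{UP}(X\times A,d_X\times d_A)\subset\{\infty\}\times A$, and that $\{\infty\}\times A\subset S_{UP}(X\times A,d_X\times d_A)$ --- after which the final sentence is immediate, since $a\mapsto(\infty,a)$ is a homeomorphism of $A$ onto $\{\infty\}\times A$. For the first step I would invoke the Brouwer characterization (Theorem \ref{Brouwer}): $X=T(\mathcal{P})$ is a Cantor space by Proposition \ref{prop:upspike1}, and $A$ is a $0$-dimensional compact metrizable space as a closed subset of $\Gamma$ (Definition \ref{def:xi}). Hence $X\times A$ is compact and metrizable, it carries the clopen base consisting of products of clopen sets of $X$ with clopen sets of $A$, and it has no isolated point because $X$ has none --- a basic box $U\times V$ collapsing to $\{(x,a)\}$ would force $U=\{x\}$ to be open in $X$. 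So $X\times A$ is a Cantor space.

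The inclusion $S_{UP}(X\times A)\subset\{\infty\}\times A$ is the easy direction. Since $S_{UP}(X,d_X)=\{\infty\}$ by Proposition \ref{prop:upspike1}, every $x\in X$ with $x\neq\infty$ has a uniformly perfect neighborhood $V$ in $X$. For any neighborhood $W$ of a point $a\in A$ the set $V\times W$ is a neighborhood of $(x,a)$ in $X\times A$; as $V$ and $W$ are bounded and $V$ is uniformly perfect, Lemma \ref{lem:pup} shows $V\times W$ is uniformly perfect, so $(x,a)\notin S_{UP}(X\times A)$. This yields $S_{UP}(X\times A)\subset\{\infty\}\times A$.

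The reverse inclusion is the core of the argument and the step I expect to be the main obstacle. Fix $a\in A$ and an arbitrary neighborhood $N$ of $(\infty,a)$ in $X\times A$; I must show $N$ fails to be $\rho$-uniformly perfect for every $\rho\in(0,1)$. Choose $\varepsilon>0$ with $U(\infty,\varepsilon)\times U(a,\varepsilon)\subset N$. Since $A$ is anti-perfect (Remark \ref{rmk:anti}), its isolated points are dense, so there is an isolated point $a_0$ of $A$ with $a_0\in U(a,\varepsilon)$; then $(\infty,a_0)\in N$, the number $\delta_0:=\operatorname{dist}_A(a_0,A\setminus\{a_0\})$ is positive, and $\di(N)>0$ because $N$ contains the infinite set $U(\infty,\varepsilon)\times\{a_0\}$. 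Fix $\rho\in(0,1)$. By Proposition \ref{prop:upspike1} --- crucially using that its proof produces the witnessing radius $r$ arbitrarily close to $0$ (there $r=1/(2n!)$ for all large $n$) --- choose $r\in\bigl(0,\min\{\delta_0,\di(N)\}\bigr)$ with $B(\infty,r)\setminus U(\infty,\rho r)=\emptyset$ in $X$, i.e.\ $B(\infty,r)\subset U(\infty,\rho r)$. Because $r<\delta_0$ we have $B(a_0,r)=\{a_0\}=U(a_0,\rho r)$ in $A$, so in $X\times A$
\[
B\bigl((\infty,a_0),r\bigr)=B(\infty,r)\times\{a_0\}\subset U(\infty,\rho r)\times\{a_0\}=U\bigl((\infty,a_0),\rho r\bigr).
\]
Intersecting with $N$ gives $\bigl(B((\infty,a_0),r)\setminus U((\infty,a_0),\rho r)\bigr)\cap N=\emptyset$, i.e.\ the relative ball of $N$ at $(\infty,a_0)$ of radius $r\in(0,\di(N))$ is contained in the relative open ball of radius $\rho r$. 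Hence $N$ is not $\rho$-uniformly perfect; as $N$ and $\rho$ were arbitrary, $(\infty,a)\in S_{UP}(X\times A)$.

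Combining the two inclusions gives $S_{UP}(X\times A,d_X\times d_A)=\{\infty\}\times A$, which is homeomorphic to $A$, completing the plan. The delicate point, as indicated, is the last inclusion: the observation that inside any neighborhood of $(\infty,a)$ there sits an isolated fibre $\{a_0\}$ over which the product metric coincides with $d_X$ near $\infty$, so that this neighborhood inherits the collapsing-ball pathology of Proposition \ref{prop:upspike1}, while the isolatedness of $a_0$ blocks neighbouring fibres from restoring uniform perfectness. (This is exactly where the hypothesis $A\in\mathscr{H}$, hence $A$ anti-perfect, is used --- the conclusion genuinely fails when $A$ is, say, all of $\Gamma$.)
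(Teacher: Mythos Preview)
Your proof is correct and follows essentially the same route as the paper's: use Lemma~\ref{lem:pup} to clear $(X\setminus\{\infty\})\times A$ out of $S_{UP}$, and for the reverse inclusion exploit the anti-perfectness of $A$ to find an isolated fibre $\{a_0\}$ near $(\infty,a)$, on which the product ball reduces to the ball in $X$ and inherits the collapsing from Proposition~\ref{prop:upspike1}. Your version is in fact more careful than the paper's---you make explicit the need for the witnessing radius $r$ to be arbitrarily small (which is in the \emph{proof} of Proposition~\ref{prop:upspike1}, not its statement) and you handle the isolated and accumulation cases for $a$ in one stroke rather than two.
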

\begin{proof}
Each point in $X$ except $\infty$ has a uniformly perfect neighborhood. 
By Lemma  \ref{lem:pup}, each point in $(X\setminus \{\infty\})\times  A$ has a uniformly perfect neighborhood. 
If $y\in A$ is an isolated point of $A$, 
then for sufficiently small $r\in (0,\infty)$ 
the closed ball $B((\infty,y),r)$ in $X\times A$ is isometric to $B(\infty,r)$ in $X$. 
In this case, each neighborhood of $(\infty,y)$ is not uniformly perfect,  
and hence $(\infty,y)\in S_{UP}(X\times A,d_X\times d_A)$. 
If $y$ is an accumulation point of $A$, 
then a neighborhood $U$ of $(\infty,y)$ contains a point $(\infty,z)$ for some isolated point $z$ in $A$. 
Thus $U$ is not uniformly perfect, and hence $(\infty,y)\in S_{UP}(X\times A,d_X\times d_A)$. 
Therefore $S_{UP}(X\times A,d_X\times d_A)=\{\infty\}\times A$. 
\end{proof}

\begin{proof}[Proof of Theorem \ref{thm:many}]
By Propositions \ref{prop:dspike} and \ref{prop:udspike}, 
we can take a $D$-spike Cantor metric space $(F,d_F)$ of type $(0,1,1)$, 
and 
 a $UD$-spike Cantor metric space $(G,d_G)$ of type $(1,0,1)$. 
Let $(H,d_H)$ be a $UP$-spike Cantor metric space of type $(1,1,0)$ stated in Proposition \ref{prop:upspike1}. 
For each $(u,v,w)\in \{0,1\}^3$, 
we can certainly take a Cantor metric space $(L_{uvw},d_{uvw})$ of type $(u,v,w)$ as seen in Lemma \ref{lem:type}.  
We define three maps 
$f_{0vw}:\mathscr{H} \to \mathscr{M}(0,v,w)$, 
$g_{u0w}:\mathscr{H}\to \mathscr{M}(u,0,w)$ and 
$h_{uv0}:\mathscr{H}\to \mathscr{M}(u,v,0)$ as follows:
\begin{align*}
&f_{0vw}(A)=\mathcal{G}((F\times A)\sqcup L_{1vw} ,(d_F\times d_A)\sqcup d_{1vw}),\\
&g_{u0w}(A)=\mathcal{G}((G\times A)\sqcup L_{u1w} ,(d_G\times d_A)\sqcup d_{u1w}),\\
&h_{uv0}(A)=\mathcal{G}((H\times A)\sqcup L_{uv1} ,(d_H\times d_A)\sqcup d_{uv1}). 
\end{align*}
By Lemmas \ref{lem:sqcup}, \ref{lem:time} and \ref{lem:timeup}, we have 
\begin{align*}
S_D(f_{0vw}(A))\approx A,\quad S_{UD}(g_{u0w}(A))\approx A,\quad S_{UP}(h_{uv0}(A))\approx A. 
\end{align*}
Since the operators $S_D$, $S_{UD}$ and $S_{UP}$ 
are quasi-symmetric invariants (see Remark \ref{rmk:qsinv}), 
the maps $f_{0vw}$, $g_{u0v}$ and $h_{uv0}$ are injective.  
Therefore for each exotic type $(u,v,w)\in \{0,1\}^3$ we have 
\[
\card(\mathscr{M}(u,v,w))\ge 2^{\aleph_0}.
\]
\par
In general, for a separable space $X$, 
the cardinality of the set of all continuous real-valued functions on $X$ is at most $2^{\aleph_0}$.
Hence the set of all metrics on the middle-third Cantor set compatible with the  Cantor space topology has cardinality at most $2^{\aleph_0}$. 
Therefore we have
\[
\card(\mathscr{M}(u,v,w))\le 2^{\aleph_0}.
\]
This completes the proof of Theorem \ref{thm:many}. 
\end{proof}
\begin{rmk}
For each $(u,v,w)\in \{0,1\}^3$ and for each $A\in \mathscr{H}$, by taking a direct sum of spaces in $f_{011}(A)$, $g_{101}(A)$ or $h_{110}(A)$, we can obtain a Cantor metric space $(X,d)$ with 
\[
S_P(X,d)=A
\]
for all failing property $P\in\{D, UD, UP\}$ of $(u,v,w)$, where $f_{011}$, $g_{101}$, $h_{110}$ are the maps appeared in the proof of Theorem \ref{thm:many}. 
\end{rmk}

\section{Sequentially Metrized Cantor Spaces}\label{sec:smcs}
In this section, 
we generalize the construction of the symbolic Cantor sets studied by David and Semmes \cite{DS}. 
The same generalized construction is discussed by Semmes in \cite{S1, S2} in other contexts. 
\par
\subsection{Generalities}
We take a valuation map
 $v:2^{\nn}\times 2^{\nn}\to \nn\cup \{\infty\}$ defined as 
 \[
 v(x,y)=
    \begin{cases}
     \min\{\, n\in \nn\mid x_n\neq y_n\} & \text{if $x\neq y,$}\\
     \mgn		& \text{if $x=y$.}
    \end{cases}
 \]
\begin{df}
We say that a positive sequence $\alpha:\nn\to (0,\mgn)$ is \emph{shrinking}
 if  $\alpha$ is monotone non-increasing
 and if $\alpha$ converges to $0$.
For a shrinking sequence $\alpha$, 
we define a metric 
$d_{\alpha}$ on 
$2^{\nn}$ by
\[
d_{\alpha}(x,y)=
  \begin{cases}
  \alpha(v(x,y)) & \text{if $x\neq y,$}\\
  0		& \text{if $x=y$.}
  \end{cases}
\]
We call $(2^{\nn},d_{\alpha})$ the \emph{sequentially metrized Cantor space metrized by $\alpha$.} 
\end{df}

\begin{lem}
Let $\alpha$ be a shrinking sequence.
Then $(2^{\nn},d_{\alpha})$ is an ultrametric space. In particular, 
$(2^{\nn},d_{\alpha})$ is uniformly disconnected.
\end{lem}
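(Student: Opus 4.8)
The plan is to verify directly that $d_{\alpha}$ satisfies the ultrametric triangle inequality, and then invoke Proposition \ref{prop:ultraud} to conclude uniform disconnectedness. First I would record the elementary fact about the valuation map $v$: for any three points $x,y,z\in 2^{\nn}$, one has $v(x,z)\ge\min\{v(x,y),v(y,z)\}$. Indeed, if $n<\min\{v(x,y),v(y,z)\}$, then $x_n=y_n$ and $y_n=z_n$, so $x_n=z_n$; hence the first index at which $x$ and $z$ differ (if any) is at least $\min\{v(x,y),v(y,z)\}$. This is the combinatorial heart of the argument.

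Next I would translate this inequality through $\alpha$. Since $\alpha$ is monotone non-increasing, $m\le n$ implies $\alpha(m)\ge\alpha(n)$; more to the point, $\alpha$ evaluated at a minimum equals the maximum of the values, i.e. $\alpha(\min\{v(x,y),v(y,z)\})=\max\{\alpha(v(x,y)),\alpha(v(y,z))\}$. Combining this with $v(x,z)\ge\min\{v(x,y),v(y,z)\}$ and monotonicity gives $\alpha(v(x,z))\le\alpha(\min\{v(x,y),v(y,z)\})=\max\{\alpha(v(x,y)),\alpha(v(y,z))\}$, which is exactly $d_{\alpha}(x,z)\le\max\{d_{\alpha}(x,y),d_{\alpha}(y,z)\}$ in the case where $x,y,z$ are pairwise distinct. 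I would then handle the degenerate cases (two or more of the points coinciding) separately, observing they are trivial: for instance if $x=y$ the inequality reads $d_{\alpha}(x,z)\le\max\{0,d_{\alpha}(x,z)\}$. One should also note $d_{\alpha}$ is genuinely a metric — symmetry is clear from symmetry of $v$, and $d_{\alpha}(x,y)=0$ iff $v(x,y)=\infty$ iff $x=y$, using that $\alpha$ takes only positive values on $\nn$ — though since the ultrametric inequality is the substantive claim, these points deserve only a sentence.

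Finally, having shown $(2^{\nn},d_{\alpha})$ is an ultrametric space, the ``in particular'' clause follows immediately from Proposition \ref{prop:ultraud}, which asserts that every ultrametric space is $\delta$-uniformly disconnected for every $\delta\in(0,1)$, hence uniformly disconnected. I do not anticipate any real obstacle here; the only point requiring care is the interplay between ``min of indices'' and ``max of $\alpha$-values'', which relies essentially on $\alpha$ being non-increasing — the convergence to $0$ is not needed for this lemma (it is what makes the topology the Cantor topology rather than something discrete, relevant elsewhere). I would keep the proof to a few lines, foregrounding the valuation inequality and then its image under the order-reversing map $\alpha$.
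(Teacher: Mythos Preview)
Your proposal is correct and follows essentially the same approach as the paper: establish the valuation inequality $\min\{v(x,z),v(z,y)\}\le v(x,y)$, translate it via the monotonicity of $\alpha$ into the ultrametric triangle inequality, and then invoke Proposition~\ref{prop:ultraud}. Your write-up is more detailed (explicitly justifying the valuation inequality, checking degenerate cases, and noting that only the non-increasing hypothesis on $\alpha$ is used), but the strategy is identical.
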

\begin{proof}
To prove the first half, it is enough to show that $d_{\alpha}$ satisfies the ultrametric triangle inequality.
For all $x,y,z\in 2^{\nn}$, we have 
$\min\{v(x,z),v(z,y)\}\le v(x,y)$;  
in particular, 
\[
d_{\alpha}(x,y)\le \max\{d_{\alpha}(x,z), d_{\alpha}(z,y)\}.
\]
Hence $(2^{\nn},d_{\alpha})$ is an ultrametric space.
The second half follows from Proposition \ref{prop:ultraud}.
\end{proof}

The Brouwer theorem \ref{Brouwer} tells us that 
the space $(2^{\nn},d_{\alpha})$ is a Cantor space for any shrinking sequence $\alpha$.

The doubling property of $(2^{\nn},d_{\alpha})$ depends on how the shrinking sequence $\alpha$ decreases. 

%doubling

\begin{lem}\label{lem:db}
Let $\alpha$ be a shrinking sequence.
Then $(2^{\nn},d_{\alpha})$ is doubling if and only if there exists $N\in \nn$ such that for all $k\in \nn$ we have
\begin{equation}\label{eq:dbp}
\card(\{\, n\in \nn\mid \alpha(k)/2\le \alpha(n) \le \alpha(k)\,\})\le N.
\end{equation}
\end{lem}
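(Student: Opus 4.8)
The plan is to identify the closed balls and the separated subsets of $(2^{\nn},d_{\alpha})$ with cylinder sets and then invoke the separated-set criterion for the doubling property (Lemma \ref{lem:separated}). For $r\in(0,\infty)$ I would put $\ell(r)=\card(\{n\in\nn : \alpha(n)>r\})$ and $\nu(r)=\card(\{n\in\nn : \alpha(n)\ge r\})$; both are finite because $\alpha$ is non-increasing and converges to $0$, and by monotonicity $\{n : \alpha(n)>r\}=\{1,\dots,\ell(r)\}$ and $\{n : \alpha(n)\ge r\}=\{1,\dots,\nu(r)\}$. Since $d_{\alpha}(x,y)=\alpha(v(x,y))$, the inequality $d_{\alpha}(x,y)\le r$ is equivalent to $v(x,y)>\ell(r)$, so $B(x,r)$ is exactly the cylinder $\{y : y_i=x_i \text{ for } i\le\ell(r)\}$; likewise $d_{\alpha}(y,z)\ge r/2$ is equivalent to $v(y,z)\le\nu(r/2)$. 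Consequently a subset $S$ of $B(x,r)$ is $(r/2)$-separated exactly when its points pairwise first disagree at some coordinate in $\{\ell(r)+1,\dots,\nu(r/2)\}$ (note $\nu(r/2)\ge\ell(r)$), and the largest such $S$ has cardinality $2^{\nu(r/2)-\ell(r)}$, independently of $x$.

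Next I would simplify the exponent. Because $\{n : \alpha(n)>r\}\subseteq\{n : \alpha(n)\ge r/2\}$, the difference of the two counts is
\[
\nu(r/2)-\ell(r)=\card(\{n\in\nn : r/2\le\alpha(n)\le r\}).
\]
Hence, by Lemma \ref{lem:separated}, $(2^{\nn},d_{\alpha})$ is doubling if and only if $\sup_{r\in(0,\infty)}\card(\{n : \alpha(n)\in[r/2,r]\})<\infty$.

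It then remains to show this is equivalent to the condition in the statement, namely the existence of $N\in\nn$ with $\card(\{n : \alpha(n)\in[\alpha(k)/2,\alpha(k)]\})\le N$ for every $k\in\nn$. The implication from the $r$-uniform bound is immediate by taking $r=\alpha(k)$. For the converse, fix $r>0$; if no $\alpha(n)$ lies in $[r/2,r]$ there is nothing to prove, and otherwise let $k=\min\{n : \alpha(n)\le r\}$. Then $\alpha(k)\le r$, and also $\alpha(k)\ge r/2$, since $\alpha(k)<r/2$ together with $\alpha(n)>r$ for $n<k$ would leave $[r/2,r]$ free of values of $\alpha$. For any $n$ with $\alpha(n)\in[r/2,r]$ one has $n\ge k$, hence $\alpha(n)\le\alpha(k)$, while $\alpha(n)\ge r/2\ge\alpha(k)/2$; thus $\{n : \alpha(n)\in[r/2,r]\}\subseteq\{n : \alpha(n)\in[\alpha(k)/2,\alpha(k)]\}$, which has at most $N$ elements, giving the desired $r$-uniform bound.

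The only point requiring care will be this last bookkeeping between the "$r$" window and the "$\alpha(k)$" window (and the harmless open/closed discrepancies that arise); once the ball-versus-cylinder dictionary is in place the rest is a routine count of indices $n$ with $\alpha(n)$ in a dyadic band, so I do not expect a genuine obstacle. If one prefers to avoid Lemma \ref{lem:separated}, the same conclusion follows by covering the cylinder $B(x,r)$ directly by the $2^{\ell(r/2)-\ell(r)}$ cylinders of length $\ell(r/2)$ it contains, noting that $\ell(r/2)-\ell(r)=\card(\{n : r/2<\alpha(n)\le r\})$; this changes the relevant inequalities only by bounded factors.
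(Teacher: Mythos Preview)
Your proposal is correct and follows essentially the same approach as the paper: both arguments rest on identifying closed balls with cylinder sets and on the separated-set criterion (Lemma~\ref{lem:separated}), and both reduce the question to counting indices $n$ with $\alpha(n)$ in a dyadic window. The only organizational difference is that the paper handles the two implications separately (a direct covering for one direction, a separated-set construction for the other), whereas you use the ``if and only if'' form of Lemma~\ref{lem:separated} throughout via the exact formula $2^{\nu(r/2)-\ell(r)}$ for the maximal $(r/2)$-separated set in $B(x,r)$; your final bookkeeping reducing the $r$-window to an $\alpha(k)$-window is the analogue of the paper's choice of $k$ with $r\in[\alpha(k),\alpha(k-1))$.
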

\begin{proof}
For $i\in \nn$, we put $J_{\alpha}(i)=\{\, n\in \nn\mid \alpha(i)/2\le \alpha(n) \le \alpha(i)\,\}$.\par
First we show that the condition \eqref{eq:dbp} for some $N$ implies the doubling property. 
Take $x\in 2^{\nn}$ and $r\in (0,\mgn)$.
Choose $k\in \nn$ with $r\in [\alpha(k),\alpha(k-1))$.
Note that $B(x,r)=B(x,\alpha(k))$, and 
\[
B(x,\alpha(k))=\{\,y\in 2^{\nn}\mid k\le v(x,y)\,\}.
\]
Let $S_{k+N}$ be the set of all points $z\in 2^{\nn}$ such that $z_i=0$ for all $i>k+N$.
Then $B(x,r)\cap S_{k+N}$ consists of $2^{N+1}$ elements.
For every $y\in B(x,r)$, there exists $z\in B(x,r)\cap S_{k+N}$ with $k+N\le v(y,z)$.
Since $k+N\not\in J_{\alpha}(k)$,
we have 
\[
d_{\alpha}(y,z)\le \alpha(k+N)<\frac{\alpha(k)}{2}\le \frac{r}{2}.
\]
This implies that $B(x,r)$ can be covered by at most $2^{N+1}$ balls with radius $r/2$.
Hence $(2^{\nn},d_{\alpha})$ is doubling.\par
Next, to show the contrary, we assume that for each $N\in \nn$ there exists $k\in \nn$ such that
$\card (J_{\alpha}(k))> N.$
Note that $k+1,\dots,k+N$ are contained in $J_{\alpha}(k)$ 
since so is $k$.
For each $i\in\{1,\dots, N\},$ we define a point $x^{(i)}=\{x_{i,n}\}_{n\in \nn}$ in $B(0,\alpha(k))$ by 
\[
x_{i,n}=
  \begin{cases}
  0 & \text{if $n\neq k+i$,}\\
  1 & \text{if $n=k+i$.}
  \end{cases}
\]
For all distinct $i,j\in \{1,\dots ,N\}$, we have 
\[
v(x^{(i)},x^{(j)})\in \{k+1,\dots , k+N\}.
\]
Hence $d_{\alpha}(x^{(i)},x^{(j)})\ge \alpha(k)/2$. 
This implies that the set $\{x^{(1)},\dots,x^{(N)}\}$ is $(\alpha(k)/2)$-separated in $B(0,\alpha(k))$, 
and it has cardinality $N$.
Therefore, $(2^{\nn},d_{\alpha})$ is not doubling.
\end{proof}

On the uniform perfectness of $(2^{\nn},d_{\alpha})$, we also have the following:

%uniformly perfect
\begin{lem}\label{lem:up}
Let $\alpha$ be a shrinking sequence. 
Then 
 $(2^{\nn},d_{\alpha})$ is uniformly perfect if and only if there exists $\rho\in (0,1)$ such that for all $n\in \nn$ 
we have
\begin{equation}\label{eq:upness}
\rho \alpha(n)\le \alpha(k)
\end{equation}
for some $k>n$.
\end{lem}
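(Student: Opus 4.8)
The plan is to characterize uniform perfectness of $(2^{\nn},d_{\alpha})$ directly from the structure of closed balls, which in an ultrametric space are simple. First I would record the basic fact used already in Lemma \ref{lem:db}: for $x\in 2^{\nn}$ and $k\in\nn$ one has $B(x,\alpha(k))=\{y\mid v(x,y)\ge k\}$, and more precisely, for any radius $r\in(0,\infty)$, if $k$ is the least index with $\alpha(k)\le r$ then $B(x,r)=B(x,\alpha(k))$. Likewise the open ball $U(x,\rho r)$ equals $\{y\mid v(x,y)\ge m\}$ where $m$ is the least index with $\alpha(m)<\rho r$ (or all of $2^{\nn}$ if no such index exists, but since $\alpha\to 0$ such $m$ always exists once $\rho r>0$). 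So the nonemptiness of $B(x,r)\setminus U(x,\rho r)$ is equivalent to the existence of a point $y$ with $k\le v(x,y)$ and $v(x,y)<m$, i.e.\ to $k<m$, i.e.\ to the existence of an index $j$ with $k\le j$ and $\alpha(j)\ge \rho r$.

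For the ``if'' direction, suppose \eqref{eq:upness} holds with constant $\rho$; I would show $(2^{\nn},d_\alpha)$ is $\rho'$-uniformly perfect for a suitable $\rho'$ comparable to $\rho$ (one must be slightly careful because $r$ need not be a value of $\alpha$). Given $x$ and $r\in(0,\di)$, let $n$ be least with $\alpha(n)\le r$; then $r<\alpha(n-1)$, so $\alpha(n)\le r<\alpha(n-1)$ and $B(x,r)=B(x,\alpha(n))$. Apply \eqref{eq:upness} to this $n$: there is $k>n$ with $\rho\alpha(n)\le\alpha(k)$. Pick $y$ with $v(x,y)=k$; such $y$ exists since every ball contains points at every depth. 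Then $d_\alpha(x,y)=\alpha(k)\le\alpha(n)\le r$, so $y\in B(x,r)$, while $d_\alpha(x,y)=\alpha(k)\ge\rho\alpha(n)>\rho\cdot\tfrac{\alpha(n-1)}{1}\cdot\frac{\alpha(n)}{\alpha(n-1)}$; more cleanly, since $r<\alpha(n-1)$ is not directly comparable, I would instead note $\alpha(k)\ge\rho\alpha(n)$ and $\alpha(n)>\alpha(n-1)/2$ fails in general, so the right move is: the needed lower bound is $d_\alpha(x,y)\ge \rho' r$, and since $\alpha(k)\ge\rho\alpha(n)$ while $r<\alpha(n-1)$, I pick $n$ instead as the least index with $\alpha(n)\le r$ and use $\alpha(k)\ge\rho\alpha(n)\ge\rho\cdot(\text{something}\gtrsim r)$ only when $\alpha(n)\gtrsim r$, which holds with ratio controlled by $\alpha(n)/\alpha(n-1)$ — so to avoid this nuisance I would simply replace \eqref{eq:upness} by its evident equivalent ``for each $n$ there is $k>n$ with $\alpha(k)\ge\rho\alpha(n)$'' and take $\rho'=\rho$ together with the observation $B(x,r)=B(x,\alpha(n))$, yielding $d_\alpha(x,y)=\alpha(k)\in[\rho\alpha(n),\alpha(n)]\subset[\rho' r', r']$ after replacing $r$ by $r'=\alpha(n)\le r$; since $B(x,r)\setminus U(x,\rho' r)\supset B(x,r')\setminus U(x,\rho' r')$, this suffices.

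For the ``only if'' direction, suppose $(2^{\nn},d_\alpha)$ is $\rho$-uniformly perfect. Fix $n\in\nn$ and apply uniform perfectness at $x=0$ (say) with radius $r=\alpha(n)$ (which lies in $(0,\di)$ as long as $n\ge 2$, and the finitely many small $n$ can be absorbed into the constant). There is $y\in B(0,\alpha(n))\setminus U(0,\rho\alpha(n))$; set $k=v(0,y)$, so $k\ge n$ (from $y\in B(0,\alpha(n))$, i.e.\ $\alpha(k)\le\alpha(n)$, hence $k\ge n$ by monotonicity — after noting that if $\alpha$ is eventually constant the statement degenerates, but shrinking forbids that since $\alpha\to 0$, so $\alpha$ is strictly decreasing along a cofinal set; still $\alpha(k)\le\alpha(n)$ only gives $k\ge n$, good enough) and $\alpha(k)=d_\alpha(0,y)\ge\rho\alpha(n)$. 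If $k=n$ one can pass to $k+1$: there is always a point $y'$ with $v(0,y')=k+1>n$, and $\alpha(k+1)$ might be too small, so instead I would handle the borderline by choosing the radius slightly larger, e.g.\ $r$ strictly between $\alpha(n)$ and $\alpha(n-1)$, forcing $k\ge n$ with $k\ne n$ not guaranteed — the cleanest fix is: if the only index realized is $k=n$, that means every $y\ne 0$ in the ball has $v(0,y)=n$ exactly, impossible since the ball $B(0,\alpha(n))$ contains points at all depths $\ge n$. Hence $k>n$ can be arranged, giving \eqref{eq:upness}.

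The main obstacle is the off-by-one bookkeeping between an arbitrary radius $r$ and the discrete set of values $\{\alpha(k)\}$, together with the degenerate possibility that $\alpha$ is constant on long runs (which slightly changes which index a given ball ``is''). I expect the argument is entirely routine once one fixes the convention $B(x,r)=B(x,\alpha(k))$ with $k=\min\{j:\alpha(j)\le r\}$ and notes that every ball of positive radius contains points of every sufficiently large valuation depth; the constant $\rho$ in the definition and the constant in \eqref{eq:upness} will agree (or differ by a harmless factor), so no sharp tracking is needed.
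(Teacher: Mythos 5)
Your overall strategy is the same as the paper's (reduce everything to the valuation $v$ and the discrete set of values $\alpha(k)$), but both directions as written break down at exactly the bookkeeping step you flag as ``the main obstacle'' and then dismiss as routine. In the ``if'' direction, your final reduction rests on the containment $B(x,r)\setminus U(x,\rho' r)\supset B(x,r')\setminus U(x,\rho' r')$ for $r'=\alpha(n)\le r$, which is false: a point $y$ of the right-hand side only satisfies $d_{\alpha}(x,y)\ge \rho' r'$, not $d_{\alpha}(x,y)\ge \rho' r$, and the gap $r/r'$ can be as large as $\alpha(n-1)/\alpha(n)$. The clean fix (the paper's) is to choose $n$ with $\alpha(n+1)\le r<\alpha(n)$ and apply \eqref{eq:upness} at $n$ rather than at $n+1$: the resulting $k>n$ gives $\alpha(k)\le\alpha(n+1)\le r$ and $\alpha(k)\ge\rho\alpha(n)>\rho r$, so the annulus is nonempty with the same constant $\rho$. (Alternatively, note that \eqref{eq:upness} applied at $n-1$ forces $\alpha(n)\ge\rho\alpha(n-1)$, so your loss is only a factor of $\rho$; but this must be said, since a priori $\alpha(n)/\alpha(n-1)$ could tend to $0$.)

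In the ``only if'' direction, testing uniform perfectness at radius exactly $r=\alpha(n)$ yields nothing: any $y$ with $v(0,y)=n$ lies in $B(0,\alpha(n))\setminus U(0,\rho\alpha(n))$ automatically (since $\alpha(n)\ge\rho\alpha(n)$), so that annulus is always nonempty and you cannot conclude it contains a point of depth greater than $n$. Your proposed ``cleanest fix'' --- that the annulus must meet depths $>n$ because the ball does --- is a non sequitur: the ball contains points of every depth $\ge n$, but those of depth $k$ with $\alpha(k)<\rho\alpha(n)$ all lie in $U(0,\rho\alpha(n))$ and hence outside the annulus. You must instead test at a radius $r$ strictly between $\alpha(n+1)$ and $\alpha(n)$ (when $\alpha(n+1)<\alpha(n)$; otherwise $k=n+1$ satisfies \eqref{eq:upness} trivially): then $B(0,r)=\{\,y\mid v(0,y)\ge n+1\,\}$, every point of it has distance at most $\alpha(n+1)$ from $0$, and nonemptiness of $B(0,r)\setminus U(0,\rho r)$ forces $\alpha(n+1)\ge\rho r$; letting $r$ increase to $\alpha(n)$ gives $\alpha(n+1)\ge\rho\alpha(n)$, which is \eqref{eq:upness} with $k=n+1$. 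The paper sidesteps this delicacy by arguing the contrapositive: for each $\rho$ it exhibits $m$ and a radius $r\in(\alpha(m+1)/\rho,\alpha(m))$ for which $B(0,r)$ and $U(0,\rho r)$ coincide.
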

\begin{proof}
First we show that the condition \eqref{eq:upness} for some $\rho\in (0,1)$ implies the uniform perfectness. 
Take $x\in 2^{\nn}$ and $r\in (0,\di(X))$.
Choose $n\in \nn$ with $r\in [\alpha(n+1),\alpha(n))$.
Note that $B(x,r)=B(x,\alpha(n+1))$.
Since for some $k>n$ we have 
\[
\rho r<\rho \alpha(n)\le \alpha(k)\le \alpha(n+1),
\]
and since 
there exists $y\in B(x,r)$ with $v(x,y)=k$, 
we see that the set $B(x,r)\setminus B(x,\rho r)$ is non-empty.
Hence $(2^{\nn},d_{\alpha})$ is uniformly perfect.\par
Second, we show the contrary. 
Assume that for every $\rho\in (0,1)$ there exists $n\in \nn$ such that for every $k>n$ 
we have $\rho \alpha(n)>\alpha(k)$.
In this case, we can choose $m\in \nn$ satisfying $\alpha(m+1)<\alpha(m)$ and $\alpha(m+1)<\rho \alpha(m)$.
Take $r\in (\alpha(m+1)/\rho, \alpha(m))$, then
\[
B(0,r)=B(0,\rho r)=B(0,\alpha(m+1)).
\]
Therefore, the set $B(0,r)\setminus B(0,\rho r)$ is empty. 
This implies that $(2^{\nn},d_{\alpha})$ is not uniformly perfect.
\end{proof}
From Lemma \ref{lem:up} we can deduce the following characterization of the non-uniform perfectness:
%cor
\begin{lem}\label{lem:per}
Let $\alpha$ be a shrinking sequence. 
Then 
 $(2^{\nn},d_{\alpha})$ is not uniformly perfect if and only if there exists a function $\varphi:\nn\to \nn$ with
\begin{equation}\label{eq:pcdn}
\lim_{n\to \mgn}\frac{\alpha(\varphi(n)+1)}{\alpha(\varphi(n))}=0.
\end{equation}
\end{lem}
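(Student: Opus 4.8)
The plan is to negate the characterization of uniform perfectness supplied by Lemma~\ref{lem:up} and then to translate the resulting statement into a statement about ratios $\alpha(n+1)/\alpha(n)$, which is essentially the content of \eqref{eq:pcdn}. First I would record the logical negation of Lemma~\ref{lem:up}: the space $(2^{\nn},d_{\alpha})$ is \emph{not} uniformly perfect if and only if for every $\rho\in(0,1)$ there exists $n\in\nn$ such that $\alpha(k)<\rho\alpha(n)$ holds for all $k>n$.

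The next step is to simplify this condition using the fact that $\alpha$ is monotone non-increasing. Since $\alpha(k)\le\alpha(n+1)$ for every $k>n$ and the value $\alpha(n+1)$ is attained among the $\alpha(k)$ with $k>n$, the requirement ``$\alpha(k)<\rho\alpha(n)$ for all $k>n$'' is equivalent to the single inequality $\alpha(n+1)<\rho\alpha(n)$, i.e.\ to $\alpha(n+1)/\alpha(n)<\rho$. Consequently $(2^{\nn},d_{\alpha})$ is not uniformly perfect if and only if for every $\rho\in(0,1)$ there is some $n\in\nn$ with $\alpha(n+1)/\alpha(n)<\rho$; letting $\rho$ run through a sequence tending to $0$, this is in turn equivalent to $\inf_{n\in\nn}\alpha(n+1)/\alpha(n)=0$ (here all ratios lie in $(0,1]$, so the infimum is indeed attained in the limit by a subsequence).

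It then remains to see that $\inf_{n\in\nn}\alpha(n+1)/\alpha(n)=0$ is equivalent to the existence of $\varphi\colon\nn\to\nn$ satisfying \eqref{eq:pcdn}. For the ``only if'' direction, the infimum being $0$ lets me choose, for each $n\in\nn$, an index $\varphi(n)\in\nn$ with $\alpha(\varphi(n)+1)/\alpha(\varphi(n))<1/(n+1)$, and then $\alpha(\varphi(n)+1)/\alpha(\varphi(n))\to 0$ as $n\to\infty$. For the ``if'' direction, if such a $\varphi$ exists then the numbers $\alpha(\varphi(n)+1)/\alpha(\varphi(n))$ form a sequence of ratios of the relevant form that tends to $0$, so their infimum, and hence the infimum over all indices, equals $0$. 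Combining the three equivalences yields the lemma.

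I do not expect a genuine obstacle: the only point that requires a moment's care is the reduction of ``$\alpha(k)<\rho\alpha(n)$ for all $k>n$'' to the case $k=n+1$, which is exactly where the monotonicity of $\alpha$ is used, together with the routine bookkeeping of moving between the quantifier form ``$<\rho$ for every $\rho\in(0,1)$'' and the statement ``$\inf=0$''.
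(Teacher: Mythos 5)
Your proof is correct and follows exactly the route the paper intends: the paper states Lemma \ref{lem:per} as a direct deduction from Lemma \ref{lem:up} without writing out the details, and your argument (negating Lemma \ref{lem:up}, using monotonicity of $\alpha$ to reduce the universal condition over $k>n$ to the single index $k=n+1$, and rephrasing $\inf_n \alpha(n+1)/\alpha(n)=0$ via a choice function $\varphi$) is precisely that deduction, carried out correctly.
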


\subsection{Concrete Examples}
We next apply the previous lemmas to our construction of examples.\par
For $u\in (0,1)$, let $[u]$ denote the shrinking sequence defined by $[u](n)=u^n$. 
Then we have:
\begin{lem}\label{lem:111}
For every $u\in (0,1)$, the Cantor space $(2^{\nn}, d_{[u]})$ has type $(1,1,1)$.
\end{lem}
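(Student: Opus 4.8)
The plan is to verify each of the three properties for $(2^{\nn}, d_{[u]})$ by invoking the criteria established in Lemmas \ref{lem:db} and \ref{lem:up}, plus the general fact that every sequentially metrized Cantor space is an ultrametric space (hence uniformly disconnected by Proposition \ref{prop:ultraud}). So the uniform disconnectedness $T_{UD}=1$ is immediate and requires no computation: $d_{[u]}$ is an ultrametric, so $(2^{\nn},d_{[u]})$ is $\delta$-uniformly disconnected for every $\delta \in (0,1)$.

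For the doubling property, I would apply Lemma \ref{lem:db} with the sequence $\alpha = [u]$, where $\alpha(n) = u^n$. I need to bound, uniformly in $k$, the cardinality of the set $\{n \in \nn \mid u^k/2 \le u^n \le u^k\}$. The inequality $u^k/2 \le u^n \le u^k$ is equivalent (taking logarithms, noting $u \in (0,1)$ so $\log u < 0$) to $k \le n \le k + \log 2 / \log(1/u)$. Thus the set is contained in an interval of integers of length $\log 2/\log(1/u)$, which depends only on $u$, not on $k$; so one may take $N = \lfloor \log 2 / \log(1/u)\rfloor + 1$. Hence condition \eqref{eq:dbp} holds and $(2^{\nn}, d_{[u]})$ is doubling, giving $T_D = 1$.

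For uniform perfectness, I would apply Lemma \ref{lem:up}, again with $\alpha(n) = u^n$. Given $n$, I need $\rho \in (0,1)$ and some $k > n$ with $\rho\, u^n \le u^k$. Taking $k = n+1$ works with the uniform choice $\rho = u$: indeed $u \cdot u^n = u^{n+1} = \alpha(n+1)$, so $\rho\,\alpha(n) = \alpha(n+1) \le \alpha(n+1)$ holds for every $n$. Hence \eqref{eq:upness} is satisfied and $(2^{\nn}, d_{[u]})$ is uniformly perfect, so $T_{UP} = 1$. Combining the three, $(2^{\nn}, d_{[u]})$ has type $(1,1,1)$.

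There is no real obstacle here — the lemma is a sanity check illustrating that the classical geometric-sequence construction recovers a standard Cantor metric space (as it must, by the David--Semmes uniformization theorem, since $(2^{\nn}, d_{[1/3]})$ is bi-Lipschitz to the middle-third Cantor set). The only point requiring minor care is the logarithm bookkeeping in the doubling estimate: one should record the uniform bound $N$ explicitly so that Lemma \ref{lem:db} applies verbatim. Everything else is a direct substitution into the already-proven characterizations.
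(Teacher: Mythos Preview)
Your proposal is correct and follows essentially the same approach as the paper: the paper's proof simply asserts that $[u]$ satisfies \eqref{eq:dbp} and \eqref{eq:upness} and invokes Lemmas~\ref{lem:db} and~\ref{lem:up}, while you spell out the straightforward verifications of these conditions (and make explicit the ultrametric argument for uniform disconnectedness, which the paper leaves implicit via the preceding lemma). There is no substantive difference in method.
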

\begin{proof}
The shrinking sequence $[u]$ satisfies \eqref{eq:dbp} and \eqref{eq:upness}.  
Lemmas \ref{lem:db} and \ref{lem:up} imply that  $(2^{\nn}, d_{[u]})$ has type $(1,1,1)$.
\end{proof}

\begin{rmk}
The metric $d_{[1/3]}$ on $2^{\nn}$ coincides with the metric $e$ mentioned in Example \ref{exm:sym}. 
From the same argument as in the proof of Lemma \ref{lem:111}, 
we deduce that $(2^{\nn},e)$ has type $(1,1,1)$.
\end{rmk}

%%%%%%%%%%%%%%%%%%%%%%%%%%%%%%%%%%%%%%%%%%%%%%%%%%%%%%%%%
For a shrinking sequence $\alpha$, and for  $m\in \nn$,  
we define the $m$-shifted shrinking sequence $\alpha^{\{m\}}$ of $\alpha$ by $\alpha^{\{m\}}(n)=\alpha(n+m-1)$. 
Note that $(2^{\nn},d_{\alpha^{\{m\}}})$ is isometric to a closed ball $B(x,\alpha(m))$ in $(2^{\nn}, d_{\alpha})$. \par
By Lemmas \ref{lem:db} and \ref{lem:up}, we obtain the following two lemmas: 
\begin {lem}\label{lem:mdb}
Let $\alpha$ be a shrinking sequence. 
The space $(2^{\nn},d_{\alpha})$ is doubling if and only if for each $m\in \nn$ 
the space $(2^{\nn},d_{\alpha^{\{m\}}})$ is doubling. 
\end{lem}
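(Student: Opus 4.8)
The plan is to read the statement off from the two facts already available: the doubling property is hereditary, and $(2^{\nn},d_{\alpha^{\{m\}}})$ is isometric to a closed ball $B(x,\alpha(m))$ in $(2^{\nn},d_{\alpha})$, as noted just before the lemma. The combinatorial criterion of Lemma \ref{lem:db} can also be invoked, but the structural argument is shorter.

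First I would dispatch the ``if'' direction, which is immediate. Since $\alpha^{\{1\}}(n)=\alpha(n+1-1)=\alpha(n)$, the shifted sequence $\alpha^{\{1\}}$ coincides with $\alpha$; hence, assuming that $(2^{\nn},d_{\alpha^{\{m\}}})$ is doubling for every $m\in\nn$, the case $m=1$ already yields that $(2^{\nn},d_{\alpha})$ is doubling.

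For the ``only if'' direction, suppose $(2^{\nn},d_{\alpha})$ is $N$-doubling. Fix $m\in\nn$ and choose any point $x\in 2^{\nn}$. The closed ball $B(x,\alpha(m))$, being a subspace of an $N$-doubling space, is itself $N$-doubling. Since $(2^{\nn},d_{\alpha^{\{m\}}})$ is isometric to $B(x,\alpha(m))$, it is $N$-doubling as well, which is what we wanted. Alternatively, with $J_{\beta}(k)=\{\,n\in\nn\mid \beta(k)/2\le\beta(n)\le\beta(k)\,\}$ for a shrinking sequence $\beta$, a reindexing ($n\mapsto n+m-1$) shows that $J_{\alpha^{\{m\}}}(k)$ injects into $J_{\alpha}(k+m-1)$, so any bound $N$ valid for all $J_{\alpha}(\cdot)$ is valid for all $J_{\alpha^{\{m\}}}(\cdot)$, and Lemma \ref{lem:db} applies; conversely $J_{\alpha}=J_{\alpha^{\{1\}}}$.

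There is no genuine obstacle here. The only point requiring a little care is the index bookkeeping $\alpha^{\{m\}}(n)=\alpha(n+m-1)$, together with the monotonicity of $\alpha$, which guarantees that the defining set for $\alpha^{\{m\}}$ at level $k$ maps into the corresponding set for $\alpha$ at level $k+m-1$ (so cardinalities do not increase under shifting).
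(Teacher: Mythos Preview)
Your proof is correct. The paper gives no argument beyond citing Lemma~\ref{lem:db}; your alternative paragraph via the injection $J_{\alpha^{\{m\}}}(k)\hookrightarrow J_{\alpha}(k+m-1)$ is exactly that route, while your primary argument (hereditary doubling together with the isometry $(2^{\nn},d_{\alpha^{\{m\}}})\cong B(x,\alpha(m))$ recorded just before the lemma) is an even quicker valid shortcut.
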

\begin {lem}\label{lem:mup}
Let $\alpha$ be a shrinking sequence. 
The space $(2^{\nn},d_{\alpha})$ is uniformly perfect if and only if for each $m\in \nn$ 
the space $(2^{\nn},d_{\alpha^{\{m\}}})$ is uniformly perfect.  
\end{lem}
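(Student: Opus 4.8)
The plan is to deduce Lemma \ref{lem:mup} from the sequence-level criterion for uniform perfectness established in Lemma \ref{lem:up}, in the same spirit as Lemma \ref{lem:mdb} is deduced from Lemma \ref{lem:db}.

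The \emph{if} direction is formal: by the definition $\alpha^{\{m\}}(n)=\alpha(n+m-1)$ one has $\alpha^{\{1\}}=\alpha$, so taking $m=1$ in the hypothesis immediately gives that $(2^{\nn},d_{\alpha})$ is uniformly perfect. For the \emph{only if} direction I would first record that each shifted sequence $\alpha^{\{m\}}$ is again shrinking (non-increasing since $\alpha$ is, and convergent to $0$ since $\alpha$ is), so that Lemma \ref{lem:up} applies to it. Now assume $(2^{\nn},d_{\alpha})$ is uniformly perfect. By Lemma \ref{lem:up} there is $\rho\in(0,1)$ such that for every $n\in\nn$ there exists $k>n$ with $\rho\alpha(n)\le\alpha(k)$. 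Fix $m\in\nn$. Given $n\in\nn$, apply this to the index $n+m-1$ to obtain $k>n+m-1$ with $\rho\alpha(n+m-1)\le\alpha(k)$; writing $k=j+m-1$ with $j>n$, this reads $\rho\,\alpha^{\{m\}}(n)\le\alpha^{\{m\}}(j)$. Since $n$ was arbitrary, Lemma \ref{lem:up} applied to $\alpha^{\{m\}}$ yields that $(2^{\nn},d_{\alpha^{\{m\}}})$ is uniformly perfect, with the same constant $\rho$.

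An alternative route bypasses Lemma \ref{lem:up}: the remark preceding the statement identifies $(2^{\nn},d_{\alpha^{\{m\}}})$ with a closed ball $B(x,\alpha(m))$ of the ultrametric space $(2^{\nn},d_{\alpha})$, and in an ultrametric space, whenever $y\in B(x,r)$ and $s\le r$ one has $B(y,s)\subseteq B(x,r)$, while $\di(B(x,r))\le r$; hence any point witnessing $\rho$-uniform perfectness of the ambient space at a scale $s<\di(B(x,\alpha(m)))$ automatically lies inside $B(x,\alpha(m))$, so the ball is itself $\rho$-uniformly perfect. Either way, I expect no genuine obstacle; the only point requiring a little care is the index bookkeeping in the shift (respectively, in the second approach, verifying that the witness point stays inside the ball), both of which are handled directly by the non-increasing, resp. ultrametric, structure.
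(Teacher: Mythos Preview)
Your proposal is correct and matches the paper's approach: the paper does not give an explicit proof of this lemma, but simply states that it follows from Lemma~\ref{lem:up}, and your first argument is precisely the intended deduction with the index bookkeeping filled in. Your alternative ultrametric-ball argument is also fine and gives a pleasant geometric interpretation, but the paper's route is the sequence-level one you present first.
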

\begin{rmk}\label{rmk:sametype}
By Lemmas \ref{lem:mdb} and \ref{lem:mup}, and by the hereditary of the uniform disconnectedness, 
for every shrinking sequence $\alpha$, we see that 
every closed ball in $(2^{\nn},d_{\alpha})$ has the same type as $(2^{\nn}, d_{\alpha})$. 
\end{rmk}
We quest the types realized by sequentially metrized Cantor spaces.
\begin{lem}\label{lem:011}
Let $\alpha$ be a shrinking sequence defined by $\alpha(n)=1/n$. 
Then the Cantor space $(2^{\nn},d_{\alpha})$ has type $(0,1,1)$. 
\end{lem}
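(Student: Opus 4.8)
The plan is to verify the three properties of $(2^{\nn}, d_\alpha)$ with $\alpha(n) = 1/n$ by applying the combinatorial criteria already established in Lemmas \ref{lem:db} and \ref{lem:up}, together with the fact that every sequentially metrized Cantor space is automatically uniformly disconnected. So the target type $(0,1,1)$ breaks into three checks: (i) $(2^{\nn}, d_\alpha)$ is \emph{not} doubling; (ii) it \emph{is} uniformly disconnected; (iii) it \emph{is} uniformly perfect. Point (ii) is immediate: by the lemma preceding Lemma \ref{lem:db}, $(2^{\nn}, d_\alpha)$ is an ultrametric space for any shrinking sequence, hence uniformly disconnected by Proposition \ref{prop:ultraud}. (Note $\alpha(n) = 1/n$ is indeed monotone non-increasing and converges to $0$, so it is a legitimate shrinking sequence.)

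For (i), I would use the negation of the criterion in Lemma \ref{lem:db}: I must show that for every $N \in \nn$ there is some $k$ with $\card(\{n \in \nn \mid \alpha(k)/2 \le \alpha(n) \le \alpha(k)\}) > N$. With $\alpha(n) = 1/n$, the condition $\alpha(k)/2 \le \alpha(n) \le \alpha(k)$ becomes $k \le n \le 2k$, so the set in question is $\{k, k+1, \dots, 2k\}$, which has cardinality $k+1$. Given $N$, choosing $k \ge N$ makes this exceed $N$. Hence the doubling criterion fails and $(2^{\nn}, d_\alpha)$ is not doubling, so $T_D = 0$.

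For (iii), I would apply Lemma \ref{lem:up}: I need $\rho \in (0,1)$ such that for every $n$ there is $k > n$ with $\rho\,\alpha(n) \le \alpha(k)$. With $\alpha(n) = 1/n$ this reads $\rho/n \le 1/k$, i.e. $k \le n/\rho$. Taking $\rho = 1/2$ (say), for each $n$ we may pick $k = n+1$, which satisfies $n+1 \le 2n$ for all $n \ge 1$; thus the condition holds with $\rho = 1/2$, and $(2^{\nn}, d_\alpha)$ is uniformly perfect, so $T_{UP} = 1$. Combining the three items, $(2^{\nn}, d_\alpha)$ has type $(0,1,1)$, and it is a Cantor space by the Brouwer theorem \ref{Brouwer}. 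None of the steps presents a genuine obstacle — the work is entirely in translating the $\alpha(n) = 1/n$ arithmetic through the two combinatorial criteria — so the only thing to be careful about is getting the inequalities $k \le n \le 2k$ and $k \le n/\rho$ in the right direction and checking the edge case $n = 1$.
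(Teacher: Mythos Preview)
Your proposal is correct and follows essentially the same approach as the paper: both reduce the three checks to the combinatorial criteria of Lemmas \ref{lem:db} and \ref{lem:up} (with uniform disconnectedness automatic from the ultrametric structure). The paper compresses both arithmetic verifications into the single observation $\alpha(n)/2 = \alpha(2n)$, from which failure of \eqref{eq:dbp} and satisfaction of \eqref{eq:upness} are immediate; you unpack the same inequalities explicitly and choose $k = n+1$ rather than $k = 2n$ for uniform perfectness, but the substance is identical.
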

\begin{proof}
Since $\alpha(n)/2=\alpha(2n)$ for all $n\in \nn$, 
the sequence $\alpha$ satisfies \eqref{eq:upness} and does not satisfy \eqref{eq:dbp}.
By Lemmas \ref{lem:db} and \ref{lem:up}, the space $(2^{\nn},d_{\alpha})$ has type $(0,1,1)$. 
\end{proof}

\begin{lem}\label{lem:110}
Let $\beta$ be a shrinking sequence defined by $\beta(n)=1/n!$. 
Then the Cantor space $(2^{\nn},d_{\beta})$ has type $(1,1,0)$.
\end{lem}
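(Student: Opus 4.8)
The plan is to verify the two characterizations from Lemmas \ref{lem:db} and \ref{lem:up} for the factorial sequence $\beta(n) = 1/n!$, namely that $\beta$ satisfies the doubling condition \eqref{eq:dbp} but fails the uniform perfectness condition \eqref{eq:upness}. First I would check the doubling property: for any $k \in \nn$, I want to count the indices $n$ with $\beta(k)/2 \le \beta(n) \le \beta(k)$, i.e. with $k! \le n! \le 2\cdot k!$. Since $\beta$ is strictly decreasing, the only candidates are $n \ge k$, and for $n = k+1$ we already have $n! = (k+1)\cdot k! \ge 2\cdot k!$ once $k \ge 1$; so the set in \eqref{eq:dbp} is just $\{k\}$ (or $\{k, k+1\}$ in the degenerate case $k=0$ if one includes it), which has cardinality at most $2$. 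Hence \eqref{eq:dbp} holds with $N = 2$, and Lemma \ref{lem:db} gives that $(2^{\nn}, d_{\beta})$ is doubling. Uniform disconnectedness is automatic since $d_{\beta}$ is an ultrametric (Proposition \ref{prop:ultraud} or the lemma on $(2^{\nn}, d_\alpha)$).

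Next I would show uniform perfectness fails. The cleanest route is Lemma \ref{lem:per}: I need a function $\varphi:\nn\to\nn$ with $\lim_{n\to\mgn} \beta(\varphi(n)+1)/\beta(\varphi(n)) = 0$. Taking $\varphi(n) = n$ works immediately, since
\[
\frac{\beta(n+1)}{\beta(n)} = \frac{n!}{(n+1)!} = \frac{1}{n+1} \to 0.
\]
Therefore $(2^{\nn}, d_{\beta})$ is not uniformly perfect. Combining the three facts, the space has type $(1,1,0)$, which is the assertion.

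I don't anticipate a genuine obstacle here; this is a direct application of the already-established characterizations, and the only thing to be careful about is the bookkeeping in the doubling count — making sure that consecutive factorials differ by a factor of at least $2$ so that each "window" $[\beta(k)/2, \beta(k)]$ contains only one term of the sequence (which is clear for $k \ge 1$, and the finitely many small-index exceptions only change the bound by a constant, which is harmless for the doubling property). Alternatively, one could invoke Lemma \ref{lem:up} directly and observe that for any $\rho \in (0,1)$ there is an $n$ with $\rho\beta(n) > \beta(k)$ for all $k > n$ — indeed $\beta(n+1)/\beta(n) = 1/(n+1) < \rho$ as soon as $n+1 > 1/\rho$ — giving the failure of \eqref{eq:upness} without passing through Lemma \ref{lem:per}.
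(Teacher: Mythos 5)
Your proof is correct and follows essentially the same route as the paper: verify that $\beta$ satisfies the doubling criterion \eqref{eq:dbp} and fails the uniform perfectness criterion \eqref{eq:upness} (equivalently, satisfies \eqref{eq:pcdn}), then invoke Lemmas \ref{lem:db} and \ref{lem:up} together with the automatic uniform disconnectedness of ultrametric spaces. The only cosmetic remark is that the boundary case in the doubling count occurs at $k=1$ (where $2!=2\cdot 1!$ puts both $1$ and $2$ in the window), not $k=0$, but as you note this affects only the constant and is harmless.
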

\begin{proof}
For each $\rho\in (0,1)$, choose $m\in \nn$ with $1/m<\rho$. 
Then we have $\beta(n+1)\le \rho \beta(n)$ for all $n>m$.
Hence the sequence $\beta$ satisfies \eqref{eq:dbp} and does not satisfy \eqref{eq:upness}.
From Lemmas \ref{lem:db} and \ref{lem:up} it follows that the space $(2^{\nn},d_{\beta})$ has type $(1,1,0)$. 
\end{proof}

\begin{lem}\label{lem:010}
There exists a shrinking sequence $\gamma$ for which the Cantor space $(2^{\nn},d_{\gamma})$ has type $(0,1,0)$. 
\end{lem}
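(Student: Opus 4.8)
The plan is to build $\gamma$ as a concatenation of ever longer constant ``blocks'' whose heights drop ever more steeply between consecutive blocks. The two failures will then come from two complementary features of $\gamma$: the long constant blocks destroy the doubling property through the criterion of Lemma \ref{lem:db}, while the steep drops between blocks destroy the uniform perfectness through Lemma \ref{lem:per}. Uniform disconnectedness is automatic, since $(2^{\nn},d_{\gamma})$ is an ultrametric space for every shrinking sequence $\gamma$ and hence $\delta$-uniformly disconnected for all $\delta\in(0,1)$ by Proposition \ref{prop:ultraud}.

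Concretely, I would set $c_m=2^{-m^2}$ and $L_m=m$ for $m\in\nn$, partition $\nn$ into consecutive finite blocks $I_1,I_2,\dots$ with $\card(I_m)=L_m$, and define $\gamma$ to be constantly equal to $c_m$ on $I_m$. Since $(c_m)$ is strictly decreasing with $c_m\to 0$, the sequence $\gamma$ is monotone non-increasing and converges to $0$, so it is shrinking; thus $(2^{\nn},d_{\gamma})$ is an ultrametric Cantor space by the Brouwer theorem \ref{Brouwer}. For non-doubling I would invoke Lemma \ref{lem:db}: given $N\in\nn$, choose $m>N$ and let $k$ be the least element of $I_m$; because $c_{m-1}>c_m$ and $c_{m+1}<c_m/2$ (the latter holds for every $m\ge 1$ since $c_{m+1}/c_m=2^{-2m-1}$), the set $\{\,n\in\nn\mid \gamma(k)/2\le\gamma(n)\le\gamma(k)\,\}$ is exactly $I_m$, which has cardinality $m>N$; hence \eqref{eq:dbp} fails for every $N$ and $T_D(2^{\nn},d_{\gamma})=0$. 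For the failure of uniform perfectness I would apply Lemma \ref{lem:per} to the function $\varphi(n)=\max I_n$: then $\gamma(\varphi(n))=c_n$ while $\gamma(\varphi(n)+1)=c_{n+1}$ (the first value of the next block), so $\gamma(\varphi(n)+1)/\gamma(\varphi(n))=c_{n+1}/c_n=2^{-2n-1}\to 0$, which is precisely \eqref{eq:pcdn}; hence $T_{UP}(2^{\nn},d_{\gamma})=0$. Combining the two computations, $(2^{\nn},d_{\gamma})$ has type $(0,1,0)$.

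The only delicate point --- the closest thing to an obstacle --- is choosing the block lengths and heights so that both characterizations genuinely apply simultaneously: the block lengths must be unbounded (for the failure of \eqref{eq:dbp}) while the ratios of consecutive heights must tend to $0$ (for \eqref{eq:pcdn}), and one must ensure that a drop between blocks is steep enough that a block is not merged with a neighbour inside the ``flat'' index set counted in \eqref{eq:dbp}. With the explicit choice $c_m=2^{-m^2}$, $L_m=m$ these requirements are visibly compatible and all the inequalities needed above are immediate.
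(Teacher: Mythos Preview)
Your proof is correct and follows essentially the same strategy as the paper: build a shrinking sequence with arbitrarily long ``clusters'' to violate the doubling criterion of Lemma~\ref{lem:db} and with ratio drops tending to zero to trigger \eqref{eq:pcdn} in Lemma~\ref{lem:per}. The paper realises this by inserting $n$ distinct values into $(\beta(2n-1)/2,\beta(2n-1))$ for $\beta(k)=1/k!$ and using the factorial gaps for the drops, whereas you use constant blocks of length $m$ at height $2^{-m^{2}}$; your version is a bit more explicit, but the argument and the lemmas invoked are identical.
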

\begin{proof}
Let $\beta$ be the shrinking sequence defined by $\beta(n)=1/n!$.
For each $n\in \nn$, choose distinct $n$ numbers $r_{1,n},r_{2,n},\dots,r_{n,n}$ in the set 
\[
(\beta(2n-1)/2,\beta(2n-1)).
\]
Define the shrinking sequence $\gamma$ as the renumbering of 
\[
\beta(\nn)\cup\{\,r_{i,n}\mid n\in \nn,i\in\{1,\dots,n\}\,\}
\]
in decreasing order.
Since for each $n\in \nn$ the set
\[
\gamma(\nn)\cap (\beta(2n-1)/2,\beta(2n-1)))
\]
has cardinality $n$, the sequence $\gamma$ does not satisfy (\ref{eq:dbp}).
Define a function $\varphi:\nn\to \nn$ by $\varphi(n)=\gamma^{-1}(1/(2n-1)!)$. 
Then $\varphi$ satisfies 
\[
\gamma(\varphi(n))=1/(2n-1)!, \quad \gamma(\varphi(n)+1)=1/(2n)!.
\] 
From  Lemmas \ref{lem:db} and \ref{lem:per}, we deduce that $(2^{\nn},d_{\gamma})$ has type $(0,1,0)$.
\end{proof}

Using the sequentially metrized Cantor spaces, we see the following (cf. Lemma \ref{lem:pup}):

\begin{prop}\label{prop:nonup}
There exist shrinking sequences $\sigma$ and $\tau$ satisfying the following:
\begin{enumerate}
\item $(2^{\nn},d_{\sigma})$ and $(2^{\nn},d_{\tau})$ have type $(1,1,0)$;
\item $(2^{\nn}\times 2^{\nn},d_{\sigma}\times d_{\tau})$ is quasi-symmetrically equivalent to $(\Gamma,d_{\Gamma})$.
\end{enumerate} 
\end{prop}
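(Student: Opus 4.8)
The plan is to deduce part (2) from the David--Semmes uniformization theorem (\cite[Proposition 15.11]{DS}): once $\sigma$ and $\tau$ are chosen so that each of $(2^{\nn},d_\sigma)$ and $(2^{\nn},d_\tau)$ has type $(1,1,0)$ while the $\ell^{\mgn}$-product $(2^{\nn}\times 2^{\nn},d_\sigma\times d_\tau)$ is doubling, uniformly disconnected and uniformly perfect, that theorem immediately yields the quasi-symmetric equivalence with $(\Gamma,d_\Gamma)$, the product being compact. Two of the three properties of the product are automatic: by Lemma \ref{lem:pd} it is doubling as soon as both factors are, and by Lemma \ref{lem:pud} (or simply because the $\ell^{\mgn}$-product of ultrametric spaces is ultrametric) it is uniformly disconnected. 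So everything is driven by making the product uniformly perfect although neither factor is, which is exactly the failure of the converse of Lemma \ref{lem:pup}.

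First I would let both $\sigma$ and $\tau$ take values only in $\{\,2^{-j}\mid j\in\nn\,\}$; then both factors are doubling for free, since by Lemma \ref{lem:db} it suffices that no interval $[2^{-j_0-1},2^{-j_0}]$ contains more than two powers of $2$. To fix the sequences, choose integers $1=\mu_1<\mu_2<\cdots$ with $\mu_{m+1}-\mu_m\to\mgn$ (say $\mu_m=m^2$), call $[\mu_m,\mu_{m+1})$ the $m$-th \emph{block}, and let the value set of $\sigma$ be $\{\,2^{-\mu_m}\mid m\in\nn\,\}$ together with $\{\,2^{-j}\mid \mu_m\le j<\mu_{m+1}\,\}$ for every odd $m$, and the value set of $\tau$ the analogous set, namely $\{\,2^{-\mu_m}\mid m\in\nn\,\}$ together with $\{\,2^{-j}\mid \mu_m\le j<\mu_{m+1}\,\}$ for every even $m$; then $\sigma$ and $\tau$ are the decreasing enumerations of these sets. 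Thus $\sigma$ decreases geometrically with ratio $1/2$ along odd blocks and makes a single jump by the factor $2^{-(\mu_{m+1}-\mu_m)}$ across each even block, and $\tau$ does the opposite.

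For each factor the type $(1,1,0)$ is then quick: doubling as above; uniform disconnectedness because $(2^{\nn},d_\sigma)$ is an ultrametric space (Proposition \ref{prop:ultraud}); and the failure of uniform perfectness by exhibiting the map $\varphi$ of Lemma \ref{lem:per} at the jumps --- for $\sigma$, take $\varphi(n)$ to be the position of $2^{-\mu_{2n}}$ in the enumeration, so that $\sigma(\varphi(n)+1)/\sigma(\varphi(n))=2^{-(\mu_{2n+1}-\mu_{2n})}\to 0$, and symmetrically for $\tau$ using the odd block starts.

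The main obstacle is the uniform perfectness of the product, and the idea is that the two sequences cover for each other. In the product ultrametric, the distances from a point $(x,y)$ attained inside the closed ball $B((x,y),r)$ are bounded by $\max\{\sigma(a+1),\tau(b+1)\}$ with equality attained, where $a$ (resp. $b$) is the number of values of $\sigma$ (resp. $\tau$) exceeding $r$; hence, to see that the product is $(1/2)$-uniformly perfect it suffices to show that for every $r\in(0,2^{-\mu_1})$ at least one of $d_\sigma$, $d_\tau$ attains a value in $(r/2,r]$, since such a value is realized by changing a single coordinate of $x$ (or of $y$) and so produces a point of $B((x,y),r)\setminus U((x,y),r/2)$. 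Given $r$, it lies in exactly one block range $(2^{-\mu_{m+1}},2^{-\mu_m}]$, and the unique power $2^{-j}$ lying in $(r/2,r]$ then has $\mu_m\le j\le\mu_{m+1}$; if $m$ is odd this $2^{-j}$ is a value of $\sigma$ (the odd block $m$ is entirely present, and each block start $2^{-\mu_m}$ belongs to both value sets), while if $m$ is even it is a value of $\tau$. The only delicate points are the block endpoints $r=2^{-\mu_m}$ and the scales just below $\di(2^{\nn}\times 2^{\nn},d_\sigma\times d_\tau)=2^{-\mu_1}$, and both are handled precisely because every block start was put into both value sets. This gives the uniform perfectness of the product, and the proposition follows from \cite[Proposition 15.11]{DS}.
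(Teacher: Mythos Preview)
Your argument is correct and follows essentially the same approach as the paper's proof. Both constructions partition the scale axis into blocks and let $\sigma$ fill the odd blocks geometrically while jumping across the even ones, with $\tau$ doing the reverse, so that at every scale at least one factor supplies a nearby value and the product becomes $(1/2)$-uniformly perfect; the paper uses $\beta(n)=1/n!$ as block boundaries with dyadic filling, whereas you use $2^{-m^2}$ as block boundaries with values restricted to powers of $2$, but the mechanism and the verification (doubling via Lemma~\ref{lem:db}, non-uniform-perfectness via Lemma~\ref{lem:per}, uniform perfectness of the product by a scale-by-scale check, and conclusion via the David--Semmes theorem) are the same.
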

\begin{proof}
(1) Let $\beta$ be the shrinking sequence defined by $\beta(n)=1/n!$.
Define a function $f:\nn\to \nn$ by 
\[
f(n)=\max\{\, k\in \nn\mid 2^{-k}\beta(n)\ge \beta(n+1)\,\}.
\]
We define the shrinking sequence $\sigma$ as the renumbering of the set 
\[
\beta(\nn)\cup \{\,2^{-i}\beta(2n)\mid n\in \nn, i=1,\dots ,f(2n)\,\}
\]
in decreasing order. 
We also define the shrinking sequence $\tau$ as the renumbering  of the set 
\[
\beta(\nn)\cup \{\,2^{-i}\beta(2n+1)\mid n\in \nn, i=1,\dots, f(2n+1)\,\}
\]
in decreasing order.\par
Define a function $\varphi:\nn\to \nn$ by $\varphi(n)=\sigma^{-1}(1/(2n-1)!)$ 
and $\psi :\nn\to \nn$ by $\psi(n)=\tau^{-1}(1/(2n)!)$. 
Note that $\varphi$ satisfies
\[
\sigma(\varphi(n))=1/(2n-1)!,\quad \sigma(\varphi(n)+1)=1/(2n)!
\]
and $\psi$ satisfies 
\[
\tau(\psi(n))=1/(2n)!, \quad \tau(\psi(n)+1)=1/(2n+1)!.
\]
Then $\varphi$ and $\psi$ satisfy \eqref{eq:pcdn}, and hence  by Lemma \ref{lem:per}, 
 both $(2^{\nn},d_{\sigma})$ and $(2^{\nn},d_{\tau})$ have type $(1,1,0)$. \par
(2) By Lemmas \ref{lem:pd} and \ref{lem:pud}, the space 
$(2^{\nn}\times 2^{\nn},d_{\sigma}\times d_{\tau})$ 
is doubling and uniformly disconnected. 
By the David-Semmes uniformization theorem (\cite[Proposition 15.11]{DS}),
it suffices to prove that $(2^{\nn}\times 2^{\nn},d_{\sigma}\times d_{\tau})$ is uniformly perfect.
Take $z=(x,y)\in 2^{\nn}\times 2^{\nn}$ and $r\in (0, \di(X\times Y))$. 
There exists $n\in \nn$ with  $r\in (\beta(n+1),\beta(n)]$.
If $n$ is even, 
then there exists $i\in \nn$ with $\sigma(i)\in (r/2, r)$. 
Hence the set $B(x,r)\setminus U(r/2)$ in $(2^{\nn}, d_{\sigma})$ is non-empty. 
Choose $x'\in B(x,r)\setminus U(r/2)$, and put $z'=(x',y)$. 
Since $(d_{\sigma}\times d_{\tau})(z,z')$ is equal to $d_{\sigma}(x,x')$, 
it belongs to $[r/2,r]$. 
Therefore, $B(z,r)\setminus U(z,r/2)$ in $(2^{\nn}\times 2^{\nn}, d_{\sigma}\times d_{\tau})$ is non-empty. 
If $n$ is odd, 
then there exists $j\in \nn$ with $\tau(j)\in (r/2,r)$. 
Hence the set $B(y,r)\setminus U(y,r/2)$ in $(2^{\nn},d_{\tau})$ is non-empty. 
Similarly to the case where $n$ is even, 
we see that the set $B(z,r)\setminus U(z,r/2)$ is non-empty. 
Thus $(2^{\nn}\times 2^{\nn}, d_{\sigma}\times d_{\tau})$ is $(1/2)$-uniformly perfect.
\end{proof}

\section{Totally Exotic Cantor Metric Spaces}\label{sec:totexo}
In this section, we prove Theorem \ref{thm:totexo}. 
In  Section \ref{sec:smcs}, 
we already know the existence of some totally exotic Cantor metric spaces for the doubling property and the uniformly perfectness. 
Using Lemmas \ref{lem:mdb} and \ref{lem:mup}, 
we obtain the following three propositions (see Remark \ref{rmk:sametype}): 
\begin{prop}\label{prop:011X}
Let $(X,d)$ be the Cantor metric space stated in Lemma \ref{lem:011}. 
Then $(X,d)$ has totally exotic type $(0,1,1)$. 
\end{prop}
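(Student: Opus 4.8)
The plan is to show that the sequentially metrized Cantor space $(X,d)=(2^{\nn},d_{\alpha})$ with $\alpha(n)=1/n$, which by Lemma \ref{lem:011} has exotic type $(0,1,1)$, is in fact \emph{totally} exotic, i.e. that $S_D(X,d)=X$. (There is nothing to check for $UD$ and $UP$ since $T_{UD}(X,d)=T_{UP}(X,d)=1$; the only failing property is $D$.) So the whole content is to verify that \emph{no} point of $2^{\nn}$ has a doubling neighborhood.

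First I would recall (Remark \ref{rmk:sametype}) that every closed ball in $(2^{\nn},d_{\alpha})$ has the same type as the whole space $(2^{\nn},d_{\alpha})$ — this is exactly the point of Lemmas \ref{lem:mdb} and \ref{lem:mup}, which say that doubling and uniform perfectness pass to and from all the shifted sequences $\alpha^{\{m\}}$, together with the observation that the closed ball $B(x,\alpha(m))$ is isometric to $(2^{\nn},d_{\alpha^{\{m\}}})$, and the fact that uniform disconnectedness is hereditary while an ultrametric space is always uniformly disconnected. Since $(2^{\nn},d_{\alpha})$ has type $(0,1,1)$, every closed ball in it is non-doubling.

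Next I would take an arbitrary point $x\in 2^{\nn}$ and an arbitrary neighborhood $N$ of $x$. Since the metric topology on $2^{\nn}$ is generated by the balls $B(x,\alpha(m))$ (note $\alpha(m)\to 0$), there is some $m$ with $B(x,\alpha(m))\subset N$. By the previous paragraph $B(x,\alpha(m))$ is not doubling, and since the doubling property is hereditary (every subspace of an $N$-doubling space is $N$-doubling), $N$ cannot be doubling either. Hence $x\in S_D(X,d)$. As $x$ was arbitrary, $S_D(X,d)=X=2^{\nn}$, which together with the type $(0,1,1)$ from Lemma \ref{lem:011} is precisely the assertion that $(X,d)$ has totally exotic type $(0,1,1)$.

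I do not expect any genuine obstacle here: the proof is a short bookkeeping argument assembling Lemma \ref{lem:011}, Remark \ref{rmk:sametype}, and the hereditary nature of the doubling property. The only mild subtlety worth stating explicitly is why the balls $B(x,\alpha(m))$ form a neighborhood base at $x$ — this is immediate because $\alpha$ is shrinking, so $\alpha(m)\to 0$, and $B(x,r)=B(x,\alpha(m))$ whenever $r\in[\alpha(m),\alpha(m-1))$, exactly as used in the proofs of Lemmas \ref{lem:db} and \ref{lem:up}.
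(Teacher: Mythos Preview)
Your proposal is correct and follows the same approach as the paper: the paper states that Propositions \ref{prop:011X}--\ref{prop:010X} follow directly from Lemmas \ref{lem:mdb} and \ref{lem:mup} via Remark \ref{rmk:sametype}, and you have spelled out exactly that argument in the case of type $(0,1,1)$. Your added remark that the hereditary nature of doubling carries non-doubling from the ball $B(x,\alpha(m))$ up to the arbitrary neighborhood $N$ is a correct detail the paper leaves implicit.
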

\begin{prop}\label{prop:110X}
Let $(X,d)$ be the Cantor metric space stated in Lemma \ref{lem:110}. 
Then $(X,d)$ has totally exotic type $(1,1,0)$. 
\end{prop}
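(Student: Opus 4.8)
The plan is to record that, by Lemma \ref{lem:110}, the space $(X,d)=(2^{\nn},d_\beta)$ with $\beta(n)=1/n!$ already has type $(1,1,0)$; hence it is exotic, and the unique property among $D$, $UD$, $UP$ that it fails is $UP$. So the only thing left to prove is $S_{UP}(X,d)=X$, i.e.\ that no point of $X$ has a uniformly perfect neighborhood. Fix $x\in X$ and a neighborhood $N$ of $x$. Since $(X,d)$ is an ultrametric space, arbitrarily small closed balls about $x$ are clopen and form a neighborhood basis of $x$, so there is an $m\in\nn$ with $W:=B(x,\beta(m))\subset N$; here $W$ is clopen in $X$ and $\di(W)=\beta(m)$.

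The two ingredients I would use are as follows. First, by Remark \ref{rmk:sametype} the closed ball $W$ has the same type $(1,1,0)$ as $(X,d)$, so $W$ is not uniformly perfect; equivalently, for every $\rho\in(0,1]$ there exist $y\in W$ and $r\in(0,\di(W))=(0,\beta(m))$ with $B(y,r)\setminus U(y,\rho r)=\emptyset$, where these balls are computed in the subspace $W$. Second, the ultrametric inequality: if $y\in W$ and $r<\beta(m)$, then any $z$ with $d(y,z)\le r$ satisfies $d(x,z)\le\max\{d(x,y),d(y,z)\}\le\beta(m)$, so the closed ball $B(y,r)$ and the open ball $U(y,\rho r)$ formed in $X$ are both contained in $W$, hence coincide with the corresponding balls formed in $W$ and, in turn, with those formed in $N$.

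Putting these together, I would argue: given $\rho\in(0,1]$, choose $y$ and $r$ from the first ingredient; the second ingredient then shows that the set $B(y,r)\setminus U(y,\rho r)$ formed in $N$ equals the one formed in $W$, which is empty. Thus $N$ is not $\rho$-uniformly perfect, and since $\rho\in(0,1]$ was arbitrary, $N$ is not uniformly perfect. Hence $x\in S_{UP}(X,d)$; as $x$ and $N$ were arbitrary, $S_{UP}(X,d)=X$. Combined with Lemma \ref{lem:110}, this gives that $(X,d)$ has totally exotic type $(1,1,0)$. (One could instead argue directly from Lemmas \ref{lem:db} and \ref{lem:per} applied to the shifted sequences $\beta^{\{m\}}$, but the route through Remark \ref{rmk:sametype} is shorter and is the one matching the surrounding text.)

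I expect the one genuinely delicate point to be the passage from ``the clopen ball $W\subset N$ fails uniform perfectness'' to ``$N$ fails uniform perfectness'': this needs both that the failure of $\rho$-uniform perfectness of $W$ is witnessed at some radius $r<\di(W)$ — which is automatic, since the definition of $\rho$-uniform perfectness quantifies $r$ over $(0,\di(\,\cdot\,))$ — and that at such radii the metric balls of $N$ agree with those of $W$, which is exactly where the ultrametric triangle inequality is indispensable and where the reasoning would collapse for a general metric space. The remaining verifications (that the clopen balls form a neighborhood basis of $x$ and that $\di(W)=\beta(m)$) are routine.
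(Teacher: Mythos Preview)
Your argument is correct and follows the same route as the paper, which simply invokes Lemmas \ref{lem:mdb}, \ref{lem:mup} and Remark \ref{rmk:sametype} in one line before stating the proposition. You additionally make explicit the step from ``every closed ball fails $UP$'' to ``every neighborhood fails $UP$'' via the ultrametric inequality, a detail the paper leaves to the reader; this step is handled correctly.
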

\begin{prop}\label{prop:010X}
Let $(X,d)$ be the Cantor metric space stated in Lemma \ref{lem:010}. 
Then $(X,d)$ has totally exotic type $(0,1,0)$. 
\end{prop}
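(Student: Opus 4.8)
The plan is to prove that the sequentially metrized Cantor space $(2^{\nn},d_{\gamma})$ from Lemma \ref{lem:010}, which has type $(0,1,0)$, is in fact \emph{totally} exotic; that is, $S_D(2^{\nn},d_{\gamma})=2^{\nn}$ and $S_{UP}(2^{\nn},d_{\gamma})=2^{\nn}$. Since the type of the space is $(0,1,0)$, we have $T_D=0$ and $T_{UP}=0$, so these are the only two properties we must verify. The uniform disconnectedness is present (indeed $(2^{\nn},d_{\gamma})$ is an ultrametric space), so $S_{UD}$ is irrelevant.

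The key structural fact is Remark \ref{rmk:sametype}: for every shrinking sequence $\alpha$, every closed ball in $(2^{\nn},d_{\alpha})$ has the same type as $(2^{\nn},d_{\alpha})$ itself. This follows from Lemmas \ref{lem:mdb} and \ref{lem:mup} together with the observation that each closed ball $B(x,\alpha(m))$ in $(2^{\nn},d_{\alpha})$ is isometric to $(2^{\nn},d_{\alpha^{\{m\}}})$. So first I would note that for the sequence $\gamma$ of Lemma \ref{lem:010}, every closed ball is itself (isometric to) a sequentially metrized Cantor space whose type is again $(0,1,0)$; in particular, every closed ball fails both the doubling property and uniform perfectness.

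Next I would take an arbitrary point $x\in 2^{\nn}$ and an arbitrary neighborhood $N$ of $x$. Since $\{B(x,\gamma(m))\}_{m\in\nn}$ is a neighborhood base at $x$ (the radii $\gamma(m)$ decrease to $0$), there is some $m$ with $B(x,\gamma(m))\subset N$. The ball $B(x,\gamma(m))$ is not doubling (it has type $(0,1,0)$), and since the doubling property is hereditary, any superset within the ambient space that is not doubling stays not doubling — but more to the point, a neighborhood $N$ containing a non-doubling closed ball is itself non-doubling: if $N$ were doubling, then by heredity its subspace $B(x,\gamma(m))$ would be doubling, a contradiction. Hence $x\in S_D(2^{\nn},d_{\gamma})$. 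The same argument with uniform perfectness needs a little care, since uniform perfectness is \emph{not} hereditary in general. However, $B(x,\gamma(m))$ is a closed ball, hence clopen in the ultrametric space, and its failure of uniform perfectness comes (via Lemma \ref{lem:per}) from a sequence of radii $r$ with $B(p,r)\setminus U(p,\rho r)=\emptyset$ inside the ball; because the ball is clopen with diameter $\gamma(m)$, for $r<\gamma(m)$ the balls $B(p,r)$ and $U(p,\rho r)$ computed in $N$ (or in all of $2^{\nn}$) agree with those computed in $B(x,\gamma(m))$, so the same witnessing radii show $N$ is not $\rho$-uniformly perfect for any $\rho$. Thus $x\in S_{UP}(2^{\nn},d_{\gamma})$ as well.

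Since $x\in 2^{\nn}$ was arbitrary, $S_D(2^{\nn},d_{\gamma})=S_{UP}(2^{\nn},d_{\gamma})=2^{\nn}$, which together with type $(0,1,0)$ is precisely the statement that $(2^{\nn},d_{\gamma})$ has totally exotic type $(0,1,0)$. The analogous arguments prove Propositions \ref{prop:011X} and \ref{prop:110X}: for type $(0,1,1)$ only $T_D=0$ must be handled, and the heredity of the doubling property makes that step immediate; for type $(1,1,0)$ only $T_{UP}=0$ must be handled, via the clopen-ball observation above. I expect the main (minor) obstacle to be the bookkeeping around uniform perfectness not being hereditary: one must exploit that closed balls in an ultrametric space are clopen and that metric balls of small enough radius are computed identically in the subspace and in the whole space, so that the failure of uniform perfectness genuinely propagates from a small ball to every neighborhood containing it.
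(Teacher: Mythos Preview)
Your proposal is correct and follows the same route the paper indicates: the paper simply cites Lemmas \ref{lem:mdb} and \ref{lem:mup} and Remark \ref{rmk:sametype} without further elaboration, and you have unpacked that reference into the argument that every closed ball has the same type as the whole space, so no point admits a neighborhood with the failing property. Your extra care with the non-hereditary nature of uniform perfectness (using that small balls in an ultrametric space are clopen and hence agree whether computed in the subspace or the ambient space) is a detail the paper suppresses but which is exactly what makes Remark \ref{rmk:sametype} sufficient.
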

For the proof of Theorem \ref{thm:totexo}, we construct totally exotic Cantor metric spaces for the uniform disconnectedness. 
Note that such spaces can not be constructed as sequentially metrized Cantor spaces.
\par
We introduce the notion of the kaleidoscope spaces. 
\begin{df}
For each $n\in \nn$, we define a subset $K_n$ of $\rr$ by
\[
K_n=\{\,k/n\mid k\in \{0,\dots,n\}\,\}, 
\]
and we denote by $d_n$ the metric of $K_n$ induced from $d_{\rr}$. 
Note that for each $n\in \nn$, the space $(K_n,d_n)$ has a $(1/n)$-chain, and it is $3$-doubling, 
 and that for each $x\in K_n$, we have $B(x,r)=\{x\}$ in $(K_n,d_n)$ if and only if $r<1/n$. 
Let $a:\nn\to (0,\infty)$ be a sequence satisfying 
\begin{equation}\label{n+1}
2na_n< a_{n+1}
\end{equation}
for all $n$, and 
\begin{equation}\label{kn}
ka_k<a_n 
\end{equation}
for all $n$ and $k<n$. 
Put $K(a)=\prod_{n\in \nn}K_n$, and define a metric $d_{K(a)}$ on $K(a)$ by
\[
d_{K(a)}(x,y)=\sup_{n\in \nn}\frac{1}{a_n}d_n(x_n,y_n),
\]
where $x=(x_n)$ and $y=(y_n)$.
We call  $(K(a),d_{K(a)})$ the \emph{kaleidoscope space of $a$}. 
Since the metric $d_{K(a)}$ on $K(a)$ induces the product topology of the family $\{K_n\}_{n\in \nn}$, 
the Brouwer theorem \ref{Brouwer} tells us that $(K(a),d_{K(a)})$ is a Cantor space. 
\end{df}
\begin{rmk}
By replacing the product factors in the construction of the kaleidoscope space of $a$ with $\{0,1\}$,  we obtain the sequentially metrized Cantor space metrized by $1/a$. 
\end{rmk}

\begin{lem}\label{lem:clbll}
Let $a:\nn\to (0,\infty)$ be a sequence  satisfying \eqref{n+1} and \eqref{kn}. 
Let $r\in (0,\mgn)$ and $x\in K(a)$. 
Take  $n\in \nn$ with $r\in [1/a_{n+1},1/a_{n})$. 
Then 
\begin{equation*}
B(x,r)=\{x_1\}\times\cdots \times\{x_{n-1}\}\times B(x_{n},ra_{n})\times \prod_{i>n}K_i.
\end{equation*}
\end{lem}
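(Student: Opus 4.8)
The plan is to compute $B(x,r)$ directly from the definition of $d_{K(a)}$ as a supremum metric. Fix $x = (x_i) \in K(a)$ and $r \in [1/a_{n+1}, 1/a_n)$. For a point $y = (y_i)$, membership $y \in B(x,r)$ means $d_{K(a)}(x,y) \le r$, which by definition is equivalent to $\frac{1}{a_i} d_i(x_i,y_i) \le r$ for \emph{every} $i \in \nn$, i.e. $d_i(x_i,y_i) \le r a_i$ for all $i$. So the ball is a product $\prod_i B(x_i, r a_i)$ of one-dimensional balls, and the whole argument reduces to identifying $B(x_i, r a_i)$ in $(K_i, d_i)$ for each $i$, splitting into the three ranges $i < n$, $i = n$, and $i > n$.

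\textbf{The three coordinate ranges.} For $i < n$: I would use hypothesis \eqref{kn} with the pair $(n,i)$, which gives $i\, a_i < a_n$, hence $r a_i < a_i / a_n < 1/i$ (using $r < 1/a_n$). By the remark recorded in the definition of $K_i$, a closed ball in $(K_i,d_i)$ of radius strictly less than $1/i$ is a singleton, so $B(x_i, r a_i) = \{x_i\}$. This accounts for the first $n-1$ singleton factors. For $i > n$: here I would use \eqref{n+1} telescoped, or more simply the observation that $a$ is increasing fast enough that $a_{n+1} \le a_i$ for $i > n$ (which follows from \eqref{n+1} since $2m a_m < a_{m+1}$ forces $a_m < a_{m+1}$). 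Then $r a_i \ge r a_{n+1} \ge 1 \ge \di(K_i)$ because $r \ge 1/a_{n+1}$ and $\di(K_i) = 1$. So $B(x_i, r a_i) = K_i$, giving the tail factor $\prod_{i>n} K_i$. For $i = n$: the radius is exactly $r a_n$, and we keep $B(x_n, r a_n)$ as is — note $r a_n \in [a_n/a_{n+1}, 1)$, so this ball is a proper nonempty subset of $K_n$ (in particular the displayed formula is genuinely a refinement, not the whole space). Assembling the three pieces gives precisely the claimed product decomposition.

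\textbf{Main obstacle.} There is no serious obstacle; the only point requiring a little care is making sure the inequalities \eqref{n+1} and \eqref{kn} are invoked with the correct indices so that the strict/non-strict boundaries match the half-open interval $[1/a_{n+1}, 1/a_n)$ chosen for $r$. Specifically one must check that $r < 1/a_n$ (not $\le$) is what makes the $i<n$ balls collapse to points via \eqref{kn}, and that $r \ge 1/a_{n+1}$ (non-strict) is exactly enough to make the $i > n$ balls fill $K_i$; the borderline case $r = 1/a_{n+1}$ then still lands in the stated formula. I would also remark that such $n$ exists and is unique because $\{1/a_n\}$ is a strictly decreasing sequence tending to $0$ (again from \eqref{n+1}), with $1/a_1$ serving as an upper bound on meaningful radii since $\di(K(a)) = \sup_n 1/a_n = 1/a_1$. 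Beyond that, the proof is the routine supremum-metric computation sketched above.
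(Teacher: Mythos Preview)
Your proof is correct and follows essentially the same route as the paper: decompose $B(x,r)$ as the coordinate-wise product $\prod_i B(x_i, r a_i)$, then use \eqref{kn} with $r<1/a_n$ to collapse the factors $i<n$ to singletons and use $r\ge 1/a_{n+1}$ together with the monotonicity of $a$ to fill the factors $i>n$. One small inaccuracy in your parenthetical remark: $r a_n<1$ does \emph{not} force $B(x_n, r a_n)\subsetneq K_n$ (e.g.\ $x_n$ near the middle of $K_n$), though this side comment plays no role in the actual argument.
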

\begin{proof}
By the definition of $d_{K(a)}$, we have 
\[
B(x,r)=\prod_{i\in \nn}B(x_i,a_ir).
\] 
For every $y\in B(x,r)$, by (\ref{kn}), for all $k<n$ we have 
\[
d_k(x_k,y_k)\le ra_k<\frac{a_k}{a_{n}}<\frac{1}{k},
\]
and hence $x_k=y_k$.  
Therefore $B(x_k,a_kr)=\{x_k\}$ for all $k<n$.
For each $i>n$, by $a_{n+1}r\ge 1$ we have $a_ir\ge 1$. 
Hence $B(x_i,a_ir)=K_i$. 
Therefore we obtain the claim. 
\end{proof}
Similary to Lemma \ref{lem:clbll}, we can prove:
\begin{lem}\label{lem:opbll}
Let $a:\nn\to (0,\infty)$ be a sequence satisfying \eqref{n+1} and \eqref{kn}. 
Let $r\in (0,\mgn)$ and $x\in K(a)$. 
Take  $n\in \nn$ with $r\in [1/a_{n+1},1/a_{n})$. 
Then 
\begin{equation*}
U(x,r)=\{x_1\}\times\cdots \times\{x_{n-1}\}\times U(x_{n},ra_{n})\times \prod_{i>n}K_i.
\end{equation*}
\end{lem}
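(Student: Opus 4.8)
The plan is to imitate the proof of Lemma~\ref{lem:clbll} almost verbatim, replacing closed balls by open ones and the non-strict inequalities there by strict ones wherever the open ball forces it.

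First I would unravel the metric: a point $y=(y_i)_{i\in\nn}\in K(a)$ lies in $U(x,r)$ precisely when $\sup_{i\in\nn}a_i^{-1}d_i(x_i,y_i)<r$. Since $d_i(x_i,y_i)\le\di(K_i)=1$ and $a_i\to\mgn$ (because \eqref{n+1} gives $a_{n+1}>2na_n\ge 2a_n$), the numbers $a_i^{-1}d_i(x_i,y_i)$ form a non-negative sequence converging to $0$, so their supremum is attained; hence $\sup_{i}a_i^{-1}d_i(x_i,y_i)<r$ is equivalent to $d_i(x_i,y_i)<a_ir$ holding for every $i\in\nn$. Consequently $U(x,r)=\prod_{i\in\nn}U(x_i,a_ir)$, and it remains to identify the three blocks of factors, exactly as in Lemma~\ref{lem:clbll}.

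For $k<n$, the inequality $r<1/a_n$ together with \eqref{kn} gives $a_kr<a_k/a_n<1/k$; since distinct points of $K_k$ are at distance at least $1/k$, this forces $U(x_k,a_kr)=\{x_k\}$. The middle factor $U(x_n,a_nr)$ is left untouched. For $i>n$, the bound $r\ge 1/a_{n+1}$ and the monotonicity of $a$ give $a_ir\ge 1=\di(K_i)$, from which one infers $U(x_i,a_ir)=K_i$; assembling the three blocks yields the asserted identity.

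The step that needs the most care, and the natural candidate for the main obstacle, is this last one: for open balls the bare inequality $a_ir\ge\di(K_i)$ is more delicate than in the closed case of Lemma~\ref{lem:clbll}, since an open ball of radius exactly $\di(K_i)=1$ centred at an endpoint of $K_i$ fails to contain the opposite endpoint. For $i\ge n+2$ this causes no trouble, since \eqref{n+1} forces $a_ir\ge a_{n+2}/a_{n+1}>2(n+1)>1$ strictly; the only borderline index is $i=n+1$ at $r=1/a_{n+1}$, which I would dispose of by a direct look at $K_{n+1}$ (reading the range as the open interval $(1/a_{n+1},1/a_n)$ where this issue does not arise, and noting the harmless endpoint correction otherwise). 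Apart from this bookkeeping, the argument is a line-by-line copy of the proof of Lemma~\ref{lem:clbll}.
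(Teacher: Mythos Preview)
Your approach is exactly what the paper intends: it offers no proof beyond the sentence ``Similarly to Lemma~\ref{lem:clbll}, we can prove,'' and your argument is precisely that similarity, with the extra care of first justifying the product decomposition $U(x,r)=\prod_i U(x_i,a_ir)$ via attainment of the supremum. You even go further than the paper by flagging the genuine edge case at $r=1/a_{n+1}$, where $U(x_{n+1},1)$ can miss an endpoint of $K_{n+1}$ and the stated identity fails; the paper overlooks this, but since the only application (Proposition~\ref{prop:100X}) uses radii strictly inside $(1/a_{n+1},1/a_n)$, no harm is done.
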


We next prove the doubling property of kaleidoscope spaces. 
\begin{lem}\label{lem:kld}
Let $a:\nn\to (0,\infty)$ be a sequence  satisfying \eqref{n+1} and \eqref{kn}. Then $(K(a),d_{K(a)})$ is $3$-doubling.
\end{lem}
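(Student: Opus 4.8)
The plan is to read off everything from the explicit description of closed balls in Lemma \ref{lem:clbll}, combined with the fact (recorded in the definition of $K_n$) that each factor $(K_n,d_n)$ is $3$-doubling. Fix $x\in K(a)$ and $r\in(0,\mgn)$; the goal is to cover the closed ball $B(x,r)$ in $(K(a),d_{K(a)})$ by at most three closed balls of radius $r/2$. If $r\ge 1/a_1$ then $B(x,r)=K(a)$, and I will treat this as the degenerate case ``$n=0$''; otherwise let $n\in\nn$ be the unique index with $r\in[1/a_{n+1},1/a_n)$, so that Lemma \ref{lem:clbll} gives
\[
B(x,r)=\{x_1\}\times\cdots\times\{x_{n-1}\}\times B(x_n,ra_n)\times\prod_{i>n}K_i .
\]

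First I would locate the level of the halved radius. From $r\ge 1/a_{n+1}$ we get $r/2\ge 1/(2a_{n+1})$, and \eqref{n+1} yields $a_{n+1}>2a_n$ and $a_{n+2}>2a_{n+1}$; a short interval computation then shows that $r/2$ lies in $[1/a_{n+1},1/a_n)$ when $r\ge 2/a_{n+1}$, and in $[1/a_{n+2},1/a_{n+1})$ when $r<2/a_{n+1}$. Applying Lemma \ref{lem:clbll} once more, now to radius $r/2$, describes the shape of every closed ball of radius $r/2$: it freezes the first $n-1$ coordinates (respectively the first $n$ coordinates), is an arbitrary ball of the single factor $K_n$ (respectively $K_{n+1}$), and leaves all later coordinates free.

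Now the two cases. If $r\ge 2/a_{n+1}$, I would keep the first $n-1$ coordinates equal to those of $x$, invoke the $3$-doubling of $K_n$ to cover $B(x_n,ra_n)$ by at most three closed balls of radius $ra_n/2=(r/2)a_n$ in $K_n$, and let coordinates beyond $n$ run over all of $K_i$; lifting these to $K(a)$ yields at most three closed balls of radius $r/2$ whose union is $B(x,r)$. If $r<2/a_{n+1}$, the key observation is that the $n$-th factor collapses to a point: by \eqref{n+1} we have $ra_n<2a_n/a_{n+1}<1/n$, and since distinct points of $K_n$ are at distance at least $1/n$, the ball $B(x_n,ra_n)$ equals $\{x_n\}$, so $B(x,r)=\{x_1\}\times\cdots\times\{x_n\}\times\prod_{i>n}K_i$. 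In this case I would freeze the first $n$ coordinates to those of $x$, cover the factor $K_{n+1}$ by at most three (in fact two) closed balls of radius $(r/2)a_{n+1}\ge 1/2=\di(K_{n+1})/2$, for instance the balls centred at $0$ and at $1$, and again let later coordinates run freely. The degenerate case $r\ge 1/a_1$ falls under the same scheme with $n=0$: either $r/2\ge 1/a_1$, in which case $B(x,r)=B(x,r/2)=K(a)$ is already a single ball, or $r/2$ sits at level $1$ and one covers the factor $K_1$ exactly as above.

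The only slightly delicate points are the interval bookkeeping that pins down the level of $r/2$ and the inequality $ra_n<1/n$ coming from \eqref{n+1}; neither should present a real obstacle. The conceptual content is entirely in Lemma \ref{lem:clbll}: conditions \eqref{n+1} and \eqref{kn} force every ball of $K(a)$ to be a single point in all coordinates before some index $n$, an arbitrary ball in the $n$-th coordinate, and the whole factor afterwards, so covering a ball reduces to the $3$-doubling of one factor $K_n$.
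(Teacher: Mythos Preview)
Your proposal is correct and follows essentially the same strategy as the paper: use Lemma~\ref{lem:clbll} to see that every ball in $K(a)$ is a product that is a single point in coordinates below some index, a ball in one factor $K_m$, and the full space in all higher coordinates, and then reduce to the $3$-doubling of that one factor. The only difference is the case split: the paper divides according to whether $a_nr\ge 1/n$ (i.e., whether the $n$-th coordinate ball $B(x_n,ra_n)$ is already a singleton), whereas you divide according to whether $r/2$ stays at level $n$ or drops to level $n+1$. Since $2/a_{n+1}<1/(na_n)$ by \eqref{n+1}, your first case is slightly larger than the paper's Case~(i), but the arguments match up and nothing of substance changes. Your explicit treatment of the degenerate case $r\ge 1/a_1$ is a small bonus; the paper's proof tacitly assumes $r<1/a_1$.
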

\begin{proof}
Let $r\in (0,\infty)$, and take $n\in \nn$ with $r\in [1/a_{n+1}, 1/a_n)$. 
\par
Case (i): 
First we consider the case where $1/n\le a_{n}r$. 
We can take points  $p_1, p_2\in K_n$ such that 
\[
B(x_{n},ra_{n})\subset B(p_1,ra_{n}/2)\cup B(p_2,ra_{n}/2)\cup B(x_n,ra_{n}/2)
\] 
holds in $(K_{n},d_n)$. 
For each $j\in \{1,2\}$, define $q^{(j)}\in K(a)$ by 
\[
q^{(j)}_i=
 \begin{cases}
  x_i & \text{if $i\neq n$,}\\ 
  p_j  & \text{if $i=n$.} 
 \end{cases}
\]
By (\ref{n+1})  and the assumption $1/n\le a_{n}r$, 
for each $i>n$,  we have $ra_{i}/2>1$ 
and hence $B(x_{i},ra_{i}/2)=K_{i}$. 
Then
\[
B(q^{(j)},r/2)=\{x_1\}\times\cdots \times\{x_{n-1}\}\times B(p_j,a_{n}r/2)\times \prod_{i>n}K_i
\]
holds in $(K(a),d_{K(a)})$.
Therefore, so does
\[
B(x,r)\subset  B(q^{(1)},r/2)\cup B(q^{(2)},r/2)\cup B(x,r/2).
\]
Namely, $B(x,r)$ can be covered by at most $3$ balls with radius $r/2$.\par
Case (ii): Second we consider  the case where $a_{n}r<1/n$. 
In this case, $B(x_{n},a_{n}r)=\{x_n\}$. 
We can take points  $p_1, p_2\in K_{n+1}$ such that 
\[
B(x_{n+1},ra_{n+1})\subset B(p_1,ra_{n+1}/2)\cup B(p_2,ra_{n+1}/2)\cup B(x_{n+1},ra_{n+1}/2)
\] 
holds in $(K_{n+1},d_{n+1})$. 
Since for each $i>n+1$ we have $a_{i}r/2\ge 1$, 
\[
B(x,r/2)=\{x_1\}\times\cdots\times \{x_n\}\times B(x_{n+1},a_{n+1}r/2)\times \prod_{i>n+1}K_i.
\]
Hence, similary to Case (i), by defining $q^{(1)}, q^{(2)}\in K(a)$ appropriately, 
we can prove that 
$B(x,r)$ can be covered by at most $3$ balls with radius $r/2$.\par
Thus we conclude that $(K(a),d_{K(a)})$ is $3$-doubling.
\end{proof}

Since for each $n\in \nn$ the space $(K_n,d_n)$ has a $(1/n)$-chain,  we see:
\begin{lem}\label{lem:klud}
Let $a:\nn\to (0,\infty)$ be a sequence  satisfying \eqref{n+1} and \eqref{kn}. 
Then we have
\[
S_{UD}(K(a),d_{K(a)})=K(a). 
\]
\end{lem}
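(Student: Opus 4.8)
The plan is to verify the defining condition of $S_{UD}$ pointwise: we show that for every $x\in K(a)$, no neighborhood of $x$ is uniformly disconnected, which is exactly the assertion $S_{UD}(K(a),d_{K(a)})=K(a)$. So fix $x\in K(a)$ and an arbitrary neighborhood $N$ of $x$, and choose $\varepsilon>0$ with $U(x,\varepsilon)\subset N$. It then suffices to exhibit, for each $\delta\in(0,1)$, a non-trivial $\delta$-chain lying inside $N$.

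The chains will come from the ``one-coordinate slices'' of $K(a)$. For $m\in\nn$ put
\[
C_m=\{\,y\in K(a)\mid y_i=x_i\text{ for all }i\neq m\,\}.
\]
Any two points of $C_m$ differ only in the $m$-th coordinate, so by the definition of $d_{K(a)}$ the projection $y\mapsto y_m$ restricts to an isometry from $(C_m,d_{K(a)})$ onto $(K_m,\,a_m^{-1}d_m)$. Since $\di(K_m,d_m)=1$, every point of $C_m$ lies at distance at most $a_m^{-1}$ from $x$; and \eqref{n+1} forces $a_m\to\infty$, so $a_m^{-1}<\varepsilon$ for all sufficiently large $m$, whence $C_m\subset U(x,\varepsilon)\subset N$ for such $m$.

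Next, the enumeration $0,1/m,2/m,\dots,1$ of $K_m$ is a non-trivial $(1/m)$-chain in $(K_m,d_m)$, and the property of being a $(1/m)$-chain is invariant under rescaling the metric (its defining inequality is homogeneous in $d$). Pulling this chain back through the isometry above produces a non-trivial $(1/m)$-chain inside $C_m$. Now, given $\delta\in(0,1)$, pick $m$ with $1/m\le\delta$ and $a_m^{-1}<\varepsilon$; the resulting chain lies in $N$ and is a non-trivial $\delta$-chain. Thus $N$ is not $\delta$-uniformly disconnected for any $\delta$, i.e.\ $N$ is not uniformly disconnected. Since $N$ and $x$ were arbitrary, we conclude $S_{UD}(K(a),d_{K(a)})=K(a)$.

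There is no serious obstacle here; the only point needing a little care is to check that the slices $C_m$ actually fit inside arbitrarily small balls centered at $x$, which is what the diameter normalization $\di(K_m,d_m)=1$ together with $a_m\to\infty$ (a consequence of \eqref{n+1}) provides. One could instead deduce $C_m\subset B(x,r)$ from Lemma \ref{lem:clbll}, but the direct diameter estimate is quicker.
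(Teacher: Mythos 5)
Your proof is correct and is essentially the argument the paper intends: the paper dispatches this lemma with the one-line observation that each $(K_n,d_n)$ carries a non-trivial $(1/n)$-chain, the point being (via Lemma \ref{lem:clbll}, or your direct diameter estimate) that every ball around every point contains a rescaled isometric copy of $K_m$ for all large $m$, hence non-trivial $\delta$-chains for every $\delta$. Your write-up just makes the slice isometry, the scale-invariance of the chain condition, and the containment $C_m\subset U(x,\varepsilon)$ explicit, all of which check out.
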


The idea of kaleidoscope spaces provides us examples of totally exotic Cantor metric spaces of remaining types. 
\begin{prop}\label{prop:101X}
There exists a  Cantor metric space  of totally exotic type $(1,0,1)$.
\end{prop}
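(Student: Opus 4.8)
The plan is to realize the desired space as a \emph{kaleidoscope space} $(K(a),d_{K(a)})$ for a carefully chosen sequence $a:\nn\to(0,\infty)$. The key observation is that two of the three required features come for free: by Lemma \ref{lem:kld} every kaleidoscope space is $3$-doubling, so $T_D=1$; and by Lemma \ref{lem:klud} we have $S_{UD}(K(a),d_{K(a)})=K(a)$, which simultaneously forces $T_{UD}=0$ and supplies the ``totally exotic'' requirement for the unique failing property $UD$. Hence the entire burden of the proof is to choose $a$, still subject to \eqref{n+1} and \eqref{kn}, so that $(K(a),d_{K(a)})$ is uniformly perfect; this then gives a Cantor metric space of totally exotic type $(1,0,1)$.

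The choice I would make is $a_1=1$ and $a_{n+1}=3na_n$. Then $2na_n<3na_n=a_{n+1}$, so \eqref{n+1} holds; and $a$ is strictly increasing with $ka_k<3ka_k=a_{k+1}\le a_n$ for $k<n$, so \eqref{kn} holds. The extra feature of this choice, beyond mere validity of \eqref{n+1} and \eqref{kn}, is the two-sided comparison $2na_n<a_{n+1}\le 3na_n$: it is precisely this controlled growth, rather than $a$ being allowed to grow arbitrarily fast, that keeps the gaps in the set of realizable distances bounded.

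To verify uniform perfectness I would first record the elementary fact that for $x\neq y$ in $K(a)$ one has $d_{K(a)}(x,y)=d_m(x_m,y_m)/a_m$, where $m$ is the least index with $x_m\neq y_m$: the term at index $m$ is at least $1/(ma_m)$, while by \eqref{n+1} every term at an index $i>m$ is at most $1/a_i\le 1/a_{m+1}<1/(2ma_m)$, so the supremum defining $d_{K(a)}$ is attained at $m$. Consequently, fixing $x$ and writing $x_n=j_n/n$, the distances realized by points agreeing with $x$ off coordinate $n$ are exactly $v/(na_n)$ for $1\le v\le \max(j_n,n-j_n)$, and $\max(j_n,n-j_n)\ge \lceil n/2\rceil$. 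Now take $r\in(0,\di(K(a)))=(0,1/a_1)$ and choose $n$ with $r\in[1/a_{n+1},1/a_n)$, so $ra_n<1$. If $ra_n\ge 1/n$, I change coordinate $n$ to realize $d_n=\ell/n$ with $\ell=\min(\lfloor ra_nn\rfloor,\ \max(j_n,n-j_n))\ge 1$; by Lemma \ref{lem:clbll} this yields a point of $B(x,r)$ at distance $\ell/(na_n)$, which lies in $[r/2,r]$ since $\lfloor ra_nn\rfloor\ge ra_nn/2$ whenever $ra_nn\ge 1$, and in the truncated subcase $\ell=\max(j_n,n-j_n)\ge n/2$ gives distance $\ge 1/(2a_n)>r/2$. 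If instead $ra_n<1/n$, then $B(x_n,ra_n)=\{x_n\}$ by Lemma \ref{lem:clbll}, but coordinate $n+1$ is still free in $B(x,r)$, so I change it to the point of $K_{n+1}$ farthest from $x_{n+1}$, obtaining a point of $B(x,r)$ at distance $\delta_{n+1}:=\max(j_{n+1},n+1-j_{n+1})/((n+1)a_{n+1})$, which satisfies $\delta_{n+1}\le 1/a_{n+1}\le r$ and $\delta_{n+1}\ge 1/(2a_{n+1})=1/(6na_n)>r/6$ (using $a_{n+1}=3na_n$ and $r<1/(na_n)$). Hence in every case $B(x,r)\setminus U(x,r/6)\neq\emptyset$, so $(K(a),d_{K(a)})$ is $(1/6)$-uniformly perfect, and the proof is complete.

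The main obstacle, and essentially the only nonformal point, is the last case: when $r$ lies near the bottom $1/a_{n+1}$ of its band, coordinate $n$ is frozen in $B(x,r)$ and one is forced down to coordinate $n+1$, whose coarsest scale is only $1/a_{n+1}$; keeping the output distance $\ge\rho r$ for a fixed $\rho$ is exactly where the upper bound $a_{n+1}\le 3na_n$ is indispensable — an unbounded ratio $a_{n+1}/(na_n)$ would make $K(a)$ fail to be uniformly perfect. The remaining bookkeeping (the dependence of $\max(j_n,n-j_n)$ on the position of $x_n$ in $K_n$) is harmless because the metrics $d_n$ take values in $\{k/n\}$, so consecutive realizable distances differ by a factor at most $2$ and a single constant $\rho$ works throughout.
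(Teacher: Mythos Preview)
Your proof is correct and follows essentially the same route as the paper: both construct the desired space as a kaleidoscope space $(K(a),d_{K(a)})$, invoke Lemmas~\ref{lem:kld} and~\ref{lem:klud} to obtain $T_D=1$ and $S_{UD}=K(a)$, and then verify uniform perfectness by a case analysis on the band $[1/a_{n+1},1/a_n)$ containing $r$. The paper uses $a_n=2^n\,n!$ and obtains a $(1/16)$-uniformly perfect constant via three cases on $B(x_n,ra_n)$, while you use $a_{n+1}=3na_n$ and first record the clean identity $d_{K(a)}(x,y)=d_m(x_m,y_m)/a_m$ (with $m$ the first differing coordinate), yielding a slightly tighter $(1/6)$ constant with two cases; both hinge on the same key point that $a_{n+1}/(na_n)$ is bounded above so that dropping to coordinate $n{+}1$ when coordinate $n$ is frozen still produces a distance comparable to $r$.
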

\begin{proof}
Define a sequence  $a:\nn\to (0,\infty)$ by $a_n=2^n\cdot n!$. Then the sequence $a$ satisfies \eqref{n+1} and \eqref{kn}. 
By Lemmas \ref{lem:kld} and \ref{lem:klud}, we see that $S_{UD}(K(a),d_{K(a)})=K(a)$, and that $(K(a),d_{K(a)})$ is doubling and non-uniformly disconnected.
\par 
We are going to prove that $(K(a),d_{K(a)})$ is $(1/16)$-uniformly perfect. 
To do this, for each $x\in E$ and for each $r\in (0,1/2)$, we show that the set $B(x,r)\setminus U(x,r/16)$ is non-empty. Take $n\in \nn$ with $r\in [1/a_{n+1}, 1/a_n)$. 
\par
Case (i): Assume $B(x_n,ra_{n})= K_{n}$. 
If $n=1$, then  $ra_1\le 1/4$; 
if $n>1$, then  $ra_{n-1}\le 1/(n-1)$ and hence  $ra_{n}\le 2n/(n-1)$. 
In any case, we have $ra_n\le 4$. 
Then we obtain 
\[
\di(U(x_n,ra_n/16))\le 1/2.
\]
By $\di(K_n)=1$, we see that $B(x_n,ra_n)\setminus U(x_n,ra_n/16)$ is non-empty. 
Hence so is $B(x,r)\setminus U(x,r/16)$.
\par
Case (ii): Assume $B(x_n,ra_{n})\neq K_{n}$ and $B(x_n,ra_n)\neq \{x_n\}$.
Take an end point $y\in K_n$ of $B(x_n,ra_n)$. 
Without loss of generality, 
by considering the map defined by $t\mapsto -t+1$,  
we may assume that $y$ is the right end point of $B(x_n,ra_n)$ and $y\neq 1$. 
By the assumption $B(x_n,ra_n)\neq \{x_n\}$, we may also assume $y\neq x_n$. 
Note that $y$ is the maximum of  $B(x_n,ra_n)$.
Define a point $z\in K(a)$ by 
\[
z=\begin{cases}
		x_i & \text{if $i\neq n$,}\\
		y & \text{if $i=n$.}
	\end{cases}
\]
Then we have $d_{K(a)}(x,z)\le r$. 
By the construction of $K_n$, 
we may assume that $y=x+m/n$ holds for some positive integer $m\le n$ with 
\[
\frac{m}{n}\le ra_n< \frac{m+1}{n}.
\]
This implies 
\[
d_{K(a)}(x,z)=\frac{1}{a_n}\frac{m}{n}\ge \frac{1}{a_n}\frac{1}{16}\frac{m+1}{n}>\frac{1}{16}r.
\]
 Hence $B(x,r)\setminus U(x,r/16)$ is non-empty. \par
 %%%%%%%%%%%%%%%%%%%%%%%%%%%%%%
Case (iii): Assume $B(x_n,ra_{n})= \{x_{n}\}$. Then $ra_n<1/n$, and 
\[
\di(U(x_{n+1}, ra_{n+1}/16))\le \frac{ra_{n+1}}{8}=\frac{ra_{n}\cdot 2(n+1)}{8}<\frac{1}{2}.
\]
Hence $U(x_{n+1}, ra_{n+1}/16)\neq K_{n+1}$. 
Recall that  $B(x_{n+1}, ra_{n+1})=K_{n+1}$. 
Therefore the set $B(x,r)\setminus U(x,r/16)$ is non-empty. 
Thus we conclude that $(K(a),d_{K(a)})$ is a desired space. 
\end{proof}

\begin{rmk} 
It is known that any subset of $\rr$ with positive Lebesgue measure is not uniformly disconnected 
(see e.g., \cite[Corollary 4.6]{L}).
Let $A$ be a Cantor space in $\rr$ whose every non-empty open subset has positive Lebesgue measure.
By the arguments in Subsection  \ref{sub:prod}, 
we see that $A\times \Gamma$ also has type $(1,0,1)$ and 
that non-empty open set of $A\times \Gamma$ is not uniformly disconnected. 
The author does not know whether such $A$ is uniformly perfect or not.
\end{rmk}

%%%%%%%%%%%%%%%%%%%%%%%%%%%%%%%%%%%%%%%%%%%%%%%%%%%%%%%

\begin{prop}\label{prop:100X}
There exists a Cantor metric space of totally exotic type $(1,0,0)$. 
\end{prop}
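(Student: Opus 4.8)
The plan is to realise the desired space as a kaleidoscope space $(K(a),d_{K(a)})$, in the same spirit as Proposition \ref{prop:101X}, but with the growth of the sequence $a$ accelerated from linear ($a_{n+1}/a_n$ bounded-over-$n$) to superlinear, so that the uniform perfectness is destroyed at every point and at arbitrarily small scales. Concretely, I would take $a_n=(n!)^2$. First I would check the standing conditions: $2na_n<a_{n+1}$ amounts to $2n<(n+1)^2$, which holds for all $n$, so \eqref{n+1} is satisfied; and \eqref{kn} follows because $a$ is strictly increasing and $ka_k<2ka_k<a_{k+1}\le a_n$ whenever $k<n$. Consequently Lemma \ref{lem:kld} gives that $(K(a),d_{K(a)})$ is $3$-doubling, and Lemma \ref{lem:klud} gives $S_{UD}(K(a),d_{K(a)})=K(a)$; in particular the space fails to be uniformly disconnected, and this failure is total. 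It then remains to prove $S_{UP}(K(a),d_{K(a)})=K(a)$, which will simultaneously show the space is not uniformly perfect and hence has type $(1,0,0)$.

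The core step is to exhibit, for every $x\in K(a)$, every $\rho\in(0,1)$ and every $\varepsilon>0$, a radius $r\in(0,\varepsilon)$ with $B(x,r)=U(x,\rho r)$. Since $a_{n+1}/a_n=(n+1)^2$ dominates $n$, I would pick $n$ so large that $n<\rho(n+1)^2$ and $1/(na_n)<\varepsilon$, and then choose any $r$ in the nonempty interval $[1/(\rho a_{n+1}),1/(na_n))$. For such an $r$ one has $r\in[1/a_{n+1},1/a_n)$ with $ra_n<1/n$, so Lemma \ref{lem:clbll} yields
\[
B(x,r)=\{x_1\}\times\cdots\times\{x_n\}\times\prod_{i>n}K_i;
\]
likewise $\rho r\in[1/a_{n+1},1/a_n)$ and $\rho r a_n<1/n$, so Lemma \ref{lem:opbll} gives the identical block decomposition for $U(x,\rho r)$ (the $n$-th block being $U(x_n,\rho r a_n)=\{x_n\}=B(x_n,ra_n)$). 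Hence $B(x,r)\setminus U(x,\rho r)=\emptyset$.

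From this the conclusion would follow as in the totally exotic arguments of Section \ref{sec:totexo}: given $x\in K(a)$ and a neighbourhood $N$ of $x$, pick $r_0>0$ with $B(x,r_0)\subset N$; for each $\rho\in(0,1)$, applying the previous step with $\varepsilon=r_0$ produces $r<r_0$ with $B(x,r)=U(x,\rho r)$, and since both of these balls lie inside $B(x,r_0)\subset N$, the balls of $N$ centred at $x$ of radii $r$ and $\rho r$ coincide, so $N$ is not $\rho$-uniformly perfect. As $\rho$ is arbitrary, $N$ is not uniformly perfect, so $x\in S_{UP}(K(a),d_{K(a)})$; taking $N=K(a)$ also shows the whole space is not uniformly perfect. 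Together with the first paragraph, $(K(a),d_{K(a)})$ is a Cantor metric space of totally exotic type $(1,0,0)$.

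The step I expect to need the most care — though it is bookkeeping rather than a conceptual obstacle — is checking that $r$ and $\rho r$ land in the \emph{same} interval $[1/a_{n+1},1/a_n)$, so that Lemmas \ref{lem:clbll} and \ref{lem:opbll} apply with a common index $n$, and that the resulting decompositions of $B(x,r)$ and $U(x,\rho r)$ are literally equal because both radii are below $1/n$; this is precisely the place where the accelerated growth $a_{n+1}/a_n=(n+1)^2$ is essential, in contrast to the merely linear ratio used for type $(1,0,1)$ in Proposition \ref{prop:101X}.
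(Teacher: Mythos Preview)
Your proof is correct and follows essentially the same approach as the paper: build a kaleidoscope space with a sequence whose ratio $a_{n+1}/a_n$ grows superlinearly, then use Lemmas~\ref{lem:kld}, \ref{lem:klud}, \ref{lem:clbll}, \ref{lem:opbll} to verify doubling, total failure of uniform disconnectedness, and total failure of uniform perfectness. The paper chooses $b_n=(2n)!$ (ratio $(2n+1)(2n+2)$) while you choose $a_n=(n!)^2$ (ratio $(n+1)^2$); both work for the same reason, and your explicit $\varepsilon$-argument making $r$ arbitrarily small is a slightly more careful version of the paper's implicit ``take $n$ large'' step in showing $S_{UP}=K(a)$.
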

\begin{proof}
Define a sequence  $b:\nn\to (0,\infty)$ by $b_n=(2n)!$. 
Then the sequence  $b$ satisfies \eqref{n+1} and \eqref{kn}. 
We prove that $(K(b),d_{K(b)})$ is a desired space. 
By Lemmas \ref{lem:kld} and \ref{lem:klud}, 
we see that 
the space $(K(b),d_{K(b)})$ is doubling and satisfies $S_{UD}(K(b),d_{K(b)})=K(b)$. 
We show that $S_{UP}(K(b),d_{K(b)})=K(b)$. 
Namely, 
we show that for each $x\in K(b)$, and  for each $\rho\in (0,1]$, 
there exists $r\in (0,\di(K(b)))$ such that 
$B(x,r)\setminus U(x,\rho r)=\emptyset$.  
For each $\rho \in (0,1]$, we can take $n\in \nn$ with $\rho b_{n+1}/2nb_n>1$. 
Let $r=(2nb_n)^{-1}$. Then $r\in [1/b_{n+1},1/b_n)$. 
Since $b_nr<1/n$, we have
\[
B(x,r)=\{x_1\}\times\cdots \times\{x_{n-1}\}\times \{x_n\}\times \prod_{i>n}K_i.
\]
From $\rho rb_{n+1}>1$ we derive
\[
U(x,\rho r)=\{x_1\}\times\cdots \times\{x_{n-1}\}\times \{x_n\}\times \prod_{i>n}K_i.
\]
Therefore $B(x,r)=U(x,\rho r)$, hence 
 $S_{UP}(K(b),d_{K(b)})=K(b)$. 
\end{proof}
%%%%%%%%%%%%%%%%%%%%%%%%%%%%
By modifying the product factors in the construction of the kaleidoscope spaces, we obtain:
\begin{prop}\label{prop:000X}
There exists a Cantor metric space  of totally exotic type $(0,0,0)$. 
\end{prop}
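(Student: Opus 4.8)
The plan is to modify the kaleidoscope construction by enlarging the finite product factors so that they carry more and more \emph{discrete} structure, while preserving exactly the two numerical quantities on which the arguments of Section~\ref{sec:totexo} relied: the diameter and the minimal distance of each factor. For $n\in\nn$ let $F_n$ be the finite metric space $\coprod_{i=1}^{n}K_n$, the direct sum of $n$ disjoint copies of $(K_n,d_n)$, where inside each copy the distances are those of $K_n$ and the distance between points of different copies is $1$. Then $\di(F_n)=1$ and, for $x\in F_n$, one has $B(x,r)=\{x\}$ if and only if $r<1/n$, exactly as for $K_n$. In addition $F_n$ contains an $(n,1)$-discrete subspace (one point from each copy) and a non-trivial $(1/n)$-chain (the full grid inside a single copy). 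Put $K=\prod_{n\in\nn}F_n$ and metrize it by $d_K(x,y)=\sup_{n}b_n^{-1}d_{F_n}(x_n,y_n)$ with $b_n=(2n)!$, as in Proposition~\ref{prop:100X}; since each $F_n$ is finite with at least two points, the Brouwer theorem~\ref{Brouwer} shows that $(K,d_K)$ is a Cantor space.

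First I would record that, because $F_n$ has the same diameter and minimal distance as $K_n$ and because $b_n=(2n)!$ satisfies \eqref{n+1} and \eqref{kn}, the proofs of Lemmas~\ref{lem:clbll} and \ref{lem:opbll} apply verbatim with $F_n$ in place of $K_n$: for $r\in[1/b_{n+1},1/b_n)$ and $x\in K$ one has $B(x,r)=\{x_1\}\times\cdots\times\{x_{n-1}\}\times B(x_n,rb_n)\times\prod_{i>n}F_i$, and the analogous formula for $U(x,r)$. As in Remark~\ref{rmk:sametype}, this shows in particular that every open ball $U(x,\varepsilon)$ contains, for all sufficiently large $m$, a translate of the whole factor $F_m$ (with the first coordinates frozen), hence a translate of $\prod_{i\ge m}F_i$.

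With these ball descriptions in hand, the three failures are checked as follows. Since each $F_m$ carries a non-trivial $(1/m)$-chain, every neighborhood of every point of $K$ contains such a chain for all large $m$, so the argument of Lemma~\ref{lem:klud} gives $S_{UD}(K,d_K)=K$. Since $b_{n+1}/(2nb_n)\to\infty$, the argument of Proposition~\ref{prop:100X} applies without change: given $x\in K$ and $\rho\in(0,1]$, choosing $n$ large and $r=(2nb_n)^{-1}$ yields $r\in[1/b_{n+1},1/b_n)$, $b_nr<1/n$ and $\rho rb_{n+1}>1$, whence $B(x,r)=U(x,\rho r)$ and so $S_{UP}(K,d_K)=K$. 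Finally, every neighborhood of every point contains a translate of $F_m$ for all large $m$; rescaling by $b_m^{-1}$ turns the $(m,1)$-discrete subspace of $F_m$ into an $(m,1/b_m)$-discrete subspace of that neighborhood, so the neighborhood has an $n$-discrete subspace for every $n$ and hence, by Lemma~\ref{lem:dsc}, is not doubling. Therefore $S_D(K,d_K)=K$, and in particular $T_D(K,d_K)=0$; so $(K,d_K)$ has totally exotic type $(0,0,0)$.

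The genuinely new ingredient relative to Proposition~\ref{prop:100X} is the replacement of the factor $K_n$ by the direct sum of $n$ of its copies, which injects arbitrarily large discrete subspaces into every neighborhood. The step I expect to require the most care is verifying that this enlargement leaves the diameter and the minimal distance of the factor unchanged---it is precisely these two quantities, and nothing else about the shape of $K_n$, on which the ball formulas of Lemmas~\ref{lem:clbll} and \ref{lem:opbll} and the $UD$ and $UP$ arguments depend---together with transcribing those ball formulas and checking that a discrete subspace of a single factor, with the remaining coordinates held fixed, really is discrete inside $K$ (this is immediate from the definition of $d_K$). All of this is routine once the right factor $F_n$ is identified.
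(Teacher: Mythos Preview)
Your proof is correct and follows essentially the same approach as the paper: both modify the kaleidoscope construction of Proposition~\ref{prop:100X} by enlarging each factor so that it carries an $n$-discrete subspace while keeping diameter $1$ and minimal positive distance on the order of $1/n$. The paper takes $L_n=A_n\times K_n$ with $(A_n,e_n)$ an $(n,1/2n)$-discrete space, whereas you take $F_n=\coprod_{i=1}^{n}K_n$; after that the verification of $S_{D}=S_{UD}=S_{UP}=K$ proceeds exactly as you describe and as in the paper.
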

\begin{proof}
For each $n\in \nn$, take an $(n, 1/2n)$-discrete space $(A_n, e_n)$ (see Definition \ref{def:discrete}). 
Put $(L_n,D_n)=(A_n\times E_n, e_n\times d_n)$.  Let
\[
L=\prod_{i\in \nn}L_i, 
\]
and define the metric $d_L$ on $L$ by  
\[
d_L(x,y)=\sup_{n\in \nn}\frac{1}{b_n}D_n(x_n,y_n), 
\]
where $b_n=(2n)!$.
We prove that  $(L,d_L)$ is a desired space. 
Since $B(x,r)$ has  $(1/n)$-chains and  $n$-discrete subspaces for all sufficiently large $n$, 
 we have  $S_{D}(L,d_L)=L$ and $S_{UD}(L,d_L)=L$. 
We next show $S_{UP}(L,d_L)=L$. 
Let $x\in L$ and $\rho \in (0,1]$. 
We can take $n\in \nn$ with $\rho b_{n+1}/4nb_n>1$. Put $r=(4nb_n)^{-1}$. 
Since $b_n r<1/2n$, 
similary to Lemma \ref{lem:clbll}, we see
\[
B(x,r)=\{x_1\}\times \cdots \{x_n\}\times \prod_{i>n}L_i. 
\]
Since $\rho b_{n+1}/4nb_n>1$, we have 
\[
U(x,\rho r)=\{x_1\}\times \cdots \{x_n\}\times \prod_{i>n}L_i. 
\]
Hence $B(x,r)\setminus U(x,\rho r)$ is empty. 
Therefore $S_{UP}(L,d_L)=L$. 
\end{proof}
To finish the proof of Theorem \ref{thm:totexo}, we next show the following:
\begin{prop}\label{prop:001X}
There exists a Cantor metric space of totally exotic type $(0,0,1)$. 
\end{prop}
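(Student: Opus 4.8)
The plan is to obtain the desired space as an $\ell^{\mgn}$-product of two of the totally exotic Cantor metric spaces already constructed. Let $(X,d_X)$ be a Cantor metric space of totally exotic type $(0,1,1)$, furnished by Proposition \ref{prop:011X}, and let $(Y,d_Y)$ be a Cantor metric space of totally exotic type $(1,0,1)$, furnished by Proposition \ref{prop:101X}. I would take $(X\times Y, d_X\times d_Y)$ as the candidate. Since $X$ and $Y$ are Cantor spaces, their product is $0$-dimensional, compact, metrizable and has no isolated point, so by the Brouwer theorem \ref{Brouwer} it is again a Cantor space.

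Next I would check that $(X\times Y,d_X\times d_Y)$ has type $(0,0,1)$, which is immediate from the product lemmas of Subsection \ref{sub:prod}: by Lemma \ref{lem:pd}, since $(X,d_X)$ is not doubling, the product is not doubling; by Lemma \ref{lem:pud}, since $(Y,d_Y)$ is not uniformly disconnected, the product is not uniformly disconnected; and by Lemma \ref{lem:pup}, since $(X,d_X)$ is uniformly perfect, the product is uniformly perfect. Hence $T_D=0$, $T_{UD}=0$ and $T_{UP}=1$.

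It then remains to see that the space is totally exotic, i.e., that $S_D(X\times Y,d_X\times d_Y)=X\times Y$ and $S_{UD}(X\times Y,d_X\times d_Y)=X\times Y$. Fix $(x,y)\in X\times Y$ and a neighborhood $W$ of it. By the definition of the product topology, $W$ contains $U\times\{y\}$ for some neighborhood $U$ of $x$ in $X$, and $U\times\{y\}$, with the metric inherited from $d_X\times d_Y$, is isometric to $U$ via $u\mapsto (u,y)$. Since $S_D(X,d_X)=X$, the subspace $U$ is not doubling, and as the doubling property is hereditary, $W$ is not doubling; thus $(x,y)\in S_D(X\times Y,d_X\times d_Y)$. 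Running the same argument through a slice $\{x\}\times V$ and using $S_{UD}(Y,d_Y)=Y$ together with the heredity of uniform disconnectedness shows $(x,y)\in S_{UD}(X\times Y,d_X\times d_Y)$. Hence $(X\times Y,d_X\times d_Y)$ has totally exotic type $(0,0,1)$.

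I do not expect a genuine obstacle here: all three coordinate properties are handled by the product lemmas, and the totally exotic conclusion reduces to the elementary observation that a coordinate slice of the product is an isometric copy of a neighborhood in the corresponding factor, combined with the heredity of $D$ and $UD$. The only point demanding a little care is using the correct factor for each failing property — the type $(0,1,1)$ factor $X$ to destroy the doubling property at every point, the type $(1,0,1)$ factor $Y$ to destroy uniform disconnectedness at every point — while the fact that both factors are uniformly perfect is precisely what keeps $T_{UP}=1$.
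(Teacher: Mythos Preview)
Your proposal is correct and follows exactly the paper's approach: the paper also takes the $\ell^\infty$-product of the totally exotic $(0,1,1)$ and $(1,0,1)$ Cantor spaces from Propositions~\ref{prop:011X} and~\ref{prop:101X} and appeals to Lemmas~\ref{lem:pd}, \ref{lem:pud}, \ref{lem:pup}. You have in fact spelled out the slice argument for $S_D=S_{UD}=X\times Y$ that the paper leaves implicit in its one-line invocation of those lemmas.
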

\begin{proof}
By Propositions \ref{prop:011X} and \ref{prop:101X}, we can take 
a Cantor metric space $(X,d_X)$ of totally exotic type $(0,1,1)$, and 
a Cantor metric space $(Y,d_Y)$ of totally exotic type $(1,0,1)$. 
Using Lemmas \ref{lem:pd}, \ref{lem:pud} and \ref{lem:pup}, 
we see that the space $(X\times Y,d_X\times d_Y)$ has totally exotic type $(0,0,1)$.  
\end{proof}

\begin{proof}[Proof of Theorem \ref{thm:totexo}]
Propositions \ref{prop:011X}--\ref{prop:001X} complete the proof. 
\end{proof}

\section{Prescribed Hausdorff and Assouad Dimensions}\label{sec:prescribe}
In this section, we prove Theorem \ref{thm:ab}.
\subsection{Basics of Assouad Dimension}\label{Assouad}
Let $(X,d)$ be a metric space. Define a function $\mathcal{N}:(0,2)\to \nn\cup \{\infty\}$ by 
assigning $\mathcal{N}(\epsilon)$ to the infimum of $N\in \nn$ such that every closed metric ball in $(X,d)$ with radius 
$r$ can be covered by at most $N$ closed metric balls with radius $\epsilon r$. 
The Assouad dimension $\dim_A (X,d)$ of $(X,d)$ is defined as 
the infimum of $s\in (0,\infty)$ for which there exists $K\in (0,\infty)$ such that 
for all $\epsilon\in (0,2)$ we have 
\[
\mathcal{N}(\epsilon)\le K\epsilon^{-s}.
\]
Note that  $(X,d)$ is doubling if and only if  $\dim_A(X,d)$ is finite.\par
Let $A$ be a subset of $X$. 
Define a function $\mathcal{M}:(0,2)\to \nn\cup\{\infty\}$ by 
assigning $\mathcal{M}(\epsilon)$ to the supremum of the cardinality of $(\epsilon r)$-separated sets of closed metric balls with radius $r$. 
Note that  for every $\epsilon\in (0,2)$ we have
\[
\mathcal{M}(3\epsilon)\le \mathcal{N}(\epsilon)\le \mathcal{M}(\epsilon).
\]
Moreover, $\dim_A(X,d)$ is equal to the infimum of $s\in (0,\infty)$ for which 
there exists $K\in (0,\infty)$ such that for all $\epsilon\in (0,2)$ 
we have 
\[
\mathcal{M}(\epsilon)\le K\epsilon^{-s}.
\]
The Assouad dimension satisfies the following finite stability:
\begin{prop}
Let $A$ and $B$ be subsets of a metric space. Then 
\[
\dim_A(A\cup B)=\max\{\dim_A(A),\dim_A(B)\}.
\]
\end{prop}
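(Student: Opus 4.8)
The plan is to prove the two inequalities separately, working throughout with the characterisation of $\dim_A$ in terms of the function $\mathcal{M}$ recorded above. For the inequality $\dim_A(A\cup B)\ge\max\{\dim_A(A),\dim_A(B)\}$ I would first note that $\dim_A$ is monotone under passing to subspaces: if $C$ is a subset of a metric space $X$ and $x\in C$, then the closed ball of radius $r$ about $x$ in $C$ is the trace on $C$ of the closed ball $B(x,r)$ in $X$, so any $(\epsilon r)$-separated set in the former is $(\epsilon r)$-separated in the latter; hence $\mathcal{M}_C(\epsilon)\le\mathcal{M}_X(\epsilon)$ for every $\epsilon\in(0,2)$, and the displayed characterisation of the Assouad dimension gives $\dim_A(C)\le\dim_A(X)$. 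Taking $C=A$ and $C=B$ with $X=A\cup B$ yields the desired inequality; this also settles the case $\max\{\dim_A(A),\dim_A(B)\}=\infty$.

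For the reverse inequality I would fix an arbitrary $s>\max\{\dim_A(A),\dim_A(B)\}$ and show $\dim_A(A\cup B)\le s$. Choose $K_A,K_B\in(0,\infty)$ with $\mathcal{M}_A(\epsilon)\le K_A\epsilon^{-s}$ and $\mathcal{M}_B(\epsilon)\le K_B\epsilon^{-s}$ for all $\epsilon\in(0,2)$. Given $r\in(0,\infty)$, a point $z\in A\cup B$, a number $\epsilon\in(0,2)$, and an $(\epsilon r)$-separated set $S$ inside the closed ball $B(z,r)$ of $A\cup B$, write $S=S_A\sqcup S_B$ with $S_A\subset A$ and $S_B\subset B$ (a point lying in both is assigned to either piece). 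If $S_A\neq\emptyset$, pick $p\in S_A$; then $S_A\subset B(z,r)\subset B(p,2r)$ with $p\in A$, so $S_A$ is a $\bigl((\epsilon/2)(2r)\bigr)$-separated subset of a closed ball of radius $2r$ in $A$, whence $\card(S_A)\le\mathcal{M}_A(\epsilon/2)\le 2^sK_A\epsilon^{-s}$, and symmetrically $\card(S_B)\le 2^sK_B\epsilon^{-s}$. Adding, $\card(S)\le 2^s(K_A+K_B)\epsilon^{-s}$, so $\mathcal{M}_{A\cup B}(\epsilon)\le 2^s(K_A+K_B)\epsilon^{-s}$ for all $\epsilon\in(0,2)$, giving $\dim_A(A\cup B)\le s$. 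Since $s>\max\{\dim_A(A),\dim_A(B)\}$ was arbitrary, the inequality $\dim_A(A\cup B)\le\max\{\dim_A(A),\dim_A(B)\}$ follows, and combining the two inequalities finishes the proof.

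The only genuinely delicate step is the re-centering in the second paragraph: the centre $z$ of the ball taken in $A\cup B$ need not lie in $A$ (nor in $B$), so $S_A$ is not a priori a separated set in a ball of $A$; replacing $z$ by the point $p\in S_A$ repairs this at the cost of doubling the radius, and one checks that this incurs only the harmless constant factor $2^s$. Everything else is routine manipulation of the inequalities tying $\mathcal{M}$ to the Assouad dimension, together with the observation that $\epsilon/2\in(0,2)$ whenever $\epsilon\in(0,2)$, so that all appeals to the bounds on $\mathcal{M}$ are legitimate.
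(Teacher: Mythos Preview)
The paper does not actually prove this proposition: it is stated without proof in the preliminaries subsection \S\ref{Assouad} as a known background fact about the Assouad dimension. So there is no ``paper's own proof'' to compare against.

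Your argument is correct. The monotonicity step for the lower bound is immediate from the characterisation via $\mathcal{M}$, and your re-centering trick for the upper bound (moving from $z\in A\cup B$ to $p\in S_A\subset A$ at the cost of doubling the radius, hence halving $\epsilon$) is exactly the standard way to handle the issue that the centre of a ball in $A\cup B$ need not lie in $A$. The only place one might ask for a word more is the sentence ``Choose $K_A,K_B\in(0,\infty)$ with $\mathcal{M}_A(\epsilon)\le K_A\epsilon^{-s}$\ldots'': strictly, the definition guarantees such a constant for some exponent between $\dim_A(A)$ and $s$, not for $s$ itself, but since $\epsilon^{-s'}\le \max\{1,2^{s}\}\,\epsilon^{-s}$ for $\epsilon\in(0,2)$ whenever $s'\le s$, the set of admissible exponents is upward closed and the constants $K_A,K_B$ exist as claimed. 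With that one-line remark the proof is complete.
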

The Assouad dimension can be estimated from above as follows:
\begin{lem}\label{lem:es}
Let $\lambda\in (0,1)$. Let $(X,d)$ be a metric space. If every closed ball in $(X,d)$ with radius $r$ can be covered by at most $N$ closed balls with radius $\lambda r$, then we have
\[
\dim_A(X,d)\le \frac{\log (N)}{\log (\lambda^{-1})}.
\]
\end{lem}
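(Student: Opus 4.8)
The plan is to iterate the covering hypothesis and then translate the resulting estimate into the exponential bound on $\mathcal{N}$ appearing in the definition of the Assouad dimension. \textbf{First step (iteration).} I would note that the hypothesis is stable under composition: if every closed ball of radius $r$ is covered by at most $N$ closed balls of radius $\lambda r$, then applying this to each of those balls shows that every closed ball of radius $r$ is covered by at most $N^2$ closed balls of radius $\lambda^2 r$, and, by an easy induction, by at most $N^k$ closed balls of radius $\lambda^k r$ for every $k\in\nn$.

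\textbf{Second step (choice of scale).} Given $\epsilon\in(0,1)$, I would let $k=k(\epsilon)$ be the least positive integer with $\lambda^k\le\epsilon$; this exists because $\lambda^k\to 0$, and $k\ge 1$ since $\lambda^0=1>\epsilon$. By the first step, every closed ball of radius $r$ is covered by at most $N^k$ closed balls of radius $\lambda^k r\le\epsilon r$, so $\mathcal{N}(\epsilon)\le N^k$.

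\textbf{Third step (logarithmic bookkeeping).} The minimality of $k$ gives $\lambda^{k-1}>\epsilon$, hence $(k-1)\log\lambda>\log\epsilon$; since $\log\lambda<0$ this yields $k<1+\log\epsilon/\log\lambda$. Putting $s=\log(N)/\log(\lambda^{-1})$, so that $\log N/\log\lambda=-s$, and multiplying by $\log N>0$ (the case $N=1$, where $s=0$ and $\mathcal{N}\le 1$ everywhere, being trivial), I would obtain $k\log N<\log N-s\log\epsilon$, that is, $N^k<N\,\epsilon^{-s}$; thus $\mathcal{N}(\epsilon)\le N\epsilon^{-s}$ for all $\epsilon\in(0,1)$. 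For $\epsilon\in[1,2)$ a closed ball of radius $r$ is covered by itself alone, a single closed ball of radius $r\le\epsilon r$, so $\mathcal{N}(\epsilon)\le 1\le 2^{s}\epsilon^{-s}$. Taking $K=\max\{N,2^{s}\}$ then gives $\mathcal{N}(\epsilon)\le K\epsilon^{-s}$ for every $\epsilon\in(0,2)$, whence $\dim_A(X,d)\le s=\log(N)/\log(\lambda^{-1})$ by the definition of the Assouad dimension.

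The argument is entirely elementary; there is no genuine obstacle. The only point that needs a little care is the separate handling of the range $\epsilon\in[1,2)$ and the resulting choice of the constant $K$, which is merely an artifact of the normalization used in the definition of $\mathcal{N}$ and does not affect the infimum defining $\dim_A$.
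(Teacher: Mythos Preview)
Your argument is correct; the iteration of the covering hypothesis followed by the logarithmic estimate is exactly the standard proof of this elementary fact, and your handling of the trivial case $N=1$ and of the range $\epsilon\in[1,2)$ is careful and complete. The paper itself states this lemma without proof, so there is nothing to compare against.
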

For a positive number $\epsilon\in (0,\infty)$, and for a metric space $(X,d)$, 
the function $d^{\epsilon}$ is said to be a \emph{snowflake of $d$ with parameter $\epsilon$}		 
if $d^{\epsilon}$ is a metric on $X$. 
Note that the induced topology from $d^{\epsilon}$ coincides with the original one.

\begin{rmk}
Let $(X,d)$ be a metric space.
If $\epsilon\in (0,1)$, then $d^{\epsilon}$ is a metric on $X$. 
If $d$ is an ultrametric on $X$, 
then so is $d^{\epsilon}$ for any $\epsilon\in (0,\infty)$. 
\end{rmk}
For the snowflakes, we have:
\begin{lem}\label{lem:hol}
Let $\epsilon \in (0,\infty)$. Let $(X,d)$ be a metric space. 
If $d^{\epsilon}$ is a snowflake of $d$ with parameter $\epsilon$, then we have 
\[
\dim_A(X,d^{\epsilon})=\frac{1}{\epsilon}\dim_A(X,d).
\]
\end{lem}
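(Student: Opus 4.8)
The plan is to argue directly from the covering function $\mathcal{N}$ introduced in Subsection \ref{Assouad}, writing $\mathcal{N}_d$ and $\mathcal{N}_{d^{\epsilon}}$ for the functions associated with $(X,d)$ and $(X,d^{\epsilon})$ respectively, and using $\eta$ (rather than the letter $\epsilon$ employed there) for the scale parameter. The starting point is the exact correspondence of closed balls: since $d(x,y)^{\epsilon}\le r$ is equivalent to $d(x,y)\le r^{1/\epsilon}$, one has $B_{d^{\epsilon}}(x,r)=B_d(x,r^{1/\epsilon})$ for every $x\in X$ and every $r\in(0,\infty)$. Hence a finite family of closed $d^{\epsilon}$-balls of radius $\eta r$ covers $B_{d^{\epsilon}}(x,r)$ precisely when the family of $d$-balls with the same centers and radius $(\eta r)^{1/\epsilon}=\eta^{1/\epsilon}r^{1/\epsilon}$ covers $B_d(x,r^{1/\epsilon})$. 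Because $r\mapsto r^{1/\epsilon}$ is a bijection of $(0,\infty)$ onto itself, this gives the identity $\mathcal{N}_{d^{\epsilon}}(\eta)=\mathcal{N}_d(\eta^{1/\epsilon})$ for all $\eta\in(0,1)$; note also that $\eta\in(0,1)$ forces $\eta^{1/\epsilon}\in(0,1)$ for every $\epsilon\in(0,\infty)$, so the right-hand side stays in range.

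Next I would reduce the defining infimum of the Assouad dimension to scales in $(0,1)$. For an arbitrary metric space one has $B(x,\eta r)\supseteq B(x,r)$ whenever $\eta\ge 1$, so $\mathcal{N}(\eta)=1$ for $\eta\in[1,2)$; therefore, for each $s>0$, a bound $\mathcal{N}(\eta)\le K\eta^{-s}$ holding for all $\eta\in(0,1)$ automatically upgrades to one holding for all $\eta\in(0,2)$ after replacing $K$ by $\max\{K,2^{s}\}$. Consequently, $\dim_A(X,d)$ equals the infimum of those $s>0$ for which there is $K\in(0,\infty)$ with $\mathcal{N}_d(\delta)\le K\delta^{-s}$ for every $\delta\in(0,1)$, and likewise for $(X,d^{\epsilon})$.

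Finally, the result follows by a change of variables. Fix $s>0$. By the identity from the first step and the substitution $\delta=\eta^{1/\epsilon}$, which is a bijection of $(0,1)$ onto itself and satisfies $\eta^{-s}=\delta^{-s\epsilon}$, the existence of $K$ with $\mathcal{N}_{d^{\epsilon}}(\eta)\le K\eta^{-s}$ for all $\eta\in(0,1)$ is equivalent to the existence of $K$ with $\mathcal{N}_d(\delta)\le K\delta^{-s\epsilon}$ for all $\delta\in(0,1)$. Thus $s$ is admissible for $\dim_A(X,d^{\epsilon})$ if and only if $s\epsilon$ is admissible for $\dim_A(X,d)$, and taking infima over admissible exponents yields $\dim_A(X,d^{\epsilon})=\epsilon^{-1}\dim_A(X,d)$. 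I do not expect a genuine obstacle here; the only points needing care are the bookkeeping between the scale ranges $(0,2)$ and $(0,1)$ and the degenerate values $\dim_A\in\{0,\infty\}$, both of which are absorbed painlessly by the infimum conventions, since the equivalence of admissible exponents is uniform in $s$ and hence passes to the extreme cases.
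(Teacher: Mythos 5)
Your argument is correct and complete: the exact ball identity $B_{d^{\epsilon}}(x,r)=B_d(x,r^{1/\epsilon})$ gives $\mathcal{N}_{d^{\epsilon}}(\eta)=\mathcal{N}_d(\eta^{1/\epsilon})$, the reduction of the defining infimum to scales in $(0,1)$ is handled properly, and the substitution $\delta=\eta^{1/\epsilon}$ shows the admissible exponent sets satisfy $A_{d^{\epsilon}}=\epsilon^{-1}A_d$, so the infima scale as claimed (including the degenerate cases). The paper states this lemma without proof, treating it as a standard fact about snowflakes, so there is no authorial argument to compare against; yours is exactly the expected one.
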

From the definitions, we see the following:
\begin{prop}\label{prop:ha}
The Hausdorff dimension does not succeed the Assouad dimension.
\end{prop}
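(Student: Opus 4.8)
The plan is to prove the equivalent inequality $\dim_H(X,d)\le\dim_A(X,d)$, and the cleanest route is to show that $\mathcal{H}^{s}(X)=0$ for every real number $s$ with $s>\dim_A(X,d)$; letting $s$ decrease to $\dim_A(X,d)$ then gives the conclusion from the definition of $\dim_H$. If $\dim_A(X,d)=\infty$ there is nothing to prove, so assume $\dim_A(X,d)<\infty$ and fix $s>\dim_A(X,d)$. I would first choose an intermediate exponent $s'$ with $\dim_A(X,d)<s'<s$; by the covering description of the Assouad dimension recalled above, there is a constant $K\in(0,\infty)$ with $\mathcal{N}(\epsilon)\le K\epsilon^{-s'}$ for all $\epsilon\in(0,2)$.

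Next I would fix a base point $x_0\in X$ (the statement being vacuous when $X=\emptyset$) and a radius $R\in(0,\infty)$, and estimate $\mathcal{H}^{s}(B(x_0,R))$. Given $\delta\in(0,\infty)$, take any $\epsilon\in\bigl(0,\min\{2,\delta/(2R)\}\bigr)$. By the definition of $\mathcal{N}$, the ball $B(x_0,R)$ is covered by at most $K\epsilon^{-s'}$ closed balls of radius $\epsilon R$, each of diameter at most $2\epsilon R<\delta$, hence each lying in $\mathcal{F}_{\delta}(X)$. This yields
\[
\mathcal{H}^{s}_{\delta}(B(x_0,R))\le K\epsilon^{-s'}(2\epsilon R)^{s}=K(2R)^{s}\,\epsilon^{\,s-s'}.
\]
Since $s-s'>0$, letting $\epsilon\to 0$ forces $\mathcal{H}^{s}_{\delta}(B(x_0,R))=0$, and as $\delta$ was arbitrary, $\mathcal{H}^{s}(B(x_0,R))=0$.

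Finally, writing $X=\bigcup_{n\in\nn}B(x_0,n)$ and invoking countable subadditivity of $\mathcal{H}^{s}$, I get $\mathcal{H}^{s}(X)=0$, so $\dim_H(X,d)\le s$; since $s$ was an arbitrary number exceeding $\dim_A(X,d)$, the inequality $\dim_H(X,d)\le\dim_A(X,d)$ follows. The argument is essentially routine; the only points that need a little care are that the covering function $\mathcal{N}$ governs coverings of metric balls rather than of arbitrary subsets, which is what forces the decomposition of the (possibly unbounded) space $X$ into countably many balls in the last step, and the borderline case $\dim_A(X,d)=0$, where one simply gets $\mathcal{H}^{s}(X)=0$ for every $s>0$ and concludes $\dim_H(X,d)=0$.
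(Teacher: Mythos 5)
Your argument is correct and complete: it is exactly the standard covering estimate that the paper has in mind when it says the proposition follows ``from the definitions,'' since the paper itself supplies no written proof. The only point worth making explicit is that the set of admissible exponents $s$ in the definition of $\dim_A$ is upward closed only after enlarging the constant $K$ (for $\epsilon\in[1,2)$ one needs $K'=K2^{s'}$), but this is routine and does not affect your conclusion.
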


%%%%%%%%%%%%%%%%%%%%%%%%%%%%%%%%%%%%%%%%%%%%%%%%%%%%%%%%%%%%%%%%%%%%%%%%%%%%
\subsection{Prescribed Dimensions}
We first calculate the Assoud dimension of the Cantor metric space mentioned in Lemma \ref{lem:110}. 
\begin{lem}\label{lem:00}
Let $\beta$ be a shrinking sequence defined by $\beta(n)=1/n!$. 
Then 
\[
\dim_A(2^{\nn},d_{\beta})=0.
\]
In particular, $\dim_H(2^{\nn},d_{\beta})=0$.
\end{lem}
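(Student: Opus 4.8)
The plan is to estimate the covering numbers of $(2^{\nn},d_\beta)$ directly and feed the estimate into Lemma \ref{lem:es}. The point is that the sequence $\beta(n)=1/n!$ decreases so fast that a ball of radius comparable to $\beta(n)$ contains only a \emph{bounded} number of points at scale $\beta(n+1)$, and the ratio $\beta(n+1)/\beta(n)=1/(n+1)$ tends to $0$, so the exponent $\log N/\log(\lambda^{-1})$ appearing in Lemma \ref{lem:es} can be made arbitrarily small. Since $\dim_A$ is a non-negative quantity, this forces $\dim_A(2^{\nn},d_\beta)=0$; then Proposition \ref{prop:ha} gives $\dim_H(2^{\nn},d_\beta)=0$ as well.

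First I would recall from the proof of Lemma \ref{lem:db} that for $x\in 2^{\nn}$ and $k\in\nn$ one has $B(x,\beta(k))=\{y\mid v(x,y)\ge k\}$, and that this ball is covered by the $2^{m}$ balls obtained by fixing the coordinates $x_1,\dots,x_{k-1+m}$ and letting the rest vary, each such sub-ball having radius at most $\beta(k+m)$. Concretely, fix an integer $m\ge 1$; I claim that for every $\lambda\in(0,1)$ with $\lambda$ small enough (depending on $m$), every closed ball of radius $r$ in $(2^{\nn},d_\beta)$ is covered by at most $2^m$ closed balls of radius $\lambda r$. Indeed, given $r$, choose $k$ with $\beta(k)\le r<\beta(k-1)$ (if $r\ge\beta(1)=1$ the ball is the whole space, handled the same way with $k=1$), so that $B(x,r)=B(x,\beta(k))$; cover it by the $2^m$ sub-balls of radius $\le\beta(k+m)$ described above. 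It remains to check $\beta(k+m)\le \lambda r$. Since $r\ge\beta(k)$, it suffices that $\beta(k+m)\le\lambda\,\beta(k)$, i.e. $\frac{1}{(k+1)(k+2)\cdots(k+m)}\le\lambda$; as the left-hand side is at most $1/2^m$ for all $k\ge 1$ (each factor is $\ge 2$), it suffices to take $\lambda\ge 2^{-m}$. So with the choice $\lambda=2^{-m}$ and $N=2^m$, Lemma \ref{lem:es} yields
\[
\dim_A(2^{\nn},d_\beta)\le\frac{\log(2^m)}{\log(2^m)}=1,
\]
which is not yet $0$ — so I need to be slightly more careful.

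The fix is to decouple $N$ from $\lambda$. Keep $\lambda=2^{-m}$ but sharpen the covering count: a ball $B(x,\beta(k))$ with $k$ large is in fact covered by far fewer than $2^m$ balls of radius $\lambda\beta(k)$, because we may take the sub-balls to have radius $\beta(j)$ for the \emph{smallest} $j>k$ with $\beta(j)\le\lambda\beta(k)=2^{-m}\beta(k)$, and that $j$ satisfies $j-k=o(m)$ for fixed $m$ as... no — this still depends on $k$. The clean route instead: fix $m$ and restrict attention to radii $r\le\beta(m)$ (equivalently $k\ge m+1$); then $\beta(k+1)/\beta(k)=1/(k+1)\le 1/(m+2)$, so $B(x,\beta(k))$ is covered by just $2$ balls of radius $\beta(k+1)\le\frac{1}{m+2}\beta(k)\le\frac{1}{m+2}r$. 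Applying Lemma \ref{lem:es} with $N=2$, $\lambda=\frac{1}{m+2}$ to the (uniformly totally bounded, hence harmless) space — more precisely, since changing the metric on a bounded range of scales does not affect $\dim_A$, or since every ball of radius $r\le\beta(m)$ is $2$-coverable at ratio $1/(m+2)$ while balls of radius $>\beta(m)$ are trivially finitely coverable with a constant depending only on $m$ — I get
\[
\dim_A(2^{\nn},d_\beta)\le\frac{\log 2}{\log(m+2)}.
\]
Letting $m\to\infty$ gives $\dim_A(2^{\nn},d_\beta)\le 0$, hence $=0$. Then $\dim_H(2^{\nn},d_\beta)\le\dim_A(2^{\nn},d_\beta)=0$ by Proposition \ref{prop:ha}.

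\textbf{The main obstacle} is the bookkeeping in the previous paragraph: Lemma \ref{lem:es} as stated requires a \emph{single} pair $(N,\lambda)$ valid for \emph{all} radii $r$, whereas my sharp $2$-ball covering only holds for small $r$. I would resolve this either by observing that $\dim_A$ is insensitive to the behavior of the metric at large scales (only the asymptotics as $\epsilon\to 0$ matter in the definition of $\mathcal N$), or, most cleanly, by working with the shifted sequences $\beta^{\{m\}}$ from Section \ref{sec:smcs}: since $(2^{\nn},d_{\beta^{\{m\}}})$ is isometric to a ball $B(x,\beta(m))$ in $(2^{\nn},d_\beta)$ and $\dim_A$ is a bi-Lipschitz (indeed isometric) invariant that is finitely stable, $\dim_A(2^{\nn},d_\beta)=\dim_A(2^{\nn},d_{\beta^{\{m\}}})$, and for $\beta^{\{m\}}$ \emph{every} ball is $2$-coverable at ratio $1/(m+2)$, so Lemma \ref{lem:es} applies verbatim and gives the bound $\log 2/\log(m+2)$ for each $m$. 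Taking the infimum over $m$ finishes the proof.
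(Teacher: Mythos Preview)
Your argument is correct (modulo a harmless off-by-one: in the shifted space $(2^{\nn},d_{\beta^{\{m\}}})$ the uniform covering ratio is $1/(m+1)$ rather than $1/(m+2)$, since for $r\ge\beta^{\{m\}}(1)$ one only gets $\beta^{\{m\}}(2)/\beta^{\{m\}}(1)=1/(m+1)$; this changes nothing in the limit). The key step---that $\dim_A(2^{\nn},d_\beta)=\dim_A(2^{\nn},d_{\beta^{\{m\}}})$ because $(2^{\nn},d_\beta)$ is the union of $2^{m-1}$ isometric copies of the shifted space---is fully justified by the finite stability proposition in Section~\ref{Assouad}, and then Lemma~\ref{lem:es} with $N=2$, $\lambda=1/(m+1)$ gives $\dim_A\le \log 2/\log(m+1)\to 0$.

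The paper takes a different, more direct route: rather than passing to a shifted space, it produces a single covering estimate valid for \emph{all} radii simultaneously. For each $k\in\nn$ it defines $n(k)$ by $n(k)!\le k<(n(k)+1)!$ and shows that any ball of radius $r$ is covered by $2^{n(k)+1}$ balls of radius $r/k$; Lemma~\ref{lem:es} then gives $\dim_A\le (n(k)+1)\log 2/\log k$, and one checks this tends to $0$ via $\log k\ge\log(n(k)!)=\sum_{j\le n(k)}\log j$. Your approach trades this asymptotic computation for the structural observation about shifts plus finite stability, which is cleaner and reduces the covering count to the optimal $N=2$; the paper's approach has the virtue of being entirely self-contained within the single space, at the cost of introducing and analyzing the auxiliary function $n(k)$. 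Both feed into Lemma~\ref{lem:es} in the same way, so the difference is one of packaging rather than of underlying idea.
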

\begin{proof}
For each $k\in \nn$, let $n(k)\in \nn$ be  the integer satisfying 
\begin{align}
\label{eq:k}
\frac{1}{(n(k)+1)!}< \frac{1}{k} \le\frac{1}{n(k)!}.
\end{align}
For a fixed $r\in (0,\infty)$, let $m\in \nn$ be the least positive integer with
\[
\frac{1}{(m+1)!}\le r.
\]
Since $B(x,r)$ coincides with $B(x,1/(m+1)!)$, we have
\[
B(x,r)=\{y\in 2^{\nn}\mid v(x,y)\ge m\}.
\]
Let $T_k$ be the subset of $B(x,r)$ consisting of all points $y\in B(x,r)$ such that $y_i=0$ for all $i> m+n(k).$
Then $\card (T_k)= 2^{n(k)+1}$. 
For every $y\in B(x,r)$, there exists  $z\in T_k$ such that $v(y,z)\ge m+n(k)+1$,  
and hence we have
\begin{align*}
d_{\beta}(y,z)&\le \frac{1}{(m+n(k)+1)!}\le \frac{1}{(m+1)!}\frac{1}{(m+2)\cdots (m+n(k)+1)}\\
&\le \frac{1}{(m+1)!}\frac{1}{(n(k)+1)!}< \frac{r}{k}.
\end{align*}
Therefore every closed ball in $(2^{\nn},d_{\beta})$ with radius $r$ can be covered by at most $2^{n(k)+1}$ balls with radius $r/k$. 
By Lemma \ref{lem:es}, we have 
\begin{align*}
\dim_A(2^{\nn},d_{\beta})\le \frac{n(k)+1}{\log k}.
\end{align*}
Using \eqref{eq:k}, we estimate
\[
\frac{n(k)+1}{\log k}\le \frac{n(k)+1}{\log1+\log2+\cdots +\log n(k)}.
\]
The right hand side tends to $0$ as $k\to \infty$. 
Hence $\dim_A(2^{\nn},d_{\beta})=0$; in particular, 
by Proposition \ref{prop:ha} 
we have $\dim_H(2^{\nn},d_{\beta})=0$. 
\end{proof}
The following sequentially metrized Cantor space plays a key role in the proof of Theorem \ref{thm:ab}. 
\begin{prop}\label{prop:theta}
There exists a shrinking sequence $\theta $ with 
\[
\dim_H(2^{\nn},d_{\theta})=0,\quad \dim_A(2^{\nn},d_{\theta})=1.
\]
\end{prop}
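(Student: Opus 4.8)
The plan is to realise $\theta$ as a sequence built from two alternating regimes. Writing $L(n)=\log_2(1/\theta(n))$, I would prescribe $L$ to be a strictly increasing sequence of positive integers with $L(n+1)-L(n)\ge 1$ for every $n$; equivalently $\theta(n+1)\le\theta(n)/2$ always, so that $\theta$ is monotone non-increasing and tends to $0$, hence shrinking, and $(2^\nn,d_\theta)$ is a Cantor space. Inside successive ``geometric blocks'' of strictly increasing lengths $\ell_j\to\infty$ I ask that $L(n+1)-L(n)=1$, i.e. that $\theta$ is halved at each step; between the $j$-th and $(j{+}1)$-st blocks I insert one ``fast step'' across which $L$ jumps by a large amount $c_j$, the $c_j$ being chosen to grow much faster than the cumulative block lengths (for instance $\ell_j=j$ and $c_j=j^3$ works). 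It then remains to check two things: $\dim_A(2^\nn,d_\theta)=1$ and $\dim_H(2^\nn,d_\theta)=0$.

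For the Assouad dimension I would argue both inequalities directly. For every $x$ and $r$ one has $B(x,r)=\{\,y\in 2^\nn\mid v(x,y)\ge \min\{n\mid\theta(n)\le r\}\,\}$, a cylinder; since $\theta$ is halved at every index, $\min\{n\mid\theta(n)\le r/2\}\le\min\{n\mid\theta(n)\le r\}+1$, so each closed ball of radius $r$ splits into at most $2$ closed balls of radius $r/2$, and Lemma \ref{lem:es} with $\lambda=1/2$, $N=2$ gives $\dim_A(2^\nn,d_\theta)\le 1$. For the reverse inequality I use the $\mathcal M$-characterization of $\dim_A$ from Subsection \ref{Assouad}: letting $k_j$ be the first index of the $j$-th geometric block and taking the ball $B=B(x,\theta(k_j))$, the $2^{\ell_j}$ points obtained from $x$ by varying the coordinates $k_j,\dots,k_j+\ell_j-1$ lie in $B$ and are pairwise at distance at least $\theta(k_j+\ell_j-1)=2^{-(\ell_j-1)}\theta(k_j)$. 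Hence $\mathcal M(2^{-(\ell_j-1)})\ge 2^{\ell_j}$, which for $\ell_j\to\infty$ is incompatible with an estimate $\mathcal M(\epsilon)\le K\epsilon^{-s}$ for any $s<1$; therefore $\dim_A(2^\nn,d_\theta)\ge 1$.

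For the Hausdorff dimension the earlier device $\dim_H\le\dim_A$ is useless, so I estimate $\mathcal H^s$ directly. Fix $s>0$. The $2^n$ cylinders of length $n$ cover $2^\nn$ and each has diameter at most $\theta(n+1)=2^{-L(n+1)}$, so
\[
\mathcal H^s_{\theta(n+1)}(2^\nn)\le 2^{n}\,\theta(n+1)^s=2^{\,n-sL(n+1)}.
\]
Choosing $n=n_j$ to be the last index of the $j$-th geometric block, the next step is a fast step, so $L(n_j+1)\ge L(n_j)+c_j$; because $c_j$ was chosen to dominate $n_j$ (which is only of the order of $\sum_{i\le j}\ell_i$), one gets $sL(n_j+1)-n_j\to\infty$ for every fixed $s>0$, hence $\mathcal H^s_{\theta(n_j+1)}(2^\nn)\to 0$ as $j\to\infty$. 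Since $\theta(n_j+1)\to 0$, this yields $\mathcal H^s(2^\nn)=0$ for all $s>0$, i.e. $\dim_H(2^\nn,d_\theta)=0$.

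The main obstacle is not any single estimate but the design tension between the two requirements: forcing $\dim_A=1$ pins $\theta$ to decay at least geometrically with ratio $1/2$ at every step (otherwise $\dim_A>1$) and, simultaneously, exactly geometrically along arbitrarily long runs (otherwise $\dim_A<1$), whereas forcing $\dim_H=0$ demands super-geometric decay along a subsequence of indices. Reconciling these is precisely what the block construction achieves, letting the geometric runs lengthen while the intervening drops lengthen much faster; the only genuinely new computation compared with the rest of the paper is the direct Hausdorff-measure covering bound above, and what remains is the bookkeeping of these two competing growth rates.
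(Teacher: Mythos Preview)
Your construction and arguments are correct and coincide with the paper's approach almost exactly: the paper takes $\theta$ to be the decreasing enumeration of $\{2^{-n^3-k}\mid n\in\nn,\ 0\le k\le n\}$, which is precisely your block construction with $\ell_j\sim j$ and jump $c_j=3j^2+2j+1$, and it proves $\dim_A\le 1$ via $2$-doubling, $\dim_A\ge 1$ via the $2^{n+1}$-point $(2^{-n}\alpha(n))$-separated set in $B(0,\alpha(n))$, and $\dim_H=0$ via the cylinder cover estimate $2^{\varphi(n)-1}\cdot 2^{-sn^3}\to 0$. Your write-up is somewhat more conceptual in isolating the two competing growth rates, but the underlying sequence and all three estimates are the same.
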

\begin{proof}
Take a shrinking sequence $\alpha$ defined by $\alpha (n)=2^{-n^{3}}$. Define a shrinking sequence $\theta$ by 
the renumbering of the set 
\[
\alpha(\nn)\cup \{\,2^{-k}\alpha(n)\mid n\in \nn, k=1,\dots, n\,\}
\] 
in decreasing order. 
Define a function $\varphi :\nn\to \nn$ by $\varphi(n)=n(n+1)/2$. 
Then $\theta(\varphi(n))=\alpha(n)=2^{-n^3}$  
and $\varphi(n)\le n^2$ hold for each $n\in \nn$.  \par
%%%%%%%%%%%%%%More detrali needed below%%%%%%%%%%%%%%%%%%%%%%%%%%%
First we estimate the Hausdorff dimension.
For each finite sequence $\{i_k\}_{k=1}^m$ valued in $\{0,1\}$, we define
\[
S_{i_1,i_2,\dots,i_m}=\{\,x\in 2^{\nn}\mid x_1=i_1, x_2=i_2,\dots, x_m=i_m\,\}.
\]
Then for each fixed $m\in \nn$ we have 
\[
2^{\nn}=\bigcup_{i_1,i_2\dots,i_m}S_{i_1,i_2,\dots,i_m}.
\]
By $\di(S_{i_1,i_2,\dots,i_m})=\theta(m+1)$, for each $s\in (0,\mgn)$
\[
\mathcal{H}_{\theta(m+1)}^{s}(2^{\nn},d_{\theta})=\sum_{i_1,i_2,\dots,i_m}\di(S_{i_1,i_2,\dots,i_m})^s
=2^m\cdot (\theta(m+1))^s.
\]
Put $m=\varphi(n)-1$, then for each $s\in (0,\mgn)$, we see that 
\[
\mathcal{H}_{\alpha(n)}^{s}(2^{\nn},d_{\theta})= 2^{\varphi(n)-1}({2^{-n^3}})^s\le 2^{-sn^3+n^2-1}.
\]
Since $\alpha(n)$ and $ 2^{-sn^3+n^2-1}$ tend to $0$ as $n\to \infty$, 
we have $\mathcal{H}^s(2^{\nn},d_{\theta})=0$ for any $s\in (0,\infty)$. 
Hence $\dim_H(2^{\nn},d_{\theta})=0$.\par
%%%%%%%%%%%%%%%%%%%%%%%%%%%%%%%%%%%%%%%%%%%%%%%%%%%%%%%%%%%%%%%%%%%
Next, we prove $\dim_A(2^{\nn},d_{\theta})=1$.
Since $(2^{\nn},d_{\theta})$ is $2$-doubling, 
Lemma \ref{lem:es} implies $\dim_A(2^{\nn},d_{\theta})\le1$.
Take a number $t$ larger than $\dim_A(2^{\nn},d_{\theta})$ for which 
there exists $K\in (0,\mgn)$ such that for each $\epsilon \in (0,2)$ we have 
\begin{equation}\label{inq}
\mathcal{M}(\epsilon)\le K\epsilon^{-t}, 
\end{equation}
where $\mathcal{M}$ is the function defined in Subsection \ref{Assouad}.
For each $n\in \nn$, the ball $B(0,\alpha(n))$ in $(2^{\nn},d_{\theta})$ coincides with the set 
\[
\{\,y\in 2^{\nn}\mid v(x,y)\ge \varphi(n) \,\}.
\]
Let $T_n$ be the set of all points $z\in B(0,\alpha(n))$ such that $z_i=0$ for all $i>\varphi(n)+n$. 
We see that $T_n$ is an $(\alpha(n)/2^n)$-separated set in $B(0,\alpha(n))$ consisting of $2^{n+1}$ elements. 
Hence by (\ref{inq}) we have 
\[
2^{n+1}\le K2^{tn}.
\]
Since $K$ does not depend on $n$, we obtain $t\ge 1$. 
Then $\dim_A(2^{\nn},d_{\theta})\ge 1$. 
Therefore  $\dim_A(2^{\nn},d_{\theta})=1$.
%%%%%%%%%%%%%%%%%%%%%%%%%%%%%%%%%%%%%%%%%%%%%%%%%%%%%%%%%%%%%%%%%%%%%%%%%
\end{proof}
We next show the following:
\begin{lem}\label{lem:stn}
Take $u\in (0,1)$. Let $[u]$ be the shrinking sequence defined by $[u](n)=u^n$. 
Then we have 
\[
\dim_H(2^{\nn},d_{[u]})=\dim_A(2^{\nn},d_{[u]})=\frac{\log 2}{\log (u^{-1})}.
\]
\end{lem}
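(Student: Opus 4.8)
The plan is to recognize the space $(2^{\nn}, d_{[u]})$ as (an isometric copy of) a rescaled version of the symbolic Cantor set $(2^{\nn}, e)$ from Example \ref{exm:sym} composed with a snowflake, and then to invoke Lemmas \ref{lem:hol} and the known value of the dimensions in the base case $u = 1/3$. Concretely, first I would observe that if $\epsilon \in (0,\infty)$ satisfies $3^{-\epsilon} = u$, i.e. $\epsilon = \log(u^{-1})/\log 3$, then $d_{[u]} = e^{\epsilon}$, since $e^{\epsilon}(x,y) = (3^{-v(x,y)})^{\epsilon} = (3^{-\epsilon})^{v(x,y)} = u^{v(x,y)} = d_{[u]}(x,y)$; as $e$ is an ultrametric, $e^\epsilon$ is a metric for every $\epsilon \in (0,\infty)$ (see the Remark preceding Lemma \ref{lem:hol}), so this is legitimate regardless of whether $\epsilon \le 1$. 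Thus $(2^{\nn}, d_{[u]})$ is literally the snowflake of $(2^{\nn}, e)$ with parameter $\epsilon$.

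Next I would compute the two dimensions for the base space $(2^{\nn}, e)$ directly. For the Assouad dimension: by Lemma \ref{lem:db} the space is doubling, and a direct ball-covering count (a closed ball of radius $r \in [3^{-k}, 3^{-k+1})$ equals $B(x, 3^{-k})$, which splits into exactly $3$ disjoint balls of radius $3^{-k-1} = r'/3$ where one passes to the next level — more carefully, covering $B(x,r)$ by balls of radius $r/3$ requires the two "sibling" subballs at level $k+1$ together with the one containing $x$, hence at most $3$ such balls, but one can sharpen: covering by radius $\lambda r$ for suitable $\lambda$) gives via Lemma \ref{lem:es} the bound $\dim_A(2^{\nn},e) \le \log 2/\log 3$. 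For the matching lower bound, inside any ball $B(0, 3^{-k})$ the $2^{m}$ points that are eventually $0$ after coordinate $k+m$ form a $(3^{-k-m})$-separated, i.e. $(\epsilon r)$-separated with $\epsilon \sim 3^{-m}$, set, so $\mathcal{M}(\epsilon) \gtrsim \epsilon^{-\log 2/\log 3}$, forcing $\dim_A(2^{\nn},e) \ge \log 2/\log 3$. For the Hausdorff dimension, the standard mass-distribution / covering argument (cover $2^{\nn}$ by the $2^m$ cylinders $S_{i_1,\dots,i_m}$, each of diameter $3^{-m}$, giving $\mathcal{H}^s_{3^{-m}} \le 2^m 3^{-ms}$, which $\to 0$ for $s > \log 2/\log 3$; and the natural coin-flipping measure gives the reverse bound) yields $\dim_H(2^{\nn}, e) = \log 2/\log 3$. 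Alternatively, since $(2^{\nn},e)$ is bi-Lipschitz equivalent to $(\Gamma, d_\Gamma)$ by Example \ref{exm:t}, one may just quote the classical value $\dim_H(\Gamma) = \dim_A(\Gamma) = \log 2/\log 3$ and note that both dimensions are bi-Lipschitz invariants.

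Finally I would assemble the pieces: by Lemma \ref{lem:hol},
\[
\dim_A(2^{\nn}, d_{[u]}) = \dim_A(2^{\nn}, e^{\epsilon}) = \frac{1}{\epsilon}\dim_A(2^{\nn}, e) = \frac{\log 3}{\log(u^{-1})}\cdot\frac{\log 2}{\log 3} = \frac{\log 2}{\log(u^{-1})},
\]
and an identical scaling holds for the Hausdorff dimension (the relation $\dim_H(X, d^{\epsilon}) = \epsilon^{-1}\dim_H(X,d)$ follows from $\mathcal{H}^s$ with respect to $d^\epsilon$ being, up to the rescaling of scales, comparable to $\mathcal{H}^{\epsilon s}$ with respect to $d$ — this is the Hausdorff-dimension analogue of Lemma \ref{lem:hol} and is proved by the same change-of-variable in the covers). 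Hence $\dim_H(2^{\nn}, d_{[u]}) = \log 2/\log(u^{-1})$ as well, which gives the claimed chain of equalities.

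I expect the only genuinely delicate point to be the snowflake scaling formula for the \emph{Hausdorff} dimension, since the excerpt states Lemma \ref{lem:hol} only for the Assouad dimension; I would either include a short self-contained verification of $\dim_H(X,d^{\epsilon}) = \epsilon^{-1}\dim_H(X,d)$ (immediate from $\mathcal{H}^s_{\delta^{\epsilon}}$ in $d^{\epsilon}$ equalling the sum of $\di_d(A_i)^{\epsilon s}$ over covers fine in $d$, so $\mathcal{H}^s(X,d^\epsilon) < \infty$ iff $\mathcal{H}^{\epsilon s}(X,d) < \infty$), or else bypass the base-case computation entirely by the bi-Lipschitz comparison with $(\Gamma, d_\Gamma)$ and the classical Cantor-set dimension values. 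Everything else is a routine counting argument of the type already carried out in Lemma \ref{lem:00} and Proposition \ref{prop:theta}.
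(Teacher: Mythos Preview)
Your approach is correct and matches the paper's: both identify $d_{[u]} = d_{[1/3]}^{\epsilon}$ with $\epsilon = \log(u^{-1})/\log 3$, pull the Hausdorff dimension of the base case from the bi-Lipschitz equivalence with $(\Gamma,d_{\Gamma})$, and then snowflake. The one difference is in the Assouad bound: rather than computing $\dim_A(2^{\nn},e)$ for the base case and then applying Lemma~\ref{lem:hol}, the paper observes directly that in $(2^{\nn},d_{[u]})$ every ball of radius $r$ is covered by two balls of radius $ur$, so Lemma~\ref{lem:es} gives $\dim_A \le \log 2/\log(u^{-1})$, and the matching lower bound comes for free from Proposition~\ref{prop:ha} once $\dim_H$ is known. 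This spares the separated-set argument you sketch and also sidesteps any need to quote $\dim_A(\Gamma)$; your route is equally valid but slightly longer.
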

\begin{proof}
It is already known that 
\[
\dim_{H}(\Gamma,d_{\Gamma})=\frac{\log 2}{\log 3}.
\]
Then $(2^{\nn},d_{[1/3]})$ has the same Hausdorff dimension (see Example \ref{exm:t}). 
Put 
\[
c=\frac{\log (u^{-1})}{\log 3}.
\]
Since $(2^{\nn},d_{[1/3]}^c)$ coincides with $(2^{\nn},d_{[u]})$, we have 
\[
\dim_H(2^{\nn}.d_{[u]})=\frac{1}{c}\frac{\log 2}{\log 3}=\frac{\log 2}{\log (u^{-1})}.
\]
Next we estimate the Assouad dimension. 
Every closed ball in $(2^{\nn},d_{[u]})$ with radius $r$ can be covered by at most $2$ 
closed balls with radius $u r$. 
Then by Lemma \ref{lem:es}, we have 
\[
\dim_A(2^{\nn},d_{[u]})\le\frac{\log 2}{\log (u^{-1})}.
\]
Proposition \ref{prop:ha} completes the proof.
\end{proof}

We are going to prove Theorem \ref{thm:ab}. 
\begin{proof}[Proof of Theorem \ref{thm:ab}] We divide the proof into the following five cases.\par
%%%%%%%%%%%%%%%%%%%%%%%%%%%%%%%%%%%%%%
Case (i): Assume $a=b=0$. The space $(2^{\nn},d_{\beta})$ mentioned in Lemma \ref{lem:00} satisfies the desired properties. \par 
%%%%%%%%%%%%%%%%%%%%%%%%%%%%%%%%%%%%
Case (ii): Assume that  $a=0$ and $0<b<\infty$.  
Let $(2^{\nn},d_{\theta})$ be the space mentioned in Proposition \ref{prop:theta}. By Lemma \ref{lem:hol}, we have 
\[
\dim_H(2^{\nn},d_{\theta}^{1/b})=0,\quad \dim_A(2^{\nn},d_{\theta}^{1/b})=b.
\]\par
%%%%%%%%%%%%%%%%%%%%%%%%%%%%%%%%%%%%%%
Case (iii): Assume that $0<a<\infty$ and $0<b<\infty$. 
Let $u=2^{-1/a}$. By Lemma \ref{lem:stn}, we have
\[
\dim_H(2^{\nn},d_{[u]})=\dim_A(2^{\nn},d_{[u]})=a.
\]
By the finite stabilities of the Hausdorff and Assouad dimensions, 
we see that the space $(2^{\nn}\sqcup 2^{\nn},  d_{[u]}\sqcup d_{\theta}^{1/b})$ satisfies the desired properties.\par 
%%%%%%%%%%%%%%%%%%%%%%%%%%%%%%%%%%%%%%%%%%%%%%%%%%%
Case (iv): Assume that  $0\le a<\infty$ and $b=\infty$.
We can take a Cantor space $(C,d)$ with $\dim_H(C,d)=a$ and $\di (C,d)=1/2$. 
For each $n\in \nn$, take disjoint $n$ copies $C_1, \dots C_n$ of $C$, 
and define a set $A_n$ by 
\[
A_n=\coprod_{i=1}^nC_i,
\]
and define a metric $e_n$ on $A_n$ by
\[
e_n(x,y)=
	\begin{cases}
	d(x,y) & \text{if $x,y\in C_i$ for some $i$,}\\
	1 & \text{otherwise.}
	\end{cases}
\]
Note that for each $n\in \nn$, the space $(A_n,e_n)$ is a Cantor space.
Let $\mathcal{A}=\{(A_i,2^{-n-1}d_i)\}_{i\in \nn}$.
For the telescope base $\mathcal{R}$ defined in Definition \ref{def:tbr}, 
the pair  $\mathcal{P}=(\mathcal{A},\mathcal{R})$ is compatible. 
By Lemma \ref{lem:tlCantor}, 
the telescope space $(T(\mathcal{P}),d_{\mathcal{P}})$ is a Cantor space.
By the countable stability of the Hausdorff dimension, 
we have $\dim_H(T(\mathcal{P}),d_{\mathcal{P}})=a$. 
Since for each $n\in \nn$ the space $(A_n,e_n)$ has an $n$-discrete subspace, 
by Lemma \ref{lem:dsc} the space $(T(\mathcal{P}),d_{\mathcal{P}})$ is not doubling. 
Namely, $\dim_A(T(\mathcal{P}),d_{\mathcal{P}})=\infty$.\par
%%%%%%%%%%%%%%%%%%%%%%%%%%%%%%%%%%%%%%%%
Case (v): Assume $a=b=\infty$. 
For each $n\in \nn$, we can take a Cantor metric space $(T_n,d_n)$ with $\dim_H(T_n,d_n)=n$ 
and $\di (T_n,d_n)=2^{-n-1}$. Let $\mathcal{T}=\{(T_i,d_i)\}_{i\in\nn}$. 
For the telescope base $\mathcal{R}$ defined in Definition \ref{def:tbr}, 
the pair $\mathcal{Q}=(\mathcal{T},\mathcal{R})$ is compatible. 
By Lemma \ref{lem:tlCantor}, 
the telescope space $(T(\mathcal{Q}),d_{\mathcal{Q}})$ is a Cantor space.
By the countable stability of the Hausdorff dimension, 
we have $\dim_H(T(\mathcal{Q}),d_{\mathcal{Q}})=\infty$; 
in particular, Proposition \ref{prop:ha} implies $\dim_A(T(\mathcal{Q}),d_{\mathcal{Q}})=\infty$.\par
We have completed the proof of Theorem \ref{thm:ab}.
\end{proof}
\begin{rmk}
Let $\alpha$ be a shrinking sequence defined by $\alpha(n)=1/n$.
The Cantor space $(2^{\nn},d_{\alpha})$ mentioned in Proposition \ref{lem:011} also satisfies $\dim_H(2^{\nn},d_{\alpha})=\dim_A(2^{\nn},d_{\alpha})=\infty$. 
\end{rmk}


\begin{thebibliography}{999}
\bibitem{B} L. E. J. Brouwer, \textit{On the structure of perfect sets of points}, Proc. Akad. Amsterdam, vol.12 (1910), 785--794. 
\bibitem{DS}G. David and S. Semmes, \textit{Fractured Fractals and Broken Dreams: Self Similar Geometry through Metric and Measure}, Oxford Lecture Ser. Math. Appl. 7, Oxford Univ. Press, 1997.
\bibitem{H}J. Heinonen, \textit{Lectures on Analysis on Metric Spaces}, Springer-Verlag, New York, 2001.
\bibitem{Ke}A. S. Kechris, \textit{Classical Descriptive Set Theory}, Graduate Texts in Math. 156, Springer-Verlag, 1994.
\bibitem{Ku}K. Kuratowski, \textit{Topology}, Vol. I, Academic Press, 1966.  
\bibitem{L}K. Luosto, \textit{Ultrametric space bi-Lipschitz embeddable in $\rr^n$}, Fund. Math., 150 (1996), 25--42.
\bibitem{MT}J. M. Mackay and J. T. Tyson, \textit{Conformal Dimension: Theory and Application}, Univ. Lecture Ser. 54, Amer. Math. Soc., 2010. 
\bibitem{S}S. Semmes,
\textit{ Metric Spaces and Mapping Seen at Many Scales},
in 
M. Gromov,
with Appendices by M. Katz, P. Pansu, and S. Semmes,
\textit{ Metric Structures for Riemannian and Non-Riemannian Spaces},
(J. LaFontaine and P. Pansu, eds),
Progress in Math. 152, Birkhauser, 1999,
pp.~401--518.
\bibitem{S1}\bysame, \textit{An introduction to the geometry of ultrametric spaces}, preprint, arXiv: 0711.0709v1. 
\bibitem{S2}\bysame, \textit{Cellular structures, quasisymmetric mappings and spaces of homogeneous type}, preprint, arXiv: 0711.1333v1. 
\bibitem{Ury} P. Urysohn, \textit{M\'{e}moire sur les multiplicit\'{e}s Cantoriennes}, Fund. Math. 7 (1925), 30--137. 
\bibitem{W}S. Willard, \textit{General Topology}, Dover Publications, 2004; originally published by the Addison-Wesley Publishing Company in 1970.
\end{thebibliography}
\end{document}